\definecolor{ForestGreen}{rgb}{0.1,0.6,0.05}
\definecolor{EgyptBlue}{rgb}{0.063,0.1,0.6}
\definecolor{RipeOlive}{HTML}{556B2F}
\newtheorem{theorem}{Theorem}
\newtheorem{proposition}[theorem]{Proposition}
\newtheorem{lemma}[theorem]{Lemma}
\theoremstyle{definition}
\newtheorem{remark}[theorem]{Remark}
\numberwithin{equation}{section}
\numberwithin{theorem}{section}
\numberwithin{equation}{section}
\numberwithin{theorem}{section}
\newenvironment{proof*}[1]{\begin{trivlist}\item[\hskip%
		\labelsep{{\bf Proof of \/{\rm\bf #1.}}\quad}]\rm}%
	{\hfill\qed\rm\end{trivlist}}
\newcommand{\W}{W_0^{1,p}}
\newcommand{\intO}{\int_\Omega}
\newcommand{\C}{C^1_0(\overline{\Omega})}
\newcommand{\E}{E_{\alpha,\beta}}
\newcommand{\En}{E_{\alpha_n,\beta_n}} 
\newcommand{\J}{J_{\alpha,\beta}}
\newcommand{\N}{\mathcal{N}_{\alpha,\beta}}
\title{Remarks on minimizers for $(p,q)$-Laplace equations with two parameters
	\footnote{AMS Subject Classifications: 35J62, 35J20, 35P30}}
\author{ 
	\normalsize Vladimir Bobkov\\ 
	{\small  Department of Mathematics and NTIS, Faculty of Applied Sciences, University of West Bohemia}\\ 
	{\small Univerzitn\'i 8, 306 14 Plze\v{n}, Czech Republic}\\
	{\small e-mail: bobkov@kma.zcu.cz}\\[0.5em] 
	\normalsize Mieko Tanaka\\
	{\small Department of  Mathematics, 
		Tokyo University of Science}\\
	{\small Kagurazaka 1-3, Shinjyuku-ku, Tokyo 162-8601, Japan}\\
	{\small e-mail: miekotanaka@rs.tus.ac.jp} 
}
\date{}
\begin{document}
\maketitle 
	\begin{abstract} 
	We study in detail the existence, nonexistence and behavior of global minimizers, ground states and corresponding energy levels of the $(p,q)$-Laplace equation $-\Delta_p u -\Delta_q u = \alpha |u|^{p-2}u + \beta |u|^{q-2}u$ in a bounded domain $\Omega \subset \mathbb{R}^N$ under zero Dirichlet boundary condition, where $p > q > 1$ and $\alpha, \beta \in \mathbb{R}$. 
	A curve on the $(\alpha,\beta)$-plane which allocates a set of the existence of ground states and  the multiplicity of positive solutions is constructed.
	Additionally, we show that eigenfunctions of the $p$- and $q$-Laplacians under zero Dirichlet boundary condition are linearly independent.
	\par
	\smallskip
	\noindent {\bf  Keywords}:\  $p$-Laplacian,\ $(p,q)$-Laplacian,\ nonlinear eigenvalue problem, global minimizer,\ ground states,\ Nehari manifold, \ fibered functional, \ improved Poincare inequality.
	\end{abstract}


\section{Introduction}\label{sec:intro}
Consider the following generalized eigenvalue problem:
\begin{equation*}\label{eq:D}
\tag{$GEV;\alpha,\beta$}
\left\{
\begin{aligned}
-&\Delta_p u -\Delta_q u=\alpha |u|^{p-2}u+\beta |u|^{q-2}u 
&&{\rm in}\ \Omega, \\
&u=0 &&{\rm on }\ \partial \Omega,
\end{aligned}
\right.
\end{equation*}
where $1<q<p<\infty$ and $\Delta_r$ with $r=\{p,q\}$ stands for the $r$-Laplace operator formally defined by $\Delta_r u :={\rm div}\,(|\nabla u|^{r-2}\nabla u)$. 
Clearly, the assumption $q<p$ is imposed without loss of generality.
Parameters $\alpha, \beta$ are real numbers, and $\Omega\subset \mathbb{R}^N$ ($N \geq 1$) is a bounded domain with $C^2$-boundary. 

We say that $u\in\W := W_0^{1,p}(\Omega)$ is a (weak) solution of \eqref{eq:D} if the following equality is satisfied for all test functions $\varphi\in \W$:
\begin{equation*}
\intO |\nabla u|^{p-2}\nabla u\nabla\varphi \,dx 
+ \intO |\nabla u|^{q-2}\nabla u\nabla\varphi \,dx 
= \alpha \intO |u|^{p-2}u \varphi \, dx + \beta \intO |u|^{q-2}u \varphi\,dx.
\end{equation*}
It is easy to see that weak solutions of \eqref{eq:D} correspond to critical points of the $C^1$ energy functional $\E : \W \to \mathbb{R}$ defined by
\begin{equation*}\label{def:E} 
\E(u) = \frac{1}{p}\, H_\alpha (u)+\frac{1}{q}\,G_\beta(u), 
\end{equation*}
where
\begin{equation*}\label{def:HG}
H_\alpha (u) :=\|\nabla u\|_p^p -\alpha\|u\|_p^p 
\quad {\rm and}\quad 
G_\beta(u) :=\|\nabla u\|_q^q -\beta\|u\|_q^q.
\end{equation*}
Hereinafter, $\| \cdot \|_r$ denotes the norm of $L^r(\Omega)$, and $W_0^{1,r}$ is endowed with the norm $\| \nabla (\cdot) \|_r$, $r > 1$.

Let $\lambda_1(r)$ and $\varphi_r \in W_0^{1,r} \setminus \{0\}$ be the first eigenvalue and a first eigenfunction of the $r$-Laplacian in $\Omega$ under zero Dirichlet boundary condition, respectively; i.e., they weakly satisfy the problem
\begin{equation}\label{eq:eigen}
\left\{
\begin{aligned}
-&\Delta_r u = \lambda |u|^{r-2}u  &&{\rm in}\ \Omega, \\
&u=0 &&{\rm on }\ \partial \Omega.
\end{aligned}
\right.
\end{equation}
Note that $\lambda_1(r)$ is simple and isolated, cf.\ \cite{anane1987}, and it can be defined as
\begin{equation}\label{eq:lambdar}
\lambda_1(r) := \inf\left\{\frac{\|\nabla u\|_r^r}{\|u\|_r^r}:~ u \in W_0^{1,r} \setminus \{0\}
\right\}.
\end{equation}
Therefore, $\varphi_r$ is unique modulo scaling; moreover, it has a constant sign in $\Omega$, and hence we will always assume, for definiteness, that  $\varphi_r \geq 0$ and $\|\varphi_r\|_r = 1$. 
The spectrum of the $r$-Laplacian will be denoted as $\sigma(-\Delta_r)$, and the set of all eigenfunctions associated to some $\mu \in \mathbb{R}$ will be denoted as $ES(r;\mu)$. For instance, $ES(r;\lambda_1(r)) \equiv \mathbb{R}\varphi_r$. 
The simplicity of the first eigenvalue and the definition \eqref{eq:lambdar} directly imply the following facts which will be often used in our arguments. 
\begin{lemma}\label{lem:eigenvalue}
	Let $u \in \W \setminus \{0\}$. 
	Then the following assertions are satisfied:
	\begin{enumerate}[label={\rm(\roman*)}]
		\item\label{lem:eigenvalue:1} if $\alpha \leq \lambda_1(p)$, then $H_\alpha(u) \geq 0$. Moreover, $H_\alpha(u) = 0$ if and only if $\alpha = \lambda_1(p)$ and $u \in \mathbb{R}\varphi_p$.
		\item\label{lem:eigenvalue:2} if $\beta \leq \lambda_1(q)$, then $G_\beta(u) \geq 0$. Moreover, $G_\beta(u) = 0$ if and only if $\beta = \lambda_1(q)$ and $u \in \mathbb{R}\varphi_q$.
	\end{enumerate}
\end{lemma}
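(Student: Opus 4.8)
The plan is to obtain both assertions as direct consequences of the variational characterization \eqref{eq:lambdar} of $\lambda_1(r)$ and the simplicity of $\lambda_1(r)$ (cf.\ \cite{anane1987}). Because $\Omega$ is bounded and $p>q$, the given $u\in\W\setminus\{0\}$ belongs also to $W_0^{1,q}(\Omega)\setminus\{0\}$, so the two statements \ref{lem:eigenvalue:1} and \ref{lem:eigenvalue:2} are completely symmetric under the substitution $(p,\alpha,\varphi_p)\leftrightarrow(q,\beta,\varphi_q)$; thus I would prove \ref{lem:eigenvalue:1} in $\W$ and then run the identical argument for \ref{lem:eigenvalue:2} in $W_0^{1,q}(\Omega)$.

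First, for the inequality $H_\alpha(u)\ge 0$: applying \eqref{eq:lambdar} to $u\ne 0$ gives $\|\nabla u\|_p^p\ge\lambda_1(p)\,\|u\|_p^p$, and since $\alpha\le\lambda_1(p)$ and $\|u\|_p^p>0$ this yields
\begin{equation*}
H_\alpha(u)=\|\nabla u\|_p^p-\alpha\|u\|_p^p\ge(\lambda_1(p)-\alpha)\|u\|_p^p\ge 0 .
\end{equation*}
The ``if'' direction of the equality statement is then an immediate computation using $p$-homogeneity and $\|\varphi_p\|_p=1$: if $\alpha=\lambda_1(p)$ and $u=t\varphi_p$ with $t\ne 0$, then $\|\nabla u\|_p^p=|t|^p\lambda_1(p)=\lambda_1(p)\|u\|_p^p$, hence $H_\alpha(u)=0$.

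For the ``only if'' direction, assume $H_\alpha(u)=0$. Then the chain of inequalities above becomes a chain of equalities: $(\lambda_1(p)-\alpha)\|u\|_p^p=0$ together with $\|u\|_p^p>0$ forces $\alpha=\lambda_1(p)$, and consequently $\|\nabla u\|_p^p=\lambda_1(p)\|u\|_p^p$, i.e.\ $u$ attains the infimum in \eqref{eq:lambdar}. The only step that is more than bookkeeping is deducing from this that $u$ is a first eigenfunction, i.e.\ $u\in ES(p;\lambda_1(p))$: this is the classical fact that a minimizer of the Rayleigh quotient solves the corresponding Euler--Lagrange equation \eqref{eq:eigen} with $\lambda=\lambda_1(p)$. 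Once this is in hand, simplicity of $\lambda_1(p)$, namely $ES(p;\lambda_1(p))=\mathbb{R}\varphi_p$, gives $u\in\mathbb{R}\varphi_p$ and completes the proof. I do not anticipate any genuine difficulty here: the whole argument is just an unwinding of \eqref{eq:lambdar} together with the standard properties of the first eigenvalue of the $r$-Laplacian.
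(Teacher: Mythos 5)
Your argument is correct and is precisely the one the paper intends: the paper gives no separate proof, stating only that the lemma follows directly from the variational characterization \eqref{eq:lambdar} and the simplicity of the first eigenvalue, which is exactly the chain of reasoning (Rayleigh-quotient inequality, equality forcing $\alpha=\lambda_1(p)$ and minimality, minimizer being an eigenfunction, simplicity giving $u\in\mathbb{R}\varphi_p$) that you spell out, together with the correct observation that $\W\subset W_0^{1,q}(\Omega)$ on the bounded domain handles assertion \ref{lem:eigenvalue:2} symmetrically.
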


Boundary value problems of the type \eqref{eq:D} containing several heterogeneous operators naturally arise in a wide range of mathematical modeling issues since such hybrid operators enable to describe simultaneously various aspects of real processes, and these problems have been being actively studied nowadays, see, for instance, \cite{cahnhill,zakharov,BenciDerrick,chueshov,sun}.
In particular, investigation of problems with the sum of the $p$- and $q$-Laplace operators also attracts considerable attention, see, e.g., \cite{CherIl,figueiredo,yin,alves} and references below, where the cases of various nonlinearities and boundary conditions were considered; we also refer the reader to the recent survey \cite{marcomasconi2017}.

Problem \eqref{eq:D}, while being a formal combination of eigenvalue problems \eqref{eq:eigen} for the $p$- and $q$-Laplacians, possesses its own structure of the solution set which appears to be significantly different from the pure eigenvalue cases or similar problems with nonhomogeneous nonlinearities, see \cite{marcomasconi2017}.
For instance, based on the results of \cite{T-2014} and \cite{MT}, it was proved in \cite{BobkovTanaka2015} that \eqref{eq:D} has at least one positive solution whenever
$$
(\alpha, \beta) \in \big( (-\infty, \lambda_1(p)) \times (\lambda_1(q), \infty) \big) 
\cup 
\big( (\lambda_1(p), \infty) \times (-\infty, \lambda_1(q)) \big).
$$
Moreover, there was constructed a ``threshold'' curve $\mathcal{C}$ on the $(\alpha,\beta)$-plane, which separates sets of the existence and nonexistence of positive solutions of \eqref{eq:D}. 
The shape of $\mathcal{C}$ is different, depending on whether the following conjecture is valid or not:
\begin{itemize}
	\item[{\bf (LI)}]
	The first eigenfunctions $\varphi_p$ and $\varphi_q$ are linearly independent.
\end{itemize}
Although it is natural to anticipate that {\bf (LI)} holds true, it was shown in \cite[Appendix C]{BobkovTanaka2015} that it can be violated when weighted eigenvalue problems are considered. 
On the other hand, the existence and nonexistence of sign-changing solutions of \eqref{eq:D} were studied in \cite{marano2013,T-Uniq,T-2014,papa2,BobkovTanaka2016}. 

The present article is devoted to the detailed investigation of some energy aspects of problem \eqref{eq:D}. 
Namely, we study questions of the existence and behavior (with respect to the parameters $\alpha$ and $\beta$)
of \textit{global minimums} of $\E$ on $\W$ and on $\N$, where $\N$ is the Nehari manifold associated to \eqref{eq:D}. Corresponding minimizers, whenever they exist, will be referred as \textit{global minimizers} and \textit{ground states} of $\E$, respectively. 
Although a partial information in this direction is contained in the available literature, the complete picture has not been completely understood. 
Except for a partial result in the case 
\begin{equation*}\label{eq:p=2q}
p=2q
\quad \text{and} \quad 
(\alpha,\beta) = \left(\frac{\|\nabla \varphi_p\|_p^p}{\|\varphi_p\|_p^p}, \frac{\|\nabla \varphi_p\|_q^q}{\|\varphi_p\|_q^q}\right),
\end{equation*}
we fully characterize the existence and behavior of global minimizers and ground states of $\E$ for all $(\alpha, \beta) \in \mathbb{R}^2$, together with the corresponding energy levels. 
It appears that the geometry of the energy functional (and hence the existence of its critical points) at $\left(\frac{\|\nabla \varphi_p\|_p^p}{\|\varphi_p\|_p^p}, \frac{\|\nabla \varphi_p\|_q^q}{\|\varphi_p\|_q^q}\right)$ crucially depends on the choice of $p < 2q$, $p=2q$ or $p > 2q$. In this respect, the situation is reminiscent of the Fredholm alternative for the $p$-Laplacian, where the difference between $p<2$, $p=2$, and $p>2$ is vital, see, e.g., \cite{takac,drabek,takac2} and references therein. 
Special attention is paid also to other borderline cases. In particular, a curve $\mathcal{C}_*$ on the $(\alpha, \beta)$-plane which separates sets where the least energy on $\N$ is finite or not is constructed. Furthermore, we show that $C_*$ allocates a set of $(\alpha,\beta)$ where \eqref{eq:D} possesses at least two positive solutions. (Note that \cite{BobkovTanaka2015} contains no multiplicity results.) 
The obtained information provides the existence of positive solutions of \eqref{eq:D} for some sets of parameters which were not covered in \cite{BobkovTanaka2015}, and it gives better understanding of the properties of the solution set of \eqref{eq:D}, as well as the geometry of the corresponding energy functional. 
Finally, we show the validity of {\bf (LI)} conjecture.

The article is organized as follows. 
In Section \ref{sec:mainresults}, we formulate main results concerning global minimizers and ground states of $\E$.
Section \ref{sec:preliminaries} contains preliminary results necessary for our arguments.
In Section \ref{sec:proofs:global_minimizers}, we prove the main results for global minimizers of $\E$.
Section \ref{sec:proofs:ground_states} is devoted to the proof of the main results for ground states of $\E$.
Finally, Appendix \ref{sec:appendix} contains the proof of {\bf (LI)} conjecture.


\section{Statements of main results}\label{sec:mainresults}

We start by defining two critical values which will play an essential role in our results:
\begin{equation}\label{def:values} 
\alpha_*:=\frac{\|\nabla \varphi_q\|_p^p}{\|\varphi_q\|_p^p} 
\quad \text{and} \quad 
\beta_*:=\frac{\|\nabla \varphi_p\|_q^q}{\|\varphi_p\|_q^q}. 
\end{equation}
The following lemma states the validity of {\bf (LI)} conjecture, as well as the consequent properties of $\alpha_*$ and $\beta_*$; see Appendix \ref{sec:appendix} for the proof. 
\begin{lemma}\label{lem:LID}
	$\varphi_p$ and $\varphi_q$ are linearly independent, and hence $\alpha_* > \lambda_1(p)$ and $\beta_* > \lambda_1(q)$.
\end{lemma}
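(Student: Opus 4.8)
The plan is to prove linear independence of $\varphi_p$ and $\varphi_q$ by contradiction, and then deduce the strict inequalities for $\alpha_*$ and $\beta_*$ directly from Lemma~\ref{lem:eigenvalue}. Suppose $\varphi_p$ and $\varphi_q$ are linearly dependent. Since both are nonnegative with unit $L^p$- and $L^q$-norm respectively, linear dependence forces $\varphi_p = c\,\varphi_q$ for some constant $c > 0$; call the common function $u$ (up to scaling). Then $u$ is simultaneously a first eigenfunction of the $p$-Laplacian and of the $q$-Laplacian, so $u$ weakly satisfies both $-\Delta_p u = \lambda_1(p)\, |u|^{p-2}u$ and $-\Delta_q u = \lambda_1(q)\, |u|^{q-2}u$ in $\Omega$ with zero boundary data.

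The heart of the argument is to extract a pointwise or distributional contradiction from these two simultaneous equations. The natural route is a regularity-plus-structure argument: by the well-known $C^{1,\gamma}(\overline{\Omega})$ regularity for solutions of the $r$-Laplace eigenvalue problem and the strong maximum principle / Hopf lemma (recall $\partial\Omega$ is $C^2$), $u > 0$ in $\Omega$ and $\partial u/\partial\nu < 0$ on $\partial\Omega$. On the open set $\{\nabla u \neq 0\}$ the equations are uniformly elliptic, and subtracting a suitable multiple of one from the other yields a relation of the form $\operatorname{div}\big( (|\nabla u|^{p-2} - t\,|\nabla u|^{q-2})\nabla u\big) = (\lambda_1(p) u^{p-1} - t\,\lambda_1(q) u^{q-1})$ for every $t \in \mathbb{R}$; choosing $t$ cleverly (or comparing the two equations on level sets of $u$, where $|\nabla u|$ would have to be locally constant) should force $|\nabla u|$ to be a function of $u$ alone and ultimately contradict the boundary behavior, since $p \neq q$ makes the two homogeneities incompatible. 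An alternative, cleaner line: test the $p$-equation with $\varphi_q = u$ and the $q$-equation with $\varphi_p = u$ after rescaling, obtaining $\|\nabla u\|_p^p = \lambda_1(p)\|u\|_p^p$ and $\|\nabla u\|_q^q = \lambda_1(q)\|u\|_q^q$; this alone is consistent, so the contradiction genuinely requires the PDE structure, not just the energy identities — this is why I expect the regularity/maximum-principle step to be the main obstacle, and it is presumably where the detailed work of Appendix~\ref{sec:appendix} lies.

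Once linear independence is established, the inequalities $\alpha_* > \lambda_1(p)$ and $\beta_* > \lambda_1(q)$ follow immediately. Indeed, by \eqref{eq:lambdar} applied with $r = p$ to the test function $\varphi_q \in W_0^{1,p}\setminus\{0\}$ (note $\varphi_q \in W_0^{1,p}$ since $q < p$ and $\Omega$ is bounded with smooth boundary, so the regularity of $\varphi_q$ guarantees it lies in $W_0^{1,p}$),
\begin{equation*}
\alpha_* = \frac{\|\nabla\varphi_q\|_p^p}{\|\varphi_q\|_p^p} \geq \lambda_1(p),
\end{equation*}
with equality if and only if $\varphi_q$ is a first eigenfunction of the $p$-Laplacian, i.e.\ $\varphi_q \in \mathbb{R}\varphi_p$ — equivalently $H_{\lambda_1(p)}(\varphi_q) = 0$, which by Lemma~\ref{lem:eigenvalue}\ref{lem:eigenvalue:1} forces $\varphi_q \in \mathbb{R}\varphi_p$. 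Since we have just shown $\varphi_p$ and $\varphi_q$ are linearly independent, equality is impossible, so $\alpha_* > \lambda_1(p)$. The argument for $\beta_* > \lambda_1(q)$ is symmetric, using Lemma~\ref{lem:eigenvalue}\ref{lem:eigenvalue:2} with the test function $\varphi_p \in W_0^{1,q}\setminus\{0\}$.

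The one technical point worth flagging in the write-up is the membership $\varphi_q \in W_0^{1,p}(\Omega)$, needed for $\alpha_*$ to be well defined and for \eqref{eq:lambdar} to apply: this is where $C^{1}(\overline{\Omega})$-regularity of first eigenfunctions and boundedness of $\Omega$ enter. Granting that (and the analogous trivial fact $\varphi_p \in W_0^{1,q}$), the whole statement reduces to the linear-independence claim, whose proof I would place entirely in the appendix as the paper does.
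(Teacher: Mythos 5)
The easy half of your argument is fine and matches the paper's intent: granting linear independence, $\alpha_*\ge\lambda_1(p)$ follows from \eqref{eq:lambdar} (with $\varphi_q\in W_0^{1,p}(\Omega)\setminus\{0\}$ by the $C^1(\overline{\Omega})$-regularity of first eigenfunctions), and equality would force $H_{\lambda_1(p)}(\varphi_q)=0$, hence $\varphi_q\in\mathbb{R}\varphi_p$ by Lemma~\ref{lem:eigenvalue}\ref{lem:eigenvalue:1}; symmetrically for $\beta_*$ via Lemma~\ref{lem:eigenvalue}\ref{lem:eigenvalue:2}.

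The genuine gap is that the core of the lemma, the linear independence itself, is never proved: you only gesture at a ``regularity-plus-structure'' route and explicitly defer the detailed work. The mechanisms you suggest do not produce a contradiction as stated: nothing in the two equations forces $|\nabla u|$ to be constant on level sets of $u$, subtracting a $t$-multiple of one equation from the other just gives another divergence-form identity with no evident obstruction, and, as you concede, the two energy identities alone are consistent; the alleged ``incompatible homogeneities at the boundary'' is also unsubstantiated, since near $\partial\Omega$ one has $\nabla u\neq0$ and both equations are uniformly elliptic there. The paper's actual proof (Proposition~\ref{prop:LID} in Appendix~\ref{sec:appendix}) exploits the opposite regime, an \emph{interior critical point}: assuming the eigenfunctions coincide, it rescales $u_\lambda(x)=(\phi(0)-\phi(\lambda x))/\lambda^{p/(p-1)}$ around a global maximum and uses Garc\'ia-Meli\'an's blow-up result to obtain a limit $\bar u\ge0$ in $C^1_{\mathrm{loc}}(\mathbb{R}^N)$ with $\bar u(0)=0$ solving $\Delta_p\bar u=\lambda_1(p)\phi(0)^{p-1}>0$ in $\mathbb{R}^N$, while the rescaled $q$-equation carries the factor $\lambda^{(p-q)/(p-1)}\to0$ (here $p>q$ is essential), so $\bar u$ is simultaneously $q$-harmonic; the strong maximum principle then gives $\bar u\equiv0$, contradicting the $p$-equation (the case $N=1$ is settled by citation). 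Without this blow-up step, or a complete alternative argument, your proposal leaves the main claim of the lemma unproved.
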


\subsection{Global minimizers}
Define an extended function $m\colon\mathbb{R}^2\to \mathbb{R}\cup\{-\infty\}$ as the \textit{global minimum} of $\E$ on $\W$:
\begin{equation}\label{def:mini}
m(\alpha,\beta):=\inf\{\E(u)\,:\, u \in \W\} 
\quad
{\rm for}\ (\alpha,\beta)\in\mathbb{R}^2. 
\end{equation}
Let us collect the basic properties of $m$.
\begin{proposition}\label{prop:GM1} 
	The following assertions are satisfied (see Fig.\ \ref{fig1}):
	\begin{enumerate}[label={\rm(\roman*)}]
	\item\label{prop:GM1:1} if $\alpha \le \lambda_1(p)$ and $\beta \le \lambda_1(q)$, then $m(\alpha,\beta)=0$ and $0$ is the unique global minimizer of $\E$; 
	\item\label{prop:GM1:2} if $\alpha < \lambda_1(p)$ and $\beta > \lambda_1(q)$, then $m(\alpha,\beta)<0$ and $\E$ has a nontrivial global minimizer;
	\item\label{prop:GM1:3} if $\alpha > \lambda_1(p)$ and $\beta \in \mathbb{R}$, then $m(\alpha,\beta)=-\infty$, that is, $\E$ has no global minimizers.
	\end{enumerate}
\end{proposition}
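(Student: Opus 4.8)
The plan is to treat the three regimes separately, each time using the scaling identities $H_\alpha(tu)=t^pH_\alpha(u)$ and $G_\beta(tu)=t^qG_\beta(u)$ for $t>0$ together with Lemma~\ref{lem:eigenvalue} and the linear independence in Lemma~\ref{lem:LID}. For \ref{prop:GM1:1}, since $\alpha\le\lambda_1(p)$ and $\beta\le\lambda_1(q)$, Lemma~\ref{lem:eigenvalue} gives $H_\alpha(u)\ge0$ and $G_\beta(u)\ge0$ for all $u\in\W$, whence $\E(u)\ge0=\E(0)$ and $m(\alpha,\beta)=0$; if $\E(u)=0$ for some $u\ne0$, then both (nonnegative) summands vanish, so Lemma~\ref{lem:eigenvalue} forces $u\in\mathbb{R}\varphi_p$ and $u\in\mathbb{R}\varphi_q$ simultaneously, contradicting Lemma~\ref{lem:LID}; hence $0$ is the unique global minimizer. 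For \ref{prop:GM1:3}, I would evaluate $\E$ along the ray $\{t\varphi_p:t>0\}$: since $H_\alpha(\varphi_p)=\lambda_1(p)-\alpha<0$ and $p>q$, the term $\frac{t^p}{p}H_\alpha(\varphi_p)$ dominates $\frac{t^q}{q}G_\beta(\varphi_p)$ as $t\to+\infty$ irrespective of the sign of $G_\beta(\varphi_p)$, so $\E(t\varphi_p)\to-\infty$ and $m(\alpha,\beta)=-\infty$.

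The substantive case is \ref{prop:GM1:2}. To get $m(\alpha,\beta)<0$ I would test with $t\varphi_q$ for small $t>0$: now $G_\beta(\varphi_q)=\lambda_1(q)-\beta<0$, so the term $\frac{t^q}{q}G_\beta(\varphi_q)$, which is the leading one as $t\to0^+$ because $q<p$, is negative and dominates $\frac{t^p}{p}H_\alpha(\varphi_q)$, giving $\E(t\varphi_q)<0$ for $t$ small. To produce a nontrivial minimizer I would first show that $\E$ is coercive and bounded below on $\W$: from the variational characterization \eqref{eq:lambdar} and $\alpha<\lambda_1(p)$ one gets $H_\alpha(u)\ge c_1\|\nabla u\|_p^p$ with $c_1:=\min\{1,\,1-\alpha/\lambda_1(p)\}>0$, while $G_\beta(u)\ge-c_2\|\nabla u\|_q^q$ for some $c_2\ge0$ (by the Poincaré inequality for the $q$-Laplacian when $\beta>0$, trivially when $\beta\le0$) and $\|\nabla u\|_q^q\le c_3\|\nabla u\|_p^q$ by the continuous embedding $\W\hookrightarrow W_0^{1,q}(\Omega)$; since $p>q$ this yields $\E(u)\ge\frac{c_1}{p}\|\nabla u\|_p^p-\frac{c_2c_3}{q}\|\nabla u\|_p^q\to+\infty$ as $\|\nabla u\|_p\to\infty$. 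Then I would run the direct method: a minimizing sequence $(u_n)$ is bounded in $\W$, so along a subsequence $u_n\rightharpoonup u$ in $\W$, hence also $u_n\rightharpoonup u$ in $W_0^{1,q}(\Omega)$ and $u_n\to u$ in $L^p(\Omega)$ and $L^q(\Omega)$ by the Rellich--Kondrachov theorem; weak lower semicontinuity of the convex functional $v\mapsto\frac1p\|\nabla v\|_p^p+\frac1q\|\nabla v\|_q^q$ together with convergence of the zero-order terms gives $\E(u)\le\liminf\E(u_n)=m(\alpha,\beta)$, so $u$ is a global minimizer, and $m(\alpha,\beta)<0=\E(0)$ forces $u\ne0$.

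The only real obstacle is the coercivity estimate in \ref{prop:GM1:2}: one has to combine the strict spectral gap $\alpha<\lambda_1(p)$, which controls the $p$-part from below by a positive multiple of $\|\nabla u\|_p^p$, with a bound on the (possibly negative) $q$-part that is of strictly lower order in $\|\nabla u\|_p$; once this is in place, everything reduces to the standard direct method plus Lemmas~\ref{lem:eigenvalue} and~\ref{lem:LID}. Minor care is needed only with the signs of $\alpha$ and $\beta$ when writing the constants $c_1,c_2,c_3$, but this causes no genuine difficulty.
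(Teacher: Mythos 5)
Your parts \ref{prop:GM1:1} and \ref{prop:GM1:3} coincide with the paper's proof: nonnegativity of $H_\alpha$, $G_\beta$ via Lemma~\ref{lem:eigenvalue} plus Lemma~\ref{lem:LID} for uniqueness of the trivial minimizer, and the ray $t\varphi_p$, $t\to\infty$, for $m=-\infty$. The difference is in \ref{prop:GM1:2}: the paper does not prove this case at all but cites \cite[Proposition~2]{BobkovTanaka2015}, whereas you give a self-contained argument. Your argument is correct: the test function $t\varphi_q$ with $t$ small gives $m(\alpha,\beta)<0$ because the negative $t^q$-term dominates as $t\to0^+$; the coercivity estimate is sound, since $\alpha<\lambda_1(p)$ and \eqref{eq:lambdar} give $H_\alpha(u)\ge c_1\|\nabla u\|_p^p$ with $c_1>0$, while the $q$-part is bounded below by $-c_2\|\nabla u\|_q^q\ge -c_2c_3\|\nabla u\|_p^q$ (H\"older on the bounded domain), which is of strictly lower order since $p>q$, so $\E$ is coercive and bounded below; and the direct method closes the argument because $\tfrac1p\|\nabla\cdot\|_p^p+\tfrac1q\|\nabla\cdot\|_q^q$ is convex and strongly continuous on $\W$, hence weakly lower semicontinuous, while the $L^p$- and $L^q$-terms pass to the limit by Rellich--Kondrachov, and $m(\alpha,\beta)<0=\E(0)$ forces the minimizer to be nontrivial. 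What your route buys is independence from the external reference, at the cost of a few lines of standard estimates; what the paper's route buys is brevity, since the cited proposition contains essentially this same direct-method argument.
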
 

\begin{remark}\label{rem:GM} 
	If $\alpha \le 0$ and $\beta > \lambda_1(q)$, then, using a D\'iaz-Sa\'a type inequality \cite{faria}, it can be proved in much the same way as \cite[Theorem 1.1]{T-Uniq} that \eqref{eq:D} has a \textit{unique} positive solution (and hence $\E$ has exactly two global minimizers since $\E$ is even). 
	Whether the same uniqueness holds true for $0 < \alpha < \lambda_1(p)$ and $\beta > \lambda_1(q)$ remains an open question. 
\end{remark}

Let us study the behavior of global minimizers when $(\alpha,\beta)$ approaches the boundary of $(-\infty,\lambda_1(p)) \times (\lambda_1(q),\infty)$.
\begin{proposition}\label{prop:GM2}  
	Let $\{\alpha_n\}_{n \in \mathbb{N}}$ and $\{\beta_n\}_{n \in \mathbb{N}}$ be such that $\alpha_n<\lambda_1(p)$ and $\beta_n>\lambda_1(q)$ for $n \in \mathbb{N}$. 
	Let $\alpha, \beta \in \mathbb{R}$ be such that $\lim\limits_{n\to \infty}\alpha_n=\alpha$ and $\lim\limits_{n\to \infty}\beta_n=\beta$. 
	Let $u_n$ be a global minimizer of $\En$ for $n \in \mathbb{N}$.
	Then the following assertions are satisfied:
	\begin{enumerate}[label={\rm(\roman*)}]
		\item\label{prop:GM2:1} if $\alpha=\lambda_1(p)$ and $\beta>\beta_*$, then $\lim\limits_{n \to \infty} \En(u_n) = -\infty$, $\lim\limits_{n \to \infty} \|u_n\|_p = \infty$, and $|u_n|/\|u_n\|_p$ converges to $\varphi_p/\|\varphi_p\|_p$ strongly in $\W$ as $n\to\infty$; 
		\item\label{prop:GM2:2} if $\alpha=\lambda_1(p)$ and $\lambda_1(q)<\beta<\beta_*$, then $\limsup\limits_{n\to\infty} \En(u_n)<0$, $\{u_n\}_{n \in \mathbb{N}}$ is bounded in $\W$, and any subsequence of $\{u_n\}_{n \in \mathbb{N}}$ has a subsequence strongly convergent in $\W$ to a global minimizer of $\E$ as $n\to\infty$; 
		\item\label{prop:GM2:3} if $\beta=\lambda_1(q)$, then $\lim\limits_{n\to \infty} \En(u_n) = 0$, $\lim\limits_{n \to \infty} \|\nabla u_n\|_p = 0$, and $|u_n|/\|\nabla u_n\|_q$ converges to  $\varphi_q/\|\nabla \varphi_q\|_q$ strongly in $W_0^{1,q}$ as $n\to\infty$; 
		\item\label{prop:GM2:4} if $\alpha=\lambda_1(p)$, $\beta=\beta_*$ and $p>2q$, 		then $\limsup\limits_{n\to\infty} \En(u_n)<0$, $\{u_n\}_{n \in \mathbb{N}}$ is bounded in $\W$, and any subsequence of $\{u_n\}_{n \in \mathbb{N}}$ has a subsequence strongly convergent in $\W$ to a global minimizer of $\E$ as $n\to\infty$; 
		\item\label{prop:GM2:5} if $\alpha=\lambda_1(p)$, $\beta=\beta_*$ and $p<2q$, then $\lim\limits_{n \to \infty} \En(u_n) = -\infty$, $\lim\limits_{n \to \infty} \|u_n\|_p = \infty$, and $|u_n|/\|u_n\|_p$ converges to $\varphi_p/\|\varphi_p\|_p$ strongly in $\W$ as $n\to\infty$. 
	\end{enumerate} 
\end{proposition}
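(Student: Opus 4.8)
The plan is to reduce every assertion of Proposition~\ref{prop:GM2} to the asymptotics of a single scalar quantity and of its extremals. Since $\alpha_n<\lambda_1(p)$, Lemma~\ref{lem:eigenvalue}\ref{lem:eigenvalue:1} gives $H_{\alpha_n}(u)>0$ for all $u\in\W\setminus\{0\}$, while $\beta_n>\lambda_1(q)$ gives $G_{\beta_n}(\varphi_q)<0$; hence for every $u$ with $G_{\beta_n}(u)<0$ the fibering map $t\mapsto\En(tu)=\tfrac{t^p}{p}H_{\alpha_n}(u)+\tfrac{t^q}{q}G_{\beta_n}(u)$ has a unique positive critical point, a strict minimum with negative value. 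Minimizing over the ray, then over $u$, and using $0$-homogeneity to normalize $H_{\alpha_n}(u)=1$, one gets
\begin{equation*}
\En(u_n)=m(\alpha_n,\beta_n)=-\tfrac{p-q}{pq}\,S_n^{\,p/(p-q)},
\qquad
S_n:=\sup\bigl\{-G_{\beta_n}(u):H_{\alpha_n}(u)=1\bigr\}\in(0,\infty),
\end{equation*}
and, after replacing $u_n$ by $|u_n|\ge 0$ (still a global minimizer), $u_n=S_n^{1/(p-q)}\widetilde u_n$, where $\widetilde u_n:=u_n/H_{\alpha_n}(u_n)^{1/p}\ge 0$ satisfies $H_{\alpha_n}(\widetilde u_n)=1$ and maximizes $-G_{\beta_n}$ over that sphere. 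So Proposition~\ref{prop:GM2} becomes a statement about $\{S_n\}$ and $\{\widetilde u_n\}$, and Lemma~\ref{lem:LID} — which gives $\beta_*>\lambda_1(q)$ and the fact that $\varphi_p$ is not an eigenfunction of $-\Delta_q$ — will be essential throughout.

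The crux is a compactness dichotomy for $\{\widetilde u_n\}$. Writing $\widetilde u_n=c_n\varphi_p+w_n$ with $w_n$ transverse to $\varphi_p$ in the sense underlying the improved Poincaré inequality of Section~\ref{sec:preliminaries}, and using $H_{\lambda_1(p)}(\widetilde u_n)\le H_{\alpha_n}(\widetilde u_n)=1$, that inequality bounds $w_n$, so $\{\widetilde u_n\}$ can fail to be bounded in $\W$ only if $|c_n|\to\infty$. If $\beta<\beta_*$, this is impossible: $|c_n|\to\infty$ forces $G_{\beta_n}(\widetilde u_n)/|c_n|^q\to G_\beta(\varphi_p)=\|\varphi_p\|_q^q(\beta_*-\beta)>0$, contradicting $-G_{\beta_n}(\widetilde u_n)=S_n>0$; hence $\{\widetilde u_n\}$ is bounded in $\W$. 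If $\beta=\beta_*$, then $\varphi_p$ is neutral for $G_{\beta_*}$ ($G_{\beta_*}(\varphi_p)=0$) and the comparison between $p$ and $2q$ decides: fixing $\psi$ with $\langle G_{\beta_*}'(\varphi_p),\psi\rangle<0$ (possible by Lemma~\ref{lem:LID}) and, when $1<p<2$, supported away from the unique critical point of $\varphi_p$ so that $\intO|\nabla\varphi_p|^{p-2}|\nabla\psi|^2\,dx<\infty$, one has $H_{\lambda_1(p)}(\varphi_p+\varepsilon\psi)\asymp\varepsilon^2$ and $G_{\beta_*}(\varphi_p+\varepsilon\psi)=-c\varepsilon+O(\varepsilon^2)$ with $c>0$. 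Optimizing $\En$ on the ray through $\varphi_p+\varepsilon_n\psi$ with $\varepsilon_n$ of order $(\lambda_1(p)-\alpha_n)^{1/2}+|\beta_n-\beta_*|$ then yields $S_n\to\infty$ exactly when $p<2q$; conversely, estimating an arbitrary competitor $u$ on $\{H_{\alpha_n}(u)=1\}$ by the same $\varphi_p$-splitting and the improved Poincaré shows $\{S_n\}$ stays bounded when $p>2q$.

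With this I would derive the five assertions as follows. Cases~\ref{prop:GM2:1} and~\ref{prop:GM2:5} are the blow-up cases: in~\ref{prop:GM2:1}, $\beta>\beta_*$ makes $-G_{\beta_n}(\varphi_p)>0$, so already the fixed direction $\varphi_p$ gives $m(\alpha_n,\beta_n)\le -c\,(\lambda_1(p)-\alpha_n)^{-q/(p-q)}\to-\infty$; in~\ref{prop:GM2:5} the perturbed direction $\varphi_p+\varepsilon_n\psi$ does the same precisely because $p<2q$. In both, $m(\alpha_n,\beta_n)\ge-\tfrac{\beta_n}{q}\|u_n\|_q^q$ and $L^p\hookrightarrow L^q$ give $\|u_n\|_p\to\infty$; then $H_{\alpha_n}(u_n)=-G_{\beta_n}(u_n)\le\beta_n\|u_n\|_q^q\le C\|u_n\|_p^q=o(\|u_n\|_p^p)$, so $u_n/\|u_n\|_p$ has $p$-Rayleigh quotient tending to $\lambda_1(p)$ and hence, by simplicity of $\lambda_1(p)$ and uniform convexity of $\|\cdot\|_p$, converges strongly in $\W$ to $\varphi_p/\|\varphi_p\|_p$. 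Case~\ref{prop:GM2:3} ($\beta=\lambda_1(q)<\beta_*$): the dichotomy gives $\{\widetilde u_n\}$ bounded, hence $S_n\le(\beta_n-\lambda_1(q))\|\widetilde u_n\|_q^q\to 0$; thus $\En(u_n)\to 0$ and $\|\nabla u_n\|_p=S_n^{1/(p-q)}\|\nabla\widetilde u_n\|_p\to 0$; moreover $S_n>0$ keeps the $q$-Rayleigh quotient of $\widetilde u_n$ below $\beta_n\to\lambda_1(q)$, so $\widetilde u_n/\|\widetilde u_n\|_q$ is a minimizing sequence converging strongly in $W_0^{1,q}$ to $\varphi_q$, and since $\|\widetilde u_n\|_q/\|\nabla\widetilde u_n\|_q$ is the reciprocal $q$-th root of that quotient, $|u_n|/\|\nabla u_n\|_q=\widetilde u_n/\|\nabla\widetilde u_n\|_q\to\varphi_q/\|\nabla\varphi_q\|_q$. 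Cases~\ref{prop:GM2:2} ($\beta<\beta_*$) and~\ref{prop:GM2:4} ($\beta=\beta_*$, $p>2q$): the dichotomy gives $0<\liminf S_n\le\limsup S_n<\infty$ (lower bound by testing with $\varphi_q$, where $H_{\lambda_1(p)}(\varphi_q)>0$ by Lemma~\ref{lem:LID}), so $\limsup_n\En(u_n)<0$ and $\{u_n\}$ is bounded in $\W$; for a weakly convergent subsequence $u_n\rightharpoonup u$, weak lower semicontinuity of $H_\bullet,G_\bullet$ (using $\alpha_n\to\alpha$, $\beta_n\to\beta$ and the compact embeddings into $L^p,L^q$) together with the elementary bound $\limsup_n m(\alpha_n,\beta_n)\le m(\alpha,\beta)$ (test $\En$ against a fixed function) yield $\E(u)\le\liminf_n m(\alpha_n,\beta_n)\le m(\alpha,\beta)\le\E(u)$, so $u\ne 0$ is a global minimizer of $\E$; the forced equality $H_{\alpha_n}(u_n)\to H_\alpha(u)$ then gives $\|\nabla u_n\|_p\to\|\nabla u\|_p$ and hence strong convergence in $\W$, along every subsequence.

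The main obstacle will be the borderline $\beta=\beta_*$, i.e.\ cases~\ref{prop:GM2:4}--\ref{prop:GM2:5}: obtaining the trichotomy $p\lessgtr 2q$ requires matching the second-order ($\varepsilon^2$) flattening of $H_{\lambda_1(p)}$ at $\varphi_p$, quantified by the improved Poincaré inequality of Section~\ref{sec:preliminaries} (which also underlies the boundedness of $w_n$ and of arbitrary competitors), against the first-order ($\varepsilon$) variation of $G_{\beta_*}$ at $\varphi_p$, whose nonvanishing rests on Lemma~\ref{lem:LID}; in addition one must track how the two approach rates $\lambda_1(p)-\alpha_n\to 0$ and $\beta_n-\beta_*\to 0$ interact, which is why the perturbation scale $\varepsilon_n$ is tuned to $(\lambda_1(p)-\alpha_n)^{1/2}$. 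The subcase $1<p<2$ further forces $\psi$ to avoid the interior critical point of $\varphi_p$ so that the weighted Hessian $\intO|\nabla\varphi_p|^{p-2}|\nabla\psi|^2\,dx$ is finite, and $N=1$ will need a short separate remark since there this avoidance is incompatible with $\langle G_{\beta_*}'(\varphi_p),\psi\rangle\ne 0$. The remaining delicate points — the strong $\W$-convergence in~\ref{prop:GM2:2},~\ref{prop:GM2:4} and the strong $W_0^{1,q}$-convergence of the renormalized minimizers in~\ref{prop:GM2:3} — both reduce to upgrading weak convergence via convergence of the relevant gradient norms and the simplicity of $\lambda_1(r)$.
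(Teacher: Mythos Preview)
Your reformulation via the scalar $S_n=\sup\{-G_{\beta_n}(u):H_{\alpha_n}(u)=1\}$ is a genuinely different organization from the paper's, and it is elegant: it reduces all five cases to the asymptotics of one number and its extremals. The paper instead exploits that each $u_n$ is a \emph{solution} of $(GEV;\alpha_n,\beta_n)$ and runs the analysis through the Euler--Lagrange structure: Lemma~\ref{lem:bdd-PS} (blow-up via $\En'(u_n)=0$ and the $(S_+)$ property) handles \ref{prop:GM2:1}, \ref{prop:GM2:5} and the boundedness in \ref{prop:GM2:2}, \ref{prop:GM2:3}; Lemma~\ref{lem:conv-gs} upgrades weak to strong convergence in \ref{prop:GM2:2}, \ref{prop:GM2:4}; and Proposition~\ref{Iinf} is invoked as a black box for the $\beta=\beta_*$ cases. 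What the paper's route buys is that the hard estimate (the $p\ge 2q$ upper bound) is isolated once in Proposition~\ref{Iinf}, and strong convergence never needs norm-matching because $(S_+)$ does the work. What your route buys is a single picture for all cases, at the price of redoing those estimates inside the proof.

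There are, however, two technical gaps you should repair. First, you invoke the improved Poincar\'e inequality to bound $w_n$ in the decomposition $\widetilde u_n=c_n\varphi_p+w_n$ for \emph{all} $p$, but the inequality you cite (Fleckinger-Pell\'e--Tak\'a\v{c}) is only for $p>2$; for \ref{prop:GM2:2} and \ref{prop:GM2:3} you do not need it---the standard Rayleigh-quotient argument ($H_{\alpha_n}(\widetilde u_n)=1$ with $\alpha_n\to\lambda_1(p)$ forces $\widetilde u_n/\|\nabla\widetilde u_n\|_p\to\varphi_p/\|\nabla\varphi_p\|_p$ strongly whenever $\|\nabla\widetilde u_n\|_p\to\infty$) already gives $|c_n|\to\infty$ and $w_n=o(|c_n|)$, which is all your contradiction uses. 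Second, your claim $H_{\lambda_1(p)}(\varphi_p+\varepsilon\psi)\asymp\varepsilon^2$ is not correct when $1<p<2$: $H_\alpha$ is only $C^1$, and the paper obtains instead the one-sided bound $H_{\lambda_1(p)}(\varphi_p+\varepsilon\psi)\le C\varepsilon^p$ via the vector inequality $\langle|x|^{p-2}x-|y|^{p-2}y,x-y\rangle\le C|x-y|^p$, which still suffices for \ref{prop:GM2:5}; your proposed fix of choosing $\psi$ away from the critical set is fragile and, as you note, breaks down in $N=1$. Finally, the real crux of \ref{prop:GM2:4}---that $\{S_n\}$ stays bounded when $p>2q$---is only asserted: this is exactly the delicate pair of estimates \eqref{eq:estimate_for_H}--\eqref{eq:estimate_for_G} in Proposition~\ref{Iinf}, where the improved Poincar\'e lower bound on $H$ must be matched against a H\"older-type upper bound on $|G_\beta|$ (using the integrability of $|\nabla\varphi_p|^{-(p-2q)}$), and you should either reproduce those estimates or cite Proposition~\ref{Iinf} directly.
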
 

\begin{figure}[!h]
	\begin{center}
		\includegraphics[width=0.34\linewidth]{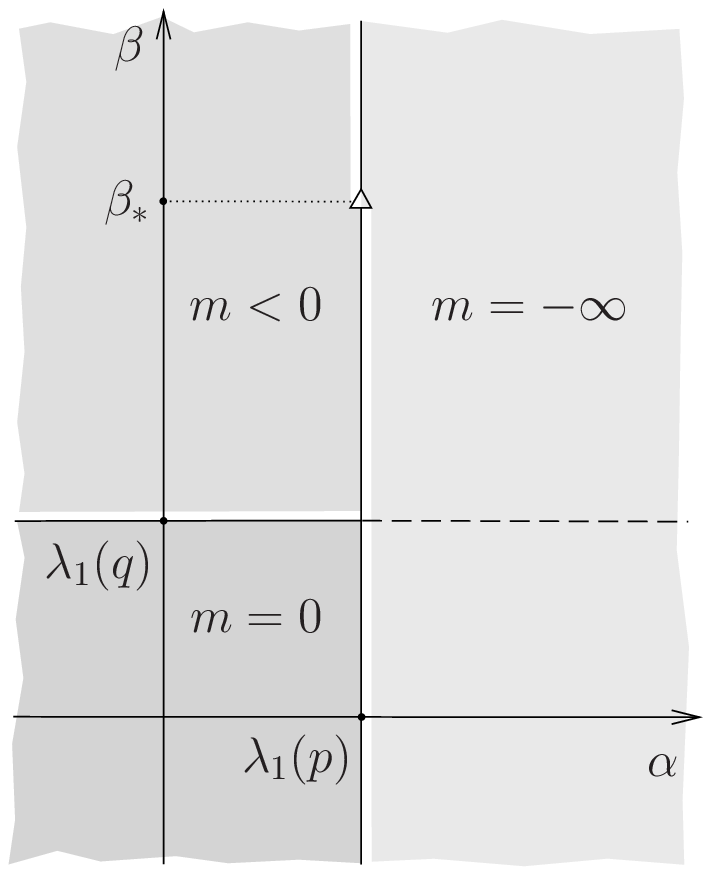}
		\caption{The global minimum $m$ of $\E$ on $\W$.}
		\label{fig1}
	\end{center}
\end{figure}

Proposition \ref{prop:GM2} allows to complement Proposition \ref{prop:GM1} with the remaining case $\alpha = \lambda_1(p)$ and $\beta > \lambda_1(q)$.
\begin{proposition}\label{prop:GM3}  
	Let $\alpha=\lambda_1(p)$ and $\beta > \lambda_1(q)$. Then $m(\alpha, \beta) < 0$. Moreover, the following assertions are satisfied:
	\begin{enumerate}[label={\rm(\roman*)}]
		\item\label{prop:GM3:1} if $\beta>\beta_*$,	then $m(\alpha,\beta)=-\infty$; 
		\item\label{prop:GM3:2} if $\lambda_1(q)<\beta<\beta_*$, then $m(\alpha,\beta)>-\infty$ and $\E$ has a global minimizer; 
		\item\label{prop:GM3:3} if $\beta=\beta_*$, then $m(\alpha,\beta)>-\infty$ if and only if $p \geq 2q$. Moreover, if $p > 2q$, then $\E$ has a global minimizer.
	\end{enumerate}
\end{proposition}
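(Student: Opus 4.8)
\medskip
\noindent\emph{Sketch of proof.}\quad
The plan is to reduce everything to a one-parameter fiber analysis of $\E$ together with Lemmas~\ref{lem:eigenvalue} and~\ref{lem:LID}, upgraded in the borderline case $\beta=\beta_*$ by an improved Poincar\'e inequality. Throughout $\alpha=\lambda_1(p)$, so $H_{\lambda_1(p)}\ge0$ on $\W$ with equality exactly on $\mathbb{R}\varphi_p$, and $\beta_*>\lambda_1(q)$, $\alpha_*>\lambda_1(p)$. Negativity of $m$ is immediate from
\[
\E(t\varphi_q)=\tfrac{t^p}{p}\,\|\varphi_q\|_p^p\,(\alpha_*-\lambda_1(p))+\tfrac{t^q}{q}\,(\lambda_1(q)-\beta),
\]
whose $t^q$-term (negative) dominates for small $t>0$ since $q<p$. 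Next, for any $u$ with $G_\beta(u)<0$ and $H_{\lambda_1(p)}(u)>0$, the fiber $t\mapsto\E(tu)=\tfrac{t^p}{p}H_{\lambda_1(p)}(u)+\tfrac{t^q}{q}G_\beta(u)$ has a unique minimum over $t>0$, equal to $-c_{p,q}R(u)$ with $c_{p,q}>0$ depending only on $p,q$ and $R(u):=(-G_\beta(u))^{p/(p-q)}H_{\lambda_1(p)}(u)^{-q/(p-q)}$. For $\beta>\beta_*$ one has $H_{\lambda_1(p)}(\varphi_p)=0$ and $G_\beta(\varphi_p)=\|\varphi_p\|_q^q(\beta_*-\beta)<0$, so $\E(t\varphi_p)\to-\infty$ and \ref{prop:GM3:1} follows. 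For $\beta\le\beta_*$, Lemma~\ref{lem:eigenvalue} gives $G_\beta(u)<0\Rightarrow H_{\lambda_1(p)}(u)>0$, and $\E\ge0$ wherever $G_\beta\ge0$, hence $m(\alpha,\beta)=-c_{p,q}\sup\{R(u):u\in\W,\ G_\beta(u)<0\}$; thus $m(\alpha,\beta)>-\infty$ if and only if this supremum is finite, and an optimally rescaled maximizer of $R$ is a global minimizer of $\E$.

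For \ref{prop:GM3:2} ($\lambda_1(q)<\beta<\beta_*$), if the supremum were infinite one obtains $u_n$ with $\|\nabla u_n\|_p=1$, $G_\beta(u_n)<0$ and $H_{\lambda_1(p)}(u_n)\to0$ (the numerators being bounded by the Sobolev embedding). Weak compactness of $\W$, the compact embedding $\W\hookrightarrow L^p$, weak lower semicontinuity and Lemma~\ref{lem:eigenvalue}\ref{lem:eigenvalue:1} then force $u_n\to\lambda_1(p)^{-1/p}\varphi_p$ strongly in $\W$, hence in $W_0^{1,q}$, along a subsequence (and up to sign); consequently $-G_\beta(u_n)\to\lambda_1(p)^{-q/p}\|\varphi_p\|_q^q(\beta-\beta_*)<0$, contradicting $-G_\beta(u_n)>0$. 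So $m>-\infty$. For the minimizer one maximizes $-G_\beta$ on $\{H_{\lambda_1(p)}=1\}$: the same convergence argument applied to $u_n/\|\nabla u_n\|_p$, together with $-G_\beta(u_n)=\|\nabla u_n\|_p^{\,q}\,\bigl(-G_\beta(u_n/\|\nabla u_n\|_p)\bigr)$ and $-G_\beta(\varphi_p)<0$, excludes $\|\nabla u_n\|_p\to\infty$ along a maximizing sequence; hence it is bounded, and its weak limit $u_0$, which by lower semicontinuity satisfies $-G_\beta(u_0)\ge\sup(-G_\beta)>0$ and $H_{\lambda_1(p)}(u_0)\le1$, maximizes $R$, so its optimal rescaling is a nontrivial global minimizer of $\E$.

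For \ref{prop:GM3:3} with $p<2q$ I would show $m=-\infty$ directly. Since $\beta_*>\lambda_1(q)$ and $\varphi_p\ge0$, $\varphi_p$ is not an eigenfunction of $-\Delta_q$ (by its sign it would have to be a first one, i.e.\ a multiple of $\varphi_q$, against Lemma~\ref{lem:LID}), so $G'_{\beta_*}(\varphi_p)\not\equiv0$ on $\W$; fix $w\in\W$ with $\langle G'_{\beta_*}(\varphi_p),w\rangle<0$. Then, using $G_{\beta_*}(\varphi_p)=0$ and the $C^1$ character of $G_{\beta_*}$, $-G_{\beta_*}(\varphi_p+\varepsilon w)=-\varepsilon\langle G'_{\beta_*}(\varphi_p),w\rangle+o(\varepsilon)>0$ for small $\varepsilon>0$, while the vanishing of the first variation of $H_{\lambda_1(p)}$ at $\varphi_p$ together with elementary convexity estimates for $|\cdot|^p$ yield $0\le H_{\lambda_1(p)}(\varphi_p+\varepsilon w)\le C\varepsilon^{\min\{2,p\}}$. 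Hence $R(\varphi_p+\varepsilon w)\gtrsim\varepsilon^{(p-q\min\{2,p\})/(p-q)}\to\infty$ as $\varepsilon\to0^+$, because $p-q\min\{2,p\}<0$ precisely when $p<2q$ (treating $p\ge2$ and $p<2$ separately). Therefore $m(\alpha,\beta)=-\infty$.

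For \ref{prop:GM3:3} with $p\ge2q$ (so $p>2$) I would show the supremum above is finite. As in \ref{prop:GM3:2}, any blow-up sequence normalized by $\|u_n\|_p=1$ converges strongly to $\varphi_p$, so it is enough to bound $R$ near $\varphi_p$. Euler's identity ($G_{\beta_*}(\varphi_p)=0$) shows the linear part of $-G_{\beta_*}$ at $\varphi_p$ annihilates $\varphi_p$, whence $-G_{\beta_*}(u)\le C\,d(u)$ near $\varphi_p$, where $d(u)$ is the distance from $u$ to $\mathbb{R}\varphi_p$ in $\W$; and the improved Poincar\'e inequality quantifying the degeneracy of $H_{\lambda_1(p)}$ along $\mathbb{R}\varphi_p$ (see Section~\ref{sec:preliminaries}) gives $H_{\lambda_1(p)}(u)\ge c\,d(u)^2$ there. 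Since $H_{\lambda_1(p)}(u)<1$ near $\varphi_p$ and $q/p\le\tfrac12$, this yields $-G_{\beta_*}(u)\le C\,H_{\lambda_1(p)}(u)^{1/2}\le C\,H_{\lambda_1(p)}(u)^{q/p}$, i.e.\ $R$ is bounded, so $m>-\infty$. Finally, when $p>2q$ strictly the exponent $(p-2q)/(p-q)$ is positive, so the same two estimates give $R(u)\to0$ as $u$ approaches $\mathbb{R}\varphi_p$; a maximizing sequence for $R$ thus stays uniformly away from that ray, is bounded in $\W$, and converges by lower semicontinuity to a maximizer of $R$, whose optimal rescaling is a global minimizer of $\E$. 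The main obstacle is precisely this quadratic-order improved Poincar\'e estimate near $\mathbb{R}\varphi_p$ — the only genuinely nonlinear second-order input about $\lambda_1(p)$ — and $p=2q$ is the borderline at which it is still strong enough to give $m>-\infty$ but no longer forces a maximizing sequence off the ray, which is why a minimizer is asserted only for $p>2q$.
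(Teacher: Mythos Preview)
Your overall strategy---reducing to the fibered quantity $R(u)=(-G_\beta(u))^{p/(p-q)}H_{\lambda_1(p)}(u)^{-q/(p-q)}$---matches the paper's, and your treatment of $m<0$, of case~\ref{prop:GM3:1}, and of case~\ref{prop:GM3:3} for $p<2q$ is essentially the paper's argument (Proposition~\ref{Iinf} performs exactly the perturbation $u_\varepsilon=\varphi_p+\varepsilon\theta$ with the same $H_\alpha(u_\varepsilon)\le C\varepsilon^{\min\{2,p\}}$ estimate). For case~\ref{prop:GM3:2} you take a different route: the paper argues by approximation, setting $\alpha_n=\lambda_1(p)-1/n$, applying Proposition~\ref{prop:GM1}~\ref{prop:GM1:2} to get minimizers $u_n$, and invoking Proposition~\ref{prop:GM2}~\ref{prop:GM2:2} for convergence. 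Your direct variational argument on the slice $\{H_{\lambda_1(p)}=1\}$ is valid and more self-contained; the paper's route instead recycles the convergence machinery already established for the approximating problems.

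There is, however, a genuine gap in your treatment of \ref{prop:GM3:3} for $p\ge 2q$. You assert that the improved Poincar\'e inequality yields $H_{\lambda_1(p)}(u)\ge c\,d(u)^2$ with $d(u)$ the $\W$-distance from $u$ to $\mathbb{R}\varphi_p$, but this is \emph{not} what that inequality delivers. What it actually gives (Fleckinger-Pell\'e--Tak\'a\v{c} \cite{takac}), for the $L^2$-decomposition $u=\gamma\varphi_p+v$ with $\intO v\varphi_p\,dx=0$, is
\[
H_{\lambda_1(p)}(u)\ \ge\ C\Bigl(|\gamma|^{p-2}\!\intO |\nabla\varphi_p|^{p-2}|\nabla v|^2\,dx+\intO|\nabla v|^p\,dx\Bigr),
\]
a lower bound in a \emph{degenerate weighted} norm. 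Since the weight $|\nabla\varphi_p|^{p-2}$ vanishes at critical points of $\varphi_p$ and since $\|\nabla v\|_p^p\ll\|\nabla v\|_p^2$ for small $v$ (here $p>2$), this does \emph{not} imply $H_{\lambda_1(p)}(u)\ge c\|\nabla v\|_p^2$. The paper (Proposition~\ref{Iinf}) does not try to pass to the $\W$-distance; instead it proves a matching \emph{upper} bound on $|G_{\beta_*}|$ in the \emph{same} weighted quantity. Via the mean value theorem, H\"older's inequality with the splitting $(|\nabla\varphi_p|+|\nabla v|)^{q-1}=(|\nabla\varphi_p|+|\nabla v|)^{(p-2)/2}(|\nabla\varphi_p|+|\nabla v|)^{(2q-p)/2}$, the embedding $\|v\|_2\le C\bigl(\intO|\nabla\varphi_p|^{p-2}|\nabla v|^2\,dx\bigr)^{1/2}$ of \cite{takac,takac2}, and---crucially---the integrability $\intO|\nabla\varphi_p|^{-(p-2q)}\,dx<\infty$ from \cite{damascelli} (valid because $(p-2q)/(p-1)<1$), one obtains
\[
|G_{\beta_*}(u)|\ \le\ C\Bigl(\intO |\nabla\varphi_p|^{p-2}|\nabla v|^2\,dx+\intO|\nabla v|^p\,dx\Bigr)^{1/2}.
\]
Only after both sides are expressed in this common degenerate norm does the exponent comparison $p\ge 2q$ bound $R$, and strict inequality $p>2q$ forces $R\to 0$ along sequences approaching $\mathbb{R}\varphi_p$, whence attainment. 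Your simplification to the plain $\W$-distance loses exactly this structure and cannot be repaired without reintroducing the weighted norm and the integrability of $|\nabla\varphi_p|^{2q-p}$.
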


We conclude this subsection by a continuity result for $m$.
\begin{proposition}\label{prop:conti-minimum-value} 
The global minimum value $m$ defined as an extended function by \eqref{def:mini} is continuous on $\mathbb{R}^2\setminus \{\lambda_1(p)\}\times(-\infty,\beta_*]$ and discontinuous on $\{\lambda_1(p)\}\times(-\infty,\beta_*)$.
\end{proposition}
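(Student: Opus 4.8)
\smallskip
\noindent\textbf{Proof sketch.}
The plan is to reduce everything to one soft fact together with the already established Propositions~\ref{prop:GM1}--\ref{prop:GM3}. For each fixed $u\in\W$ the map $(\alpha,\beta)\mapsto\E(u)$ is affine, hence continuous, so $m=\inf\{\E(u):u\in\W\}$ is upper semicontinuous on $\mathbb{R}^2$ as a $[-\infty,\infty)$-valued function, and $m\le\E(0)=0$ throughout. The point I would extract from this is that \emph{at any point where $m=-\infty$ the function $m$ is automatically continuous}, since continuity and upper semicontinuity coincide there. Now by Proposition~\ref{prop:GM1}\,\ref{prop:GM1:3} one has $m\equiv-\infty$ on the open half-plane $\{\alpha>\lambda_1(p)\}$, and by Proposition~\ref{prop:GM3}\,\ref{prop:GM3:1} one has $m(\lambda_1(p),\beta)=-\infty$ for every $\beta>\beta_*$; hence $m$ is continuous at every point of $\{\alpha>\lambda_1(p)\}\cup\big(\{\lambda_1(p)\}\times(\beta_*,\infty)\big)$. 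Since this set together with $\{\alpha<\lambda_1(p)\}$ exhausts $\mathbb{R}^2\setminus\big(\{\lambda_1(p)\}\times(-\infty,\beta_*]\big)$, it remains to establish continuity on $\{\alpha<\lambda_1(p)\}$ and discontinuity on $\{\lambda_1(p)\}\times(-\infty,\beta_*)$.

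For the discontinuity, I would fix $\beta_0<\beta_*$ and note that $m(\lambda_1(p),\beta_0)$ is finite: it equals $0$ when $\beta_0\le\lambda_1(q)$ by Proposition~\ref{prop:GM1}\,\ref{prop:GM1:1}, and it is finite when $\lambda_1(q)<\beta_0<\beta_*$ by Proposition~\ref{prop:GM3}\,\ref{prop:GM3:2} (here Lemma~\ref{lem:LID} guarantees $\lambda_1(q)<\beta_*$, so this case is non-vacuous). On the other hand $m(\lambda_1(p)+\tfrac1n,\beta_0)=-\infty$ for all $n$ by Proposition~\ref{prop:GM1}\,\ref{prop:GM1:3}. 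Thus $(\lambda_1(p)+\tfrac1n,\beta_0)\to(\lambda_1(p),\beta_0)$ while the values of $m$ along this sequence do not converge to $m(\lambda_1(p),\beta_0)$, and $m$ is discontinuous at $(\lambda_1(p),\beta_0)$.

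For continuity on $\{\alpha<\lambda_1(p)\}$, I would fix $(\alpha_0,\beta_0)$ with $\alpha_0<\lambda_1(p)$ and, in view of upper semicontinuity, only prove lower semicontinuity there. Given $(\alpha_n,\beta_n)\to(\alpha_0,\beta_0)$, after passing to a subsequence I may assume $m(\alpha_n,\beta_n)\to\liminf_k m(\alpha_k,\beta_k)$, and that $\alpha_n\le\alpha_0':=\tfrac12(\alpha_0+\lambda_1(p))<\lambda_1(p)$ and $|\beta_n|\le B$ for all $n$. By Proposition~\ref{prop:GM1}\,\ref{prop:GM1:1}--\ref{prop:GM1:2} the functional $\En$ has a global minimizer $u_n$, so $\En(u_n)=m(\alpha_n,\beta_n)\le0$. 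Using \eqref{eq:lambdar} to bound $H_{\alpha_n}(u_n)$ from below, together with $\|\nabla u_n\|_q^q\ge0$ and the continuous embedding $\W\hookrightarrow L^q(\Omega)$ (recall $\Omega$ is bounded and $q<p$), one gets
\[
0\ \ge\ \En(u_n)\ \ge\ \tfrac1p\,H_{\alpha_n}(u_n)-\tfrac{B}{q}\,\|u_n\|_q^q\ \ge\ c\,\|\nabla u_n\|_p^p-C\,\|\nabla u_n\|_p^q
\]
with $c>0$ and $C\ge0$ independent of $n$; since $p>q$ this forces $\{u_n\}$ to be bounded in $\W$. Extracting a further subsequence, $u_n\rightharpoonup u_0$ in $\W$ and, by the compact embeddings $\W\hookrightarrow L^p(\Omega)$ and $\W\hookrightarrow L^q(\Omega)$, $u_n\to u_0$ in $L^p(\Omega)$ and $L^q(\Omega)$. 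Then, using the weak lower semicontinuity of $u\mapsto\|\nabla u\|_p^p$ and $u\mapsto\|\nabla u\|_q^q$ on $\W$ and $\alpha_n\to\alpha_0$, $\beta_n\to\beta_0$,
\[
\liminf_k m(\alpha_k,\beta_k)=\lim_n\En(u_n)\ \ge\ \E(u_0)\ \ge\ m(\alpha_0,\beta_0),
\]
which gives the lower semicontinuity and, with the upper semicontinuity above, continuity at $(\alpha_0,\beta_0)$.

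The only genuine work here is the uniform $\W$-bound on the minimizers $u_n$, which rests on the super-homogeneity $p>q$ of the principal part together with the Poincar\'e and Sobolev inequalities; the limit passage afterwards is standard. I would also pay attention to the bookkeeping of $[-\infty,\infty)$-valued limits --- in particular to the observation that continuity is automatic wherever $m=-\infty$, which is precisely what lets Propositions~\ref{prop:GM1}\,\ref{prop:GM1:3} and \ref{prop:GM3}\,\ref{prop:GM3:1} dispatch the region where $m$ is infinite and makes Proposition~\ref{prop:GM2} unnecessary for this statement.
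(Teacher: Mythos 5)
Your argument is correct, and it takes a genuinely different route from the paper. The paper proves continuity by a five-case analysis that leans on the machinery already built for minimizing sequences: on $\{\alpha<\lambda_1(p),\ \beta=\lambda_1(q)\}$ it invokes Proposition~\ref{prop:GM2}\,\ref{prop:GM2:3}, on $\{\alpha<\lambda_1(p),\ \beta>\lambda_1(q)\}$ it shows boundedness of the minimizers via Lemma~\ref{lem:bdd-PS}, proves $\limsup_n m(\alpha_n,\beta_n)<0$, and then gets strong convergence to a global minimizer through Lemma~\ref{lem:conv-gs} (hence convergence of the values), and on the ray $\{\lambda_1(p)\}\times(\beta_*,\infty)$ it uses Proposition~\ref{prop:GM2}\,\ref{prop:GM2:1} for sequences coming from the left half-plane. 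You replace all of this by two observations: (i) $m$ is an infimum of functions affine in $(\alpha,\beta)$, hence upper semicontinuous, so continuity is automatic wherever $m=-\infty$, which disposes of the half-plane $\alpha>\lambda_1(p)$ and the ray $\beta>\beta_*$ at once (Propositions~\ref{prop:GM1}\,\ref{prop:GM1:3} and \ref{prop:GM3}\,\ref{prop:GM3:1} supply the values); and (ii) on $\{\alpha<\lambda_1(p)\}$ a direct coercivity estimate (uniform since $\alpha_n$ stays strictly below $\lambda_1(p)$ and $p>q$) bounds the minimizers, after which weak convergence and weak lower semicontinuity of the gradient terms already give $\liminf_n m(\alpha_n,\beta_n)\ge\E(u_0)\ge m(\alpha_0,\beta_0)$; no strong convergence, $(S_+)$ property, or Proposition~\ref{prop:GM2} is needed, because you only want convergence of values, not of minimizers. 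The discontinuity part is the same as in the paper. What the paper's route buys is stronger information (strong $\W$-convergence of the minimizers, recorded in Proposition~\ref{prop:GM2} and reused elsewhere) at essentially no extra cost since that machinery is already in place; what your route buys is a shorter, softer and self-contained proof of the continuity statement itself. All the delicate points (existence of minimizers for $\alpha_n<\lambda_1(p)$, weak lower semicontinuity of $\|\nabla\cdot\|_q^q$ along $\W$-weakly convergent sequences, the bookkeeping of $-\infty$ values) are handled correctly.
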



\subsection{Ground states} 
Define the Nehari manifold associated to $\E$ at $(\alpha,\beta) \in \mathbb{R}^2$ by 
\begin{equation*}\label{def:Nehari} 
\N:=\{ v\in\W\setminus\{0\}\,:\,
\langle \E^\prime(v),v \rangle=H_\alpha(v)+G_\beta(v)=0\,\}. 
\end{equation*}
Evidently, any nontrivial critical point of $\E$ belongs to $\N$. 
Define an extended function $d\colon\mathbb{R}^2\to \mathbb{R}\cup\{\pm\infty\}$ as the \textit{least energy} on $\N$, namely, 
\begin{equation*}\label{def:least-enery} 
d(\alpha,\beta):=\inf\{\E(u)\,:\, u \in \N\} 
\quad
{\rm for}\ (\alpha,\beta)\in\mathbb{R}^2,
\end{equation*}
and set $d(\alpha,\beta)=\infty$ whenever $\N = \emptyset$. 
With a slight abuse of notation, we say that $u$ is a \textit{ground state} of $\E$ if 
$$
u \in\N 
\quad  {\rm and} \quad 
\E(u) = d(\alpha, \beta).
$$

\begin{lemma}\label{lem:nonempty-Nehari} 
	$\N = \emptyset$ and hence $d(\alpha,\beta)=\infty$
	if and only if $(\alpha,\beta) \in (-\infty,\lambda_1(p)] \times (-\infty,\lambda_1(q)]$.
\end{lemma}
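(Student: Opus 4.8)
The plan is to use the homogeneities $H_\alpha(tu)=t^pH_\alpha(u)$ and $G_\beta(tu)=t^qG_\beta(u)$ to reduce membership in $\N$ to a sign condition, and then prove the two implications separately, the second by a connectedness argument. For $u\in\W\setminus\{0\}$ and $t>0$ one has $tu\in\N$ if and only if $t^{p-q}H_\alpha(u)=-G_\beta(u)$, and since $p>q$ the map $t\mapsto t^{p-q}$ is an increasing bijection of $(0,\infty)$; hence such a $t$ exists exactly when $H_\alpha(u)$ and $G_\beta(u)$ are nonzero with opposite signs. Adjoining the trivial possibility $H_\alpha(u)=G_\beta(u)=0$ (for which $u$ itself lies in $\N$), we get the criterion: $\N\neq\emptyset$ if and only if there is some $u\in\W\setminus\{0\}$ with $H_\alpha(u)\,G_\beta(u)<0$ or $H_\alpha(u)=G_\beta(u)=0$.

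For the implication ``$\Leftarrow$'' I would assume $\alpha\le\lambda_1(p)$ and $\beta\le\lambda_1(q)$, so that Lemma~\ref{lem:eigenvalue} gives $H_\alpha(u)\ge0$ and $G_\beta(u)\ge0$ for every $u\in\W\setminus\{0\}$. In particular $H_\alpha(u)\,G_\beta(u)<0$ is impossible, so by the criterion $\N\neq\emptyset$ could only come from some $u\neq0$ with $H_\alpha(u)=G_\beta(u)=0$; the equality cases of Lemma~\ref{lem:eigenvalue} then force $\alpha=\lambda_1(p)$, $\beta=\lambda_1(q)$ and $u\in\mathbb{R}\varphi_p\cap\mathbb{R}\varphi_q$, while $\mathbb{R}\varphi_p\cap\mathbb{R}\varphi_q=\{0\}$ by the linear independence in Lemma~\ref{lem:LID}. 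This contradicts $u\neq0$, so $\N=\emptyset$ and $d(\alpha,\beta)=\infty$.

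For ``$\Rightarrow$'' I would argue by contraposition through a connectedness argument. Consider the continuous map $\Phi\colon\W\setminus\{0\}\to\mathbb{R}^2$, $\Phi(u)=(H_\alpha(u),G_\beta(u))$; since $\W$ is infinite-dimensional, $\W\setminus\{0\}$ is connected, hence so is $K:=\Phi(\W\setminus\{0\})$. If $\N=\emptyset$, the criterion forces $K$ to avoid the open second and fourth quadrants and the origin, so $K$ lies in the disjoint union of $A_+:=(\{x\ge0\}\cap\{y\ge0\})\setminus\{0\}$ and $A_-:=(\{x\le0\}\cap\{y\le0\})\setminus\{0\}$, each of which is closed in $A_+\cup A_-$; by connectedness $K\subset A_+$ or $K\subset A_-$. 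A sharply concentrated bump $b\in\W\setminus\{0\}$, whose Rayleigh quotients $\|\nabla b\|_p^p/\|b\|_p^p$ and $\|\nabla b\|_q^q/\|b\|_q^q$ are arbitrarily large, gives $H_\alpha(b)>0$, ruling out $K\subset A_-$; hence $K\subset A_+$, i.e. $H_\alpha\ge0$ and $G_\beta\ge0$ on all of $\W\setminus\{0\}$. Testing with $\varphi_p$ (where $\|\varphi_p\|_p=1$ and $\|\nabla\varphi_p\|_p^p=\lambda_1(p)$) gives $0\le H_\alpha(\varphi_p)=\lambda_1(p)-\alpha$, and testing with $\varphi_q$ gives $\beta\le\lambda_1(q)$, which is the required negation.

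I expect the last implication to be the only delicate point. The eigenfunctions alone already settle it whenever $\alpha<\alpha_*$ or $\beta<\beta_*$: if $\beta>\lambda_1(q)$ and $\alpha<\alpha_*$ then $H_\alpha(\varphi_q)>0>G_\beta(\varphi_q)$, so a suitable multiple of $\varphi_q$ lies in $\N$, and symmetrically a multiple of $\varphi_p$ works if $\alpha>\lambda_1(p)$ and $\beta<\beta_*$. Using Lemma~\ref{lem:LID} (which gives $\alpha_*>\lambda_1(p)$ and $\beta_*>\lambda_1(q)$) one checks that the residual case is precisely $\alpha\ge\alpha_*$ and $\beta\ge\beta_*$, where neither $\varphi_p$ nor $\varphi_q$ (nor any of their multiples) lands in $\N$ and one must resort to test functions with large Rayleigh quotient — which is exactly what the topological argument above supplies uniformly. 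So the handling of the region $\{\alpha\ge\alpha_*\}\times\{\beta\ge\beta_*\}$ is the main obstacle.
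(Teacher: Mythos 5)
Your proposal is correct, and for the substantive direction it takes a genuinely different route from the paper. The easy direction (if $\alpha\le\lambda_1(p)$ and $\beta\le\lambda_1(q)$ then $\N=\emptyset$) is essentially the paper's argument in contrapositive form: sign information from Lemma \ref{lem:eigenvalue} (the paper phrases it via the Poincar\'e inequality applied to a point of $\N$) plus Lemma \ref{lem:LID} to exclude the corner $(\lambda_1(p),\lambda_1(q))$. For the converse, the paper argues constructively: when $\alpha\le\lambda_1(p)$, $\beta>\lambda_1(q)$ it scales $\varphi_q$; when $\alpha>\lambda_1(p)$ it produces, via the intermediate value theorem, a nonnegative $u$ with $H_\alpha(u)=0$, uses that such $u$ is a regular point of $H_\alpha$ to perturb into $\{H_\alpha>0>G_\beta\}$ or $\{H_\alpha<0<G_\beta\}$, and then invokes Proposition \ref{prop:minpoint}. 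You instead prove the contrapositive by a topological argument: if $\N=\emptyset$, the connected image of $u\mapsto(H_\alpha(u),G_\beta(u))$ on $\W\setminus\{0\}$ must avoid $\{xy<0\}$ and the origin, hence lies entirely in one closed quadrant punctured at the origin; a concentrated bump (large Rayleigh quotients) rules out the third quadrant, and testing the resulting inequalities $H_\alpha\ge0$, $G_\beta\ge0$ with $\varphi_p$ and $\varphi_q$ yields $\alpha\le\lambda_1(p)$, $\beta\le\lambda_1(q)$. I checked the separation ($A_+$ and $A_-$ are disjoint and relatively closed in their union once the origin is removed) and the scaling criterion $t^{p-q}H_\alpha(u)=-G_\beta(u)$; both are sound, so the argument is complete. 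What you gain is a uniform treatment of all parameter ranges (in particular the region $\alpha\ge\alpha_*$, $\beta\ge\beta_*$, where neither eigenfunction helps) without the regular-point perturbation; what the paper's constructive route buys is explicit elements of $\N$ lying in $B^\pm_{\alpha,\beta}$, which is in the spirit of arguments reused elsewhere in the article.
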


\begin{remark}\label{rem:positivity}
	Note that any {\it nontrivial} global minimizer of $\E$ is a ground state of $\E$. 
	On the other hand, it is shown in \cite[Lemma~2]{BobkovTanaka2015} that any ground state $u$ with $H_\alpha(u) \cdot G_\beta(u) \neq 0$ is a (nontrivial) critical point of $\E$. Therefore, the existence of a ground state $u$ with $\E(u)\neq0$ ensures that $u$ is a solution of \eqref{eq:D}. Moreover, by considering $|u|$ if necessary, we conclude that $u$ is a nonnegative solution. 
	Furthermore, the regularity result up to the boundary (\cite[Theorem~1]{Lieberman} and \cite[p.~320]{L}) and the strong maximum principle (cf.\ \cite[Theorem~5.4.1]{PS}) guarantee that $u\in \C$, $u>0$ in $\Omega$ and $\partial u/\partial\nu <0$ on $\partial\Omega$, where $\nu$ denotes a unit outward normal vector to $\partial \Omega$. 
\end{remark}

We start with some general elementary properties of ground states of $\E$.
\begin{proposition}\label{prop:property-gs} 
	Let $(\alpha, \beta) \in \mathbb{R}^2$ and let $u$ be a ground state of $\E$. 
	Then the following assertions are satisfied:
	\begin{enumerate}[label={\rm(\roman*)}]
		\item\label{prop:property-gs:1} if $\E(u)<0$, then $u$ is a local minimum point of $\E$; 
		\item\label{prop:property-gs:2} if $\E(u)>0$, then $u$ is not an extrema point of $\E$. 
	\end{enumerate}
\end{proposition}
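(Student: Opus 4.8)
The plan is to work with the fibered map $j_v(t):=\E(tv)=\tfrac{t^p}{p}H_\alpha(v)+\tfrac{t^q}{q}G_\beta(v)$ for $v\in\W\setminus\{0\}$ and $t>0$. Its positive critical points are exactly the $t$ with $tv\in\N$; they are the positive roots of $t^{p-q}H_\alpha(v)+G_\beta(v)=0$ and, since $H_\alpha$ and $G_\beta$ are continuous on $\W$, they depend continuously on $v$. For a ground state $u$ we have $u\in\N$, hence $G_\beta(u)=-H_\alpha(u)$ and $\E(u)=\bigl(\tfrac1p-\tfrac1q\bigr)H_\alpha(u)=\tfrac{q-p}{pq}H_\alpha(u)$; as $q<p$, the sign of $\E(u)$ is opposite to that of $H_\alpha(u)$ and equal to that of $G_\beta(u)$, and $t=1$ is the only positive critical point of $j_u$.

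For \ref{prop:property-gs:1}, assume $\E(u)<0$, so $H_\alpha(u)>0>G_\beta(u)$; by continuity pick $\varepsilon\in(0,\|\nabla u\|_p)$ with $H_\alpha(v)>0>G_\beta(v)$ whenever $\|\nabla(v-u)\|_p<\varepsilon$. For such $v$ the bracket in $j_v'(t)=t^{q-1}\bigl(t^{p-q}H_\alpha(v)+G_\beta(v)\bigr)$ increases from $G_\beta(v)<0$ to $+\infty$, so $j_v$ has a unique positive minimiser $t_v=\bigl(-G_\beta(v)/H_\alpha(v)\bigr)^{1/(p-q)}$ with $t_vv\in\N$. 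Hence $\E(v)=j_v(1)\ge j_v(t_v)=\E(t_vv)\ge d(\alpha,\beta)=\E(u)$, and since this holds on a whole ball, $u$ is a local minimum point of $\E$.

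For \ref{prop:property-gs:2}, $\E(u)>0$ means $H_\alpha(u)<0<G_\beta(u)$. Along the ray through $u$ the bracket $t^{p-q}H_\alpha(u)+G_\beta(u)$ decreases from $G_\beta(u)>0$ to $-\infty$ and vanishes only at $t=1$, so $t=1$ is the strict maximiser of $j_u$ on $(0,\infty)$ and $\E(tu)<\E(u)$ for $t$ near $1$, $t\ne1$; thus $u$ is not a local minimum. It remains to exclude that $u$ is a local maximum. If it were, then $\E\le\E(u)=d(\alpha,\beta)$ on a ball $B$ about $u$, which together with $\E\ge d(\alpha,\beta)$ on $\N$ forces $\E\equiv d(\alpha,\beta)$ on $\N\cap B$. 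Writing $g:=H_\alpha+G_\beta$, on $\N$ one has $\langle g'(v),v\rangle=pH_\alpha(v)+qG_\beta(v)=(p-q)H_\alpha(v)$, nonzero near $u$; so $\N$ is there a $C^1$ hypersurface, and $\E$ being constant on it forces $\E'(v)=\mu(v)g'(v)$. Pairing with $v$ and using $\langle\E'(v),v\rangle=g(v)=0$ on $\N$ and $\langle g'(v),v\rangle\ne0$ gives $\mu(v)=0$, i.e.\ $\E'(v)=0$ for every $v\in\N\cap B$. By Remark~\ref{rem:positivity} each such $v$ (a ground state with $H_\alpha(v)G_\beta(v)\ne0$) is then a positive solution of \eqref{eq:D}, so \eqref{eq:D} would carry a relatively open, codimension-one set of positive solutions all of energy $d(\alpha,\beta)$.

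Excluding this degenerate configuration is the crux and, I expect, the only genuine difficulty. It cannot be settled by local calculus at $u$: a short computation shows the scenario is consistent with the vanishing of the second variation of $\E$ on $T_u\N$ (whenever that variation is defined), so $u$ would be a truly degenerate local maximum. I would therefore argue globally — differentiate the identity $\langle\E'(v_s),\cdot\rangle=0$ along a $C^1$ path $v_s\subset\N\cap B$ with $v_0=u$, $\dot v_0\ne0$, using the regularity $u\in\C$, $u>0$ in $\Omega$ from Remark~\ref{rem:positivity} (and $|\{\nabla u=0\}|=0$ to handle the $p$- and $q$-homogeneous terms when $p<2$ or $q<2$), to produce a nontrivial element in the kernel of the linearisation of \eqref{eq:D} at $u$ — and then rule out such a continuum of positive solutions of a single energy level, e.g.\ via the Picone/Díaz–Saá-type convexity carried by the two energies, a Leray–Schauder degree computation around $u$, or by establishing that positive ground states of $\E$ are isolated in $\W$, which immediately contradicts $u$ lying in a continuum of them. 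By comparison, the fibering bookkeeping in the preceding steps is routine.
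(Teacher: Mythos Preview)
Your argument for \ref{prop:property-gs:1} is correct and is exactly the paper's. For \ref{prop:property-gs:2}, ruling out that $u$ is a local minimum via the ray $t\mapsto\E(tu)$ is also the same. The problem is the local-maximum half: you correctly arrive at a relatively open piece of $\N$ consisting entirely of ground states, hence of solutions of \eqref{eq:D}, but you then explicitly leave the contradiction open and list several heavy possible routes (linearisation kernels, Picone/D\'iaz--Sa\'a, degree, isolation of ground states) without carrying any of them out. As written, this is a genuine gap, not a routine step.

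The paper closes it with a one-line regularity trick that bypasses all of the machinery you suggest. Since $u\in\C$ by Remark~\ref{rem:positivity}, choose any $\theta\in\W\setminus\C$ (such $\theta$ exists because $\C\subsetneq\W$) and set $u_\varepsilon:=u+\varepsilon\theta$. For small $\varepsilon\neq 0$ one still has $G_\beta(u_\varepsilon)>0>H_\alpha(u_\varepsilon)$, so there is a unique $t_\varepsilon>0$ with $t_\varepsilon u_\varepsilon\in\N$, and $t_\varepsilon\to 1$. If $u$ were a local maximum, then for small $\varepsilon$ one gets $d(\alpha,\beta)\le\E(t_\varepsilon u_\varepsilon)\le\E(u)=d(\alpha,\beta)$, so each $t_\varepsilon u_\varepsilon$ is a ground state with nonzero energy, hence a solution of \eqref{eq:D}, hence $t_\varepsilon u_\varepsilon\in\C$ by Remark~\ref{rem:positivity}. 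But then $u_\varepsilon=t_\varepsilon^{-1}(t_\varepsilon u_\varepsilon)\in\C$ and $\theta=\varepsilon^{-1}(u_\varepsilon-u)\in\C$, contradicting the choice of $\theta$. In other words: the solution set of \eqref{eq:D} sits inside $\C$, which has infinite codimension in $\W$, so it cannot contain a relatively open piece of the hypersurface $\N$. This replaces your entire final paragraph.
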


Let us now consider the existence of ground states of $\E$. For this end, we show that for some $(\alpha, \beta) \in \mathbb{R}^2$ the least energy on $\N$ coincides with a mountain pass level of $\E$; see, e.g., \cite{jeanjean,bellazzini} for related problems. 
First, we define two mountain pass critical values for $\alpha>\lambda_1(p)$ as follows: 
\begin{align*} 
c(\alpha,\beta)&:=\inf_{\gamma\in\Gamma(\alpha,\beta)} 
\max_{t\in[0,1]} \E (\gamma(t)), 
\quad 
c^+(\alpha,\beta) :=\inf_{\gamma\in\Gamma^+(\alpha,\beta)} 
\max_{t\in[0,1]} E^+_{\alpha,\beta} (\gamma(t)),
\end{align*}
where 
\begin{align*}
\Gamma(\alpha,\beta)&:=\{\gamma\in C([0,1],\W):~ \gamma(0)=0,~ \E(\gamma(1))<0\}, \\
\Gamma^+(\alpha,\beta)&:=\{\gamma\in C([0,1],\W):~ \gamma(0)=0,~ E^+_{\alpha,\beta}(\gamma(1))<0\}, 
\end{align*}
and the functional $E^+_{\alpha,\beta}: \W \to \mathbb{R}$ is defined by
\begin{equation*}\label{def:I} 
E^+_{\alpha,\beta}(u) := \frac{1}{p}\|\nabla u\|_p^p + \frac{1}{q}\|\nabla u\|_q^q 
- \frac{\alpha}{p}\|u_+\|_p^p - \frac{\beta}{q}\|u_+\|_q^q.
\end{equation*}
Here $u_+$ denotes the positive part of $u$, that is, $u_+ := \max\{u,0\}$. 

\begin{theorem}\label{thm:MP-Nehari} 
	Let $\alpha>\lambda_1(p)$ and $\beta<\lambda_1(q)$. Then 
	$$
	c^+(\alpha,\beta) = c(\alpha,\beta) = d(\alpha,\beta) > 0
	$$ 
	and $d(\alpha,\beta)$ is attained by a positive solution of \eqref{eq:D}. 
\end{theorem}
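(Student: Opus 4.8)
The plan is to prove the chain of equalities by establishing the inequalities $d \le c \le c^+$ and $c^+ \le d$ together with the attainment, splitting the work into four ingredients: (a) positivity of $c^+$, (b) the mountain-pass geometry of $E^+_{\alpha,\beta}$, (c) a Palais–Smale-type compactness argument yielding a positive critical point at level $c^+$, and (d) a fibering argument identifying $c^+$ with $d$. First I would check the geometry: since $\beta < \lambda_1(q)$, Lemma \ref{lem:eigenvalue}\ref{lem:eigenvalue:2} gives $G_\beta(u) \ge 0$ for all $u$, so $E^+_{\alpha,\beta}(u) \ge \frac1p\|\nabla u\|_p^p + \frac1q G_\beta(u) - \frac{\alpha}{p}\|u_+\|_p^p$; a Poincaré/Sobolev estimate then shows $E^+_{\alpha,\beta}$ is bounded below by a positive constant on a small sphere $\{\|\nabla u\|_p = \rho\}$, giving the uniform lower bound needed for $c^+ > 0$. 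On the other hand, because $\alpha > \lambda_1(p)$ we have $H_\alpha(\varphi_p) < 0$, and since $\varphi_p \ge 0$ the function $t \mapsto E^+_{\alpha,\beta}(t\varphi_p) = \frac{t^p}{p}H_\alpha(\varphi_p) + \frac{t^q}{q}G_\beta(\varphi_p)$ tends to $-\infty$ as $t \to \infty$ (the $t^p$ term dominates), so $\Gamma^+(\alpha,\beta) \neq \emptyset$ and the mountain-pass value $c^+$ is well defined.

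Next I would run the mountain-pass theorem for $E^+_{\alpha,\beta}$. The key step here is the Palais–Smale condition: given a $(PS)_{c^+}$ sequence $\{u_n\}$, boundedness follows from the standard manipulation $E^+_{\alpha,\beta}(u_n) - \frac1p\langle (E^+_{\alpha,\beta})'(u_n),u_n\rangle = (\frac1q - \frac1p)(\|\nabla u_n\|_q^q - \beta\|(u_n)_+\|_q^q) = (\frac1q-\frac1p)G_\beta((u_n)_+) + \text{(terms controlled on the negative part)}$, and since $G_\beta \ge 0$ and $q < p$ this bounds $\|\nabla u_n\|_q$; combined with the sublevel information and the coercivity structure, one extracts a bound on $\|\nabla u_n\|_p$. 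Compactness then follows from the $(S_+)$-property of the $(p,q)$-Laplacian operator together with compact Sobolev embeddings, as in the cited references \cite{T-2014,MT}. The limit $u$ is a nonzero (because $c^+ > 0$) critical point of $E^+_{\alpha,\beta}$; testing its equation with $u_- := \max\{-u,0\}$ forces $\|\nabla u_-\|_p^p + \|\nabla u_-\|_q^q = 0$, hence $u \ge 0$, so $u$ solves \eqref{eq:D} and, by the regularity and strong maximum principle quoted in Remark \ref{rem:positivity}, $u > 0$ in $\Omega$; in particular $E^+_{\alpha,\beta}(u) = \E(u)$.

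Finally I would identify $c^+(\alpha,\beta) = c(\alpha,\beta) = d(\alpha,\beta)$. For $c \le c^+$: any $\gamma \in \Gamma^+$ can, after composing with $t \mapsto t_+$ (which does not increase $E^+_{\alpha,\beta}$ and on nonnegative functions agrees with $\E$), be turned into an admissible path for $\Gamma$ with no larger maximum, and conversely the positive critical point $u$ above lies on a path realizing $c^+$, giving $c \le c^+$; the reverse $c^+ \le c$ is analogous since $\E(v) \ge E^+_{\alpha,\beta}(v)$ is false in general, so instead I would argue $c = c^+$ by noting both equal the min over the one-dimensional fibers. Concretely, for $\alpha > \lambda_1(p)$ and $\beta < \lambda_1(q)$ one checks that for each $v \in \W$ with $H_\alpha(v) < 0$ (equivalently $v_+ \not\equiv 0$ in the relevant cone) the fibering map $t \mapsto \E(tv)$ has a unique critical point $t(v) > 0$ which is a strict global maximum on $(0,\infty)$, so $\max_{t \ge 0}\E(tv) = \E(t(v)v)$ and $t(v)v \in \N$; taking the infimum over such $v$ shows $\inf_{\N}\E$ equals the mountain-pass level along linear paths, which by a standard deformation argument (every mountain-pass path must cross $\N$, since $\N$ separates $0$ from the region $\{\E < 0\}$) equals $c(\alpha,\beta)$. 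The same fibering computation applied to $E^+_{\alpha,\beta}$ restricted to nonnegative functions gives $c^+ = d$. The attainment of $d$ is then exactly the positive solution $u$ produced in the compactness step, since $\E(u) = c^+ = d$. The main obstacle I anticipate is the Palais–Smale / boundedness argument: the presence of two parameters with $\alpha$ above the first $p$-eigenvalue means the $p$-part is not coercive, so one must carefully exploit $\beta < \lambda_1(q)$ and the sublevel constraint to close the estimate, and verifying that $\N$ genuinely separates the mountain-pass endpoints (so that the variational identification is rigorous) requires the sign analysis of the fibering maps to be carried out with care.
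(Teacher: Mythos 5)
Your identification of the three levels is organized soundly (the chain $d\le c\le c^+\le d$ works just as well as the paper's $d\le c^+\le c\le d$: the comparison $c\le c^+$ via $\gamma\mapsto\gamma_+$ and the ray argument $c^+\le d$ through a nonnegative near-minimizer $|w|\in\N$ are both correct in spirit and close to the paper's own comparisons, which use $|\gamma|$ to get $c^+\le c$ and rays $s\mapsto sRw_\varepsilon$ to get $c\le d$). The genuine gap is in the existence/positivity half. The paper does \emph{not} rerun the mountain pass theorem: it simply invokes \cite[Theorem~2.1]{BobkovTanaka2015}, which already provides $c^+(\alpha,\beta)>0$ attained by a positive solution $u$ of \eqref{eq:D}, and then the whole theorem follows from $d\le\E(u)=E^+_{\alpha,\beta}(u)=c^+\le c\le d$. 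You instead propose to reprove this existence, and precisely the two steps you would need are not closed. First, the claim that ``a Poincar\'e/Sobolev estimate'' gives a positive lower bound for $E^+_{\alpha,\beta}$ on a small sphere (and, likewise, that $\N$ separates $0$ from $\{\E<0\}$) is false as stated when $p\ge Nq/(N-q)$: there is no embedding controlling $\|u\|_p$ by $\|\nabla u\|_q$, so the negative term $-\frac{\alpha}{p}\|u_+\|_p^p$ cannot be absorbed by $\frac1q G_\beta$ on the sphere $\{\|\nabla u\|_p=\rho\}$ by generic embeddings. What saves the argument is the cone-restricted inequality $\|\nabla u\|_p\le C\|u\|_q$ valid on $\{\|\nabla u\|_p^p\le \alpha\|u\|_p^p\}$ (this is \cite[Lemma~9]{T-2014}, used in this paper in the proof of Lemma~\ref{lem:bdd-PS-2}), which you neither quote nor replace. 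Second, the boundedness of Palais--Smale sequences for $E^+_{\alpha,\beta}$ — which you yourself flag as ``the main obstacle'' — is left open; your truncation identity only bounds $\|\nabla u_n\|_q$, and passing from that to a bound on $\|\nabla u_n\|_p$ again needs either the same cone inequality or a normalization argument as in Lemma~\ref{lem:bdd-PS} (a blow-up limit would be a nonnegative eigenfunction of $-\Delta_p$ at level $\alpha>\lambda_1(p)$, which is impossible). Since the attainment of $d$ by a positive solution rests entirely on this unproved existence step, the proposal as written does not yet establish the theorem; either supply these two estimates or, as the paper does, cite \cite[Theorem~2.1]{BobkovTanaka2015} and reduce the proof to the three elementary level comparisons.
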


Let us complement Theorem \ref{thm:MP-Nehari} with the case $\beta = \lambda_1(q)$.
Recall that $\alpha_*$ is defined by \eqref{def:values}.
\begin{theorem}\label{prop:MP-Nehari-resonant} 
	Let $\beta=\lambda_1(q)$. Then the following assertions are satisfied:
	\begin{enumerate}[label={\rm(\roman*)}]
		\item\label{prop:MP-Nehari-resonant:1} if $\alpha\le \lambda_1(p)$, then $d(\alpha,\beta)=\infty$; 
		\item\label{prop:MP-Nehari-resonant:2} if $\lambda_1(p)<\alpha<\alpha_*$, then $d(\alpha,\beta)>0$ and it is attained by a positive solution of \eqref{eq:D}; 
		\item\label{prop:MP-Nehari-resonant:3} if $\alpha=\alpha_*$, then $d(\alpha,\beta)=0$ and it is attained only by $t\varphi_q$ for any $t \neq 0$;
		\item\label{prop:MP-Nehari-resonant:4} if $\alpha>\alpha_*$, then $d(\alpha,\beta)=0$ and it is not attained. 
	\end{enumerate}
\end{theorem}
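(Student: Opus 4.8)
The plan is to analyze the fibering map $t \mapsto \E(tv)$ on each ray and to exploit the resonant structure $\beta = \lambda_1(q)$, where by Lemma \ref{lem:eigenvalue}\ref{lem:eigenvalue:2} one has $G_\beta(v) \geq 0$ for all $v$, with equality exactly on $\mathbb{R}\varphi_q$. Writing $\E(tv) = \frac{t^p}{p}H_\alpha(v) + \frac{t^q}{q}G_\beta(v)$, the sign of $H_\alpha(v)$ governs everything: since $p>q$, the ray meets $\N$ in a nontrivial point iff $H_\alpha(v) < 0 \le G_\beta(v)$ with not both zero, and then there is a unique $t_v>0$ with $t_v v \in \N$, at which $\E$ restricted to the ray attains its maximum; moreover $\E(t_v v) = \left(\frac{1}{q}-\frac{1}{p}\right) t_v^q G_\beta(v) \ge 0$. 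For part \ref{prop:MP-Nehari-resonant:1}, when $\alpha \le \lambda_1(p)$ we have $H_\alpha(v) \ge 0$ for all $v$ (Lemma \ref{lem:eigenvalue}\ref{lem:eigenvalue:1}), so no ray meets $\N$; combined with Lemma \ref{lem:nonempty-Nehari} this gives $\N = \emptyset$ and $d = \infty$. (One should note that $\beta = \lambda_1(q)$ with $\alpha = \lambda_1(p)$ is the boundary case $(-\infty,\lambda_1(p)]\times(-\infty,\lambda_1(q)]$ of that lemma.)

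For \ref{prop:MP-Nehari-resonant:2}, with $\lambda_1(p) < \alpha < \alpha_*$: first, $\N \neq \emptyset$ since $H_\alpha(\varphi_p) < 0$ (as $\alpha > \lambda_1(p)$) while $G_\beta(\varphi_p) > 0$ by Lemma \ref{lem:LID} (linear independence forces $\varphi_p \notin \mathbb{R}\varphi_q$). The key point is that $d(\alpha,\beta) > 0$: the only way to get $\E(t_v v) = 0$ on $\N$ is $G_\beta(v) = 0$, i.e.\ $v \in \mathbb{R}\varphi_q$, but $H_\alpha(\varphi_q) = \|\nabla\varphi_q\|_p^p - \alpha\|\varphi_q\|_p^p = \|\varphi_q\|_p^p(\alpha_* - \alpha) > 0$ since $\alpha < \alpha_*$, so $\varphi_q \notin \N$; hence every element of $\N$ has strictly positive energy, and a standard argument (the infimum over $\N$ is bounded below away from $0$ — here one uses that on $\N$, $\|\nabla u\|_q^q \le \beta\|u\|_q^q \le \dots$ combined with a lower bound on $\|u\|$ forced by $u \ne 0$) gives $d>0$; actually one shows $d>0$ via the identification with the mountain pass value. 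The attainment is obtained by identifying $d(\alpha,\beta)$ with the mountain pass level $c^+(\alpha,\beta)$ of $E^+_{\alpha,\beta}$, exactly as in Theorem \ref{thm:MP-Nehari}: the geometry holds because $E^+_{\alpha,\beta}(t\varphi_p) \to -\infty$ (the $\|\nabla\cdot\|_q^q$ term is lower order and $H_\alpha(\varphi_p)<0$ dominates), the Palais–Smale condition holds by the $(S_+)$-property of $-\Delta_p - \Delta_q$, and the mountain pass critical point is positive and nontrivial by Remark \ref{rem:positivity}; a standard comparison shows the MP level equals $\inf_\N E^+_{\alpha,\beta} = d(\alpha,\beta)$.

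For \ref{prop:MP-Nehari-resonant:3} and \ref{prop:MP-Nehari-resonant:4}, with $\alpha \ge \alpha_*$: now $H_\alpha(\varphi_q) = \|\varphi_q\|_p^p(\alpha_* - \alpha) \le 0$, and since $G_\beta(\varphi_q) = 0$, the ray through $\varphi_q$ behaves specially. If $\alpha = \alpha_*$ then $H_\alpha(\varphi_q) = 0 = G_\beta(\varphi_q)$, so $t\varphi_q \in \N$ for every $t \ne 0$, with $\E(t\varphi_q) = 0$; since $d \ge 0$ always (every element of $\N$ has $\E(t_v v) = (\frac1q - \frac1p)t_v^q G_\beta(v) \ge 0$), this gives $d(\alpha_*,\lambda_1(q)) = 0$. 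That it is attained \emph{only} by $t\varphi_q$: if $\E(u) = 0$ with $u \in \N$ then $G_\beta(u) = 0$, forcing $u \in \mathbb{R}\varphi_q$ by Lemma \ref{lem:eigenvalue}\ref{lem:eigenvalue:2}. For $\alpha > \alpha_*$: then $d = 0$ still (take $u_n = \varphi_q + \varepsilon_n w$ with $w$ chosen so $H_\alpha < 0$, rescale onto $\N$; the energy $(\frac1q-\frac1p)t_n^q G_\beta(u_n) \to 0$ because $G_\beta(u_n) \to G_\beta(\varphi_q) = 0$ by continuity while $t_n$ stays bounded — this boundedness is the one delicate estimate, controlled since $H_\alpha(u_n) \to H_\alpha(\varphi_q) < 0$ stays away from $0$), but it is not attained since any minimizer would again have to lie in $\mathbb{R}\varphi_q$, yet $\varphi_q \notin \N$ because $H_\alpha(\varphi_q) < 0 = -G_\beta(\varphi_q)$ fails the Nehari identity $H_\alpha + G_\beta = 0$ (it gives $H_\alpha(\varphi_q) \ne 0$). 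The main obstacle throughout is the attainment in part \ref{prop:MP-Nehari-resonant:2} under the resonant condition $\beta = \lambda_1(q)$, where the $G_\beta$ term degenerates and one cannot directly run a coercivity/minimization argument on $\N$; routing through the mountain pass characterization of $E^+_{\alpha,\beta}$ (verifying the geometry and Palais–Smale in this borderline regime) is the technical heart.
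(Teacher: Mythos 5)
Parts \ref{prop:MP-Nehari-resonant:1}, \ref{prop:MP-Nehari-resonant:3} and \ref{prop:MP-Nehari-resonant:4} of your plan are essentially the paper's arguments and are sound (in \ref{prop:MP-Nehari-resonant:4} the condition you really need on the perturbation is $w\notin\mathbb{R}\varphi_q$, so that $G_\beta(\varphi_q+\varepsilon_n w)>0$; $H_\alpha<0$ near $\varphi_q$ is automatic for $\alpha>\alpha_*$). The genuine gap is part \ref{prop:MP-Nehari-resonant:2}, and it is exactly the piece you flag as ``the technical heart'' and then do not supply. First, the observation that every single element of $\N$ has strictly positive energy does not give $d(\alpha,\beta)>0$: the infimum could still be $0$, since $\E|_{\N}=\frac{p-q}{pq}G_\beta$ and $G_\beta$ degenerates along $\mathbb{R}\varphi_q$ at resonance. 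Second, the mountain-pass route you fall back on is not available off the shelf: Theorem~\ref{thm:MP-Nehari} and the result of \cite{BobkovTanaka2015} it relies on require $\beta<\lambda_1(q)$, and at $\beta=\lambda_1(q)$ both key ingredients become delicate. The mountain-pass geometry (a positive lower bound on small spheres) is no longer immediate, because the $q$-homogeneous term $G_\beta$ vanishes along the $\varphi_q$-direction; one would have to exploit $\beta=\lambda_1(q)<\beta_*(\alpha)$ quantitatively. More seriously, your claim that ``the Palais--Smale condition holds by the $(S_+)$-property of $-\Delta_p-\Delta_q$'' is wrong as stated: the $(S_+)$-property only upgrades \emph{bounded} PS sequences to strongly convergent ones, and boundedness of PS sequences at resonance is precisely the nontrivial point — it is where the hypothesis $\alpha<\alpha_*$ must enter (compare Lemmas~\ref{lem:bdd-PS} and \ref{lem:bdd-minimizer-Nehar}), and your sketch never uses $\alpha<\alpha_*$ for this purpose.

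For comparison, the paper does not use a mountain pass for \ref{prop:MP-Nehari-resonant:2} at all. It minimizes $\E$ directly over $\N$: for a nonnegative minimizing sequence $\{u_n\}$, Lemma~\ref{lem:bdd-minimizer-Nehar} (this is where $\alpha<\alpha_*$ is used) rules out $\|\nabla u_n\|_p\to\infty$; a normalization argument with $v_n=u_n/\|\nabla u_n\|_p$ shows $G_\beta(v_0)>0$ for the weak limit, which forces $\inf_n\|\nabla u_n\|_p>0$; and strong convergence of $u_n$ to some $u_0\in\N$ follows from the fibering comparison $d\le\E(t_0u_0)<\liminf\E(t_0u_n)\le\liminf\E(u_n)=d$ if the norm were to drop. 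Both conclusions then fall out at once: $d=\E(u_0)=\frac{p-q}{pq}G_\beta(u_0)>0$, and $u_0$ is a positive solution by Remark~\ref{rem:positivity}. If you want to keep your mountain-pass route, you must prove the resonant geometry estimate and, crucially, the boundedness of PS sequences under $\lambda_1(p)<\alpha<\alpha_*$; as written, the proposal leaves the existence and positivity of $d$ in case \ref{prop:MP-Nehari-resonant:2} unproved.
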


\begin{remark} 
	Let $\beta=\lambda_1(q)$. Note that the existence result \cite[Theorem~2.2 (ii)]{BobkovTanaka2015} does not directly imply that \eqref{eq:D} has a positive solution for $\lambda_1(p) < \alpha < \alpha_*$. On the other hand, if $\alpha>\alpha_*$, then it was shown in \cite[Proposition~4 (ii)]{BobkovTanaka2015} that \eqref{eq:D} has no positive solutions. In the remaining case $\alpha=\alpha_*$, although $d(\alpha,\beta)=0$ is attained by $t\varphi_q$ for any $t \not = 0$, it is obvious that $t\varphi_q$ is not a solution of \eqref{eq:D}, since $\varphi_q$ does not satisfy $-\Delta_p u = \alpha_* |u|^{p-2}u$, see Lemma~\ref{lem:LID}. 
\end{remark}

Now, we study the behavior of ground states when $(\alpha,\beta)$ approaches the boundary of $(\lambda_1(p),\infty) \times (-\infty,\lambda_1(q))$. 
\begin{proposition}\label{prop:behavior-gs} 
	Let $\{\alpha_n\}_{n \in \mathbb{N}}$ and $\{\beta_n\}_{n \in \mathbb{N}}$ be such that $\alpha_n>\lambda_1(p)$ and $\beta_n<\lambda_1(q)$ for $n \in \mathbb{N}$, or $\lambda_1(p) < \alpha_n < \alpha_*$ and $\beta_n \leq \lambda_1(q)$ for $n \in \mathbb{N}$.
	Let $\alpha, \beta \in \mathbb{R}$ be such that $\lim\limits_{n \to \infty} \alpha_n = \alpha$ and $\lim\limits_{n \to \infty} \beta_n = \beta$. 
	Let $u_n$ be a ground state of $\En$ for $n \in \mathbb{N}$. 
	Then the following assertions are satisfied:
	\begin{enumerate}[label={\rm(\roman*)}]
		\item\label{prop:behavior-gs:1} 
		if $\alpha=\lambda_1(p)$, then $\lim\limits_{n\to\infty}E_{\alpha_n,\beta}(u_n)=\infty$, 
		$\lim\limits_{n\to\infty}\|u_n\|_p=\infty$ and $|u_n|/\|u_n\|_p$ converges to $\varphi_p/\|\varphi_p\|_p$ strongly in $\W$ as $n \to \infty$; 
		\item\label{prop:behavior-gs:2} if $\lambda_1(p)<\alpha<\alpha_*$ and $\beta=\lambda_1(q)$, then $\{u_n\}_{n \in \mathbb{N}}$ is bounded in $\W$ and it has a subsequence strongly convergent in $\W$ to a ground state of $\E$;
		\item\label{prop:behavior-gs:3} if $\alpha\ge \alpha_*$ and $\beta=\lambda_1(q)$, then $\lim\limits_{n \to \infty} \|\nabla u_n\|_p = 0$ and $|u_n|/\|\nabla u_n\|_q$ converges to $\varphi_q/\|\nabla \varphi_q\|_q$ weakly in $\W$ and strongly in $W_0^{1,q}$ as $n\to\infty$;
	\end{enumerate}
\end{proposition}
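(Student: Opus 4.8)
\medskip
\noindent\textbf{Proof plan for Proposition~\ref{prop:behavior-gs}.}
Throughout, $u_n\in\mathcal{N}_{\alpha_n,\beta_n}$ is a positive solution of \eqref{eq:D} (with parameters $\alpha_n,\beta_n$) by Theorems~\ref{thm:MP-Nehari} and \ref{prop:MP-Nehari-resonant}, $\beta_n\le\lambda_1(q)$ in every case, and Lemma~\ref{lem:eigenvalue}(ii) gives $G_{\beta_n}(u_n)\ge G_{\lambda_1(q)}(u_n)\ge 0$; since $H_{\alpha_n}(u_n)+G_{\beta_n}(u_n)=0$ this forces $H_{\alpha_n}(u_n)=-G_{\beta_n}(u_n)\le 0$ and the identity
\begin{equation*}
E_{\alpha_n,\beta_n}(u_n)=d(\alpha_n,\beta_n)=\Big(\tfrac1q-\tfrac1p\Big)G_{\beta_n}(u_n)=\Big(\tfrac1q-\tfrac1p\Big)\big(-H_{\alpha_n}(u_n)\big)\ge 0 .
\end{equation*}
The devices I would use are this identity, weak lower semicontinuity of $H_\cdot$ and $G_\cdot$, compactness of $\W\hookrightarrow L^p\cap L^q$, the $(S_+)$ property of $-\Delta_p-\Delta_q$ on $\W$, the variational characterisation \eqref{eq:lambdar}, and Lemmas~\ref{lem:eigenvalue} and \ref{lem:LID}.

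\emph{Case \ref{prop:behavior-gs:1} ($\alpha=\lambda_1(p)$).} First I would prove $\|u_n\|_p\to\infty$ by contradiction. If a subsequence had $\{\|u_n\|_p\}$ bounded, the Nehari identity would make $\{u_n\}$ bounded in $\W$; passing to a weak limit $u$ and using the displayed identity with lower semicontinuity gives $d(\alpha_n,\beta_n)\to 0$, hence $H_{\lambda_1(p)}(u)=G_\beta(u)=0$, so $u=0$ by Lemmas~\ref{lem:eigenvalue} and \ref{lem:LID}, whence $u_n\to 0$ in $\W$; then, for $w_n:=u_n/\|u_n\|_p$, dividing the Nehari identity by $\|u_n\|_p^p$ gives $\|\nabla w_n\|_p^p\to\lambda_1(p)$ and $\|u_n\|_p^{q-p}G_{\beta_n}(w_n)\to 0$, so $G_{\beta_n}(w_n)\to 0$; but $\{w_n\}$ minimises \eqref{eq:lambdar}, so by compactness and uniform convexity $|w_n|\to\varphi_p/\|\varphi_p\|_p$ in $\W$, giving $G_{\beta_n}(w_n)\to(\beta_*-\beta)\|\varphi_p\|_q^q/\|\varphi_p\|_p^q>0$ because $\beta\le\lambda_1(q)<\beta_*$ (Lemma~\ref{lem:LID}) --- a contradiction. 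Once $\|u_n\|_p\to\infty$, the same rescaling gives $|u_n|/\|u_n\|_p\to\varphi_p/\|\varphi_p\|_p$ strongly in $\W$, and then $E_{\alpha_n,\beta_n}(u_n)=(\tfrac1q-\tfrac1p)\|u_n\|_p^q G_{\beta_n}(w_n)\to\infty$; as $\beta_n\to\beta$ this yields $E_{\alpha_n,\beta}(u_n)\to\infty$ as well.

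\emph{Case \ref{prop:behavior-gs:2} ($\lambda_1(p)<\alpha<\alpha_*$, $\beta=\lambda_1(q)$).} Since $\varphi_p\notin\mathbb{R}\varphi_q$ (Lemma~\ref{lem:LID}) we have $G_{\lambda_1(q)}(\varphi_p)>0$, so for large $n$, $H_{\alpha_n}(\varphi_p)<0<G_{\beta_n}(\varphi_p)$ and there is $t_n>0$, bounded away from $0$ and $\infty$, with $t_n\varphi_p\in\mathcal{N}_{\alpha_n,\beta_n}$; hence $d(\alpha_n,\beta_n)\le E_{\alpha_n,\beta_n}(t_n\varphi_p)=O(1)$. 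Next, $\{u_n\}$ is bounded in $\W$: otherwise along a subsequence $\|u_n\|_p\to\infty$, and with $v_n:=u_n/\|u_n\|_p$ the Nehari identity gives $\|\nabla v_n\|_p^p\le\alpha_n$ while boundedness of $d(\alpha_n,\beta_n)=(\tfrac1q-\tfrac1p)\|u_n\|_p^q G_{\beta_n}(v_n)$ forces $G_{\beta_n}(v_n)\to 0$; the weak limit $v$ then satisfies $G_{\lambda_1(q)}(v)=0$, $\|v\|_p=1$, so $v\in\mathbb{R}\varphi_q$ and $\alpha_*=\|\nabla v\|_p^p/\|v\|_p^p\le\liminf\|\nabla v_n\|_p^p\le\alpha<\alpha_*$, a contradiction --- this is exactly where $\alpha<\alpha_*$ is used. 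With $\{u_n\}$ bounded, testing the equations for $u_n$ with $u_n-u$ and invoking $(S_+)$ yields $u_n\to u$ strongly in $\W$, $u$ a solution of \eqref{eq:D} in $\mathcal{N}_{\alpha,\beta}$, and $d(\alpha_n,\beta_n)\to E_{\alpha,\beta}(u)$. It remains to exclude $u=0$: then $d(\alpha_n,\beta_n)\to 0$ and $G_{\beta_n}(u_n),-H_{\alpha_n}(u_n)\to 0$, and rescaling the Euler--Lagrange equation by $\|\nabla u_n\|_q$ shows $u_n/\|\nabla u_n\|_q\to\varphi_q/\|\nabla\varphi_q\|_q$ in $W_0^{1,q}$ with $p$-Rayleigh quotient staying $\le\alpha<\alpha_*=\|\nabla\varphi_q\|_p^p/\|\varphi_q\|_p^p$, a contradiction. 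Hence $u\ne 0$, so $E_{\alpha,\beta}(u)\ge d(\alpha,\beta)$; combining with $\limsup d(\alpha_n,\beta_n)\le d(\alpha,\beta)$ (obtained by testing with a small rescaling of a ground state of $E_{\alpha,\beta}$) gives $E_{\alpha,\beta}(u)=d(\alpha,\beta)$, so $u$ is a ground state.

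\emph{Case \ref{prop:behavior-gs:3} ($\alpha\ge\alpha_*$, $\beta=\lambda_1(q)$).} I would first check $d(\alpha_n,\beta_n)\to 0$ (from Theorem~\ref{prop:MP-Nehari-resonant} together with an upper bound as above), so $G_{\beta_n}(u_n),-H_{\alpha_n}(u_n)\to 0$. Then $\|\nabla u_n\|_p\to 0$: if a subsequence had $\|\nabla u_n\|_p\ge c>0$, then either it is unbounded in $\W$, in which case rescaling by $\|\nabla u_n\|_p$ makes the $q$-terms of the equation vanish and $(S_+)$ forces a nontrivial nonnegative $v$ with $-\Delta_p v=\alpha v^{p-1}$, hence $\alpha=\lambda_1(p)$, impossible since $\alpha\ge\alpha_*>\lambda_1(p)$; or it is bounded, in which case $(S_+)$ gives $u_n\to u$ strongly in $\W$ solving \eqref{eq:D}, and $d(\alpha_n,\beta_n)\to 0$ forces $G_{\lambda_1(q)}(u)=0$, so $u\in\mathbb{R}\varphi_q$, but $t\varphi_q$ solves no such equation (see the remark after Theorem~\ref{prop:MP-Nehari-resonant}), so $u=0$ and then $\|\nabla u_n\|_p\to 0$, a contradiction. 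Finally, with $s_n:=\|\nabla u_n\|_q\to 0$ and $z_n:=u_n/s_n$, testing the rescaled equation $s_n^{p-q}(-\Delta_p z_n)-\Delta_q z_n=\alpha_n s_n^{p-q}z_n^{p-1}+\beta_n z_n^{q-1}$ with $z_n$ gives $\|\nabla z_n\|_p^p\le\alpha_n\|z_n\|_p^p$ and $G_{\beta_n}(z_n)=s_n^{p-q}(-H_{\alpha_n}(z_n))\to 0$, so $\{z_n\}$ minimises \eqref{eq:lambdar} for $q$ and $|z_n|\to\varphi_q/\|\nabla\varphi_q\|_q$ strongly in $W_0^{1,q}$ by compactness and uniform convexity, while $\|\nabla z_n\|_p^p\le\alpha_n\|z_n\|_p^p$ plus elliptic estimates for the rescaled equation give $\{z_n\}$ bounded in $\W$ --- hence the asserted weak convergence. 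The main obstacle in Cases \ref{prop:behavior-gs:2}--\ref{prop:behavior-gs:3} is exactly this compactness at the resonant boundary: excluding the trivial weak limit in Case \ref{prop:behavior-gs:2}, and securing uniform $\W$-control (not just $W_0^{1,q}$-control) of the rescaled sequences in Case \ref{prop:behavior-gs:3}; I expect both to require an improved Poincar\'e-type inequality near the value $\alpha_*$ together with $(S_+)$ and elliptic regularity for the rescaled $(p,q)$-equations, after which the key identity, \eqref{eq:lambdar}, and Lemmas~\ref{lem:eigenvalue} and \ref{lem:LID} close every case.
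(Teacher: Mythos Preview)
Your plan is largely on target and in several places parallels the paper's proof (the Nehari identity, the role of $G_\beta(\varphi_p)>0$ via $\beta<\beta_*$, the $(S_+)$ compactness step, and the use of Lemma~\ref{lem:LID}). A few of your alternative arguments --- rescaling by $\|u_n\|_p$ in \ref{prop:behavior-gs:1} to reach $\varphi_p$ directly, and bounding $d(\alpha_n,\beta_n)$ in \ref{prop:behavior-gs:2} before proving boundedness --- are different from the paper's (which invokes Lemmas~\ref{lem:bdd-PS} and \ref{lem:bdd-PS-2} instead) but they work.

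There is, however, a concrete gap at exactly the point you flag as ``the main obstacle'': the $\W$-boundedness of $z_n:=u_n/\|\nabla u_n\|_q$ once $\|\nabla u_n\|_p\to 0$. You propose to close it with an ``improved Poincar\'e-type inequality near $\alpha_*$'', but that is the wrong tool: the improved Poincar\'e of \cite{takac} is tied to $\lambda_1(p)$ and the orthogonal decomposition around $\varphi_p$, not to $\alpha_*$. What is actually needed --- and what the paper uses inside Lemma~\ref{lem:bdd-PS-2} --- is the elementary compactness fact (\cite[Lemma~9]{T-2014}): on the cone $\{u\in\W:\ \|\nabla u\|_p^p\le M\|u\|_p^p\}$ one has $\|\nabla u\|_p\le C_M\|u\|_q$. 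This follows by normalising $\|\nabla u\|_p=1$, noting the cone is then bounded in $\W$, and using compactness of $\W\hookrightarrow L^q$ to bound $\|u\|_q$ away from $0$. Since $H_{\alpha_n}(u_n)<0$ gives $\|\nabla u_n\|_p^p\le \alpha_n\|u_n\|_p^p$, this yields $\|\nabla z_n\|_p\le C\|z_n\|_q\le C\lambda_1(q)^{-1/q}$, and your argument in both \ref{prop:behavior-gs:2} (excluding $u=0$) and \ref{prop:behavior-gs:3} (weak $\W$-convergence of $z_n$) goes through.

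A second, smaller gap is in \ref{prop:behavior-gs:3}: you assert $d(\alpha_n,\beta_n)\to 0$ ``from Theorem~\ref{prop:MP-Nehari-resonant} together with an upper bound as above''. For $\alpha>\alpha_*$ this is fine (test with $\varphi_q$: then $G_{\beta_n}(\varphi_q)\to 0$ while $H_{\alpha_n}(\varphi_q)\to H_\alpha(\varphi_q)<0$, so the Nehari scaling $t_n\to 0$). For $\alpha=\alpha_*$, however, $H_{\alpha_n}(\varphi_q)\to 0$ and you need a perturbation $\varphi_q+\varepsilon\theta$ with $\langle H_\alpha'(\varphi_q),\theta\rangle<0$ (as in Lemma~\ref{lem:seq-to-zero}). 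The paper sidesteps this by a different route: it first shows boundedness via Lemma~\ref{lem:bdd-PS} and strong convergence to a solution $u_0$; then, if $u_0\not\equiv 0$, uses the nonexistence of positive solutions for $\alpha>\alpha_*$, $\beta=\lambda_1(q)$ \cite[Proposition~4(ii)]{BobkovTanaka2015} to force $\alpha=\alpha_*$, and finally contradicts Lemma~\ref{lem:seq-to-zero} by showing $u_0$ would minimise $\E$ over $\N\setminus\mathbb{R}\varphi_q$ at a positive level. Either approach works once you supply the missing upper bound at $\alpha=\alpha_*$.
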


\begin{figure}[!h]
	\begin{center}
		\includegraphics[width=0.52\linewidth]{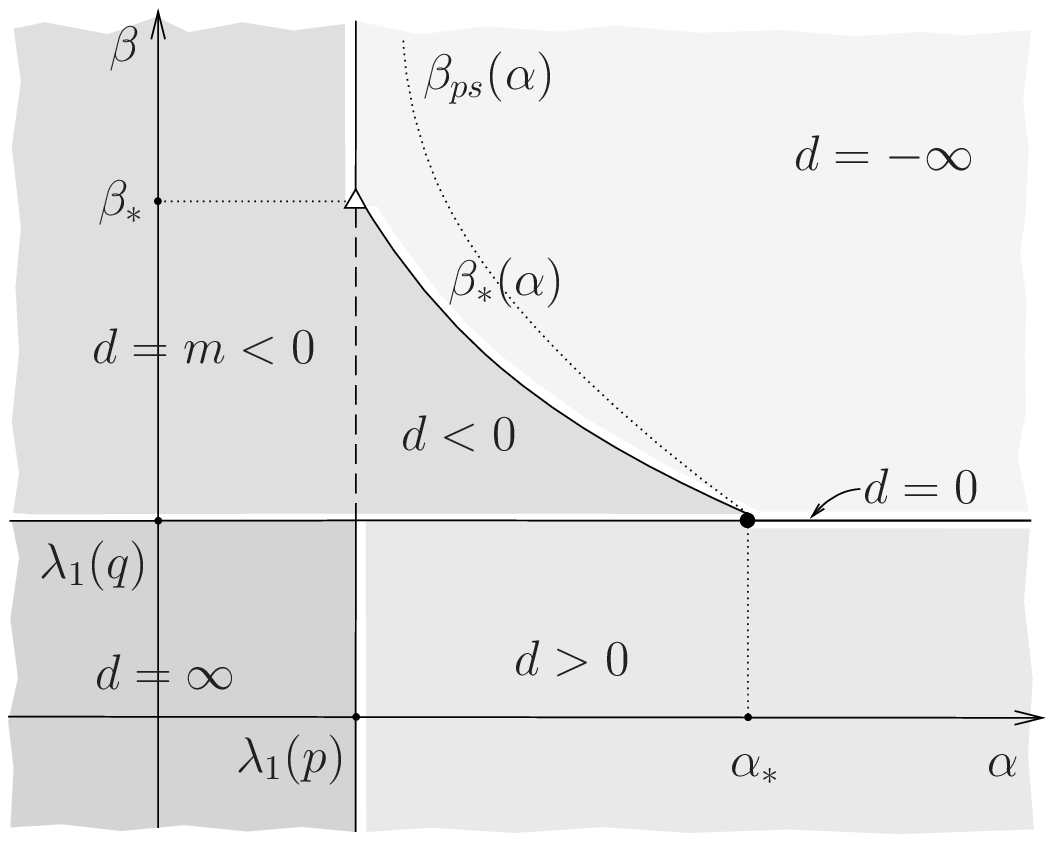}
		\caption{The least energy $d$ on $\N$.}
		\label{fig2}
	\end{center}
\end{figure}

In order to handle the existence of ground states of $\E$ in the case $\alpha \geq \lambda_1(p)$ and $\beta \geq \lambda_1(q)$, we define the following family of critical points:
\begin{equation*}\label{def:alpha_*}
\beta_*(\alpha) := \inf\left\{ \frac{\|\nabla u\|_q^q}{\|u\|_q^q}\,:\,
u\in\W\setminus \{0\} \text{ and } H_\alpha(u)\le 0\right\},
\end{equation*}
and we set $\beta_*(\alpha) = \infty$ whenever $\alpha < \lambda_1(p)$.
Let us collect the main properties of $\beta_*(\alpha)$.
\begin{proposition}\label{prop:property-curve} 
	The following assertions are satisfied (see Fig.\ \ref{fig2}):
	\begin{enumerate}[label={\rm(\roman*)}]
		\item\label{prop:property-curve:1} $\lambda_1(q) \le \beta_*(\alpha) < \infty$ for all $\alpha\ge \lambda_1(p)$; 
		\item\label{prop:property-curve:2} $\beta_*(\lambda_1(p))=\beta_*$ and $\beta_*(\alpha)=\lambda_1(q)$ for all $\alpha\ge \alpha_*$; 
		\item\label{prop:property-curve:4} $\beta_*(\alpha) > \lambda_1(q)$ for all $\alpha < \alpha_*$; 
		\item\label{prop:property-curve:3} $\beta_*(\alpha)$ is attained for all  $\alpha\ge \lambda_1(p)$; 
		\item\label{prop:property-curve:5} $\beta_*(\alpha)$ is continuous for all $\alpha > \lambda_1(p)$ and right-continuous at $\alpha = \lambda_1(p)$;
		\item\label{prop:property-curve:6} $\beta_*(\alpha)$ is (strictly) decreasing for all $\lambda_1(p) \leq \alpha \leq \alpha_*$.
	\end{enumerate} 
\end{proposition}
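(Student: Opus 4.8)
The plan is to establish the six assertions roughly in the order (i), (ii), the non-strict monotonicity, then the attainment (iv), then (iii), then the strict monotonicity in (vi), and finally the continuity (v), each step feeding into the next. I would set $R(u):=\|\nabla u\|_q^q/\|u\|_q^q$ for $u\in\W\setminus\{0\}$ and record that $\|\nabla|u|\|_r=\|\nabla u\|_r$, $\||u|\|_r=\|u\|_r$, so that $R(|u|)=R(u)$ and $H_\alpha(|u|)=H_\alpha(u)$; consequently every infimum defining $\beta_*(\alpha)$ may be taken over \emph{nonnegative} admissible functions, and every minimizer may be assumed nonnegative. Parts (i) and (ii) are then essentially immediate: \eqref{eq:lambdar} gives $R(u)\ge\lambda_1(q)$ for every $u\in\W\setminus\{0\}$, hence $\beta_*(\alpha)\ge\lambda_1(q)$ always; for $\alpha\ge\lambda_1(p)$ the function $\varphi_p$ is admissible because $H_\alpha(\varphi_p)=\lambda_1(p)-\alpha\le0$, so $\beta_*(\alpha)\le R(\varphi_p)=\beta_*<\infty$; at $\alpha=\lambda_1(p)$, Lemma~\ref{lem:eigenvalue}\ref{lem:eigenvalue:1} forces the admissible set to be exactly $\mathbb{R}\varphi_p\setminus\{0\}$, whence $\beta_*(\lambda_1(p))=\beta_*$; and for $\alpha\ge\alpha_*$ the function $\varphi_q$, which lies in $\W$ by the regularity of eigenfunctions, is admissible since $H_\alpha(\varphi_q)=\|\varphi_q\|_p^p(\alpha_*-\alpha)\le0$, so $\lambda_1(q)\le\beta_*(\alpha)\le R(\varphi_q)=\lambda_1(q)$. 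The non-strict monotonicity of $\beta_*$ on $\mathbb{R}$ is clear: $H_{\alpha'}(u)\le H_\alpha(u)$ whenever $\alpha'\ge\alpha$, so the admissible set only grows with $\alpha$.

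For the attainment statement (iv) with $\alpha>\lambda_1(p)$ I would run the direct method, normalizing a minimizing sequence $\{u_n\}$ by $\|u_n\|_p=1$: the constraint $H_\alpha(u_n)\le0$ then yields $\|\nabla u_n\|_p^p\le\alpha$, so $\{u_n\}$ is bounded in $\W$; along a subsequence it converges weakly in $\W$ and strongly in $L^p$ and $L^q$ to some $u$ with $\|u\|_p=1$ (hence $u\neq0$), and weak lower semicontinuity of $\|\nabla\cdot\|_p^p$ and $\|\nabla\cdot\|_q^q$ gives $H_\alpha(u)\le\liminf_n H_\alpha(u_n)\le0$ and $R(u)\le\liminf_n R(u_n)=\beta_*(\alpha)$, so $u$ attains $\beta_*(\alpha)$; the cases $\alpha=\lambda_1(p)$ and $\alpha\ge\alpha_*$ are already witnessed by $\varphi_p$ and $\varphi_q$ from (i)--(ii). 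With (iv) in hand, (iii) follows quickly: for $\alpha<\lambda_1(p)$ it is trivial since $\beta_*(\alpha)=\infty$; and if $\beta_*(\alpha)=\lambda_1(q)$ for some $\lambda_1(p)\le\alpha<\alpha_*$, then a nonnegative minimizer $u$ has $R(u)=\lambda_1(q)$, hence $u\in\mathbb{R}\varphi_q$ by simplicity of $\lambda_1(q)$, and then $H_\alpha(u)=\|u\|_p^p(\alpha_*-\alpha)>0$ contradicts $H_\alpha(u)\le0$.

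Next comes the strict decrease of $\beta_*$ on $[\lambda_1(p),\alpha_*]$. Suppose $\lambda_1(p)\le\alpha_1<\alpha_2\le\alpha_*$ and $\beta_*(\alpha_1)=\beta_*(\alpha_2)=:b$; by (iii) — or by (i) together with Lemma~\ref{lem:LID} when $\alpha_1=\lambda_1(p)$ — we have $b>\lambda_1(q)$. Let $u\ge0$ attain $\beta_*(\alpha_1)$. Since $R(u)=b>\lambda_1(q)$ and every \emph{nonnegative} eigenfunction of $-\Delta_q$ is a first one (see \cite{anane1987}), $u$ is not an eigenfunction of $-\Delta_q$, hence not a critical point of the $C^1$ functional $R$: there is $v\in\W$ with $\langle R'(u),v\rangle<0$. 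On the other hand $H_{\alpha_2}(u)=H_{\alpha_1}(u)-(\alpha_2-\alpha_1)\|u\|_p^p<0$ strictly. Therefore, for small $t>0$, the function $u+tv$ lies in $\W\setminus\{0\}$, satisfies $H_{\alpha_2}(u+tv)<0$, and has $R(u+tv)<R(u)=b$, so $\beta_*(\alpha_2)<b$ — a contradiction.

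It remains to prove the continuity (v); monotonicity guarantees that one-sided limits exist, and $\beta_*\equiv\lambda_1(q)$ for $\alpha>\alpha_*$, so only right-continuity on $[\lambda_1(p),\alpha_*]$ and left-continuity on $(\lambda_1(p),\alpha_*]$ remain. Right-continuity at $\alpha_0$ follows from the compactness of (iv): for $\alpha_n\downarrow\alpha_0$, the $L^p$-normalized minimizers $u_n$ for $\alpha_n$ are bounded in $\W$, and a weak limit $u$ is admissible for $\alpha_0$ with $\beta_*(\alpha_0)\le R(u)\le\liminf_n R(u_n)=\lim_n\beta_*(\alpha_n)$; since also $\lim_n\beta_*(\alpha_n)\le\beta_*(\alpha_0)$ by monotonicity, equality holds throughout. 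Left-continuity at $\alpha_0>\lambda_1(p)$ is the main obstacle, precisely because the inequality constraint $H_{\alpha_0}\le0$ may be \emph{active} at the minimizer, so that the minimizer need not be admissible for $\alpha<\alpha_0$. I would take a nonnegative minimizer $u$ for $\alpha_0$. If $H_{\alpha_0}(u)<0$, then $u$ stays admissible for all $\alpha$ slightly below $\alpha_0$, whence $\limsup_{\alpha\uparrow\alpha_0}\beta_*(\alpha)\le R(u)=\beta_*(\alpha_0)$, and monotonicity turns this into equality. If $H_{\alpha_0}(u)=0$, then $u$ is not a critical point of $H_{\alpha_0}$ — otherwise $u$ weakly solves $-\Delta_p u=\alpha_0|u|^{p-2}u$, and, being nonnegative and nontrivial, is a first eigenfunction of $-\Delta_p$ by the strong maximum principle and the fact that $\lambda_1(p)$ is the only eigenvalue of $-\Delta_p$ with a constant-sign eigenfunction (see \cite{anane1987}), which forces $\alpha_0=\lambda_1(p)$, a contradiction; so there is $v\in\W$ with $\langle H_{\alpha_0}'(u),v\rangle<0$, hence $H_{\alpha_0}(u+tv)<0$ for small $t>0$, hence $u+tv$ is admissible for all $\alpha$ close enough to $\alpha_0$, giving $\limsup_{\alpha\uparrow\alpha_0}\beta_*(\alpha)\le R(u+tv)$, and letting $t\to0^{+}$ (continuity of $R$ at $u$) yields $\limsup_{\alpha\uparrow\alpha_0}\beta_*(\alpha)\le\beta_*(\alpha_0)$, i.e. equality. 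The two devices carrying this whole argument — and which I expect to contain the real content — are the $L^p$-normalization (there is no a priori $W_0^{1,p}$-bound on minimizing sequences, but the constraint $H_\alpha\le0$ supplies one after that normalization) and the passage to $|u|$ (so that an \emph{active} minimizer cannot simultaneously be a critical point of $H_{\alpha_0}$, since for $\alpha_0>\lambda_1(p)$ that would make it a first $p$-eigenfunction carrying the wrong eigenvalue).
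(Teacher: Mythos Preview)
Your proof is correct and follows essentially the same approach as the paper: direct method with $L^p$-normalization for attainment, passage to nonnegative minimizers to rule out higher eigenfunctions of $-\Delta_q$ (for (iii) and (vi)) and of $-\Delta_p$ (for (v)), and perturbation off the constraint boundary for upper semicontinuity. The only notable organizational difference is in (vi): the paper first shows that every minimizer for $\beta_*(\alpha)$ must have $H_\alpha(u_0)=0$ (via the interior-point $\Rightarrow$ $q$-eigenfunction argument), then transports $u_0$ to $\alpha'$ where the constraint becomes inactive and reapplies the same argument; you instead perturb directly in the $R$-decreasing direction at the interior point $u$ of the $\alpha_2$-admissible set, which is one step shorter but uses exactly the same key observation.
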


Let us study the existence and nonexistence of ground states of $\E$ in domains bounded by $\beta_*(\alpha)$ and two lines $\{\lambda_1(p)\}\times\mathbb{R}$ and $\mathbb{R}\times\{\lambda_1(q)\}$. 
\begin{theorem}\label{thm:negative-gs}
	Let $\alpha \ge \lambda_1(p)$. The following assertions are satisfied:
	\begin{enumerate}[label={\rm(\roman*)}]
		\item\label{thm:negative-gs:1} If $\lambda_1(q)<\beta<\beta_*(\alpha)$, then $d(\alpha,\beta)<0$ and it is attained by a positive solution of \eqref{eq:D};
		\item\label{thm:negative-gs:2} if $\beta>\beta_*(\alpha)$, then $d(\alpha,\beta)=-\infty$.
	\end{enumerate} 
\end{theorem}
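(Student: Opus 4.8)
The plan is to handle the two parts separately, using the variational characterization of $\beta_*(\alpha)$ and the fibering analysis along rays $t \mapsto \E(tu)$. For part \ref{thm:negative-gs:2}, suppose $\beta > \beta_*(\alpha)$. By Proposition \ref{prop:property-curve}\ref{prop:property-curve:3}, $\beta_*(\alpha)$ is attained by some $w \in \W \setminus \{0\}$ with $H_\alpha(w) \le 0$ and $\|\nabla w\|_q^q / \|w\|_q^q = \beta_*(\alpha) < \beta$, so $G_\beta(w) = \|\nabla w\|_q^q - \beta \|w\|_q^q < 0$. Then along the ray $t > 0$ we have $\E(tw) = \frac{t^p}{p} H_\alpha(w) + \frac{t^q}{q} G_\beta(w)$; since $p > q$ and both coefficients are $\le 0$ with the $G_\beta$ term strictly negative, $\E(tw) \to -\infty$ as $t \to \infty$. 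It remains to check that the ray meets $\N$, i.e.\ that $\varphi_w(t) := H_\alpha(w) + t^{p-q} \cdot(\text{something})$ — more precisely the function $t \mapsto \langle \E'(tw), tw \rangle = t^p H_\alpha(w) + t^q G_\beta(w)$ vanishes for some $t > 0$. If $H_\alpha(w) < 0$ this is immediate since the bracket is strictly negative for all $t>0$; one instead perturbs $w$ slightly or uses that when $H_\alpha(w)<0$ and $G_\beta(w)<0$ the whole ray lies below zero and we can still find points on $\N$ nearby, or simply note $d(\alpha,\beta) = \inf_{\N} \E \le \inf_{t>0}\E(tw_\varepsilon)$ for a suitable nearby $w_\varepsilon$ with $H_\alpha(w_\varepsilon)>0 > G_\beta(w_\varepsilon)$, in which case the unique $t_* > 0$ on $\N$ is explicit and $\E(t_* w_\varepsilon) \to -\infty$. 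Thus $d(\alpha,\beta) = -\infty$.

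For part \ref{thm:negative-gs:1}, assume $\lambda_1(q) < \beta < \beta_*(\alpha)$. First I would show $d(\alpha,\beta) < 0$. By definition of $\beta_*(\alpha)$, every $u$ with $H_\alpha(u) \le 0$ satisfies $\|\nabla u\|_q^q / \|u\|_q^q \ge \beta_*(\alpha) > \beta$, hence $G_\beta(u) > 0$; contrapositively, $G_\beta(u) \le 0 \Rightarrow H_\alpha(u) > 0$. Since $\beta > \lambda_1(q)$, Lemma \ref{lem:eigenvalue}\ref{lem:eigenvalue:2} gives $G_\beta(\varphi_q) < 0$, so $H_\alpha(\varphi_q) > 0$; on the ray $t\varphi_q$ the bracket $t^p H_\alpha(\varphi_q) + t^q G_\beta(\varphi_q)$ has a unique positive root $t_*$, and $\E(t_*\varphi_q) = \frac{t_*^p}{p}H_\alpha(\varphi_q) + \frac{t_*^q}{q}G_\beta(\varphi_q) < 0$ because at the Nehari point $\frac1p\langle\E',\cdot\rangle$ vanishes so $\E(t_*\varphi_q) = (\frac1q - \frac1p)t_*^q G_\beta(\varphi_q) < 0$. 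Hence $d(\alpha,\beta) < 0$.

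The main work is attaining $d(\alpha,\beta)$. I would take a minimizing sequence $\{u_n\} \subset \N$ with $\E(u_n) \to d(\alpha,\beta) \in (-\infty, 0)$. Boundedness in $\W$ is the crux: from $u_n \in \N$ we have $H_\alpha(u_n) = -G_\beta(u_n)$, so $\E(u_n) = (\frac1q - \frac1p)G_\beta(u_n) = (\frac1p - \frac1q)H_\alpha(u_n)$; since $\E(u_n)$ is bounded, both $H_\alpha(u_n)$ and $G_\beta(u_n)$ are bounded, and since $\E(u_n) < 0$ eventually we get $G_\beta(u_n)$ bounded below away from $0$ by a negative constant, i.e.\ $\|\nabla u_n\|_q^q < \beta\|u_n\|_q^q$. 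Combined with the strict inequality $\beta < \beta_*(\alpha)$ and the definition of $\beta_*(\alpha)$, this forces $H_\alpha(u_n) > 0$, and boundedness of $H_\alpha(u_n)$ together with $H_\alpha(u_n) = \|\nabla u_n\|_p^p - \alpha\|u_n\|_p^p$ gives, via $\|u_n\|_p \le C\|\nabla u_n\|_p$ if $\alpha > 0$ or directly if $\alpha \le 0$, a bound on $\|\nabla u_n\|_p$ — the one delicate point being to rule out $\|\nabla u_n\|_p \to \infty$ with $H_\alpha(u_n)$ staying bounded, which I would exclude using that $\beta_*(\alpha)$ is attained (Proposition \ref{prop:property-curve}\ref{prop:property-curve:3}): if $v_n := u_n/\|\nabla u_n\|_p \rightharpoonup v$ with $H_\alpha(v_n) \to 0$, then in the limit $H_\alpha(v) \le 0$, whence $\|\nabla v\|_q^q \ge \beta_*(\alpha)\|v\|_q^q$; but $G_\beta(u_n) < 0$ rescales to $\|\nabla v_n\|_q^q < \beta\|v_n\|_q^q$, and passing to the limit (using compactness of $\W \hookrightarrow L^q$ and weak lower semicontinuity) yields $\|\nabla v\|_q^q \le \beta\|v\|_q^q < \beta_*(\alpha)\|v\|_q^q$, a contradiction unless $v = 0$, which is also impossible since then $\|\nabla v_n\|_q \to 0$ forces (by the same rescaling) $\|v_n\|_q\to 0$ incompatible with $\|v_n\|_p$-normalization and the embeddings. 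Once $\{u_n\}$ is bounded, pass to a weakly convergent subsequence $u_n \rightharpoonup u$; by compact embedding into $L^p$ and $L^q$ and weak lower semicontinuity of the gradient norms, $H_\alpha(u) \le \liminf H_\alpha(u_n)$ and similarly for $G_\beta$, so $H_\alpha(u) + G_\beta(u) \le 0$. If $u \ne 0$ and the bracket is $< 0$ we scale down by $t_* \in (0,1)$ to land on $\N$, getting $\E(t_* u) < \E(u) \le \liminf\E(u_n) = d$, contradiction — so in fact $u \in \N$ and $\E(u) = d$; the case $u = 0$ is excluded because $d < 0$ forces the limit to carry energy. Finally, $u$ is a ground state with $\E(u) = d < 0$, so by Proposition \ref{prop:property-gs}\ref{prop:property-gs:1} it is a local minimizer, hence a critical point of $\E$, hence (Remark \ref{rem:positivity}) after replacing $u$ by $|u|$ a positive solution of \eqref{eq:D}. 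The hardest step is the boundedness/compactness argument in the regime $\beta$ close to $\beta_*(\alpha)$, where the coercivity is lost and one must exploit that the infimum defining $\beta_*(\alpha)$ is attained to close the gap.
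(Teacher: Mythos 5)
Your part \ref{thm:negative-gs:1} is essentially the paper's own argument: negativity of $d$ via the fiber through $\varphi_q$, then a minimizing sequence on $\N$, a normalized blow-up ($v_n=u_n/\|\nabla u_n\|_p$) ruled out by the definition of $\beta_*(\alpha)$, and a weak limit pushed onto $\N$ by the fibering map, with positivity from Remark \ref{rem:positivity}. Two small slips there are easily repaired: the correct way to exclude $v=0$ is that $H_\alpha(v_n)=1-\alpha\|v_n\|_p^p\to 0$ forces $\|v_n\|_p^p\to 1/\alpha>0$, which contradicts strong $L^p$ convergence to $0$ (your ``$\|\nabla v_n\|_q\to0$'' justification does not follow from weak convergence); and at the weak limit $u$ one should first record $G_\beta(u)\le 0$, hence $H_\alpha(u)>0$ by the definition of $\beta_*(\alpha)$, so if $H_\alpha(u)+G_\beta(u)<0$ the Nehari point on the ray is $t_*=\left(-G_\beta(u)/H_\alpha(u)\right)^{1/(p-q)}>1$, not in $(0,1)$ — the contradiction goes through unchanged.

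Part \ref{thm:negative-gs:2} has a genuine gap. Unboundedness of $\E$ along the ray $t\mapsto tw$ through a minimizer $w$ of $\beta_*(\alpha)$ only shows $m(\alpha,\beta)=-\infty$; to get $d(\alpha,\beta)=-\infty$ you must exhibit points \emph{of} $\N$ with energy tending to $-\infty$, and your ``perturb $w$ slightly to $w_\varepsilon$ with $H_\alpha(w_\varepsilon)>0>G_\beta(w_\varepsilon)$'' is precisely the step that needs proof. The mechanism (the paper's Lemma \ref{lem:seq-to-negative-infty}) requires a function with $H_\alpha=0$ \emph{exactly} and $G_\beta<0$: then small perturbations give $H_\alpha\to 0^+$ with $G_\beta$ bounded away from $0$, so $\J\to-\infty$ on $\N\cap B^-_{\alpha,\beta}$. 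If instead $H_\alpha(w)<0$ strictly, no small perturbation makes $H_\alpha$ positive, so your fallback fails. Consequently you need (a) for $\lambda_1(p)<\alpha<\alpha_*$, the Lagrange-multiplier/interior-point argument showing the minimizer of $\beta_*(\alpha)$ actually satisfies $H_\alpha(u_0)=0$ (otherwise it would be a constant-sign $q$-eigenfunction, forcing $\beta_*(\alpha)=\lambda_1(q)$, contradicting Proposition \ref{prop:property-curve} \ref{prop:property-curve:4}); and (b) the case $\alpha>\alpha_*$, which your proposal does not treat at all: there $\beta_*(\alpha)=\lambda_1(q)$, the minimizer is $\varphi_q$ with $H_\alpha(\varphi_q)<0$ strictly, and no function close to $\varphi_q$ in $\W$ has $H_\alpha\ge 0$. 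The paper resolves this with the concentration construction $v_n=\varphi_q+\gamma_n\theta_n$, $\theta_n(x)=n^{N/p-1}\theta(nx)$, tuned so that $H_\alpha(v_n)=0$ while $\|\nabla v_n\|_q^q/\|v_n\|_q^q\to\lambda_1(q)<\beta$ (convergence to $\varphi_q$ weakly in $\W$ but strongly in $W^{1,q}_0$), after which Lemma \ref{lem:seq-to-negative-infty} applies. Without (a) and (b) the claim $d(\alpha,\beta)=-\infty$ is not established for all $\alpha\ge\lambda_1(p)$.
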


\begin{remark}
According to Theorem~\ref{thm:negative-gs}, we see that the curve $\mathcal{C}_*$ defined by
$$
\mathcal{C}_*:=\{(\alpha,\beta_*(\alpha)) \in \mathbb{R}^2:~\alpha\ge \lambda_1(p) \}
$$
separates the set $[\lambda_1(p), \infty) \times (\lambda_1(q), \infty)$ with respect to the existence and nonexistence of ground states of $\E$. 
This implies that $\mathcal{C}_*$ lies below or on the curve $\mathcal{C}$ constructed in \cite{BobkovTanaka2015} in such a way that $\mathcal{C}$ is a threshold between the existence and nonexistence of positive solutions of \eqref{eq:D}. 
Namely, it holds 
$$
\beta_*(\alpha)\le \beta_{ps}(\alpha):=
\sup_{u \in {\rm int}\,\C_+} 
\inf_{\varphi \in C^1_0(\overline{\Omega})_+ \setminus\{0\}}
\mathcal{L}_\alpha(u; \varphi)
\quad 
\text{ for } 
\alpha\ge \lambda_1(p),
$$
where $\mathcal{L}_\alpha(u; \varphi)$ is the extended functional (see \cite{ilfunc,BobkovTanaka2015}) defined as
$$
\mathcal{L}_\alpha(u; \varphi) := \frac{\intO |\nabla u|^{p-2}\nabla u\nabla\varphi \,dx + 
	\intO |\nabla u|^{q-2}\nabla u\nabla\varphi \,dx - 
	\alpha\intO |u|^{p-2} u \varphi\,dx}{\intO |u|^{q-2} u \varphi\,dx},
$$
and ${\rm int}\,\C_+$ denotes the interior of the positive cone of $C^1_0(\overline{\Omega})$, that is,
$$
{\rm int}\,\C_+ := 
\left\{
u \in C^1_0(\overline{\Omega})\,:\, 
u(x)>0 \text{ for all } x \in \Omega, \,
\frac{\partial u}{\partial\nu}(x) < 0 \text{ for all } x \in \partial\Omega 
\right\}.
$$
We do not know if $\mathcal{C}_*$ and $\mathcal{C}$ coincide. However, recent results of \cite{ilyasovkaye} for a related problem with indefinite nonlinearities may indicate that $\mathcal{C}_*$ and $\mathcal{C}$ are different.
If $\beta_*(\alpha)<\beta_{ps}(\alpha)$ for some $\lambda_1(p) \le \alpha<\alpha_*$, then for any $\beta_*(\alpha)<\beta\le \beta_{ps}(\alpha)$ our equation has a positive solution which is not a ground state of $\E$. 

On the other hand, in the bounded open set $\{(\alpha,\beta) \in \mathbb{R}^2:~ \lambda_1(p)<\alpha<\alpha_*,~ \lambda_1(q)<\beta< \beta_*(\alpha)\}$
we can find two positive solutions of \eqref{eq:D}, where one of them is a ground state of $\E$ and another one has the least energy among all solutions $w$ of \eqref{eq:D} such that $\E(w) > 0$, see Theorem~\ref{thm:multi} below. 
\end{remark}

Let us study the behavior of ground states of $\E$ when $(\alpha,\beta)$ approaches the boundary of 
$\{(\alpha,\beta) \in \mathbb{R}^2:~ \lambda_1(p)<\alpha<\alpha_*,~ \lambda_1(q)<\beta< \beta_*(\alpha)\}$.
\begin{proposition}\label{prop:behavior-gs-2} 
	Let $\{\alpha_n\}_{n \in \mathbb{N}}$ and $\{\beta_n\}_{n \in \mathbb{N}}$ be such that $\lambda_1(p) < \alpha_n < \alpha_*$ and $\lambda_1(q) < \beta_n < \beta_*(\alpha_n)$ for $n \in \mathbb{N}$.
	Let $\alpha, \beta \in \mathbb{R}$ be such that $\lim\limits_{n \to \infty} \alpha_n = \alpha$ and $\lim\limits_{n \to \infty} \beta_n = \beta$. 
	Let $u_n$ be a ground state of $\En$ for $n \in \mathbb{N}$. 
	Then the following assertions are satisfied:
	\begin{enumerate}[label={\rm(\roman*)}]
		\item\label{prop:behavior-gs-2:1} if $\beta=\lambda_1(q)$, then  $\lim\limits_{n\to\infty}\|\nabla u_n\|_p = 0$ and $|u_n|/\|\nabla u_n\|_q$ converges to $\varphi_q/\|\nabla \varphi_q\|_q$ weakly in $\W$ and strongly in $W_0^{1,q}$ as $n \to \infty$;
		\item\label{prop:behavior-gs-2:2} if $\alpha=\lambda_1(p)$ and $\lambda_1(q)<\beta<\beta_*$, then $\{u_n\}_{n \in \mathbb{N}}$ is bounded in $\W$ and any subsequence of $\{u_n\}_{n \in \mathbb{N}}$ has a subsequence strongly convergent in $\W$ to a global minimizer of $\E$ as $n \to \infty$; 
		\item\label{prop:behavior-gs-2:3} if $\lambda_1(p)<\alpha<\alpha_*$ and $\beta=\beta_*(\alpha)$, then $\{u_n\}_{n \in \mathbb{N}}$ is bounded in $\W$ and any subsequence of $\{u_n\}_{n \in \mathbb{N}}$ has a subsequence strongly convergent in $\W$ to a ground state of $\E$ as $n \to \infty$;
		\item\label{prop:behavior-gs-2:4} if $\alpha=\lambda_1(p)$, $\beta=\beta_*$ and $p<2q$, then $\lim\limits_{n\to\infty}\|\nabla u_n\|_p = \infty$, $\lim\limits_{n\to\infty}\En(u_n) = -\infty$ and $|u_n|/\|u_n\|_p$ converges to $\varphi_p/\|\varphi_p\|_p$ strongly in $\W$ as $n \to \infty$.
	\end{enumerate}
\end{proposition}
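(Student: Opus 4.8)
The plan is to prove all four assertions from one set of identities for a ground state, supplemented by a single ``blow-up lemma''. Since $\lambda_1(p)<\alpha_n<\alpha_*$ and $\lambda_1(q)<\beta_n<\beta_*(\alpha_n)$, Theorem~\ref{thm:negative-gs}\ref{thm:negative-gs:1} gives $d(\alpha_n,\beta_n)<0$, and by Remark~\ref{rem:positivity} we may take each ground state $u_n>0$. From $u_n\in\mathcal{N}_{\alpha_n,\beta_n}$ and $\En(u_n)=d(\alpha_n,\beta_n)$ we read off $H_{\alpha_n}(u_n)+G_{\beta_n}(u_n)=0$ and
$$
\En(u_n)=\Big(\tfrac1p-\tfrac1q\Big)H_{\alpha_n}(u_n)=\Big(\tfrac1q-\tfrac1p\Big)G_{\beta_n}(u_n)=d(\alpha_n,\beta_n)<0 ,
$$
hence $H_{\alpha_n}(u_n)>0>G_{\beta_n}(u_n)$. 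The blow-up lemma I would establish first: if, along a subsequence, $t_n:=\|\nabla u_n\|_p\to\infty$, then $w_n:=u_n/t_n$ has $\|\nabla w_n\|_p=1$, and dividing \eqref{eq:D} (at $(\alpha_n,\beta_n)$) by $t_n^{p-1}$ shows that $w_n$ weakly solves $-\Delta_p w_n-t_n^{q-p}\Delta_q w_n=\alpha_n w_n^{p-1}+\beta_n t_n^{q-p}w_n^{q-1}$. Since $q<p$ and $\{w_n\}$ is bounded in $\W$, testing with $w_n-w$ (for $w_n\rightharpoonup w$ in $\W$) annihilates the $t_n^{q-p}$-terms and, by the $(S_+)$-property of $-\Delta_p-\Delta_q$, yields $w_n\to w$ strongly in $\W$ and $-\Delta_p w=\alpha w^{p-1}$ with $w\ge0$, $\|\nabla w\|_p=1$. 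Thus $w$ is a nonnegative nontrivial eigenfunction of $-\Delta_p$ at level $\alpha$, so necessarily $\alpha=\lambda_1(p)$ and $w=\varphi_p/\|\nabla\varphi_p\|_p$.

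For \ref{prop:behavior-gs-2:1}, \ref{prop:behavior-gs-2:2}, \ref{prop:behavior-gs-2:3} the sequence $\{u_n\}$ is bounded in $\W$: otherwise the blow-up lemma forces $\alpha=\lambda_1(p)$ and $w_n\to\varphi_p/\|\nabla\varphi_p\|_p$ in $\W$ --- impossible in \ref{prop:behavior-gs-2:3} where $\alpha>\lambda_1(p)$, and in \ref{prop:behavior-gs-2:1}, \ref{prop:behavior-gs-2:2} (where $\beta<\beta_*$, using Lemma~\ref{lem:LID} when $\beta=\lambda_1(q)$) it gives $G_{\beta_n}(w_n)\to\|\nabla\varphi_p\|_p^{-q}(\beta_*-\beta)\|\varphi_p\|_q^q>0$, contradicting $G_{\beta_n}(w_n)<0$. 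For \ref{prop:behavior-gs-2:2} and \ref{prop:behavior-gs-2:3}: boundedness plus testing \eqref{eq:D} with $u_n-u^*$ and the $(S_+)$-property give, along a subsequence, $u_n\to u^*$ strongly in $\W$, so $\En(u_n)\to\E(u^*)$; scaling $\varphi_q$ onto $\mathcal{N}_{\alpha_n,\beta_n}$ (possible since $H_{\alpha_n}(\varphi_q)>0>G_{\beta_n}(\varphi_q)$) with scaling factor tending to a positive limit gives $\limsup_n d(\alpha_n,\beta_n)<0$, so $\E(u^*)<0$ and $u^*\neq0$; then $u^*\in\mathcal{N}_{\alpha,\beta}$, and upper semicontinuity of $d$ together with minimality force $\E(u^*)=\lim_n d(\alpha_n,\beta_n)=d(\alpha,\beta)$ (in particular $d(\alpha,\beta)$ is finite and attained). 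Hence $u^*$ is a ground state of $\E$; in \ref{prop:behavior-gs-2:2}, since a nontrivial global minimizer lies on $\N$, one has $d(\alpha,\beta)=m(\alpha,\beta)$ (Proposition~\ref{prop:GM3}\ref{prop:GM3:2}), so $u^*$ is in fact a global minimizer.

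For \ref{prop:behavior-gs-2:1}: $G_{\beta_n}(u_n)<0\le G_{\lambda_1(q)}(u_n)$ (Lemma~\ref{lem:eigenvalue}) forces the Rayleigh quotient $\|\nabla u_n\|_q^q/\|u_n\|_q^q$ into $[\lambda_1(q),\beta_n)$, hence $\to\lambda_1(q)$, so by simplicity of $\lambda_1(q)$ and $u_n\ge0$, $u_n/\|\nabla u_n\|_q\to\varphi_q/\|\nabla\varphi_q\|_q$ strongly in $W_0^{1,q}$. Boundedness of $\{u_n\}$ in $\W$ was obtained above; the $\varphi_q$-scaling gives $d(\alpha_n,\beta_n)\le\En(\tau_n\varphi_q)\to0$ with $\tau_n\to0$, while if $d(\alpha_n,\beta_n)\le-\delta<0$ along a subsequence, the strong limit $u^*$ of $\{u_n\}$ would be a nonzero solution of \eqref{eq:D} at $(\alpha,\lambda_1(q))$ with $\E$-value $(\tfrac1q-\tfrac1p)G_{\lambda_1(q)}(u^*)\le-\delta$, i.e.\ $G_{\lambda_1(q)}(u^*)<0$, contradicting Lemma~\ref{lem:eigenvalue}\ref{lem:eigenvalue:2}; thus $d(\alpha_n,\beta_n)\to0$. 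Then the strong limit $u^*$ satisfies $E_{\alpha,\lambda_1(q)}(u^*)=0$, so $u^*\in\mathbb{R}\varphi_q$, but $t\varphi_q$ ($t\neq0$) is not a solution of \eqref{eq:D} by Lemma~\ref{lem:LID}, whence $u^*=0$, i.e.\ $\|\nabla u_n\|_p\to0$. The only remaining point is that $u_n/\|\nabla u_n\|_q\rightharpoonup\varphi_q/\|\nabla\varphi_q\|_q$ in $\W$, for which one must show this rescaled sequence is bounded in $\W$: it solves a $(p,q)$-type equation with a vanishing coefficient on the $p$-part and a uniformly bounded subcritical right-hand side, and I would invoke Moser iteration together with the $C^{1,\sigma}$ estimates of \cite{Lieberman,L} (or the improved Poincar\'e inequality of Section~\ref{sec:preliminaries}) to get a uniform $C^1$, hence $\W$, bound; the weak $\W$-limit then coincides with the $W_0^{1,q}$-limit.

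For \ref{prop:behavior-gs-2:4} the key point is $d(\lambda_1(p),\beta_*)=-\infty$ when $p<2q$. By Proposition~\ref{prop:GM3}\ref{prop:GM3:3}, $m(\lambda_1(p),\beta_*)=-\infty$; for any $u$ with $E_{\lambda_1(p),\beta_*}(u)<0$ one has $H_{\lambda_1(p)}(u)\ge0$ (Lemma~\ref{lem:eigenvalue}) and $G_{\beta_*}(u)<0$, and $H_{\lambda_1(p)}(u)=0$ would give $u\in\mathbb{R}\varphi_p$ and $E_{\lambda_1(p),\beta_*}(u)=0$ (since $G_{\beta_*}(\varphi_p)=0$ by the definition of $\beta_*$); hence $H_{\lambda_1(p)}(u)>0$, the fiber $t\mapsto E_{\lambda_1(p),\beta_*}(tu)$ attains its minimum over $(0,\infty)$ at a point of $\mathcal{N}_{\lambda_1(p),\beta_*}$ with value $\le E_{\lambda_1(p),\beta_*}(u)$, and letting $E_{\lambda_1(p),\beta_*}(u)\to-\infty$ gives $d(\lambda_1(p),\beta_*)=-\infty$. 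By upper semicontinuity of $d$, $d(\alpha_n,\beta_n)\to-\infty$, so $H_{\alpha_n}(u_n)=-(\tfrac1q-\tfrac1p)^{-1}d(\alpha_n,\beta_n)\to\infty$ and $\|\nabla u_n\|_p\to\infty$. Since $H_{\alpha_n}(u_n)=-G_{\beta_n}(u_n)=\beta_n\|u_n\|_q^q-\|\nabla u_n\|_q^q\le(\beta_n-\lambda_1(q))\|u_n\|_q^q\le C(\beta_n-\lambda_1(q))\|u_n\|_p^q$ and $\|\nabla u_n\|_p^p=\alpha_n\|u_n\|_p^p+H_{\alpha_n}(u_n)$, we get $\|u_n\|_p\to\infty$, hence $H_{\alpha_n}(u_n)/\|u_n\|_p^p\le C(\beta_n-\lambda_1(q))\|u_n\|_p^{q-p}\to0$; thus the Rayleigh quotient of $u_n/\|u_n\|_p$ in $\W$ tends to $\lim_n\alpha_n=\lambda_1(p)$, so by simplicity and $u_n\ge0$, $u_n/\|u_n\|_p\to\varphi_p/\|\varphi_p\|_p$ strongly in $\W$, while $\En(u_n)=d(\alpha_n,\beta_n)\to-\infty$. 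The main obstacle throughout is the a priori control of $\{u_n\}$ (or of the rescalings): the boundedness in \ref{prop:behavior-gs-2:2}--\ref{prop:behavior-gs-2:3} is delivered by the blow-up lemma, but the uniform $\W$-boundedness of the rescaled sequence in \ref{prop:behavior-gs-2:1} genuinely requires a regularity input beyond elementary H\"older/Poincar\'e estimates; everything else is bookkeeping around the identity $\En(u_n)=(\tfrac1q-\tfrac1p)G_{\beta_n}(u_n)$, simplicity of $\lambda_1(p),\lambda_1(q)$, the $(S_+)$-property, and upper semicontinuity of $d$.
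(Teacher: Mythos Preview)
Your approach is essentially the same as the paper's: your ``blow-up lemma'' is Lemma~\ref{lem:bdd-PS}, the passage from boundedness to strong convergence is Lemma~\ref{lem:conv-gs}, and the use of $\varphi_q$-scaling to produce a negative upper bound on $d(\alpha_n,\beta_n)$ plays the role of the comparison arguments the paper makes with a pre-existing minimizer. Two points deserve comment.

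\textbf{Upper semicontinuity in \ref{prop:behavior-gs-2:3}.} You invoke ``upper semicontinuity of $d$'' as a black box to conclude $\E(u^*)=d(\alpha,\beta)$. As stated in the paper (Lemma~\ref{lem:usc}/Proposition~\ref{prop:property-least-energy}\ref{prop:property-least-energy:4}), that result assumes $d(\alpha,\beta)$ is attained, which is precisely what you are proving. The circularity is only formal: the actual content you need is that for every $w\in\N$ with $\E(w)<0$ one can scale $w$ onto $\mathcal{N}_{\alpha_n,\beta_n}$ and obtain $\limsup_n d(\alpha_n,\beta_n)\le\E(w)$; taking the infimum over such $w$ (and using that your $\varphi_q$-scaling already gives $d(\alpha,\beta)<0$) yields the inequality directly. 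This is exactly the mechanism inside Lemma~\ref{lem:conv-gs}, and the paper invokes that lemma rather than upper semicontinuity. You should phrase it this way to avoid the apparent loop.

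\textbf{The weak $\W$-bound in \ref{prop:behavior-gs-2:1}.} You are right that this is the one nontrivial step, and in fact the paper's own proof only refers to ``the same arguments as in the proof of Proposition~\ref{prop:GM2}\ref{prop:GM2:3}'', which yields strong $W_0^{1,q}$-convergence but says nothing about $\W$-boundedness of $u_n/\|\nabla u_n\|_q$; the hypothesis $H_{\alpha_n}(u_n)<0$ of Lemma~\ref{lem:bdd-PS-2} is not available here (you have $H_{\alpha_n}(u_n)>0$), so the \cite{T-2014}-type inequality cannot be used directly. Your regularity idea can be made to work, but it needs a specific normalization: set $M_n=\|u_n\|_\infty$ and $v_n=u_n/M_n$, so that $v_n$ solves $-M_n^{p-q}\Delta_p v_n-\Delta_q v_n=\alpha_n M_n^{p-q}v_n^{p-1}+\beta_n v_n^{q-1}$ with $\|v_n\|_\infty=1$ and $M_n\to 0$; the structure conditions of \cite{L} are then uniform in $n$, giving a uniform $C^{1,\gamma}(\overline\Omega)$ bound on $v_n$. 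Any $C^1$-limit $v_\infty$ is a nonnegative eigenfunction of $-\Delta_q$ at $\lambda_1(q)$ with $\|v_\infty\|_\infty=1$, hence $\|\nabla v_\infty\|_q>0$, and therefore $\|\nabla u_n\|_p/\|\nabla u_n\|_q=\|\nabla v_n\|_p/\|\nabla v_n\|_q$ stays bounded. Moser iteration alone (without the $L^\infty$-normalization) does not obviously give a uniform bound, because the $p$-Laplacian coefficient degenerates; you should make the normalization explicit.
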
 

Thanks to the assertions \ref{prop:behavior-gs-2:3} and \ref{prop:behavior-gs-2:4} of Proposition \ref{prop:behavior-gs-2}, we can complement Theorem \ref{thm:negative-gs} as follows. 
\begin{theorem}\label{thm:negative-gs-2} 
	Let $\lambda_1(p) \leq \alpha < \alpha_*$ and $\beta = \beta_*(\alpha)$. Then $d(\alpha, \beta) < 0$. Moreover, the following assertions are satisfied:
	\begin{enumerate}[label={\rm(\roman*)}]
		\item\label{thm:negative-gs-2:1} if $\lambda_1(p)<\alpha<\alpha_*$, then $d(\alpha,\beta)$ is attained by a positive solution of \eqref{eq:D}; 
		\item\label{thm:negative-gs-2:2} if $\alpha=\lambda_1(p)$, then $d(\alpha,\beta) > -\infty$ if and only $p \geq 2q$. Moreover, if $p > 2q$, then $d(\alpha,\beta)$ is attained by a global minimizer of $\E$.
	\end{enumerate}
\end{theorem}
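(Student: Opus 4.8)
The plan is to read the theorem off from Theorem~\ref{thm:negative-gs}, Proposition~\ref{prop:behavior-gs-2} and the known behavior of the global minimum $m$, via the elementary analysis of the fibering map $t\mapsto\E(tu)=\frac{t^p}{p}H_\alpha(u)+\frac{t^q}{q}G_\beta(u)$. Recall that if $u\in\W\setminus\{0\}$ satisfies $H_\alpha(u)>0>G_\beta(u)$, then $t_u u\in\N$ for the unique $t_u:=\bigl(|G_\beta(u)|/H_\alpha(u)\bigr)^{1/(p-q)}$, and $\E(t_u u)=\bigl(\tfrac1q-\tfrac1p\bigr)t_u^{q}G_\beta(u)<0$. \emph{Step~1 ($d(\alpha,\beta)<0$).} Let $\lambda_1(p)\le\alpha<\alpha_*$ and $\beta=\beta_*(\alpha)$. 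Test with $\varphi_q$: since $\alpha<\alpha_*=\|\nabla\varphi_q\|_p^p/\|\varphi_q\|_p^p$ we get $H_\alpha(\varphi_q)=\|\varphi_q\|_p^p(\alpha_*-\alpha)>0$, and since $\beta=\beta_*(\alpha)>\lambda_1(q)$ (by Proposition~\ref{prop:property-curve}\ref{prop:property-curve:4} if $\alpha<\alpha_*$, and by Proposition~\ref{prop:property-curve}\ref{prop:property-curve:2} together with Lemma~\ref{lem:LID} if $\alpha=\lambda_1(p)$) and $\|\varphi_q\|_q=1$ we get $G_\beta(\varphi_q)=\lambda_1(q)-\beta<0$. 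Hence $d(\alpha,\beta)\le\E(t_{\varphi_q}\varphi_q)<0$.

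\emph{Step~2 (assertion~\ref{thm:negative-gs-2:1}).} Fix $\lambda_1(p)<\alpha<\alpha_*$, $\beta=\beta_*(\alpha)$, set $\alpha_n:=\alpha$, and use Proposition~\ref{prop:property-curve}\ref{prop:property-curve:4} to choose $\beta_n\in(\lambda_1(q),\beta_*(\alpha))$ with $\beta_n\to\beta$. By Theorem~\ref{thm:negative-gs}\ref{thm:negative-gs:1}, for each $n$ the functional $\En$ admits a ground state $u_n$, which is a positive solution of \eqref{eq:D}. Proposition~\ref{prop:behavior-gs-2}\ref{prop:behavior-gs-2:3} (whose hypotheses hold, with limits $\alpha$ and $\beta=\beta_*(\alpha)$) then produces a subsequence of $\{u_n\}$ converging strongly in $\W$ to a ground state $u$ of $\E$; consequently $d(\alpha,\beta)=\E(u)\in\mathbb{R}$, and $d(\alpha,\beta)<0$ by Step~1, so $\E(u)\neq0$ and $u\neq0$. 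By Remark~\ref{rem:positivity}, $|u|$ is then a positive solution of \eqref{eq:D} with $\E(|u|)=\E(u)=d(\alpha,\beta)$, which proves~\ref{thm:negative-gs-2:1} and also shows $d(\alpha,\beta)>-\infty$ there.

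\emph{Step~3 (assertion~\ref{thm:negative-gs-2:2}).} Let $\alpha=\lambda_1(p)$, so $\beta=\beta_*=\beta_*(\lambda_1(p))$ by Proposition~\ref{prop:property-curve}\ref{prop:property-curve:2}; moreover $\N\neq\emptyset$ by Lemma~\ref{lem:nonempty-Nehari} (since $\beta_*>\lambda_1(q)$) and $m(\lambda_1(p),\beta_*)<0$ by Proposition~\ref{prop:GM3}. Since $\N\subset\W$ forces $d(\alpha,\beta)\ge m(\alpha,\beta)$, Proposition~\ref{prop:GM3}\ref{prop:GM3:3} gives $d(\lambda_1(p),\beta_*)>-\infty$ whenever $p\ge2q$; and if $p>2q$, a global minimizer $u$ of $\E$ exists, is nontrivial (since $\E(u)=m(\lambda_1(p),\beta_*)<0$), hence is a ground state (Remark~\ref{rem:positivity}), so $d(\lambda_1(p),\beta_*)=m(\lambda_1(p),\beta_*)$ is attained by $u$. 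If $p<2q$, then $m(\lambda_1(p),\beta_*)=-\infty$ by Proposition~\ref{prop:GM3}\ref{prop:GM3:3}, and I would deduce $d(\lambda_1(p),\beta_*)=-\infty$ as follows: pick $w_k\in\W$ with $\E(w_k)\to-\infty$; since $H_{\lambda_1(p)}\ge0$ on $\W$ and $H_{\lambda_1(p)}(w)=0$ forces $w\in\mathbb{R}\varphi_p$, hence $G_{\beta_*}(w)=0$ and $\E(w)=0$ (Lemma~\ref{lem:eigenvalue}\ref{lem:eigenvalue:1} and the definition of $\beta_*$), for all large $k$ we have $H_{\lambda_1(p)}(w_k)>0>G_{\beta_*}(w_k)$, so $t_{w_k}w_k\in\N$ and
$$
\E(t_{w_k}w_k)=-\Bigl(\tfrac1q-\tfrac1p\Bigr)\frac{|G_{\beta_*}(w_k)|^{p/(p-q)}}{H_{\lambda_1(p)}(w_k)^{q/(p-q)}}.
$$
Now $\E(w_k)<0$ and $H_{\lambda_1(p)}(w_k)\ge0$ give $H_{\lambda_1(p)}(w_k)\le\tfrac pq|G_{\beta_*}(w_k)|$ and $|G_{\beta_*}(w_k)|\ge q|\E(w_k)|\to\infty$; inserting these yields $\E(t_{w_k}w_k)\le-c\,|G_{\beta_*}(w_k)|\to-\infty$ for some $c>0$, so $d(\lambda_1(p),\beta_*)=-\infty$. (The same divergence should also follow by rescaling the ground states of Proposition~\ref{prop:behavior-gs-2}\ref{prop:behavior-gs-2:4} onto $\N$ at $(\lambda_1(p),\beta_*)$, but that route additionally requires controlling the rate at which $H_{\lambda_1(p)}$ degenerates along those ground states.)

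\emph{Main obstacle.} The computations above are a routine repackaging through the fibering map; the real difficulty is hidden in the quoted inputs — Proposition~\ref{prop:GM3}\ref{prop:GM3:3} (the dichotomy for $m$ at $(\lambda_1(p),\beta_*)$) and Proposition~\ref{prop:behavior-gs-2}\ref{prop:behavior-gs-2:3}--\ref{prop:behavior-gs-2:4} — which rest on sharp lower bounds for $H_{\lambda_1(p)}$ near the ray $\mathbb{R}\varphi_p$ (an improved Poincaré inequality), and it is exactly there that the trichotomy $p<2q$, $p=2q$, $p>2q$ becomes decisive. A minor point one still needs to check is that strong $\W$-convergence in Proposition~\ref{prop:behavior-gs-2}\ref{prop:behavior-gs-2:3} indeed passes to $\En(u_n)\to\E(u)$; it does, because $\W\hookrightarrow W_0^{1,q}(\Omega)$ continuously on the bounded domain $\Omega$, so all the norms entering $H_\alpha$ and $G_{\beta_n}$ converge and $\beta_n\to\beta$.
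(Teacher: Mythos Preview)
Your proof is correct and follows essentially the same route as the paper. For \ref{thm:negative-gs-2:1} the paper does exactly your Step~2 (approximate by $(\alpha_n,\beta_n)$ in the open region, invoke Theorem~\ref{thm:negative-gs}\ref{thm:negative-gs:1} for ground states $u_n$, pass to the limit via Proposition~\ref{prop:behavior-gs-2}\ref{prop:behavior-gs-2:3}); for \ref{thm:negative-gs-2:2} the paper simply cites Proposition~\ref{Iinf}, which already contains the identity $m(\lambda_1(p),\beta_*)=d(\lambda_1(p),\beta_*)$ and the $p\lessgtr 2q$ dichotomy --- your Step~3 rederives the $m=d$ identity by the same fibering computation that appears in the proof of Proposition~\ref{Iinf}, so the content is identical.
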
 

The behavior of energy levels described in Propositions \ref{prop:behavior-gs} and \ref{prop:behavior-gs-2} indicates that \eqref{eq:D} possesses the multiplicity of positive solutions for some $\alpha > \lambda_1(p)$ and $\beta > \lambda_1(q)$. We formulate the following result in this direction. 
\begin{theorem}\label{thm:multi}
	Let $\lambda_1(p)<\alpha<\alpha_*$ and $\lambda_1(q)<\beta\le \beta_*(\alpha)$. 
	Then \eqref{eq:D} has at least two positive solutions $u_1$ and $u_2$ such that $\E(u_1)=d(\alpha,\beta)<0$, $\E(u_2)>0$ if $\beta<\beta_*(\alpha)$, and $\E(u_2)=0$ if $\beta=\beta_*(\alpha)$. 
	In particular, in the case of $\beta<\beta_*(\alpha)$, 
$u_2$ has the least energy among all solutions $w$ of \eqref{eq:D} such that $\E(w) > 0$.
\end{theorem}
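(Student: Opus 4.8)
The plan is to recover the first solution from the existence results already established and to build the second one by minimizing $\E$ on the part of the Nehari manifold where $\E$ is positive. For $u_1$ there is nothing to do: by Theorem~\ref{thm:negative-gs}\ref{thm:negative-gs:1} when $\lambda_1(q)<\beta<\beta_*(\alpha)$, and by Theorem~\ref{thm:negative-gs-2}\ref{thm:negative-gs-2:1} when $\beta=\beta_*(\alpha)$ (recall $\lambda_1(p)<\alpha<\alpha_*$), the value $d(\alpha,\beta)<0$ is attained by a positive solution $u_1$ of \eqref{eq:D}, so $\E(u_1)=d(\alpha,\beta)<0$. For the second solution I would first record the structure of $\N$ via the fibered map $\phi_u(t):=\E(tu)=\frac{t^p}{p}H_\alpha(u)+\frac{t^q}{q}G_\beta(u)$. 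On $\N$ one has $\phi_u'(1)=H_\alpha(u)+G_\beta(u)=0$, so $G_\beta(u)=-H_\alpha(u)$, $\phi_u''(1)=(p-q)H_\alpha(u)$ and $\E(u)=(\tfrac1p-\tfrac1q)H_\alpha(u)$. Hence, putting $\N^-:=\{u\in\N:H_\alpha(u)<0\}$ and $\N^+:=\{u\in\N:H_\alpha(u)>0\}$, we get $\E>0$ on $\N^-$, $\E<0$ on $\N^+$, $\E=0$ on $\N\setminus(\N^+\cup\N^-)$, and \emph{every} solution $w$ of \eqref{eq:D} with $\E(w)>0$ lies in $\N^-$. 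Set $d^-(\alpha,\beta):=\inf_{\N^-}\E$. Note $\N^-\neq\emptyset$: since $\beta\le\beta_*(\alpha)<\beta_*(\lambda_1(p))=\beta_*$ by Proposition~\ref{prop:property-curve}\ref{prop:property-curve:2},\ref{prop:property-curve:6} and $\alpha>\lambda_1(p)$, we have $H_\alpha(\varphi_p)<0$ and $G_\beta(\varphi_p)>0$, so the fiber through $\varphi_p$ meets $\N^-$ at its unique maximum point.

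\textbf{Case $\beta<\beta_*(\alpha)$.} Here the goal is that $d^-(\alpha,\beta)$ is a positive number attained at a critical point of $\E$. Positivity is exactly where $\beta<\beta_*(\alpha)$ is used: for any $u\neq0$ with $H_\alpha(u)\le0$, the definition of $\beta_*(\alpha)$ gives $\|\nabla u\|_q^q/\|u\|_q^q\ge\beta_*(\alpha)>\beta$, hence $G_\beta(u)\ge(\beta_*(\alpha)-\beta)\|u\|_q^q>0$; a short compactness argument (a sequence in $\N^-$ with $\E\to0$ has $H_\alpha\to0$, $G_\beta\to0$, and after normalizing in $L^p$ and passing to a nonzero weak limit $u$ with $H_\alpha(u)\le0$ one would get $G_\beta(u)\le0$, a contradiction) then yields $\inf_{\N^-}\|u\|_q>0$ and so $d^-(\alpha,\beta)>0$. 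For attainment take a minimizing sequence $\{u_n\}\subset\N^-$; replacing $u_n$ by $|u_n|$ we may assume $u_n\ge0$, and from $\E(u_n)=(\tfrac1p-\tfrac1q)H_\alpha(u_n)\to d^-$ both $H_\alpha(u_n)$ and $G_\beta(u_n)=-H_\alpha(u_n)$ converge to finite nonzero limits. Boundedness of $\{u_n\}$ in $\W$ follows from a rescaling argument: if $\|\nabla u_n\|_p\to\infty$ then $w_n:=u_n/\|\nabla u_n\|_p$ has a nonzero weak limit $w$ with $\|w\|_p^p=1/\alpha$, $H_\alpha(w)\le0$ and $G_\beta(w)\le0$, contradicting the displayed inequality. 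So $u_n\rightharpoonup u$ in $\W$, $u_n\to u$ in $L^p\cap L^q$, and $u\neq0$ (else $H_\alpha(u_n)\ge-\alpha\|u_n\|_p^p\to0$). Then $H_\alpha(u)\le0$, hence $G_\beta(u)>0$, the fiber through $u$ meets $\N^-$ at a unique point $\bar tu$ with $\E(\bar tu)\ge d^-$, while comparing $\phi_u$ term by term with the limiting fiber $t\mapsto\frac{t^p}{p}\lim H_\alpha(u_n)+\frac{t^q}{q}\lim G_\beta(u_n)$, whose maximum is $d^-$ and is attained at $t=1$, gives $\E(\bar tu)\le d^-$. Equality throughout forces $\bar t=1$, $u\in\N^-$, $\E(u)=d^-$ and $\|\nabla u_n\|_p\to\|\nabla u\|_p$, whence $u_n\to u$ in $\W$ by uniform convexity. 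Finally $u$ is a critical point: $\N$ is a $C^1$-manifold near $u$ because the derivative of the constraint $v\mapsto H_\alpha(v)+G_\beta(v)$ paired with $u$ equals $(p-q)H_\alpha(u)\neq0$, $\N^-$ is relatively open in $\N$, and the Lagrange rule followed by pairing with $u$ (which kills the multiplier, since $\langle\E'(u),u\rangle=0$) yields $\E'(u)=0$. So $u_2:=u$ is a nonnegative solution with $\E(u_2)=d^-(\alpha,\beta)>0$, positive by Remark~\ref{rem:positivity} since $H_\alpha(u_2)G_\beta(u_2)\neq0$; and because all positive-energy solutions lie in $\N^-$, $u_2$ has the least energy among them.

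\textbf{Case $\beta=\beta_*(\alpha)$.} Now $d^-(\alpha,\beta_*(\alpha))=0$ and it is \emph{not} attained on $\N^-$ (taking fibers with $H_\alpha(u)<0$, $G_\beta(u)\to0^+$ makes the Nehari point $\bar tu\to0$ and $\E(\bar tu)\to0$), so the minimization above collapses and I would obtain $u_2$ by a limiting procedure. Pick $\beta_n\uparrow\beta_*(\alpha)$ with $\lambda_1(q)<\beta_n<\beta_*(\alpha)$ and let $u_2^{(n)}$ be the positive solution at $(\alpha,\beta_n)$ from the previous case, with $E_{\alpha,\beta_n}(u_2^{(n)})=d^-(\alpha,\beta_n)>0$. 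Testing with an extremal $w$ for $\beta_*(\alpha)$ from Proposition~\ref{prop:property-curve}\ref{prop:property-curve:3} (so $H_\alpha(w)\le0$, $\|\nabla w\|_q^q=\beta_*(\alpha)\|w\|_q^q$) shows $d^-(\alpha,\beta_n)\to0^+$. It remains to show $\{u_2^{(n)}\}$ is bounded in $\W$ and bounded away from $0$; the second point is the crux: if $\|\nabla u_2^{(n)}\|_q\to0$, dividing the equation by $\|\nabla u_2^{(n)}\|_q$ and passing to the limit forces the rescaled functions to converge to a nonnegative eigenfunction of $-\Delta_q$ with eigenvalue $\beta_*(\alpha)$, i.e.\ to a multiple of $\varphi_q$, contradicting $\beta_*(\alpha)>\lambda_1(q)$ (Proposition~\ref{prop:property-curve}\ref{prop:property-curve:4}). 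A subsequence then converges strongly in $\W$ to a nontrivial solution $u_2$ of \eqref{eq:D} at $(\alpha,\beta_*(\alpha))$ with $\E(u_2)=\lim d^-(\alpha,\beta_n)=0$, positive again by Remark~\ref{rem:positivity} (or after passing to $|u_2|$). Alternatively one may try to build $u_2$ directly by showing that the extremal $w$ can itself be rescaled into a solution at $(\alpha,\beta_*(\alpha))$, in which case $H_\alpha(u_2)=G_{\beta_*(\alpha)}(u_2)=0$ gives $\E(u_2)=0$ at once.

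I expect the boundary case $\beta=\beta_*(\alpha)$ to be the genuine difficulty: there the natural variational quantity $d^-$ degenerates to $0$ along $\mathcal C_*$, and one must control a possibly-degenerating family $\{u_2^{(n)}\}$ — ruling out concentration and blow-up, which is exactly where Proposition~\ref{prop:property-curve}\ref{prop:property-curve:4} and the limit behavior in Proposition~\ref{prop:behavior-gs-2} do the work — or else extract enough variational information about an extremal for $\beta_*(\alpha)$ to turn it into a solution. The compactness step in the case $\beta<\beta_*(\alpha)$, upgrading weak to strong convergence of the minimizing sequence in $\N^-$ via the fibered-map comparison, is the other place needing care, but it is essentially routine.
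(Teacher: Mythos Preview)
Your construction of $u_1$, and of $u_2$ in the case $\beta<\beta_*(\alpha)$, is essentially the paper's own: your minimization over $\N^-$ is exactly Proposition~\ref{prop:miniNehari+} (there the set is written $\N\cap B_{\alpha,\beta}^+$), and the boundedness/strong-convergence mechanism you sketch matches the paper's proof of that proposition.

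The borderline case $\beta=\beta_*(\alpha)$ is where your route diverges. The paper does \emph{not} pass to the limit along $\beta_n\uparrow\beta_*(\alpha)$; instead it works directly with a nonnegative extremal $u_0$ of $\beta_*(\alpha)$ (what you list as the ``alternative''). In Lemma~\ref{lem:minimizer-sol} it first shows $H_\alpha(u_0)=0$ --- otherwise $u_0$ would be an interior constrained minimizer and hence a nonnegative $q$-eigenfunction at level $\beta_*(\alpha)>\lambda_1(q)$, which is impossible --- then applies the Lagrange rule $G_{\beta_*(\alpha)}'(u_0)=\lambda H_\alpha'(u_0)$, rules out $\lambda>0$ by a perturbation that would strictly decrease both $H_\alpha$ and $G_{\beta_*(\alpha)}$ (contradicting the definition of $\beta_*(\alpha)$), and concludes that $tu_0$ with $t=|\lambda|^{1/(p-q)}$ is a positive solution with $\E(tu_0)=0$. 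This is shorter and sidesteps all the compactness bookkeeping your limiting argument needs.

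Your limiting approach is viable, but two points need tightening. First, ``testing with the extremal $w$'' to force $d^-(\alpha,\beta_n)\to0$ does not work verbatim, because every extremal has $H_\alpha(w)=0$ (this is precisely the first step of Lemma~\ref{lem:minimizer-sol}), so $w\notin\N^-$; you must perturb $w$ in a direction $\theta$ with $\langle H_\alpha'(w),\theta\rangle<0$ before fibering. Second, the appeal to Remark~\ref{rem:positivity} for the positivity of the limit $u_2$ is misplaced when $\E(u_2)=0$: that remark requires $H_\alpha(u_2)\,G_\beta(u_2)\neq0$. Here positivity has to come from the fact that $u_2$ is a nontrivial nonnegative strong-$\W$ limit of solutions (hence itself a solution, via the $(S_+)$ property), followed by the strong maximum principle.
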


We conclude this subsection by collecting some general properties of the least energy on $\N$.
Recall that $\beta_*(\alpha) = \infty$ for $\alpha<\lambda_1(p)$ and we consider the least energy $d$ as an extended function, i.e., $d: \mathbb{R}^2 \to \mathbb{R} \cup \{\pm \infty\}$.
\begin{proposition}\label{prop:property-least-energy} 
	The following assertions are satisfied:
	\begin{enumerate}[label={\rm(\roman*)}]
		\item\label{prop:property-least-energy:1} if $\alpha\le \alpha^\prime$, $\beta\le \beta^\prime$, and $(\alpha,\beta)\neq(\alpha^\prime,\beta^\prime)$, then $d(\alpha,\beta) \ge d(\alpha^\prime,\beta^\prime)$;
		\item\label{prop:property-least-energy:2} if $\alpha\le \alpha^\prime$, $\lambda_1(q) < \beta \le \beta^\prime < \beta_*(\alpha^\prime)$, and $(\alpha,\beta)\neq(\alpha^\prime,\beta^\prime)$, then $d(\alpha,\beta)>d(\alpha^\prime,\beta^\prime)$; 
		\item\label{prop:property-least-energy:3} if $\lambda_1(p)<\alpha\le\alpha^\prime$, $\beta \le \beta^\prime < \beta_*(\alpha^\prime)$ and $(\alpha,\beta)\neq(\alpha^\prime,\beta^\prime)$, then $d(\alpha,\beta)>d(\alpha^\prime,\beta^\prime)$; 
		\item\label{prop:property-least-energy:4} $d(\alpha,\beta)$ is upper semicontinuous on $\mathbb{R}^2$; 
		\item\label{prop:property-least-energy:5} $d(\alpha,\beta)$ is continuous on the following set: 
		$$
		\mathbb{R}^2 \setminus \left(
		(\mathbb{R} \times \{\lambda_1(q)\})
		\cup 
		\mathcal{C}_*
		\cup 
		(\{\lambda_1(p)\} \times (-\infty, \lambda_1(q)])
		\right).
		$$
	\end{enumerate}
\end{proposition}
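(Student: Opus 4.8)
The plan is to build the whole argument on the fibered reduction. For $u\in\W\setminus\{0\}$ and $t>0$ put $\phi_{\alpha,\beta,u}(t):=\E(tu)=\frac{t^p}{p}H_\alpha(u)+\frac{t^q}{q}G_\beta(u)$, and keep in mind two elementary facts: \textbf{(a)} $E_{\alpha',\beta'}(u)\le\E(u)$ whenever $\alpha\le\alpha'$ and $\beta\le\beta'$, with strict inequality at every $u\neq 0$ as soon as $(\alpha,\beta)\neq(\alpha',\beta')$; \textbf{(b)} if $H_\alpha(u)$ and $G_\beta(u)$ are nonzero of opposite signs, then $\phi_{\alpha,\beta,u}$ has a unique critical point $t_{\alpha,\beta}(u)>0$, the element $t_{\alpha,\beta}(u)u$ is the only point of $\N$ on the ray $\mathbb{R}_{>0}u$, $\phi_{\alpha,\beta,u}$ attains over $(0,\infty)$ its maximum there if $H_\alpha(u)<0<G_\beta(u)$ and its minimum if $H_\alpha(u)>0>G_\beta(u)$, and $(\alpha,\beta,u)\mapsto t_{\alpha,\beta}(u)$ is continuous on the open set where this opposite-sign condition holds; I also record that on $\N$ one has $G_\beta(v)=-H_\alpha(v)$, so $\E(v)=-\frac{p-q}{pq}H_\alpha(v)$ has sign opposite to $H_\alpha(v)$. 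For \textup{(i)--(iii)}, fix $(\alpha,\beta)\le(\alpha',\beta')$ and $v\in\N$; then $H_{\alpha'}(v)+G_{\beta'}(v)\le H_\alpha(v)+G_\beta(v)=0$. If $H_{\alpha'}(v)$ and $G_{\beta'}(v)$ are of opposite signs, project by (b) to $w:=t_{\alpha',\beta'}(v)v\in\mathcal{N}_{\alpha',\beta'}$; using (a), (b) and that $t=1$ is the extremal point of $\phi_{\alpha,\beta,v}$ (since $v\in\N$) one gets $E_{\alpha',\beta'}(w)\le\E(w)\le\max_{t>0}\phi_{\alpha,\beta,v}(t)=\E(v)$ in the max case and $E_{\alpha',\beta'}(w)=\min_{t>0}\phi_{\alpha',\beta',v}(t)\le\min_{t>0}\phi_{\alpha,\beta,v}(t)=\E(v)$ in the min case, so $d(\alpha',\beta')\le\E(v)$. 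If instead $H_{\alpha'}(v)\le 0$, $G_{\beta'}(v)\le 0$ with one strictly negative, then $v$ is admissible for $\beta_*(\alpha')$ and $\beta_*(\alpha')\le\|\nabla v\|_q^q/\|v\|_q^q\le\beta'$; if $\beta'>\beta_*(\alpha')$ then $d(\alpha',\beta')=-\infty$ (Theorem \ref{thm:negative-gs}\,(ii)), while the borderline $\beta'=\beta_*(\alpha')$ (forcing $G_{\beta'}(v)=0$, $H_{\alpha'}(v)<0$) is settled by sign-bookkeeping on $\E(v)=-\frac{p-q}{pq}H_\alpha(v)$ and the known values of $d$ on $\mathcal{C}_*$ and on $\mathbb{R}\times\{\lambda_1(q)\}$ (Theorems \ref{thm:negative-gs-2}, \ref{prop:MP-Nehari-resonant}). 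Taking the infimum over $v\in\N$ gives (i). For (ii)--(iii) the hypotheses force $\beta<\beta_*(\alpha)$ (Proposition \ref{prop:property-curve} and the convention $\beta_*\equiv\infty$ below $\lambda_1(p)$), so $d(\alpha,\beta)$ is attained by a ground state $v$ (Propositions \ref{prop:GM1}, \ref{prop:GM3}, Theorems \ref{thm:MP-Nehari}, \ref{prop:MP-Nehari-resonant}, \ref{thm:negative-gs}); for such $v$ the degenerate alternative is impossible (it would force $\beta'\ge\beta_*(\alpha')$), so the projection applies and (a) is \emph{strict}, whence $d(\alpha',\beta')<\E(v)=d(\alpha,\beta)$.

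For upper semicontinuity \textup{(iv)}, let $(\alpha_n,\beta_n)\to(\alpha_0,\beta_0)$. If $d(\alpha_0,\beta_0)=+\infty$ there is nothing to prove; otherwise I claim that for each $\varepsilon>0$ (and each $M>0$ if $d(\alpha_0,\beta_0)=-\infty$) there is a competitor $v\in\mathcal{N}_{\alpha_0,\beta_0}$ with $E_{\alpha_0,\beta_0}(v)<d(\alpha_0,\beta_0)+\varepsilon$ (resp. $<-M$) \emph{and} $H_{\alpha_0}(v)\neq 0$. Indeed, a near-optimal element of negative energy automatically satisfies $H_{\alpha_0}(v)\neq 0$; and the only case with $d(\alpha_0,\beta_0)\ge 0$ is $\beta_0=\lambda_1(q)$, $\alpha_0\ge\alpha_*$, where by Theorem \ref{prop:MP-Nehari-resonant} the level $0$ is either not attained (so near-optimal elements already have nonzero $H_{\alpha_0}$) or attained only at multiples of $\varphi_q$, in which case one perturbs $\varphi_q$ in a direction $z$ with $\langle H_{\alpha_0}'(\varphi_q),z\rangle<0$ — available because $\varphi_q$ is not a $p$-eigenfunction by Lemma \ref{lem:LID}, and such $z$ is then automatically not a multiple of $\varphi_q$ — so that $H_{\alpha_0}(\varphi_q+sz)<0<G_{\beta_0}(\varphi_q+sz)$ for small $s>0$, and then projects by (b) to obtain $v$ with $H_{\alpha_0}(v)\neq 0$ and $E_{\alpha_0,\beta_0}(v)=-\frac{p-q}{pq}H_{\alpha_0}(\varphi_q+sz)\to 0$ as $s\downarrow 0$. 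For any such $v$ the opposite-sign condition holds, so for $n$ large it holds for $(\alpha_n,\beta_n,v)$ with $t_{\alpha_n,\beta_n}(v)\to 1$, whence $d(\alpha_n,\beta_n)\le E_{\alpha_n,\beta_n}(t_{\alpha_n,\beta_n}(v)v)\to E_{\alpha_0,\beta_0}(v)$; letting $\varepsilon\downarrow 0$ (resp. $M\uparrow\infty$) gives $\limsup_n d(\alpha_n,\beta_n)\le d(\alpha_0,\beta_0)$.

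For continuity \textup{(v)}, by (iv) it remains to prove lower semicontinuity on the indicated set. On $\{\alpha<\lambda_1(p),\,\beta<\lambda_1(q)\}$ one has $d\equiv+\infty$ (Lemma \ref{lem:nonempty-Nehari}) and on $\{\alpha\ge\lambda_1(p),\,\beta>\beta_*(\alpha)\}$ one has $d\equiv-\infty$ (Theorem \ref{thm:negative-gs}\,(ii)); on $\{\alpha<\lambda_1(p),\,\beta>\lambda_1(q)\}$ one has $d=m$, continuous by Proposition \ref{prop:conti-minimum-value}; at $(\lambda_1(p),\beta)$ with $\beta>\lambda_1(q)$ and $\beta\neq\beta_*$ one splits a $\liminf$-sequence by the sign of $\alpha_n-\lambda_1(p)$ and invokes $d=m$ on the left side and the ground-state convergence of Propositions \ref{prop:GM2}, \ref{prop:GM3}, \ref{prop:behavior-gs-2}\,(ii) on the right side (for $n$ large the right side lies inside $\{\lambda_1(p)<\alpha<\alpha_*,\ \lambda_1(q)<\beta<\beta_*(\alpha)\}$ when $\beta<\beta_*$, and keeps $d=-\infty$ when $\beta>\beta_*$). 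The only remaining points of the set lie in the open regions $\{\alpha>\lambda_1(p),\ \beta<\lambda_1(q)\}$ and $\{\alpha>\lambda_1(p),\ \lambda_1(q)<\beta<\beta_*(\alpha)\}$, where $d$ is finite and attained by ground states $u_n$ at $(\alpha_n,\beta_n)$ (Theorems \ref{thm:MP-Nehari}, \ref{thm:negative-gs}\,(i)). Passing to a subsequence realising $\liminf_n d(\alpha_n,\beta_n)$, I would use (i) to trap $d(\alpha_n,\beta_n)$ between two fixed values of $d$ in the same region, invoke the improved Poincar\'e / coercivity estimates of Section \ref{sec:preliminaries} to show $\{u_n\}$ is bounded in $\W$ and bounded away from $0$, extract $u_n\rightharpoonup u_0\neq 0$, and pass to the limit by weak lower semicontinuity of $\|\nabla(\cdot)\|_p^p+\|\nabla(\cdot)\|_q^q$ together with compactness of $\W\hookrightarrow L^p(\Omega)\cap L^q(\Omega)$, obtaining $H_{\alpha_0}(u_0)+G_{\beta_0}(u_0)\le 0$ and $E_{\alpha_0,\beta_0}(u_0)\le\liminf_n E_{\alpha_n,\beta_n}(u_n)$. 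Projecting $u_0$ onto $\mathcal{N}_{\alpha_0,\beta_0}$ — in the region $\beta<\lambda_1(q)$, where $u_0$ falls in the max case and the projection only decreases the energy; in the region $\lambda_1(q)<\beta<\beta_*(\alpha)$, where one first upgrades the weak convergence of the gradients to strong convergence via the $(S_+)$ property of $-\Delta_p-\Delta_q$, so that $u_0\in\mathcal{N}_{\alpha_0,\beta_0}$ and the energy passes to the limit — then yields $d(\alpha_0,\beta_0)\le E_{\alpha_0,\beta_0}(u_0)\le\liminf_n d(\alpha_n,\beta_n)$.

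The crux of the whole proof is the last step of (v): obtaining the uniform a priori bounds on the ground states $u_n$ — both boundedness in $\W$ and a bound away from $0$ — as $(\alpha,\beta)$ ranges over a full neighbourhood of an interior point, and then enough compactness that no energy is lost in the limit, i.e.\ that the weak limit $u_0$ genuinely lies on $\mathcal{N}_{\alpha_0,\beta_0}$ at level $\liminf_n d(\alpha_n,\beta_n)$; this is exactly where the Poincar\'e/coercivity machinery of Section \ref{sec:preliminaries} has to be used carefully. A secondary technical nuisance is the perturbation step in (iv) that bypasses the degenerate fibres $H_{\alpha_0}(v)=G_{\beta_0}(v)=0$, on which the projection onto nearby Nehari manifolds is an indeterminate $0/0$ limit — the same degeneracy being precisely what makes $d$ fail to be continuous along $\mathcal{C}_*$, $\mathbb{R}\times\{\lambda_1(q)\}$ and $\{\lambda_1(p)\}\times(-\infty,\lambda_1(q)]$, and hence the reason those sets are excluded.
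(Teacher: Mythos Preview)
Your treatment of \ref{prop:property-least-energy:1}--\ref{prop:property-least-energy:4} is essentially correct and close in spirit to the paper, though organized differently: the paper proves \ref{prop:property-least-energy:2} and \ref{prop:property-least-energy:3} first (using a ground state at $(\alpha,\beta)$ and projecting onto $\mathcal{N}_{\alpha',\beta'}$, exactly your fibered reduction), and then derives \ref{prop:property-least-energy:1} by partitioning the plane into four regions $A,B,C,D$ according to the known sign of $d$, so that most comparisons are trivial and the nontrivial ones reduce to \ref{prop:property-least-energy:2}--\ref{prop:property-least-energy:3}. Your direct case-by-case argument for \ref{prop:property-least-energy:1} works but is longer. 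One minor slip in \ref{prop:property-least-energy:4}: your sentence ``the only case with $d(\alpha_0,\beta_0)\ge 0$ is $\beta_0=\lambda_1(q)$, $\alpha_0\ge\alpha_*$'' is false (one also has $d>0$ on $\{\alpha>\lambda_1(p),\ \beta<\lambda_1(q)\}$), but there every near-optimal $v$ has $\E(v)>0$ and hence $H_{\alpha_0}(v)\neq 0$ automatically, so the conclusion survives.

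There is, however, a genuine gap in your argument for \ref{prop:property-least-energy:5}, in the region $\{\alpha>\lambda_1(p),\ \beta<\lambda_1(q)\}$. You write that $u_0$ ``falls in the max case and the projection only decreases the energy''. This is the wrong direction: when $H_{\alpha_0}(u_0)<0<G_{\beta_0}(u_0)$ the projected point $t_0u_0\in\mathcal{N}_{\alpha_0,\beta_0}$ is the \emph{maximum} of $t\mapsto E_{\alpha_0,\beta_0}(tu_0)$, so $E_{\alpha_0,\beta_0}(t_0u_0)\ge E_{\alpha_0,\beta_0}(u_0)$. From $d(\alpha_0,\beta_0)\le E_{\alpha_0,\beta_0}(t_0u_0)$ and $E_{\alpha_0,\beta_0}(u_0)\le\liminf_n d(\alpha_n,\beta_n)$ you therefore cannot deduce $d(\alpha_0,\beta_0)\le\liminf_n d(\alpha_n,\beta_n)$. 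The paper's fix (Lemma~\ref{lem:donti-d}) is to use the $(S_+)$ property of $-\Delta_p-\Delta_q$ in \emph{both} open regions, not only in $\{\lambda_1(q)<\beta<\beta_*(\alpha)\}$: since each $u_n$ is a solution one has $\En'(u_n)=0$, so $u_n\to u_0$ \emph{strongly} in $\W$, hence $u_0\in\mathcal{N}_{\alpha_0,\beta_0}$ on the nose and $E_{\alpha_0,\beta_0}(u_0)=\lim_n\En(u_n)$. (An alternative repair that stays at the variational level is the chain $\liminf_n\En(u_n)\ge\liminf_n\En(t_0u_n)\ge E_{\alpha_0,\beta_0}(t_0u_0)\ge d(\alpha_0,\beta_0)$, using that $t=1$ is the maximum of $\En(tu_n)$ and weak lower semicontinuity at the fixed $t_0$; but that is not what you wrote.) Relatedly, your phrase ``invoke the improved Poincar\'e / coercivity estimates of Section~\ref{sec:preliminaries}'' is too vague for the required bounds on $\{u_n\}$: the paper uses the blow-up Lemmas~\ref{lem:bdd-PS} and~\ref{lem:bdd-PS-2} (unboundedness forces $\alpha=\lambda_1(p)$; convergence to $0$ forces $\beta=\lambda_1(q)$), which are contradicted precisely because $(\alpha_0,\beta_0)$ lies in the open interior of the region.
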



\section{Preliminaries}\label{sec:preliminaries}
We start by noting that 
\begin{equation}\label{eq:Nehari}
\E(u)=-\frac{p-q}{pq}H_\alpha(u)=\frac{p-q}{pq}G_\beta(u) 
\qquad {\rm for\ any}\ u \in \N. 
\end{equation}
Thus, for any $u \in \N$ we see that $\E(u) \le 0$ (resp.\ $\E(u) \ge 0$) if and only if $G_\beta(u) \le 0 \le H_\alpha(u)$ (resp.\ $G_\beta(u) \ge 0 \ge H_\alpha(u)$). 

\begin{proposition}[\protect{\cite[Proposition~6]{BobkovTanaka2015}} and \protect{\cite[Lemma~2.1]{BobkovTanaka2016}}]\label{prop:minpoint}
Let $u \in \W$. If $H_\alpha(u) \cdot G_\beta(u) < 0$, then there exists a unique extrema point $t(u) > 0$ of $\E(t u)$ with respect to $t > 0$, and $t(u) u \in \N$.
In particular, if
\begin{equation*}
G_\beta(u) < 0 < H_\alpha(u) \quad ({\it resp.}\ G_\beta(u) > 0 > H_\alpha(u)), 
\end{equation*}
then $t(u)$ is the unique minimum (resp.\ maximum) point of  $\E(t u)$ with respect to $t > 0$, and $\E(t(u) u) < 0$ (resp.\ $\E(t(u) u) > 0$).
\end{proposition}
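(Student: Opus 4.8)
The plan is to reduce the statement to an elementary analysis of the one–variable fibering map $\phi(t):=\E(tu)$, $t>0$, exploiting the homogeneity of $H_\alpha$ and $G_\beta$. Since $H_\alpha(tu)=t^pH_\alpha(u)$ and $G_\beta(tu)=t^qG_\beta(u)$, we have
\[
\phi(t)=\frac{t^p}{p}H_\alpha(u)+\frac{t^q}{q}G_\beta(u),
\qquad
\phi'(t)=t^{q-1}\bigl(t^{p-q}H_\alpha(u)+G_\beta(u)\bigr),
\]
and moreover $t\,\phi'(t)=H_\alpha(tu)+G_\beta(tu)=\langle\E'(tu),tu\rangle$, so the positive critical points of $\phi$ are exactly the numbers $t>0$ for which $tu\in\N$.

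First I would locate the critical points. For $t>0$ the equation $\phi'(t)=0$ is equivalent to $t^{p-q}=-G_\beta(u)/H_\alpha(u)$; by the hypothesis $H_\alpha(u)\cdot G_\beta(u)<0$ the right–hand side is a positive real number, and since $p>q$ the function $t\mapsto t^{p-q}$ is a strictly increasing bijection of $(0,\infty)$ onto itself. Hence there is exactly one admissible value, namely $t(u)=\bigl(-G_\beta(u)/H_\alpha(u)\bigr)^{1/(p-q)}>0$, which is the unique extrema point of $\phi$ on $(0,\infty)$, and by the identity above $t(u)u\in\N$.

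Finally I would identify the type of $t(u)$ and the sign of the critical value. The sign of $\phi'(t)$ for $t>0$ coincides with that of $\psi(t):=t^{p-q}H_\alpha(u)+G_\beta(u)$. In the case $G_\beta(u)<0<H_\alpha(u)$, $\psi$ is strictly increasing with $\psi(0^+)<0<\psi(+\infty)$, so $\phi$ strictly decreases on $(0,t(u))$ and strictly increases on $(t(u),\infty)$, whence $t(u)$ is the unique minimum point; the reverse monotonicity of $\psi$ in the case $G_\beta(u)>0>H_\alpha(u)$ makes $t(u)$ the unique maximum point. The strict sign of $\E(t(u)u)$ then follows either from $\phi(0^+)=0$ together with the monotonicity just established, or directly from \eqref{eq:Nehari}, which gives $\E(t(u)u)=-\tfrac{p-q}{pq}H_\alpha(t(u)u)=-\tfrac{p-q}{pq}t(u)^pH_\alpha(u)$. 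There is no genuine obstacle here; the only points needing care are using $p>q$ to ensure both the uniqueness of the critical point and the invertibility of $t\mapsto t^{p-q}$, and invoking \eqref{eq:Nehari} (or the monotonicity of $\phi$) to fix the sign of the critical level.
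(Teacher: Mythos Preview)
Your argument is correct and is exactly the standard fibering-map analysis one would expect; the paper itself does not supply a proof of this proposition but merely cites it from \cite[Proposition~6]{BobkovTanaka2015} and \cite[Lemma~2.1]{BobkovTanaka2016}, so there is nothing further to compare against. Your explicit formula $t(u)=\bigl(-G_\beta(u)/H_\alpha(u)\bigr)^{1/(p-q)}$ in fact matches the expression \eqref{tu} that the paper records and uses later.
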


Let us now prove Lemma~\ref{lem:nonempty-Nehari}.
\begin{lemma}\label{lem:nonempty-Nehari2} 
	$\N \not = \emptyset$ if and only if $(\alpha,\beta) \in \mathbb{R}^2 \setminus (-\infty,\lambda_1(p)] \times (-\infty,\lambda_1(q)]$.
\end{lemma}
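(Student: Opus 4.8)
The statement is an equivalence, so I would prove the two implications separately, and it is cleaner to argue the contrapositive form stated in Lemma~\ref{lem:nonempty-Nehari2}: namely, $\N \neq \emptyset$ precisely when $(\alpha,\beta)$ lies \emph{outside} the ``lower-left'' quadrant $(-\infty,\lambda_1(p)] \times (-\infty,\lambda_1(q)]$. The key structural fact I would lean on is \eqref{eq:Nehari} together with Proposition~\ref{prop:minpoint}: if some $u \in \W$ has $H_\alpha(u)$ and $G_\beta(u)$ of opposite (strict) signs, then the fiber $t \mapsto \E(tu)$ has an interior extremum, which produces a point on $\N$. So the whole game reduces to deciding, for a given $(\alpha,\beta)$, whether there exists $u$ with $H_\alpha(u) \cdot G_\beta(u) < 0$ (or the easier case where one of them vanishes while admitting a sign change nearby).

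\textbf{The ``only if'' direction (nonemptiness forces $(\alpha,\beta)$ out of the quadrant).} Suppose $\alpha \le \lambda_1(p)$ and $\beta \le \lambda_1(q)$; I must show $\N = \emptyset$. Take any $u \in \W \setminus \{0\}$. By Lemma~\ref{lem:eigenvalue}\ref{lem:eigenvalue:1}--\ref{lem:eigenvalue:2} we have $H_\alpha(u) \ge 0$ and $G_\beta(u) \ge 0$, hence $H_\alpha(u) + G_\beta(u) \ge 0$, and this sum vanishes only if $H_\alpha(u) = G_\beta(u) = 0$, which by the equality cases in Lemma~\ref{lem:eigenvalue} forces simultaneously $\alpha = \lambda_1(p)$, $u \in \mathbb{R}\varphi_p$ and $\beta = \lambda_1(q)$, $u \in \mathbb{R}\varphi_q$. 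But $\varphi_p$ and $\varphi_q$ are linearly independent by Lemma~\ref{lem:LID}, so no nonzero $u$ can lie in both lines. Therefore $\langle \E'(u), u \rangle = H_\alpha(u) + G_\beta(u) > 0$ for every $u \neq 0$, so $\N = \emptyset$.

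\textbf{The ``if'' direction (outside the quadrant forces $\N \neq \emptyset$).} Now assume $(\alpha,\beta)$ is not in the quadrant, i.e.\ $\alpha > \lambda_1(p)$ \emph{or} $\beta > \lambda_1(q)$. In the first case, pick $u = \varphi_p$: then $H_\alpha(\varphi_p) = \|\nabla\varphi_p\|_p^p - \alpha\|\varphi_p\|_p^p = (\lambda_1(p) - \alpha)\|\varphi_p\|_p^p < 0$, while $G_\beta(\varphi_p) = \|\nabla\varphi_p\|_q^q - \beta\|\varphi_p\|_q^q$; if this is positive we are in the regime $G_\beta > 0 > H_\alpha$ of Proposition~\ref{prop:minpoint} and obtain $t(\varphi_p)\varphi_p \in \N$; if $G_\beta(\varphi_p) \le 0$ then $H_\alpha(\varphi_p) + G_\beta(\varphi_p) < 0$ while $H_\alpha(t\varphi_p) + G_\beta(t\varphi_p) = t^p H_\alpha(\varphi_p) + t^q G_\beta(\varphi_p)$, which for small $t > 0$ is dominated by the $t^q$ term; one has to check the sign there, and in the subcase $G_\beta(\varphi_p) < 0$ both terms are negative so this does not immediately give a zero — instead I would take a different test function, or observe that scaling $u = s\varphi_p$ with large $s$ makes $H_\alpha + G_\beta \to -\infty$ while a suitable competitor (e.g.\ a function with small $\|\nabla u\|$ relative norms, or using that $\varphi_q$-type functions give $H_\alpha, G_\beta$ both controllable) yields the opposite sign, and then continuity of $s \mapsto H_\alpha(\gamma(s)) + G_\beta(\gamma(s))$ along a path $\gamma$ in $\W\setminus\{0\}$ connecting the two produces a zero by the intermediate value theorem. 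The symmetric case $\beta > \lambda_1(q)$ uses $u = \varphi_q$ with $G_\beta(\varphi_q) < 0$ in the same way.

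\textbf{Main obstacle.} The delicate point is the ``if'' direction in the degenerate subcases where the chosen eigenfunction test function makes \emph{both} $H_\alpha$ and $G_\beta$ non-positive (e.g.\ $\alpha > \lambda_1(p)$ but also $\beta$ large enough that $G_\beta(\varphi_p) \le 0$), so that Proposition~\ref{prop:minpoint} does not apply directly to that function. The clean fix is to exhibit, in $\W\setminus\{0\}$, a continuous path (or just two endpoints) along which $H_\alpha(\cdot) + G_\beta(\cdot)$ changes sign: at one end the sum is strictly negative (scale up an eigenfunction associated to the supercritical parameter), and at the other end it is strictly positive — for this I would use that on the one-dimensional space $\mathbb{R}\varphi_r$ the competing parameter is subcritical, or more robustly that for $u$ with $\|\nabla u\|_r$ bounded below and $\|u\|_r$ small (which exists since the embeddings are not isometries) the corresponding $H$ or $G$ is positive. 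Connectedness of $\W \setminus \{0\}$ (true since $\dim \W = \infty$, or simply since we can route the path through a ``far'' direction avoiding $0$) then gives an intermediate zero, hence a point of $\N$. Everything else is a direct application of Lemma~\ref{lem:eigenvalue}, Lemma~\ref{lem:LID}, and Proposition~\ref{prop:minpoint}.
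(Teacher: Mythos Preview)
Your ``only if'' direction is essentially identical to the paper's: both use Lemma~\ref{lem:eigenvalue} to force $H_\alpha(u)=G_\beta(u)=0$ in the quadrant and then rule out the corner case via Lemma~\ref{lem:LID}.

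In the ``if'' direction the overall shape is the same (test with an eigenfunction; when that fails, invoke the intermediate value theorem along a path in $\W\setminus\{0\}$), but the key step differs. You propose applying the IVT directly to the Nehari functional $u\mapsto H_\alpha(u)+G_\beta(u)$, so that the resulting zero lies in $\N$ immediately. The paper instead applies the IVT to $H_\alpha$ alone to find $u\ge 0$ with $H_\alpha(u)=0$, and then uses that such $u$ is a \emph{regular point} of $H_\alpha$ (since $\alpha>\lambda_1(p)$) to perturb: a direction $\theta$ with $\langle H_\alpha'(u),\theta\rangle\neq 0$ lets one push $H_\alpha$ slightly to whichever sign is needed to oppose $G_\beta$, and then Proposition~\ref{prop:minpoint} produces a point of $\N$. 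Your route is shorter and avoids the perturbation argument entirely; the paper's route is more elaborate but showcases the regular-point perturbation technique that is reused several times later (e.g.\ in Lemmas~\ref{lem:seq-to-negative-infty}, \ref{lem:seq-to-zero}, \ref{lem:minimizer-sol}).

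One small point to tighten: your description of the positive endpoint for the IVT (``$\|\nabla u\|_r$ bounded below and $\|u\|_r$ small'') is not the right formulation, since both quantities scale together. What you actually need is any $u$ with $\|\nabla u\|_q^q/\|u\|_q^q>\beta$ (such $u$ exist because the Rayleigh quotient is unbounded above); then $G_\beta(u)>0$, and since $p>q$ the sum $t^pH_\alpha(u)+t^qG_\beta(u)$ is strictly positive for small $t>0$. With this fix, the argument is complete.
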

\begin{proof} 
	Assume first that $\N \not = \emptyset$. If $u \in \N$, then we apply the Poincar\'e inequality to get
	\begin{equation*}
	(\lambda_1(p) - \alpha) \|u\|_p^p \leq H_\alpha(u) = -G_\beta(u) \leq (\beta - \lambda_1(q)) \|u\|_q^q,
	\end{equation*}
	which implies that either $(\alpha, \beta) \in \mathbb{R}^2 \setminus (-\infty,\lambda_1(p)] \times (-\infty,\lambda_1(q)]$ or $(\alpha, \beta) = (\lambda_1(p), \lambda_1(q))$. 
	In the second case, we derive that $H_{\lambda_1(p)}(u) = G_{\lambda_1(q)} = 0$, and hence $u$ is a first eigenfunction of the $p$-Laplacian and $q$-Laplacian, simultaneously.
	However, it contradicts Lemma~\ref{lem:LID}, and hence the first case is the only possible. 
		
	Assume now that $(\alpha,\beta) \in \mathbb{R}^2 \setminus (-\infty,\lambda_1(p)] \times (-\infty,\lambda_1(q)]$. 
	We distinguish two cases:
	
	1. $\alpha \leq \lambda_1(p)$ and $\beta > \lambda_1(q)$. 
	In view of Lemma~\ref{lem:LID}, we have $G_\beta(\varphi_q) < 0 < H_\alpha(\varphi_q)$. Hence, Proposition~\ref{prop:minpoint} ensures the nonemptiness of $\N$. 
	
	2. $\alpha > \lambda_1(p)$. Take any $u \in \W \setminus \{0\}$ satisfying $H_\alpha(u)=0$. (The existence of such $u$ can be shown by applying the intermediate value theorem to a continuous path connecting $\E^{-1}(-\infty,0)$ and $\E^{-1}(0, \infty)$ in $\W\setminus\{0\}$.) Moreover, taking $|u|$ if necessary, we may assume that $u\ge 0$. 
	Consider three cases:
	
	(i) $G_\beta(u)=0$. In this case, we have $u \in \N$, that is, $\N \neq \emptyset$. 
	
	(ii) $G_\beta(u)<0$. Note that $u$ is a regular point of $H_\alpha$ since $\alpha>\lambda_1(p)$ and $u \ge 0$. Thus, there exists $\theta \in \W$ such that $\langle H^\prime_\alpha(u), \theta \rangle>0$, and hence $\langle H^\prime_\alpha(\cdot), \theta \rangle>0$ in a neighborhood of $u$. Therefore, we have $H_\alpha(u+t\theta)=\int_0^t \langle H_\alpha^\prime(u+s\theta), \theta \rangle \, ds > 0$ for sufficiently small $t>0$. Moreover, since $G_\beta(u)<0$, we can choose $t>0$ smaller, if necessary, to get $G_\beta(u+t\theta) < 0 < H_\alpha(u+t\theta)$. Hence, applying Proposition \ref{prop:minpoint}, we see that $\N \neq \emptyset$.
	
	(iii) $G_\beta(u)>0$. Arguing as above, we can find $\theta \in \W$ satisfying $\langle H_\alpha^\prime(u),\theta \rangle < 0$, and hence $G_\beta(u+t\theta)>0>H_\alpha(u+t\theta)$ for $t>0$ small enough. Therefore, Proposition \ref{prop:minpoint} leads to the desired conclusion. 
\end{proof}

\subsection{Behavior of sequences} 

The following two lemmas are similar to \cite[Lemma~3.3]{BobkovTanaka2016} and will be needed for further arguments. 
\begin{lemma}\label{lem:bdd-PS}
Let $\{\alpha_n\}_{n \in \mathbb{N}}$, $\{\beta_n\}_{n \in \mathbb{N}} \subset \mathbb{R}$, and $\{u_n\}_{n \in \mathbb{N}}\subset \W \setminus \{0\}$ be sequences satisfying 
\begin{equation*}\label{eq:bdd-PS-0} 
\alpha_n\to\alpha,
\quad \beta_n\to\beta,
\quad \|u_n\|_p \to \infty,
\quad {\rm and} \quad 
\frac{\|\En'(u_n)\|_{(\W)^*}}{\|u_n\|_p^{p-1}} \to 0
\end{equation*}
as $n \to \infty$. 
Then the sequence $\{v_n\}_{n \in \mathbb{N}}$, where $v_n := u_n/\|u_n\|_p$ for $n \in \mathbb{N}$, has a subsequence strongly convergent in $\W$ to some $v_0 \in ES(p;\alpha)\setminus\{0\}$, that is, $\alpha\in\sigma(-\Delta_p)$. 

In particular, if $u_n$ is nonnegative for $n \in \mathbb{N}$, then $v_0 = \varphi_p/\|\varphi_p\|_p$ and  $\alpha=\lambda_1(p)$. 
\end{lemma}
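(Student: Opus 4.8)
The plan is to exploit the $p$-homogeneity of $-\Delta_p$: after dividing the equation by the natural factor $\|u_n\|_p^{p-1}$, all $q$-order terms acquire a factor $\|u_n\|_p^{q-p}\to 0$ (since $q<p$), so $v_n = u_n/\|u_n\|_p$ becomes an almost-solution of $-\Delta_p v = \alpha|v|^{p-2}v$; strong convergence and the eigenfunction equation are then extracted from the $(S_+)$ property of the $p$-Laplacian on $\W$.

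Write $\varepsilon_n := \|\En'(u_n)\|_{(\W)^*}/\|u_n\|_p^{p-1}$, so $\varepsilon_n\to 0$. First I would show that $\{v_n\}$ is bounded in $\W$. Testing $\En'(u_n)$ with $\varphi = u_n$, dividing by $\|u_n\|_p^p$, using $u_n = \|u_n\|_p v_n$ and $|\langle \En'(u_n),u_n\rangle| \le \|\En'(u_n)\|_{(\W)^*}\|\nabla u_n\|_p$, one obtains
\[
\Big| \|\nabla v_n\|_p^p - \alpha_n + \|u_n\|_p^{q-p}\big(\|\nabla v_n\|_q^q - \beta_n\|v_n\|_q^q\big)\Big| \le \varepsilon_n \|\nabla v_n\|_p .
\]
Since $\|v_n\|_p = 1$, Hölder's inequality on the bounded domain $\Omega$ gives $\|v_n\|_q \le C$ and $\|\nabla v_n\|_q \le C\|\nabla v_n\|_p$ with $C$ depending only on $\Omega, p, q$, while $\|u_n\|_p^{q-p}\to 0$. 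If $\|\nabla v_n\|_p\to\infty$ along a subsequence, dividing the displayed inequality by $\|\nabla v_n\|_p^p$ forces its left-hand side to tend to $1$ and its right-hand side to $0$, a contradiction. Hence $\{v_n\}$ is bounded in $\W$, and up to a subsequence $v_n\rightharpoonup v_0$ in $\W$; by compactness of $\W\hookrightarrow L^p(\Omega)$ we get $v_n\to v_0$ in $L^p(\Omega)\cap L^q(\Omega)$, so $\|v_0\|_p = 1$ and in particular $v_0\neq 0$.

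To upgrade to strong convergence I would test the equation for $u_n$ with $\varphi = v_n - v_0$ and divide by $\|u_n\|_p^{p-1}$. The left-hand side is bounded by $\varepsilon_n\|\nabla(v_n-v_0)\|_p\to 0$; the $q$-gradient term carries the factor $\|u_n\|_p^{q-p}\to 0$; the term $\alpha_n\intO |v_n|^{p-2}v_n(v_n-v_0)\,dx$ vanishes because $|v_n|^{p-2}v_n\to|v_0|^{p-2}v_0$ in $L^{p'}(\Omega)$ while $v_n-v_0\to 0$ in $L^p(\Omega)$; and the remaining $q$-order term again carries $\|u_n\|_p^{q-p}$. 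Therefore $\intO |\nabla v_n|^{p-2}\nabla v_n\nabla(v_n-v_0)\,dx \to 0$, and since $\intO |\nabla v_0|^{p-2}\nabla v_0\nabla(v_n-v_0)\,dx\to 0$ by weak convergence, the $(S_+)$ property of $-\Delta_p$ on $\W$ yields $v_n\to v_0$ strongly in $\W$. Passing to the limit in the tested identity for an arbitrary $\varphi\in\W$ — the $q$-terms dropping out via $\|u_n\|_p^{q-p}$ — gives $\intO |\nabla v_0|^{p-2}\nabla v_0\nabla\varphi\,dx = \alpha\intO |v_0|^{p-2}v_0\varphi\,dx$, i.e.\ $v_0\in ES(p;\alpha)\setminus\{0\}$ and $\alpha\in\sigma(-\Delta_p)$. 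Finally, if $u_n\ge 0$ then $v_n\ge 0$, hence $v_0\ge 0$ by the $L^p$-convergence; a nontrivial sign-definite eigenfunction of the $p$-Laplacian is possible only for $\lambda_1(p)$ (cf.\ \cite{anane1987} and the discussion around \eqref{eq:lambdar}), so $\alpha=\lambda_1(p)$ and, by the normalization $\|v_0\|_p=1$, $v_0 = \varphi_p/\|\varphi_p\|_p$.

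The main obstacle I anticipate is purely the scaling bookkeeping: one must verify that dividing by $\|u_n\|_p^{p-1}$ in the equation (and by $\|u_n\|_p^p$ in the energy identity used for boundedness) leaves every $q$-contribution multiplied by $\|u_n\|_p^{q-p}\to 0$, while the $p$-contributions retain their natural coefficients; the Hölder comparison of $\|\cdot\|_q$ with $\|\cdot\|_p$ on the bounded domain is what keeps the $q$-gradient terms controlled. Once this is arranged, the compactness input (the $(S_+)$ property) and the structural fact about sign-definite eigenfunctions of $-\Delta_p$ finish the argument routinely.
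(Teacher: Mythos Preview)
Your proof is correct and follows essentially the same route as the paper's own proof: rescale by $\|u_n\|_p^{p-1}$, use the resulting identity to bound $\{v_n\}$ in $\W$, extract a weak limit with $\|v_0\|_p=1$, apply the $(S_+)$ property of $-\Delta_p$ by testing against $v_n-v_0$, and pass to the limit with an arbitrary test function to identify $v_0$ as an eigenfunction. The only cosmetic difference is in the boundedness step, where the paper simply drops the nonnegative term $\|u_n\|_p^{q-p}\|\nabla v_n\|_q^q$ to obtain a one-sided inequality, while you keep it and control it via H\"older and a contradiction argument; both are equally valid.
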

\begin{proof} 
Note first that $\{v_n\}_{n \in \mathbb{N}}$ is bounded in $\W$ due to the following inequalities: 
\begin{align*}
o(1)\|\nabla v_n\|_p \ge 
\left< \frac{\En^\prime(u_n)}{\|u_n\|_p^{p-1}}, v_n \right>
&\ge \|\nabla v_n\|_p^p-|\alpha_n|\|v_n\|_p^p 
-\frac{|\beta_n|}{\|u_n\|_p^{p-q}}\|v_n\|_q^q \\
&\ge \|\nabla v_n\|_p^p-|\alpha_n| 
-\frac{|\beta_n|}{\|u_n\|_p^{p-q}}|\Omega|^{1-q/p}, 
\end{align*}
where $o(1)\to 0$ as $n\to\infty$ and $|\Omega|$ denotes the Lebesgue measure of $\Omega$. (The last estimate is obtained by the H\"older inequality.)
Therefore, we may suppose that, up to a subsequence, $v_n \rightharpoonup v_0$ in $\W$ and $v_n \to v_0$ in $L^p(\Omega)$, where $v_0\in \W$ is such that $\|v_0\|_p = 1$.
Consequently, we get
\begin{align*}
o(1) &= \left< \En^\prime(u_n),\frac{v_n-v_0}{\|u_n\|_p^{p-1}} \right> \\
&=\intO |\nabla v_n|^{p-2}\nabla v_n\nabla (v_n-v_0)\,dx 
+\frac{1}{\|u_n\|_p^{p-q}}
\intO |\nabla v_n|^{q-2}\nabla v_n\nabla (v_n-v_0)\,dx
\\
&~\quad -\alpha_n\intO |v_n|^{p-2}v_n(v_n-v_0)\,dx 
-\frac{\beta_n}{\|u_n\|_p^{p-q}} \intO |v_n|^{q-2}v_n(v_n-v_0)\,dx 
\\
&=\intO |\nabla v_n|^{p-1}\nabla v_n\nabla (v_n-v_0)\,dx +o(1). 
\end{align*}
Thus, the $(S_+)$ property of $-\Delta_p$ on $\W$ yields that $v_n\to v_0$ strongly in $\W$ (cf.\ \cite[Definition 5.8.31 and Lemma 5.9.14]{drabekmilota}). 
Moreover, for any $\varphi\in \W$, by taking $\varphi/\|u_n\|_p^{p-1}$ as a test function, we have 
\begin{align*}
o(1) 
&=\intO |\nabla v_n|^{p-2}\nabla v_n\nabla \varphi\,dx 
+\frac{1}{\|u_n\|_p^{p-q}}
\intO |\nabla v_n|^{q-2}\nabla v_n\nabla \varphi\,dx
\\
&~\quad -\alpha_n \intO |v_n|^{p-2}v_n\varphi\,dx
-\frac{\beta_n}{\|u_n\|_p^{p-q}}\intO |v_n|^{q-2}v_n\varphi\,dx. 
\end{align*}
Letting $n\to\infty$ and recalling that $\|v_0\|_p=1$, we see that $v_0$ is a nontrivial solution of 
$$
-\Delta_p v_0 =\alpha |v_0|^{p-2}v_0 \quad {\rm in}\ \Omega,\quad 
v_0=0 \quad {\rm on}\ \partial\Omega. 
$$
Thus, $\alpha\in \sigma(-\Delta_p)$. 
If, additionally, $v_n\ge 0$ for all $n \in \mathbb{N}$, then $v_0 \ge 0$. Since any eigenfunction except the first one must be sign-changing (cf.\ \cite{anane1987}), we conclude that $\alpha=\lambda_1(p)$.
\end{proof} 


\begin{lemma}\label{lem:bdd-PS-2}
Let $\{\alpha_n\}_{n \in \mathbb{N}}$, 
$\{\beta_n\}_{n \in \mathbb{N}} \subset \mathbb{R}$, 
and $\{u_n\}_{n \in \mathbb{N}}\subset\W\setminus\{0\}$ be sequences satisfying
$$
\alpha_n\to\alpha,
\quad \beta_n\to\beta,
\quad 
\|\nabla u_n\|_p \to 0, 
\quad {\rm and}\quad 
\frac{\|\En'(u_n)\|_{(\W)^*}}{\|\nabla u_n\|_q^{q-1}} \to 0
$$
as $n\to \infty$, and $H_{\alpha_n}(u_n)<0$ for $n \in \mathbb{N}$.
Then the sequence $\{w_n\}_{n \in \mathbb{N}}$, where $w_n := u_n/\|\nabla u_n\|_q$ for $n \in \mathbb{N}$, has a subsequence convergent to some $w_0 \in ES(q;\beta)\setminus\{0\}$ weakly in $\W$ and strongly in $W^{1,q}_0$, that is, $\beta \in \sigma(-\Delta_q)$. 

In particular, if $u_n$ is nonnegative for $n \in \mathbb{N}$, then $w_0=\varphi_q/\|\nabla \varphi_q\|_q$ and $\beta=\lambda_1(q)$. 
\end{lemma}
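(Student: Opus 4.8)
The plan is to follow the template of Lemma~\ref{lem:bdd-PS}, with the roles of $p$ and $q$ interchanged and the normalization carried out in $W_0^{1,q}$. Write $t_n := \|\nabla u_n\|_q$, so that $u_n = t_n w_n$ with $\|\nabla w_n\|_q = 1$; since $\|\nabla u_n\|_q \le C\|\nabla u_n\|_p$ by the H\"older inequality, $t_n \to 0$. Note also that $H_{\alpha_n}(u_n) < 0$ together with part~\ref{lem:eigenvalue:1} of Lemma~\ref{lem:eigenvalue} forces $\alpha_n > \lambda_1(p) > 0$, so $\{\alpha_n\}$ is bounded.

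The first step --- which I expect to be the main obstacle --- is to prove that $\{w_n\}$ is bounded in $\W$, i.e.\ that $\|\nabla w_n\|_p = \|\nabla u_n\|_p/\|\nabla u_n\|_q$ stays bounded; this is not automatic, since a priori one only controls $\|\nabla u_n\|_q \le C\|\nabla u_n\|_p$ (the wrong direction). Here the hypothesis $H_{\alpha_n}(u_n) < 0$ must be used. Setting $y_n := u_n/\|\nabla u_n\|_p$ one has $\|\nabla y_n\|_p = 1$, hence $\{y_n\}$ is bounded in $\W$ and, by the Sobolev embedding, $\{\|y_n\|_{s_0}\}$ is bounded for some fixed $s_0 > p$; on the other hand $H_{\alpha_n}(u_n) < 0$ yields $1 = \|\nabla y_n\|_p^p < \alpha_n\|y_n\|_p^p$, so $\|y_n\|_p$ is bounded away from $0$. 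Interpolating the $L^p$-norm between $L^q$ and $L^{s_0}$ (legitimate since $q < p < s_0$) then gives a uniform positive lower bound for $\|y_n\|_q$, and the Poincar\'e inequality in $W_0^{1,q}$ gives $\|\nabla y_n\|_q \ge c > 0$; hence $\|\nabla w_n\|_p = 1/\|\nabla y_n\|_q$ is bounded.

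Granting this, I would pass to a subsequence along which $w_n \rightharpoonup w_0$ in $\W$; then also $w_n \rightharpoonup w_0$ in $W_0^{1,q}$, and $w_n \to w_0$ in $L^p(\Omega)$ and in $L^q(\Omega)$ by compactness. To upgrade to strong convergence in $W_0^{1,q}$, test $\En'(u_n)$ against $w_n - w_0 \in \W$ and divide by $t_n^{q-1}$: the left-hand side tends to $0$ by hypothesis (as $\{\|\nabla(w_n - w_0)\|_p\}$ is bounded), the prefactor $t_n^{p-q} \to 0$ annihilates the bounded ``$p$-part'' of the resulting expression, and $\beta_n\intO |w_n|^{q-2}w_n(w_n - w_0)\,dx \to 0$ since $w_n \to w_0$ in $L^q$; what remains is $\intO |\nabla w_n|^{q-2}\nabla w_n\nabla(w_n - w_0)\,dx \to 0$, so the $(S_+)$ property of $-\Delta_q$ on $W_0^{1,q}$ (cf.\ \cite{drabekmilota}) gives $w_n \to w_0$ strongly in $W_0^{1,q}$. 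In particular $\|\nabla w_0\|_q = \lim_n\|\nabla w_n\|_q = 1$, so $w_0 \neq 0$.

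It then remains to identify $w_0$. Expanding $\langle \En'(u_n),\varphi\rangle/t_n^{q-1}$ for an arbitrary $\varphi \in \W$ and letting $n \to \infty$ --- using $t_n \to 0$, $\alpha_n \to \alpha$, $\beta_n \to \beta$ and the strong convergence of $w_n$ to $w_0$ in $W_0^{1,q}$ and in $L^q$ --- shows that $\intO |\nabla w_0|^{q-2}\nabla w_0\nabla\varphi\,dx = \beta\intO |w_0|^{q-2}w_0\varphi\,dx$ for all $\varphi \in \W$, hence, by density of $\W$ in $W_0^{1,q}$, for all $\varphi \in W_0^{1,q}$; thus $w_0 \in ES(q;\beta)\setminus\{0\}$ and $\beta \in \sigma(-\Delta_q)$. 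Finally, if $u_n \ge 0$ for all $n$ then $w_0 \ge 0$, and since $\lambda_1(q)$ is simple and every eigenfunction of $-\Delta_q$ other than a scalar multiple of $\varphi_q$ changes sign (cf.\ \cite{anane1987}), we obtain $\beta = \lambda_1(q)$ and $w_0 = c\varphi_q$ with $c > 0$; the normalization $\|\nabla w_0\|_q = 1$ then forces $w_0 = \varphi_q/\|\nabla\varphi_q\|_q$, which completes the plan.
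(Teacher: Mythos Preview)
Your proof is correct and follows the same overall template as the paper's: establish boundedness of $\{w_n\}$ in $\W$, extract a weak limit, test $\En'(u_n)$ against $(w_n-w_0)/t_n^{q-1}$ and use the $(S_+)$ property of $-\Delta_q$ to upgrade to strong $W_0^{1,q}$-convergence, then identify $w_0$ by testing against arbitrary $\varphi$.

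The one genuine difference is in the boundedness step. The paper argues as follows: from $H_{\alpha_n}(u_n)<0$ one has $\|\nabla u_n\|_p^p\le(\alpha+1)\|u_n\|_p^p$ for large $n$, and then invokes an external result \cite[Lemma~9]{T-2014} to obtain $\|\nabla u_n\|_p\le C\|u_n\|_q$; combined with Poincar\'e $\lambda_1(q)\|u_n\|_q^q\le\|\nabla u_n\|_q^q$ this gives the bound on $\|\nabla w_n\|_p$ directly. Your route via interpolation of $\|y_n\|_p$ between $\|y_n\|_q$ and $\|y_n\|_{s_0}$ (with $s_0>p$ from Sobolev embedding) achieves the same conclusion without appealing to that lemma, which makes your argument somewhat more self-contained. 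Both approaches exploit $H_{\alpha_n}(u_n)<0$ in exactly the same way, namely to get $\|y_n\|_p$ bounded away from zero; the cited lemma is in effect a packaged version of your interpolation step.
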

\begin{proof} 
By the assumption $H_{\alpha_n}(u_n)<0$, we may assume that $\|\nabla u_n\|_p^p\le (\alpha+1)\|u_n\|_p^p$ for all $n \in \mathbb{N}$. 
Therefore, due to \cite[Lemma 9]{T-2014}, there exists a constant $C > 0$ such that $\|\nabla u_n\|_p\le C\|u_n\|_q$ for all $n \in \mathbb{N}$. 
At the same time, we know that $\lambda_1(q)\|u_n\|_q^q \le \|\nabla u_n\|_q^q$ for all $n \in \mathbb{N}$. The last two inequalities directly imply the boundedness of $\{w_n\}_{n \in \mathbb{N}}$ in $\W$. 
Then, choosing an appropriate subsequence, we may suppose that $w_n\to w_0$ weakly in $\W$ (and hence in $W_0^{1,q}$) and strongly in $L^p(\Omega)$, where $w_0\in\W$. 
Therefore, we deduce that 
\begin{align}
\notag
o(1)&=
\left< \En^\prime(u_n),\frac{w_n-w_0}{\|\nabla u_n\|_q^{q-1}} \right>
\\
\notag
&=\|\nabla u_n\|_q^{p-q}\intO |\nabla w_n|^{p-2}\nabla w_n\nabla (w_n-w_0)\,dx
+\intO |\nabla w_n|^{q-2}\nabla w_n\nabla (w_n-w_0)\,dx
\\
\notag
&\qquad -\alpha_n \|\nabla u_n\|_q^{p-q}\intO |w_n|^{p-2} w_n (w_n-w_0)\,dx
-\beta_n \intO |w_n|^{q-2} w_n (w_n-w_0)\,dx
\\
\label{eq:lemma3.4:1}
&=\intO |\nabla w_n|^{q-2}\nabla w_n\nabla (w_n-w_0)\,dx +o(1).
\end{align}
Using the $(S_+)$ property of $-\Delta_q$ on $W_0^{1,q}$, we conclude that $w_n\to w_0$ strongly in $W_0^{1,q}$. 
This implies that $\|\nabla w_0\|_q=1$ and hence $w_0 \not\equiv 0$. 
Moreover, considering $\langle \En^\prime(u_n),\varphi/\|\nabla u_n\|_q^{q-1} 
\rangle$ for any $\varphi\in C_0^\infty(\Omega)$ and using the density of $C_0^\infty(\Omega)$ in $W^{1,q}_0$, we proceed analogously to \eqref{eq:lemma3.4:1} to deduce that $w_0 \in ES(q;\beta) \setminus \{0\}$. 
The final assertion follows as in Lemma~\ref{lem:bdd-PS}.
\end{proof} 

\begin{lemma}\label{lem:bdd-minimizer-Nehar} 
Suppose that $\alpha>\lambda_1(p)$ and $\beta \le \lambda_1(q)$. 
Let $\{u_n\}_{n \in \mathbb{N}} \subset \N$ be a minimizing sequence of $d(\alpha,\beta)$ such that $u_n \ge 0$ for all $n \in \mathbb{N}$. 
If $\|\nabla u_n\|_p\to\infty$ as $n\to\infty$, then $\alpha\ge \alpha_*$, $\beta=\lambda_1(q)$, and the sequence $\{v_n\}_{n \in \mathbb{N}}$, where $v_n:=u_n/\|\nabla u_n\|_p$ for $n \in \mathbb{N}$, has a subsequence convergent to some $v_0\in\mathbb{R}\varphi_q\setminus\{0\}$ weakly in $\W$ and strongly in $L^p(\Omega)$. 
\end{lemma}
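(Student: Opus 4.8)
The plan is to normalize $u_n$ by its $p$-gradient norm, extract a weakly convergent subsequence, identify its limit $v_0$ as a nonzero multiple of $\varphi_q$, and then read the two parameter restrictions $\beta=\lambda_1(q)$ and $\alpha\ge\alpha_*$ off the normalizations. First I would record that $d(\alpha,\beta)$ is a finite real number: it is finite because $\alpha>\lambda_1(p)$ gives $\N\neq\emptyset$ (Lemma~\ref{lem:nonempty-Nehari}), and, since $\beta\le\lambda_1(q)$ forces $G_\beta\ge0$ on $\W\setminus\{0\}$ (Lemma~\ref{lem:eigenvalue}), we have $d(\alpha,\beta)\ge0$ by \eqref{eq:Nehari}. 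As $\{u_n\}$ is a minimizing sequence, $\E(u_n)\to d(\alpha,\beta)$, and then \eqref{eq:Nehari} yields $H_\alpha(u_n)\to-\frac{pq}{p-q}\,d(\alpha,\beta)$ and $G_\beta(u_n)\to\frac{pq}{p-q}\,d(\alpha,\beta)$; in particular $\{H_\alpha(u_n)\}$ and $\{G_\beta(u_n)\}$ are bounded.

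Next I would normalize. Since $H_\alpha(u_n)=\|\nabla u_n\|_p^p-\alpha\|u_n\|_p^p$ stays bounded while $\|\nabla u_n\|_p\to\infty$, dividing by $\|\nabla u_n\|_p^p$ gives $\|u_n\|_p^p/\|\nabla u_n\|_p^p\to1/\alpha$ (using $\alpha>\lambda_1(p)>0$). Set $v_n:=u_n/\|\nabla u_n\|_p$, so $\|\nabla v_n\|_p=1$ and $v_n\ge0$. Passing to a subsequence, $v_n\rightharpoonup v_0$ in $\W$ and $v_n\to v_0$ strongly in $L^p(\Omega)$ and in $L^q(\Omega)$ (Rellich--Kondrachov, $\Omega$ bounded, $p>q$), with $v_0\ge0$ and $\|v_0\|_p^p=1/\alpha>0$, whence $v_0\neq0$. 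This nontriviality of $v_0$ is the crux of the argument and essentially the only delicate step; it is exactly where $\alpha>\lambda_1(p)$ (so that $H_\alpha$ is bounded along the minimizing sequence and $\alpha\neq0$) is used.

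Then I would push the $q$-level information to $v_n$: dividing $G_\beta(u_n)$ by $\|\nabla u_n\|_p^q\to\infty$ gives $G_\beta(v_n)\to0$. Suppose $\beta<\lambda_1(q)$. The Poincar\'e inequality for $W_0^{1,q}$ yields $G_\beta(v)\ge c\,\|\nabla v\|_q^q$ for all $v$, with $c:=1-\max\{\beta,0\}/\lambda_1(q)>0$; hence $\|\nabla v_n\|_q\to0$. Since $\W\hookrightarrow W_0^{1,q}$ continuously, also $v_n\rightharpoonup v_0$ in $W_0^{1,q}$, and weak lower semicontinuity of $v\mapsto\|\nabla v\|_q$ forces $\nabla v_0=0$, i.e.\ $v_0=0$, contradicting $v_0\neq0$. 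Therefore $\beta=\lambda_1(q)$.

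Finally, with $\beta=\lambda_1(q)$, from $G_{\lambda_1(q)}(v_n)\to0$ and $v_n\to v_0$ in $L^q(\Omega)$ I get $\|\nabla v_n\|_q^q\to\lambda_1(q)\|v_0\|_q^q$, so weak lower semicontinuity gives $G_{\lambda_1(q)}(v_0)=\|\nabla v_0\|_q^q-\lambda_1(q)\|v_0\|_q^q\le0$; combined with $G_{\lambda_1(q)}(v_0)\ge0$ (Lemma~\ref{lem:eigenvalue}) and $v_0\neq0$, this forces $G_{\lambda_1(q)}(v_0)=0$, so $v_0\in\mathbb{R}\varphi_q\setminus\{0\}$. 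Also, weak lower semicontinuity of $v\mapsto\|\nabla v\|_p$ gives $\|\nabla v_0\|_p^p\le\liminf\|\nabla v_n\|_p^p=1$, while $\|v_0\|_p^p=1/\alpha$; since $v_0$ is a nonzero multiple of $\varphi_q$, $\alpha_*=\|\nabla\varphi_q\|_p^p/\|\varphi_q\|_p^p=\|\nabla v_0\|_p^p/\|v_0\|_p^p\le\alpha$. This gives all three conclusions, the subsequence being the one extracted above. Once $v_0\neq0$ is secured, the remainder is a routine combination of weak lower semicontinuity, the simplicity of $\lambda_1(q)$, and the definition of $\alpha_*$; the obstacle is concentrated in that single step.
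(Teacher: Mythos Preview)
Your proof is correct and follows essentially the same normalize--extract--identify strategy as the paper: set $v_n=u_n/\|\nabla u_n\|_p$, pass to a weak limit $v_0$ in $\W$ with strong $L^p$ convergence, show $v_0\neq0$, force $G_\beta(v_0)=0$, and read off $\beta=\lambda_1(q)$, $v_0\in\mathbb{R}\varphi_q\setminus\{0\}$, and $\alpha\ge\alpha_*$. The only cosmetic difference is that the paper obtains $G_\beta(v_0)=0$ in one stroke from $0\le G_\beta(v_0)\le\liminf G_\beta(v_n)\le0$ and then invokes Lemma~\ref{lem:eigenvalue}\,\ref{lem:eigenvalue:2} to get both $\beta=\lambda_1(q)$ and $v_0\in\mathbb{R}\varphi_q$ simultaneously, whereas you first rule out $\beta<\lambda_1(q)$ by a coercivity argument and then identify $v_0$; either way works.
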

\begin{proof} 
	Suppose that the assumptions of the lemma are satisfied. 
	Since $\{v_n\}_{n \in \mathbb{N}}$ is bounded in $\W$, we may assume that, up to a subsequence, $v_n$ converges to some $v_0 \in \W$ weakly in $\W$ and strongly in $L^p(\Omega)$. 
	Since $\{u_n\}_{n \in \mathbb{N}} \subset \N$ is a minimizing sequence, for sufficiently large $n \in \mathbb{N}$ we have 
	$$
	\frac{p-q}{pq}G_\beta(v_n)=
	\frac{\E(u_n)}{\|\nabla u_n\|_p^q}\le \frac{d(\alpha,\beta)+1}{\|\nabla u_n\|_p^q}=o(1) 
	$$
	as $n\to\infty$, and hence 
	$$
	0 \le G_\beta(v_0) \le \liminf_{n\to\infty} G_\beta(v_n) \le 0, 
	$$
	where the first inequality follows from $\beta\le \lambda_1(q)$. 
	Thus, $G_\beta(v_0)=0$ occurs. On the other hand, $H_\alpha(v_n)=-G_\beta(v_n)\le 0$ implies that $\|v_n\|_p^p\ge 1/\alpha$ for all $n \in \mathbb{N}$. Therefore, $H_\alpha(v_0)\le 0$, $\|v_0\|_p^p \ge 1/\alpha$, and hence $v_0 \not\equiv 0$. 
	Consequently, we must have $\beta=\lambda_1(q)$ and $v_0\in\mathbb{R}\varphi_q\setminus\{0\}$, 
	and the definition of $\alpha_*$ (see \eqref{def:values}) yields $\alpha\ge \alpha_*$. 
\end{proof} 

\begin{lemma}\label{lem:conv-gs} 
	Let $\{\alpha_n\}_{n \in \mathbb{N}}$ and $\{\beta_n\}_{n \in \mathbb{N}}$ be such that $\lim\limits_{n\to\infty}\alpha_n=\alpha$ and $\lim\limits_{n\to\infty}\beta_n=\beta$ for some $\alpha, \beta \in \mathbb{R}$.
	Suppose that $u_n$ is a ground state of $\En$ with $H_{\alpha_n}(u_n)\neq0$ for $n \in \mathbb{N}$. If $\{u_n\}_{n \in \mathbb{N}}$ is bounded in $\W$, then $\{u_n\}_{n \in \mathbb{N}}$ has a subsequence strongly convergent in $\W$ to a solution of \eqref{eq:D}. Moreover, if $\liminf\limits_{n\to\infty}\En(u_n)<0$, then $\{u_n\}_{n \in \mathbb{N}}$ has a subsequence strongly convergent in $\W$ to a ground state of $\E$ and $d(\alpha,\beta)<0$. 
\end{lemma}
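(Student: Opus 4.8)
The plan is to combine a standard $(S_+)$-compactness argument for the $(p,q)$-Laplacian (to upgrade weak convergence to strong convergence and pass to the limit in the equation) with a fibering comparison of energy levels based on Proposition~\ref{prop:minpoint} (to identify the limit as a ground state in the second assertion).

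First I would observe that each $u_n$ is in fact a weak solution of \eqref{eq:D} with parameters $\alpha_n,\beta_n$: since $u_n$ lies on the Nehari manifold at $(\alpha_n,\beta_n)$ we have $H_{\alpha_n}(u_n)+G_{\beta_n}(u_n)=0$, so $H_{\alpha_n}(u_n)\neq0$ gives $H_{\alpha_n}(u_n)\cdot G_{\beta_n}(u_n)=-H_{\alpha_n}(u_n)^2<0$, and Remark~\ref{rem:positivity} (i.e.\ \cite[Lemma~2]{BobkovTanaka2015}) applies. Using the boundedness of $\{u_n\}_{n\in\mathbb{N}}$ in $\W$ (hence in $W_0^{1,q}$, as $\Omega$ is bounded), I would pass to a subsequence with $u_n\rightharpoonup u_0$ in $\W$ (and hence in $W_0^{1,q}$) and $u_n\to u_0$ in $L^p(\Omega)$ and in $L^q(\Omega)$. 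Testing the equation for $u_n$ with $u_n-u_0$ and using that $\alpha_n\intO|u_n|^{p-2}u_n(u_n-u_0)\,dx$ and $\beta_n\intO|u_n|^{q-2}u_n(u_n-u_0)\,dx$ vanish in the limit, I get $\intO|\nabla u_n|^{p-2}\nabla u_n\nabla(u_n-u_0)\,dx+\intO|\nabla u_n|^{q-2}\nabla u_n\nabla(u_n-u_0)\,dx=o(1)$; since $-\Delta_q$ is monotone and $u_n\rightharpoonup u_0$ in $W_0^{1,q}$, the $q$-term has nonnegative $\liminf$, so $\limsup\intO|\nabla u_n|^{p-2}\nabla u_n\nabla(u_n-u_0)\,dx\le0$, and the $(S_+)$ property of $-\Delta_p$ on $\W$ yields $u_n\to u_0$ strongly in $\W$. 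Passing to the limit in the weak formulation (with $\alpha_n\to\alpha$, $\beta_n\to\beta$) then shows that $u_0$ solves \eqref{eq:D}, which proves the first assertion (with $u_0$ possibly trivial).

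For the second assertion, suppose $\liminf_{n\to\infty}\En(u_n)<0$. I would first extract a subsequence realizing this $\liminf$, and then, by the first part, a further subsequence along which $u_n\to u_0$ strongly in $\W$. Strong convergence gives $H_{\alpha_n}(u_n)\to H_\alpha(u_0)$ and $G_{\beta_n}(u_n)\to G_\beta(u_0)$, so $\En(u_n)\to\E(u_0)$ along this subsequence; hence $\E(u_0)=\liminf_{n\to\infty}\En(u_n)<0$, in particular $u_0\neq0$, so $u_0\in\N$ and $d(\alpha,\beta)\le\E(u_0)<0$. It remains to show $\E(u_0)\le d(\alpha,\beta)$. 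Given $\varepsilon\in(0,-d(\alpha,\beta))$, pick $w\in\N$ with $\E(w)<d(\alpha,\beta)+\varepsilon<0$; by \eqref{eq:Nehari} this forces $G_\beta(w)<0<H_\alpha(w)$ together with $H_\alpha(w)+G_\beta(w)=0$. Since $H_{\alpha_n}(w)\to H_\alpha(w)>0$ and $G_{\beta_n}(w)\to G_\beta(w)<0$, for all large $n$ Proposition~\ref{prop:minpoint} provides a unique $t_n>0$ with $t_nw\in\mathcal{N}_{\alpha_n,\beta_n}$, namely $t_n=\bigl(-G_{\beta_n}(w)/H_{\alpha_n}(w)\bigr)^{1/(p-q)}$, and $t_n\to1$ because $-G_\beta(w)=H_\alpha(w)$. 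Therefore $d(\alpha_n,\beta_n)=\En(u_n)\le\En(t_nw)=\tfrac{t_n^p}{p}H_{\alpha_n}(w)+\tfrac{t_n^q}{q}G_{\beta_n}(w)\to\E(w)$, so along the subsequence $\E(u_0)=\lim_{n\to\infty}d(\alpha_n,\beta_n)\le\E(w)<d(\alpha,\beta)+\varepsilon$. Letting $\varepsilon\to0^+$ yields $\E(u_0)\le d(\alpha,\beta)$, hence $\E(u_0)=d(\alpha,\beta)$, so $u_0$ is a ground state of $\E$ and $d(\alpha,\beta)=\E(u_0)<0$.

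I expect the main obstacle to be this last step: showing that the limit $u_0$ achieves the \emph{least} energy on $\N$ and not merely negative energy. The inequality $d(\alpha,\beta)\le\E(u_0)$ is automatic, but the reverse requires transplanting a near-minimizer $w$ of $d(\alpha,\beta)$ onto the perturbed Nehari manifolds $\mathcal{N}_{\alpha_n,\beta_n}$ through the fibering map of Proposition~\ref{prop:minpoint} and verifying that the rescaling factors $t_n\to1$ and the energies $\En(t_nw)\to\E(w)$. By contrast, the strong-convergence/$(S_+)$ step, although indispensable, is routine for the $(p,q)$-Laplacian.
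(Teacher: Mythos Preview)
Your proposal is correct and follows essentially the same line as the paper's proof: the $(S_+)$ compactness argument for the first assertion, and the fibering comparison via Proposition~\ref{prop:minpoint} for the second. The only noteworthy simplification in the paper is that, once $G_{\beta_n}(w)<0<H_{\alpha_n}(w)$, Proposition~\ref{prop:minpoint} says $t_n$ is the \emph{minimum} point of $s\mapsto\En(sw)$, so $\En(t_nw)\le\En(w)$ directly---there is no need to track $t_n\to1$ or pass to the limit in $\En(t_nw)$; this also lets you dispense with the $\varepsilon$-approximation and simply compare $\E(u_0)$ against every $w\in\N$ with $\E(w)<0$ (and trivially against those with $\E(w)\ge0$).
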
 
\begin{proof} 
	Let $u_n$ be a ground state of $\En$ with $H_{\alpha_n}(u_n)\neq0$ for all $n \in \mathbb{N}$ and $\{u_n\}_{n \in \mathbb{N}}$ is bounded in $\W$. By passing to a subsequence, we may assume that $u_n$ converges to some $u_0 \in \W$ weakly in $\W$ and strongly in $L^p(\Omega)$. 
	Then, noting that each $u_n$ is a (nontrivial) solution of $(GEV;\alpha_n,\beta_n)$ (see Remark \ref{rem:positivity}), we get 
	\begin{align*}
	&\intO |\nabla u_n|^{p-2}\nabla u_n\nabla (u_n-u_0)\,dx
	+\intO |\nabla u_n|^{q-2}\nabla u_n\nabla (u_n-u_0)\,dx
	\\
	&=\alpha_n \intO |u_n|^{p-2} u_n (u_n-u_0)\,dx
	+\beta_n \intO |u_n|^{q-2} u_n (u_n-u_0)\,dx \to 0
	\end{align*}
	as $n\to\infty$. Using the $(S_+)$ property of $-\Delta_p-\Delta_q$ (cf. \cite[Remark~3.5]{BobkovTanaka2016}), we deduce that $u_n\to u_0$ strongly in $\W$. As a consequence, we easily see that $u_0$ is a solution of \eqref{eq:D}. Note that $u_0$ can be trivial.
	
	Assume, additionally, that $\E(u_0) = \lim\limits_{n\to\infty}\En(u_n)<0$. Let us prove that, in this case, $u_0$ is a ground state of $\E$. Note that the strong convergence in $\W$ implies that $u_0 \in \N$. Fix any $w \in \N$ such that $\E(w)<0$. Since $G_\beta(w)<0<H_\alpha(w)$, we see that $G_{\beta_n}(w)<0<H_{\alpha_n}(w)$ for sufficiently large $n \in \mathbb{N}$. 
	Therefore, Proposition \ref{prop:minpoint} implies that for any such $n \in \mathbb{N}$ we can find a unique $t_n>0$ such that 
$t_nw\in \mathcal{N}_{\alpha_n,\beta_n}$ and 
	$$
	\En (u_n)=d(\alpha_n,\beta_n)\le \En(t_nw)
	=\min_{s\ge 0}\En(sw) \le \En(w). 
	$$
	Letting $n\to\infty$, we get $\E(u_0)\le \E(w)$. At the same time, $\E(u_0)\le \E(v)$ is obviously satisfied for any $v\in\N$ such that $\E(v)\ge 0$. Consequently, $\E(u_0)\le \E(w)$ for all $w\in\N$, and hence $u_0$ is a ground state of $\E$. 
\end{proof}

\subsection{Fibered functional}\label{sec:fib} 
Take any $u \in \W$ such that $H_\alpha(u) \cdot G_\beta(u) < 0$. As it follows from Proposition~\ref{prop:minpoint}, there exists a unique $t(u) > 0$ such that $t(u) u \in \N$. 
Moreover, we easily see that 
\begin{equation}\label{tu}
t(u) = \left(\frac{-G_\beta(u)}{H_\alpha(u)}\right)^{\frac{1}{p-q}} = 
\frac{|G_\beta(u)|^{\frac{1}{p-q}}}{|H_\alpha(u)|^{\frac{1}{p-q}}}.
\end{equation}
Since $H_\alpha(u) \cdot G_\beta(u) < 0$, we have $\text{sign}(H_\alpha(u)) = -\text{sign}(G_\beta(u))$. Therefore, noting that $H_\alpha$ and $G_\beta$ are $p$- and $q$-homogeneous, respectively, we get 
$$
\J(u) := \E(t(u) u) = -\text{sign}(H_\alpha(u))\, \frac{p-q}{pq}\, \frac{|G_\beta(u)|^{\frac{p}{p-q}}}{|H_\alpha(u)|^{\frac{q}{p-q}}}. 
$$
The functional $\J$ is called \textit{fibered} functional \cite{pohozaev}. Evidently, $\J$ is $0$-homogeneous.

Let us introduce the following subsets of $\W$:
\begin{align*}
B_{\alpha, \beta}^- &:= \{u \in \W:\, H_\alpha(u)>0>G_\beta(u)\},\\
B_{\alpha, \beta}^+ &:= \{u \in \W:\, H_\alpha(u)<0< G_\beta(u)\}.
\end{align*}
Since $\J$ is $0$-homogeneous, we see that $\J(u) = \E(u)$ for any $u \in \N \cap (B_{\alpha, \beta}^-\cup B_{\alpha, \beta}^+ )$. 

The following proposition contains the results of Proposition~\ref{prop:GM3} \ref{prop:GM3:3} and Theorem~\ref{thm:negative-gs-2} \ref{thm:negative-gs-2:2}. 
We present it in this subsection for the better exposition. 
\begin{proposition}\label{Iinf}
	Let $1<q<p<\infty$ and $\alpha = \lambda_1(p)$, $\beta = \beta_*$. Then $\N\cap B_{\alpha,\beta}^-\neq\emptyset$ and 
	\begin{equation}\label{prop:Infif}
	m(\alpha,\beta) = d(\alpha,\beta) = \inf_{u \in \N \cap B_{\alpha,\beta}^-} \J(u) < 0.
	\end{equation}
	Moreover, $m(\alpha,\beta) > -\infty$ if and only if $p \geq 2q$. 
	Furthermore, if $p > 2q$, then $m(\alpha,\beta)$ is attained.
\end{proposition}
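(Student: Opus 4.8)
The plan is to reduce the whole statement to an analysis of the fibered functional $\J$ on the cone $B_{\alpha,\beta}^-$ via its $0$-homogeneity, and then to extract the trichotomy $p<2q$, $p=2q$, $p>2q$ from the local behaviour of $H_{\lambda_1(p)}$ near the line $\mathbb{R}\varphi_p$. First, $\N\cap B_{\alpha,\beta}^-\neq\emptyset$: by Lemma~\ref{lem:LID}, $\varphi_q\notin\mathbb{R}\varphi_p$, so Lemma~\ref{lem:eigenvalue} gives $H_{\lambda_1(p)}(\varphi_q)>0$, while $G_{\beta_*}(\varphi_q)=(\lambda_1(q)-\beta_*)\|\varphi_q\|_q^q<0$ since $\beta_*>\lambda_1(q)$; hence $\varphi_q\in B_{\alpha,\beta}^-$ and Proposition~\ref{prop:minpoint} yields $t(\varphi_q)\varphi_q\in\N\cap B_{\alpha,\beta}^-$. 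Next, for $\alpha=\lambda_1(p)$ one has $H_{\lambda_1(p)}\ge0$ on $\W$ (Lemma~\ref{lem:eigenvalue}), so $\E(u)\ge0$ whenever $G_{\beta_*}(u)\ge0$, and $H_{\lambda_1(p)}(u)=0$ forces $u\in\mathbb{R}\varphi_p$, on which $G_{\beta_*}$ vanishes identically; therefore $\{\E<0\}\subseteq B_{\alpha,\beta}^-$, and since $\E(t(u)u)=\J(u)<0$ on $B_{\alpha,\beta}^-$ we get $m(\alpha,\beta)<0$. For $u\in B_{\alpha,\beta}^-$ the point $t(u)$ is the global minimum of $t\mapsto\E(tu)$ on $(0,\infty)$, so $\E(u)\ge\J(u)$; using that $B_{\alpha,\beta}^-$ is a cone, $t(u)u\in\N\cap B_{\alpha,\beta}^-$, that $\J$ is $0$-homogeneous and that $\J=\E$ on $\N\cap B_{\alpha,\beta}^-$, one obtains
\begin{equation*}
m(\alpha,\beta)=\inf_{u\in B_{\alpha,\beta}^-}\E(u)=\inf_{u\in B_{\alpha,\beta}^-}\J(u)=\inf_{u\in\N\cap B_{\alpha,\beta}^-}\J(u)=d(\alpha,\beta),
\end{equation*}
where $d\le m$ is trivial and $d\ge m$ follows from $\inf_{\N\cap B_{\alpha,\beta}^-}\E\ge\inf_{\N}\E=d$. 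This proves \eqref{prop:Infif}.

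For the case $p<2q$ I would produce a test path along which $\J\to-\infty$. Since $\varphi_p$ cannot be an eigenfunction of $-\Delta_q$ with eigenvalue $\beta_*$ (otherwise $\beta_*=\lambda_1(q)$, contradicting Lemma~\ref{lem:LID}) and its critical set has empty interior, one may pick $\phi\in C_0^\infty(\Omega)$ supported away from $\{\nabla\varphi_p=0\}$ with $\langle G_{\beta_*}^\prime(\varphi_p),\phi\rangle<0$; in particular $\phi\notin\mathbb{R}\varphi_p$. For small $t>0$ put $u_t:=\varphi_p+t\phi$. Then $H_{\lambda_1(p)}(u_t)>0$ by Lemma~\ref{lem:eigenvalue}, and $G_{\beta_*}(u_t)=t\langle G_{\beta_*}^\prime(\varphi_p),\phi\rangle+o(t)<0$, so $u_t\in B_{\alpha,\beta}^-$ with $-G_{\beta_*}(u_t)\sim c\,t$, $c>0$. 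On $\operatorname{supp}\phi$ both $|\nabla\varphi_p|$ and $\varphi_p$ are bounded away from $0$, so $t\mapsto H_{\lambda_1(p)}(u_t)$ is $C^2$ near $0$ with vanishing value and first derivative at $t=0$ (the latter because $\varphi_p$ is a critical point of $H_{\lambda_1(p)}$), whence $H_{\lambda_1(p)}(u_t)=O(t^2)$. Plugging these into the closed form of $\J$ gives $\J(u_t)\le-c^\prime\,t^{(p-2q)/(p-q)}\to-\infty$ as $t\to0^+$, since $p-2q<0$; hence $m(\alpha,\beta)=-\infty$ by the first paragraph.

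For the case $p\ge2q$ (hence $p>2$) I would show the finiteness $m(\alpha,\beta)>-\infty$. By $0$-homogeneity it suffices to bound $(-G_{\beta_*}(u))^{p/q}/H_{\lambda_1(p)}(u)$ on $B_{\alpha,\beta}^-\cap\{\|u\|_p=1\}$, where $-G_{\beta_*}(u)\le\beta_*|\Omega|^{1-q/p}$. Away from a weak neighbourhood of $\mathbb{R}\varphi_p$, $H_{\lambda_1(p)}(u)$ is bounded below by a positive constant (otherwise a sequence with $\|u_n\|_p=1$, $H_{\lambda_1(p)}(u_n)\to0$ would, by the Poincar\'e inequality and uniform convexity, converge strongly in $\W$ to $\pm\varphi_p$). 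Near $\mathbb{R}\varphi_p$ one invokes an improved Poincar\'e inequality of the form $H_{\lambda_1(p)}(u)\ge C\,\operatorname{dist}(u,\mathbb{R}\varphi_p)^2$ (for $\|u\|_p=1$ and $\operatorname{dist}(u,\mathbb{R}\varphi_p)<\delta$), the distance taken in a topology weaker than that of $\W$, so that the quadratic rate is not destroyed by concentration of $\nabla u$ near $\{\nabla\varphi_p=0\}$; combined with $-G_{\beta_*}(u)\le C^\prime\operatorname{dist}(u,\mathbb{R}\varphi_p)$ (as $G_{\beta_*}$ vanishes on $\mathbb{R}\varphi_p$ and is locally Lipschitz in that norm), this yields $(-G_{\beta_*}(u))^{p/q}/H_{\lambda_1(p)}(u)\le C^{\prime\prime}\operatorname{dist}(u,\mathbb{R}\varphi_p)^{p/q-2}$, which is bounded precisely because $p/q\ge2$. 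Hence $m(\alpha,\beta)>-\infty$.

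Finally, for $p>2q$ the exponent $p/q-2$ is positive, so the same estimate shows that any minimizing sequence $\{u_n\}\subset B_{\alpha,\beta}^-$ with $\|u_n\|_p=1$ and $\J(u_n)\to m(\alpha,\beta)\in(-\infty,0)$ cannot approach $\mathbb{R}\varphi_p$ (otherwise $\J(u_n)\to0$); thus $H_{\lambda_1(p)}(u_n)$ and $-G_{\beta_*}(u_n)$ stay bounded away from $0$, and then the closed form of $\J$ forces $\|\nabla u_n\|_p$ to be bounded. Extracting a subsequence with $u_n\rightharpoonup u_0$ in $\W$ and $u_n\to u_0$ in $L^p(\Omega)\cap L^q(\Omega)$, semicontinuity gives $-G_{\beta_*}(u_0)\ge\limsup_n(-G_{\beta_*}(u_n))>0$ and $0\le H_{\lambda_1(p)}(u_0)\le\liminf_n H_{\lambda_1(p)}(u_n)$, and $H_{\lambda_1(p)}(u_0)=0$ is impossible since it would give $u_0=\pm\varphi_p$ and then $G_{\beta_*}(u_0)=0$. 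Therefore $u_0\in B_{\alpha,\beta}^-$ with $\J(u_0)\le m(\alpha,\beta)$, so $\J(u_0)=m(\alpha,\beta)$, and $t(u_0)u_0\in\N\cap B_{\alpha,\beta}^-$ attains $d(\alpha,\beta)=m(\alpha,\beta)$; by Remark~\ref{rem:positivity} (applied to $|u_0|$, since $\E\ne0$) it is a positive solution of \eqref{eq:D} and, being a nontrivial global minimizer, a global minimizer of $\E$. The main obstacle is the improved Poincar\'e inequality of the third paragraph: proving it with the sharp quadratic rate for $p>2$ and in a norm insensitive to gradient concentration near the critical set of $\varphi_p$ is exactly what produces the threshold $p=2q$, in analogy with the Fredholm alternative for the $p$-Laplacian; the remaining ingredients are homogeneity, one explicit test-function computation, and a standard $(S_+)$-type compactness argument.
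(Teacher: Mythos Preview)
Your overall strategy coincides with the paper's: reduce to the fibered functional $\J$ on $B_{\alpha,\beta}^-$ by $0$-homogeneity, prove the identities \eqref{prop:Infif}, handle $p<2q$ by a test path $\varphi_p+t\theta$ along which $H_{\lambda_1(p)}$ vanishes to second order while $G_{\beta_*}$ decreases linearly, and for $p\ge2q$ combine an improved Poincar\'e inequality at $\lambda_1(p)$ with a matching first-order bound on $G_{\beta_*}$. Your execution of the identities, the $p<2q$ case, and the attainment argument for $p>2q$ is essentially correct.

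The gap you yourself flag is the $p\ge2q$ step. Your heuristic ``$H_{\lambda_1(p)}(u)\ge C\,\mathrm{dist}(u,\mathbb{R}\varphi_p)^2$ and $-G_{\beta_*}(u)\le C'\,\mathrm{dist}(u,\mathbb{R}\varphi_p)$ in a weaker norm'' is correct in spirit, but the threshold $p=2q$ does not fall out of any generic Lipschitz argument; it emerges from the specific norm one is forced to use. The paper makes this precise: with the $L^2$-orthogonal splitting $u=\gamma\varphi_p+v$, the Fleckinger--Tak\'a\v{c} improved Poincar\'e inequality gives
\[
H_{\lambda_1(p)}(u)\ \ge\ C\Bigl(|\gamma|^{p-2}\!\int_\Omega|\nabla\varphi_p|^{p-2}|\nabla v|^2\,dx+\|\nabla v\|_p^p\Bigr),
\]
and $|G_{\beta_*}(u)|$ is then bounded by the square root of the same quantity. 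For the gradient term in $G_{\beta_*}$ this last estimate uses H\"older with exponent $2$ and requires $\int_\Omega|\nabla\varphi_p|^{-(p-2q)}\,dx<\infty$, which the paper obtains from the Damascelli--Sciunzi integrability $|\nabla\varphi_p|^{-(p-1)r}\in L^1(\Omega)$ for every $r<1$ (here $r=(p-2q)/(p-1)$); for the zero-order term it uses the weighted embedding $\|v\|_2^2\le C\int|\nabla\varphi_p|^{p-2}|\nabla v|^2$ from the same reference. Thus $p\ge2q$ is what makes the weight $|\nabla\varphi_p|^{-(p-2q)}$ nonnegative and integrable; without this concrete computation the finiteness claim is not established.

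A minor point on your $p<2q$ argument: finding $\phi\in C_0^\infty$ supported away from $\{\nabla\varphi_p=0\}$ with $\langle G_{\beta_*}'(\varphi_p),\phi\rangle<0$ needs a density argument across the critical set that you have not supplied. The paper avoids this by taking an arbitrary $\theta$ with $\langle G_{\beta_*}'(\varphi_p),\theta\rangle<0$ and using the elementary estimate $\langle H_{\lambda_1(p)}'(\varphi_p+s\theta)-H_{\lambda_1(p)}'(\varphi_p),s\theta\rangle\le Cs^{\min(2,p)}$, which already yields $H_{\lambda_1(p)}(\varphi_p+\varepsilon\theta)\le C\varepsilon^{\min(2,p)}$ with no support restriction and still forces $\J(u_\varepsilon)\to-\infty$.
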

\begin{proof} 
	Let us show first that $\N\cap B_{\alpha,\beta}^-\neq\emptyset$. 
	In view of Lemma~\ref{lem:LID}, we see that $\varphi_p$ is a regular point of $G_\beta$, i.e., there exists $\theta \in C_0^{\infty}(\Omega)$ such that
	\begin{equation}\label{g<0}
	-D:=\langle G'_\beta(\varphi_p), \theta \rangle < 0.
	\end{equation}
	Note that $\theta\not\in\mathbb{R}\varphi_p$ since $\langle G_\beta^\prime(\varphi_p),\varphi_p\rangle = q G_\beta(\varphi_p) = 0$.	
	Therefore, the simplicity of $\alpha=\lambda_1(p)$ implies that $H_\alpha(\varphi_p+\varepsilon\theta) > 0$ for any $\varepsilon \neq 0$.
	Moreover, by \eqref{g<0}, there exists $\varepsilon_0>0$ such that 
	\begin{equation*}\label{g<0-2}
	\langle G'_\beta(\varphi_p+\varepsilon\theta), \theta \rangle \le -\frac{D}{2} < 0
	\qquad {\rm for\ all}\ 
	\varepsilon \in [-\varepsilon_0,\varepsilon_0]. 
	\end{equation*}
	Fix any $\varepsilon\in(0,\varepsilon_0]$ and denote  $u_\varepsilon := \varphi_p + \varepsilon \theta$. According to the mean value theorem, there exist $\varepsilon_1\in (0,\varepsilon)$ and $\varepsilon_2\in(0,\varepsilon)$ such that
	\begin{align} 
	\label{eq:prop3.7:1}
	0 < H_\alpha(u_\varepsilon) &= 
	H_\alpha(\varphi_p) + 
	\varepsilon\langle H_\alpha'(\varphi_p+\varepsilon_1\theta), \theta \rangle 
	=\varepsilon \langle H_\alpha'(\varphi_p+\varepsilon_1\theta), \theta \rangle, 
	\\
	\label{eq:prop3.7:2}
	G_\beta(u_\varepsilon) &= 
	G_\beta(\varphi_p) + 
	\varepsilon 
	\langle G_\beta'(\varphi_p+\varepsilon_2\theta), \theta \rangle
	=\varepsilon\langle G_\beta'(\varphi_p+\varepsilon_2\theta), \theta \rangle
	\le -\frac{\varepsilon D}{2} < 0. 
	\end{align}
	Hence, $u_\varepsilon\in B_{\alpha, \beta}^-$ and there exists $t(u_\varepsilon) > 0$ such that
	$t(u_\varepsilon)u_\varepsilon\in \N\cap B_{\alpha, \beta}^-$, see Proposition \ref{prop:minpoint}. 
	
	Let us now prove \eqref{prop:Infif}. It is easy to see that 
	\begin{equation*}
	m(\alpha,\beta) \leq d(\alpha,\beta) \leq \inf_{u \in \N \cap B_{\alpha,\beta}^-} \J(u) < 0.
	\end{equation*}
	(The last inequality follows by considering $t_q \varphi_q \in \N \cap B_{\alpha,\beta}^-$, where $t_q>0$ is obtained by Proposition \ref{prop:minpoint} since $H_\alpha(\varphi_q) > 0 > G_\beta(\varphi_q)$.) 
	On the other hand, if $\{u_n\}_{n \in \mathbb{N}} \subset \W$ is a minimizing sequence for $m(\alpha,\beta)$, then we easily see that $H_\alpha(u_n) > 0 > G_\beta(u_n)$ for all $n \in \mathbb{N}$, and hence Proposition \ref{prop:minpoint} implies the existence of a unique minimum point $t_n > 0$ of $\E(tu_n)$ on $[0,\infty)$ such that $t_nu_n \in \N \cap B_{\alpha,\beta}^-$ for all $n \in \mathbb{N}$. Therefore, we get $\inf\limits_{u \in \N \cap B_{\alpha,\beta}^-} \J(u) \leq m(\alpha,\beta)$, and hence \eqref{prop:Infif} follows. 
	
	Now, we study the behavior of $\J(u_\varepsilon)$, where $u_\varepsilon$ is defined as above.  Assume first that $p<2q$. 
	Let us recall that there exists a positive constant $C$ such that for all for $x,y\in\mathbb{R}^N$ the following inequalities are satisfied:
	\begin{equation*}
	0 \le \langle |x|^{p-2}x-|y|^{p-2}y, x-y\rangle 
	\le 
	\left\{
	\begin{aligned}
	&C|x-y|^p 					&&{\rm if}\ 1<p\le 2,\\ 
	&C|x-y|^2(|x|+|y|)^{p-2}	&&{\rm if}\ p\ge 2.
	\end{aligned}
	\right.
	\end{equation*}
	Therefore, recalling also that $\alpha = \lambda_1(p)$ and $0<\varepsilon_1<\varepsilon\le \varepsilon_0$, we obtain 
	\begin{align*} 
	\langle H_\alpha'(\varphi_p+\varepsilon_1\theta), \theta \rangle
	&= \langle H_\alpha'(\varphi_p+\varepsilon_1\theta), \theta \rangle 
	-\langle H_\alpha'(\varphi_p), \theta \rangle \\
	&=\frac{1}{\varepsilon_1}
	\langle H_\alpha'(\varphi_p+\varepsilon_1\theta)-H_\alpha^\prime(\varphi_p), 
	(\varphi_p+\varepsilon_1\theta) - \varphi_p \rangle 
	\\
	&\le 
	\left\{
	\begin{aligned}
	&\frac{C\varepsilon_1^p}{\varepsilon_1}\|\nabla \theta\|_p^p
	=C^\prime\varepsilon_1^{p-1} &&{\rm if}\ 1<p\le 2, 
	\\ 
	&\frac{C\varepsilon_1^2}{\varepsilon_1}
	\intO |\nabla\theta|^2
	(2 |\nabla \varphi_p|+\varepsilon_1|\nabla \theta|)^{p-2}  \, dx
	\le C^\prime\varepsilon_1 
	&&{\rm if}\ p\ge 2,
	\end{aligned}
	\right.
	\end{align*} 
	where $C^\prime > 0$ is independent of $\varepsilon$. 
	Consequently, we deduce from \eqref{eq:prop3.7:1} that 
	$$
	H_\alpha(u_\varepsilon)\le C^\prime \varepsilon^{p} \quad 
	{\rm if}\ 1<p\le 2, 
	\qquad H_\alpha(u_\varepsilon)\le C^\prime \varepsilon^{2} \quad {\rm if}\ p\ge 2. 
	$$
	Recalling now that $t(u_\varepsilon)u_\varepsilon\in \N\cap B_{\alpha, \beta}^-$ and using \eqref{eq:prop3.7:2}, we get
	\begin{align*} 
	\inf_{u \in \W} \E(u) 
	&= \inf_{u \in \N} \E(u) 
	= \inf_{u \in \N \cap B_{\alpha,\beta}^-} \J(u) 
	\leq \J(t(u_\varepsilon)u_\varepsilon) \\
	&=\J(u_\varepsilon)
	=- \frac{p-q}{pq}\,
	\frac{|G_\beta(u_\varepsilon)|^{\frac{p}{p-q}}}{|H_\alpha(u_\varepsilon)|^{\frac{q}{p-q}}} \le 
	\left\{
	\begin{aligned}
	&-C^{\prime\prime} \varepsilon^{\frac{p}{p-q} - \frac{pq}{p-q}}
	&&{\rm if}\ 1<p\le 2, 
	\\[3mm]
	&-C^{\prime\prime} \varepsilon^{\frac{p}{p-q} - \frac{2q}{p-q}}
	&&{\rm if}\ p\ge 2, 
	\end{aligned}
	\right.
	\end{align*}
	where $C^{\prime\prime}$ is a positive constant independent of $\varepsilon$. 
	Since $p<2q$, we obtain that $m(\alpha,\beta) = -\infty$ by tending $\varepsilon \to +0$.
	
	Assume now that $p \geq 2q$. (In particular, we always have $p>2$.) Suppose, by contradiction, that $m(\alpha,\beta) = -\infty$. Then there exists a sequence $\{u_n\}_{n \in \mathbb{N}} \subset \N\cap B_{\alpha, \beta}^-$ such that $\J(u_n) \to -\infty$ as $n \to \infty$. Since $\J$ is $0$-homogeneous, we can assume that $\|\nabla u_n\|_p = 1$ for all $n \in \mathbb{N}$. Therefore, we see that $u_n \to \varphi_p$ strongly in $\W$, where $\|\nabla \varphi_p\|_p=1$. 
	Indeed, by the boundedness of $\{u_n\}_{n \in \mathbb{N}}$, we may assume that $u_n$ converges to some $u_0$ weakly in $\W$ and strongly in $L^p(\Omega)$. 
	Then, $G_\beta(u_0)\le \liminf\limits_{n\to\infty} G_\beta(u_n)\le 0$ and $0\le H_\alpha(u_0)\le \liminf\limits_{n\to\infty} H_\alpha(u_n)$ (recall that $\alpha=\lambda_1(p)$). 
	On the other hand, the assumption $\lim\limits_{n\to\infty}\J(u_n)=-\infty$ implies that  $\liminf\limits_{n\to\infty} H_\alpha(u_n)=0$, whence $H_\alpha(u_0)=0$. 
	This means that $\|\nabla u_0\|_p=\lim\limits_{n\to\infty}\|\nabla u_n\|_p$ and $\|\nabla u_0\|_p^p=\lambda_1(p)\|u_0\|_p^p$, that is, $u_n$ converges to $\varphi_p$ strongly in $\W$ as $n \to \infty$.

	Let us make the $L^2$-orthogonal decomposition $u_n = \gamma_n \varphi_p + v_n$, where $\gamma_n \in \mathbb{R}$ and $v_n \in \W$ are chosen in such a way that $\gamma_n = \|\varphi_p\|_2^{-2} \intO u_n \varphi_p \, dx$ and $\intO v_n \varphi_p \, dx = 0$ for all $n \in \mathbb{N}$. 
	Since $u_n \to \varphi_p$ strongly in $\W$, we derive that $\gamma_n \to 1$ and $\|\nabla v_n\|_p \to 0$ as $n \to \infty$.
	Using now the improved Poincar\'e inequality of \cite{takac}, we get
	\begin{align}
	H_\alpha(u_n) &\geq C\left(|\gamma_n|^{p-2} \intO |\nabla \varphi_p|^{p-2} |\nabla v_n|^2 \, dx + \intO |\nabla v_n|^p \, dx \right) 
	\nonumber 
	\\ 
	\label{eq:estimate_for_H}
	&\geq \dfrac{C}{2}\left(\intO |\nabla \varphi_p|^{p-2} |\nabla v_n|^2 \, dx + \intO |\nabla v_n|^p \, dx \right) 
	\end{align} 
	for large $n \in \mathbb{N}$, 
	where $C>0$ does not depend on $n \in \mathbb{N}$. (Below in the proof we will always denote by $C$ a positive constant independent of $n \in \mathbb{N}$.) 
	Let us now estimate $|G_\beta(u_n)|$ from above. Using the mean value theorem, we can find $\varepsilon_n \in (0,1)$ for each $n \in \mathbb{N}$ such that
	\begin{align}
	\notag
	0 > G_\beta(u_n) 
	&= 
	|\gamma_n|^{q} G_\beta(\varphi_p) + \langle G_\beta'(\gamma_n \varphi_p+\varepsilon_n v_n), v_n \rangle \\
	\label{eq:estim:Inf:G}
	&\geq - \intO |\nabla(\gamma_n \varphi_p+\varepsilon_n v_n)|^{q-1}|\nabla v_n| \, dx - 
	\beta \intO |\gamma_n \varphi_p+\varepsilon_n v_n|^{q-1}|v_n| \, dx.
	\end{align}
	First, we estimate the second summand in \eqref{eq:estim:Inf:G} as follows. Since $p \geq 2q > 2(q-1)$ by assumption, we use the H\"older inequality and an embedding result of \cite[Lemma 4.2]{takac} or \cite[Lemma 4.2]{takac2} to obtain
	\begin{align*}
	\intO |\gamma_n \varphi_p + \varepsilon_n v_n|^{q-1}|v_n| \, dx 
	&\leq \left(\intO |\gamma_n \varphi_p + \varepsilon_n v_n|^{2(q-1)} \, dx\right)^\frac{1}{2} \left(\intO |v_n|^2 \, dx\right)^\frac{1}{2}
	\\
	&\leq 
	C \left(\intO |v_n|^2 \, dx\right)^\frac{1}{2} \leq 
	C \left(\intO |\nabla \varphi_p|^{p-2} |\nabla v_n|^2 \, dx\right)^\frac{1}{2}
	\quad \text{for all } n \in \mathbb{N}.
	\end{align*}
	Let us estimate the first summand in \eqref{eq:estim:Inf:G}. 
	Note first that
	$$
	\intO |\nabla(\gamma_n \varphi_p+\varepsilon_n v_n)|^{q-1}|\nabla v_n| \, dx \leq C \intO \left(|\nabla \varphi_p|+| \nabla v_n|\right)^{q-1}|\nabla v_n| \, dx.
	$$
	Now, using the H\"older inequality, we get
	\begin{align*}
	\intO \left(|\nabla \varphi_p|+| \nabla v_n|\right)^{q-1}& |\nabla v_n| \, dx
	=
	\intO \left(|\nabla \varphi_p|+| \nabla v_n|\right)^\frac{p-2}{2}|\nabla v_n| \cdot \left(|\nabla \varphi_p|+| \nabla v_n|\right)^\frac{2q-p}{2} \, dx
	\\
	&\leq 
	\left(\intO \left(|\nabla \varphi_p|+| \nabla v_n|\right)^{p-2}|\nabla v_n|^2 \, dx\right)^\frac{1}{2}
	\left(\intO \frac{dx}{\left(|\nabla \varphi_p|+| \nabla v_n|\right)^{p-2q}} \right)^\frac{1}{2} 
	\\
	&\leq
	C \left(\intO |\nabla \varphi_p|^{p-2} |\nabla v_n|^2 \, dx + \intO |\nabla v_n|^{p} \, dx \right)^\frac{1}{2}
	\left(\intO \frac{dx}{|\nabla \varphi_p|^{p-2q}}\right)^\frac{1}{2}.
	\end{align*}
	If $p=2q$, then we conclude that 
	\begin{equation}\label{eq:estimate_for_G}
	|G_\beta(u_n)| \leq C \left(\intO |\nabla \varphi_p|^{p-2} |\nabla v_n|^2 \, dx + \intO |\nabla v_n|^{p} \, dx\right)^\frac{1}{2} 
	\quad \text{for all } n \in \mathbb{N}.
	\end{equation}
	Assume now that $p>2q$. Taking $r=\frac{p-2q}{p-1}$, we see that $r<1$. Hence, we can apply the integrability result of \cite[Theorem 1.1]{damascelli} to derive that $\intO \frac{dx}{|\nabla \varphi_p|^{(p-1)r}} \leq C$ whenever $N \geq 2$. 
	If $N=1$, then we also have $\intO \frac{dx}{|\varphi_p^\prime|^{(p-1)r}} \leq C$. Indeed, in this case $\varphi_p$ is a generalized trigonometric function $\sin_p$ (cf.\ \cite{busheled}). Then, we deduce from \cite[(2.12) and (2.18)]{busheled} that $\cos_p x := \varphi_p^\prime \approx C |x-a|^\frac{1}{p-1}$, where $a$ is a (unique) zero of $\cos_p$ on $\Omega$, which implies the desired integrability. 
	Therefore, we conclude that \eqref{eq:estimate_for_G} is satisfied for all $N \geq 1$ and $p \geq 2q$.
	
	Finally, combining the obtained estimates \eqref{eq:estimate_for_H} and \eqref{eq:estimate_for_G} for $H_\alpha(u_n)$ and $G_\beta(u_n)$, we get
	\begin{align}
	\notag
	-\infty 
	&= \inf_{u \in \N \cap B_{\alpha,\beta}^-} \J(u) 
	= \liminf_{n \to \infty} \J(u_n) \\
	\label{eq:Inf:contradict}
	&\geq
	- C \, \limsup_{n \to \infty} \left(\intO |\nabla \varphi_p|^{p-2} |\nabla v_n|^2 \, dx + \intO |\nabla v_n|^{p} \, dx \right)^\frac{p-2q}{2(p-q)} > -\infty
	\end{align}
	as $n \to \infty$ since $p \geq 2q$. A contradiction. 
	
	Let us show that $m(\alpha,\beta)$ is attained when $p>2q$. Let $\{w_n\}_{n \in \mathbb{N}}$ be a corresponding minimizing sequence for $m(\alpha,\beta)$. 
	In view of \eqref{prop:Infif}, we can assume that each $w_n \in \N\cap B_{\alpha, \beta}^-$.
	Suppose now, by contradiction, that $\|\nabla w_n\|_p \to \infty$ as $n \to \infty$. Then, considering $u_n := w_n/\|\nabla w_n\|_p$ for $n \in \mathbb{N}$, we see from \eqref{tu} that $H_\alpha(u_n) \to 0$, which implies that $u_n \to \varphi_p$ strongly in $\W$. However, in this case \eqref{eq:Inf:contradict} is valid, and we see that $\liminf\limits_{n \to \infty} \J(u_n) = 0$, which is a contradiction to $m(\alpha,\beta) < 0$. 
	Therefore, minimizing sequence $\{w_n\}_{n \in \mathbb{N}}$ is bounded, and hence $\E$ possesses a global minimizer whenever $p>2q$.
\end{proof}

\begin{remark}\label{rem:special} 
	Whether a global minimum of $E_{\lambda_1(p),\beta_*}$ is attained in the case $p = 2q$ remains an open problem.
\end{remark}

\begin{lemma}\label{lem:seq-to-negative-infty} 
Let $\alpha \geq \lambda_1(p)$ and $\beta > \lambda_1(q)$. 
Assume that $u_0\in\W$ satisfies $H_\alpha(u_0)=0$ and $G_\beta(u_0)<0$. 
Then
$$
\N\cap B_{\alpha,\beta}^-\neq\emptyset
\quad \text{and} \quad 
\inf_{u \in \W} \E(u) = \inf_{u \in \N} \E(u) = \inf_{u \in \N\cap B_{\alpha,\beta}^-} \J(u) = -\infty.
$$ 
\end{lemma}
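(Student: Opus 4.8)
The plan is to produce a one‑parameter family $u_\varepsilon\in B_{\alpha,\beta}^-$, $\varepsilon\in(0,\varepsilon_0]$, along which $H_\alpha(u_\varepsilon)\to 0^+$ while $G_\beta(u_\varepsilon)$ stays bounded away from $0$; the rest then follows automatically. Indeed, once such a family is at hand, Proposition~\ref{prop:minpoint} produces $t(u_\varepsilon)\,u_\varepsilon\in\N\cap B_{\alpha,\beta}^-$ (so, in particular, $\N\cap B_{\alpha,\beta}^-\ne\emptyset$), and since $\J$ is $0$‑homogeneous and $H_\alpha(u_\varepsilon)>0$, the explicit formula for $\J$ together with \eqref{tu} gives
\[
\J(u_\varepsilon)=\E\big(t(u_\varepsilon)\,u_\varepsilon\big)
=-\frac{p-q}{pq}\,\frac{|G_\beta(u_\varepsilon)|^{\frac{p}{p-q}}}{|H_\alpha(u_\varepsilon)|^{\frac{q}{p-q}}}
\;\longrightarrow\;-\infty
\qquad\text{as }\varepsilon\to0^+,
\]
because the numerator converges to $|G_\beta(u_0)|^{\frac{p}{p-q}}>0$ while the denominator tends to $0^+$. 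As $\N\subseteq\W$ and $\J=\E$ on $\N\cap B_{\alpha,\beta}^-$, this forces $\inf_{\W}\E\le\inf_{\N}\E\le\inf_{\N\cap B_{\alpha,\beta}^-}\J=-\infty$, hence all three infima equal $-\infty$.

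To construct $u_\varepsilon$, note first that $H_\alpha$ and $G_\beta$ are invariant under $u_0\mapsto|u_0|$, so we may assume $u_0\ge0$; also $u_0\not\equiv0$ since $G_\beta(u_0)<0$. I look for $u_\varepsilon$ of the form $u_0+\varepsilon\theta$ with a fixed direction $\theta\in\W$ chosen so that $H_\alpha(u_0+\varepsilon\theta)>0$ for all small $\varepsilon>0$; granting this, continuity of $H_\alpha$ and $G_\beta$ together with $H_\alpha(u_0)=0$ and $G_\beta(u_0)<0$ yields $u_\varepsilon\in B_{\alpha,\beta}^-$ for $\varepsilon$ small, $H_\alpha(u_\varepsilon)\to0^+$, and $G_\beta(u_\varepsilon)\to G_\beta(u_0)<0$, as required. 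If $\alpha>\lambda_1(p)$, then $u_0$ cannot be an eigenfunction of $-\Delta_p$ at $\alpha$: otherwise $u_0\ge0$, $u_0\not\equiv0$ would be a positive eigenfunction (strong maximum principle) and hence a first eigenfunction, forcing $\alpha=\lambda_1(p)$, a contradiction. Therefore $H_\alpha'(u_0)\ne0$, so there is $\theta\in\W$ with $\langle H_\alpha'(u_0),\theta\rangle>0$; by continuity of $H_\alpha'$ we have $\langle H_\alpha'(u_0+s\theta),\theta\rangle>0$ for $s\in[0,\varepsilon_0]$ with $\varepsilon_0$ small, whence $H_\alpha(u_0+\varepsilon\theta)=H_\alpha(u_0)+\int_0^\varepsilon\langle H_\alpha'(u_0+s\theta),\theta\rangle\,ds>0$ for $0<\varepsilon\le\varepsilon_0$. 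If instead $\alpha=\lambda_1(p)$, then $H_\alpha(u_0)=0$ and $u_0\ge0$, $u_0\not\equiv0$ force $u_0\in\mathbb{R}\varphi_p$ by Lemma~\ref{lem:eigenvalue}\,\ref{lem:eigenvalue:1}; picking any $\theta\in\W\setminus\mathbb{R}\varphi_p$ we get $u_0+\varepsilon\theta\notin\mathbb{R}\varphi_p$ for $\varepsilon\ne0$, and Lemma~\ref{lem:eigenvalue}\,\ref{lem:eigenvalue:1} again gives $H_\alpha(u_0+\varepsilon\theta)>0$.

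The only genuinely delicate point is the one handled in the second paragraph: the first‑order perturbation of $H_\alpha$ breaks down precisely when $u_0$ is an eigenfunction of $-\Delta_p$ at the level $\alpha$, and this degeneracy is circumvented by first replacing $u_0$ with $|u_0|$ (which changes neither $H_\alpha$ nor $G_\beta$) and then disposing of the resonant value $\alpha=\lambda_1(p)$ via the rigidity in Lemma~\ref{lem:eigenvalue}\,\ref{lem:eigenvalue:1}. All remaining ingredients — continuity of $H_\alpha$, $G_\beta$, $H_\alpha'$ on $\W$, the mean value computation, the $0$‑homogeneity of $\J$, and Proposition~\ref{prop:minpoint} with \eqref{tu} — are routine and parallel the corresponding steps in the proof of Proposition~\ref{Iinf}.
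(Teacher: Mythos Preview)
Your proof is correct and follows essentially the same approach as the paper's: replace $u_0$ by $|u_0|$, perturb along a direction $\theta$ with $\langle H_\alpha'(u_0),\theta\rangle>0$ when $\alpha>\lambda_1(p)$ (respectively along any $\theta\notin\mathbb{R}\varphi_p$ when $\alpha=\lambda_1(p)$), and use the explicit formula for $\J$ to drive the energy to $-\infty$. The only cosmetic difference is that you spell out why $H_\alpha'(u_0)\ne0$ in the case $\alpha>\lambda_1(p)$ via the strong maximum principle, whereas the paper simply asserts that $u_0\ge0$ is a regular point of $H_\alpha$.
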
 
\begin{proof} 
Assume first that $\alpha> \lambda_1(p)$ and $\beta > \lambda_1(q)$.
Let $u_0\in\W$ be such that $H_\alpha(u_0)=0$ and $G_\beta(u_0)<0$. Considering $|u_0|$ if necessary, we may assume that $u_0\ge 0$.
Therefore, $u_0$ is a regular point of $H_\alpha$, and hence we can find $\theta \in \W$ such that 
$\langle H^\prime_\alpha(u_0), \theta \rangle > 0$.
Note that $\theta\not\in\mathbb{R}u_0$ since $\langle H_\alpha^\prime(u_0),u_0\rangle = p H_\alpha(u_0)= 0$.

Let us consider  $u_\varepsilon := u_0 + \varepsilon \theta$ 
for $\varepsilon>0$. It is easy to see that 
$$
\int_0^\varepsilon \langle H_\alpha^\prime(u_0+t\theta),u_0\rangle\,dt 
=H_\alpha(u_\varepsilon) > 0 
\quad {\rm and} \quad 
G_\beta(u_\varepsilon)<0 
$$
for sufficiently small $\varepsilon>0$. 
Therefore, $t(u_\varepsilon)u_\varepsilon\in \N\cap B_{\alpha,\beta}^-$, where $t(u_\varepsilon) > 0$ is obtained by Proposition~\ref{prop:minpoint}, and we get 
\begin{align*} 
\inf_{u \in \W} \E(u) 
&\leq \inf_{u \in \N} \E(u) \leq \inf_{u \in \N\cap B_{\alpha,\beta}^-} \J(u) \\
&\leq \J(t(u_\varepsilon)u_\varepsilon) =\J(u_\varepsilon) 
=- \frac{p-q}{pq}\,
\frac{|G_\beta(u_\varepsilon)|^{\frac{p}{p-q}}}{|H_\alpha(u_\varepsilon)|^{\frac{q}{p-q}}} 
\to -\infty 
\end{align*}
as $\varepsilon\to+0$, since $|G_\beta(u_\varepsilon)|\to |G_\beta(u_0)|\neq0$ and $|H_\alpha(u_\varepsilon)|\to |H_\alpha(u_0)|=0$. 

Assume now that $\alpha = \lambda_1(p)$ and $\beta > \lambda_1(q)$. Since $H_\alpha(u_0)=0$ if and only if $u_0 \in \mathbb{R}\varphi_p$, we see that $G_\beta(u_0) < 0$ if and only if $\beta > \beta_*$, see \eqref{def:values}. Let $u_0 = \varphi_p$. Taking any $\theta \in \W \setminus \mathbb{R}\varphi_p$ and considering $u_\varepsilon := \varphi_p + \varepsilon \theta$ for $\varepsilon > 0$, we can apply the arguments from above to obtain the desired conclusion 
because $H_\alpha(u_\varepsilon)>0$ for any $\varepsilon\not=0$. 
\end{proof} 

\begin{lemma}\label{lem:seq-to-zero}
Let $\alpha=\alpha_*$ and $\beta=\lambda_1(q)$. Then 
\begin{equation}\label{eq:lemma3.10}
\N \cap B_{\alpha,\beta}^+\neq \emptyset
\quad \text{and} \quad 
\inf_{u \in \N} E(u) = \inf_{u \in \N \cap B_{\alpha,\beta}^+} \J(u) = 0.
\end{equation}
\end{lemma}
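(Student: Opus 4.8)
The plan is to exhibit an element of $\N\cap B_{\alpha,\beta}^+$ directly, to compute $\inf_{u\in\N}\E(u)$ from the Nehari identity \eqref{eq:Nehari}, and then to drive $\J$ down to $0$ along a one‑parameter perturbation of $\varphi_q$. Throughout, $\alpha=\alpha_*$ and $\beta=\lambda_1(q)$, so that $H_\alpha(\varphi_q)=0$ and $G_\beta(\varphi_q)=0$ by the definitions of $\alpha_*$ (see \eqref{def:values}) and of $\lambda_1(q)$.

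First, for the nonemptiness of $\N\cap B_{\alpha,\beta}^+$ I would check that $\varphi_p$ itself lies in $B_{\alpha,\beta}^+$: by Lemma~\ref{lem:LID} we have $H_\alpha(\varphi_p)=(\lambda_1(p)-\alpha_*)\|\varphi_p\|_p^p<0$, while $\varphi_p\notin\mathbb{R}\varphi_q$ forces $G_\beta(\varphi_p)>0$ by Lemma~\ref{lem:eigenvalue} \ref{lem:eigenvalue:2}. Hence Proposition~\ref{prop:minpoint} produces $t(\varphi_p)>0$ with $t(\varphi_p)\varphi_p\in\N\cap B_{\alpha,\beta}^+$. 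To identify the infimum, note that $\varphi_q\in\N$ (since $H_\alpha(\varphi_q)+G_\beta(\varphi_q)=0$, using $\varphi_q\in\W$ by regularity) and $\E(\varphi_q)=0$, while \eqref{eq:Nehari} together with Lemma~\ref{lem:eigenvalue} \ref{lem:eigenvalue:2} (valid because $\beta=\lambda_1(q)$) gives $\E(u)=\tfrac{p-q}{pq}G_\beta(u)\ge0$ for every $u\in\N$. Thus $\inf_{u\in\N}\E(u)=0$; and since $\J(u)=\E(u)$ on $\N\cap B_{\alpha,\beta}^+$, the same bound yields $\inf_{u\in\N\cap B_{\alpha,\beta}^+}\J(u)\ge0$. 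It remains to prove the reverse inequality.

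For that, the decisive point is that $\varphi_q$ is a \emph{global minimizer} of $G_{\lambda_1(q)}$ on $\W$ (Lemma~\ref{lem:eigenvalue} \ref{lem:eigenvalue:2}), hence a critical point, $G_\beta'(\varphi_q)=0$, whereas $\varphi_q$ is \emph{not} a critical point of $H_\alpha$, i.e.\ $H_\alpha'(\varphi_q)\neq0$ --- otherwise $\varphi_q$ would be a nonnegative eigenfunction of $-\Delta_p$, hence proportional to $\varphi_p$, contradicting Lemma~\ref{lem:LID}. I would then pick $\theta\in\W$ with $\langle H_\alpha'(\varphi_q),\theta\rangle<0$; automatically $\theta\notin\mathbb{R}\varphi_q$, since $\langle H_\alpha'(\varphi_q),\varphi_q\rangle=p\,H_\alpha(\varphi_q)=0$. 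Put $u_\varepsilon:=\varphi_q+\varepsilon\theta$. By the mean value theorem and continuity of $H_\alpha'$, for all small $\varepsilon>0$ one has $H_\alpha(u_\varepsilon)=\varepsilon\langle H_\alpha'(\varphi_q+\varepsilon'\theta),\theta\rangle\le-c\varepsilon<0$ for some $c>0$ and $\varepsilon'\in(0,\varepsilon)$; at the same time, differentiability of $G_\beta$ with $G_\beta'(\varphi_q)=0$ gives $0\le G_\beta(u_\varepsilon)=G_\beta(\varphi_q)+o(\varepsilon)=\eta(\varepsilon)\varepsilon$ with $\eta(\varepsilon)\to0^+$, and $G_\beta(u_\varepsilon)>0$ because $u_\varepsilon\notin\mathbb{R}\varphi_q$. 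Hence $u_\varepsilon\in B_{\alpha,\beta}^+$, so by Proposition~\ref{prop:minpoint} and $0$‑homogeneity of $\J$ we get $t(u_\varepsilon)u_\varepsilon\in\N\cap B_{\alpha,\beta}^+$ and, using \eqref{tu} and the resulting expression for $\J$ on $B_{\alpha,\beta}^+$,
\[
0\le\J(t(u_\varepsilon)u_\varepsilon)=\J(u_\varepsilon)=\frac{p-q}{pq}\,\frac{|G_\beta(u_\varepsilon)|^{\frac{p}{p-q}}}{|H_\alpha(u_\varepsilon)|^{\frac{q}{p-q}}}\le\frac{p-q}{pq}\,\frac{(\eta(\varepsilon)\varepsilon)^{\frac{p}{p-q}}}{(c\varepsilon)^{\frac{q}{p-q}}}=C\,\eta(\varepsilon)^{\frac{p}{p-q}}\,\varepsilon,
\]
since $\frac{p}{p-q}-\frac{q}{p-q}=1$, with $C>0$ independent of $\varepsilon$. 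Letting $\varepsilon\to0^+$ gives $\inf_{u\in\N\cap B_{\alpha,\beta}^+}\J(u)\le0$, and combining with the previous step proves \eqref{eq:lemma3.10}.

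The main obstacle is this last estimate, that is, recognizing and exploiting the asymmetry of the two decay rates: $|H_\alpha(u_\varepsilon)|$ vanishes \emph{linearly} in $\varepsilon$ (because $\varphi_q$ is a regular point of $H_\alpha$), while $G_\beta(u_\varepsilon)$ vanishes \emph{faster than linearly} (because $\varphi_q$ is a minimum, hence critical, point of $G_\beta$), so that $|G_\beta(u_\varepsilon)|^{p}$ is dominated by $|H_\alpha(u_\varepsilon)|^{q}$ precisely because $q<p$. Verifying $H_\alpha'(\varphi_q)\neq0$ via Lemma~\ref{lem:LID} and choosing the perturbation direction $\theta\notin\mathbb{R}\varphi_q$ are the only other steps that require a little care; the rest is routine.
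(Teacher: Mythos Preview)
Your proof is correct and follows essentially the same route as the paper's: perturb $\varphi_q$ along a direction $\theta$ with $\langle H_\alpha'(\varphi_q),\theta\rangle<0$, verify $u_\varepsilon=\varphi_q+\varepsilon\theta\in B_{\alpha,\beta}^+$, and use the explicit form of $\J$ together with $\tfrac{p}{p-q}-\tfrac{q}{p-q}=1$ to conclude $\J(u_\varepsilon)\to0$. One small remark: the paper does not invoke $G_\beta'(\varphi_q)=0$; it simply writes $G_\beta(u_\varepsilon)=\varepsilon\langle G_\beta'(\varphi_q+\varepsilon_2\theta),\theta\rangle$ via the mean value theorem with a \emph{bounded} bracket, and the single surviving factor $\varepsilon$ already gives $\J(u_\varepsilon)\le C\varepsilon\to0$ --- so your super-linear decay of $G_\beta$ is correct but more than the argument actually needs.
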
 
\begin{proof} 
Due to Lemma~\ref{lem:LID}, $\varphi_q$ is a regular point of $H_\alpha$. Hence, we can find $\theta \in C_0^\infty(\Omega)$ satisfying 
$
\langle H^\prime_\alpha(\varphi_q), \theta \rangle < 0.
$
Note that $\theta\not\in\mathbb{R}\varphi_q$ since $\langle H^\prime_\alpha(\varphi_q),\varphi_q \rangle = p H_\alpha(\varphi_q) = 0$. 
Let us consider  $u_\varepsilon := \varphi_q + \varepsilon \theta$ for $\varepsilon>0$. 
Fix any sufficiently small $\varepsilon>0$ such that $\langle H^\prime_\alpha(\varphi_q+t\theta), \theta \rangle < 0$ for all $t \in (0,\varepsilon)$. 
According to the mean value theorem, there exist $\varepsilon_1\in (0,\varepsilon)$ and $\varepsilon_2 \in (0,\varepsilon)$ such that
\begin{align}
\label{eq:lemma:3.10:1}
H_\alpha(u_\varepsilon) &= 
	H_\alpha(\varphi_q) + 
	\varepsilon\langle H_\alpha'(\varphi_q+\varepsilon_1\theta), \theta \rangle 
=\varepsilon \langle H_\alpha'(\varphi_q+\varepsilon_1\theta), \theta \rangle 
<0, 
\\
\notag
0<G_\beta(u_\varepsilon) &= 
	G_\beta(\varphi_q) + 
	\varepsilon 
\langle G_\beta'(\varphi_q+\varepsilon_2\theta), \theta \rangle
=\varepsilon\langle G_\beta'(\varphi_q+\varepsilon_2\theta), \theta \rangle, 
\end{align}
where we used the assumption $\alpha=\alpha_*$ and the fact that $G_\beta(w)>0$ for all $w\not\in \mathbb{R}\varphi_q$ due to the simplicity of $\beta=\lambda_1(q)$. 
Hence, Proposition \ref{prop:minpoint} guarantees the existence of $t(u_\varepsilon) > 0$ such that $t(u_\varepsilon) u_\varepsilon\in \N\cap B_{\alpha, \beta}^+$, and we get
\begin{align*} 
0 &\le \inf_{u \in \N\cap B_{\alpha,\beta}^+} \J(u) \le \J(t(u_\varepsilon)u_\varepsilon) \\
&= \J(u_\varepsilon) 
=\frac{p-q}{pq}\,
\frac{|G_\beta(u_\varepsilon)|^{\frac{p}{p-q}}}{|H_\alpha(u_\varepsilon)|^{\frac{q}{p-q}}} =\varepsilon\, \frac{p-q}{pq}\, 
\frac{|\langle G_\beta'(\varphi_q+\varepsilon_2\theta), \theta \rangle|^{\frac{p}{p-q}}}
{|\langle H_\alpha'(\varphi_q+\varepsilon_1\theta), \theta \rangle|^{\frac{q}{p-q}}}
\to 0 
\end{align*}
as $\varepsilon \to +0$ since 
$$
|\langle H_\alpha'(\varphi_q+\varepsilon_1\theta), \theta \rangle| \to |\langle H_\alpha'(\varphi_q), \theta \rangle| \neq 0. 
$$
Furthermore, thanks to $\beta=\lambda_1(q)$, we know that $G_\beta(v)\ge 0$ for all $v \in \W$ and hence $\E(u) \geq 0$ for all $u\in\N$, see \eqref{eq:Nehari}. 
The latter fact implies the desired equalities in \eqref{eq:lemma3.10}.
\end{proof}

\begin{proposition}\label{prop:miniNehari+} 
	Let $\lambda_1(p)<\alpha<\alpha_*$ and $\beta<\beta_*(\alpha)$. 
	Then there exists $v\in \N \cap B_{\alpha,\beta}^+$ such that 
	$$
	\E(v) = \inf_{u \in \N\cap B_{\alpha,\beta}^+}\E(u) = \inf_{u \in \N\cap B_{\alpha,\beta}^+}\J(u)>0. 
	$$
	Moreover, $v$ is a positive solution of \eqref{eq:D}.
\end{proposition}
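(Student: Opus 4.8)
The plan is to identify the value $d^+:=\inf_{u\in\N\cap B_{\alpha,\beta}^+}\E(u)$ with a minimum of the fibered functional $\J$ taken over the \emph{open} set $B_{\alpha,\beta}^+$, and then to show that such a minimizer generates a positive critical point of $\E$. Note first that $B_{\alpha,\beta}^+$ is open (it is the preimage of an open set under the continuous map $u\mapsto(H_\alpha(u),G_\beta(u))$) and nonempty: since $\alpha>\lambda_1(p)$ we have $H_\alpha(\varphi_p)=(\lambda_1(p)-\alpha)\|\varphi_p\|_p^p<0$, and since $H_\alpha(\varphi_p)\le0$ while $\beta<\beta_*(\alpha)$, the definition of $\beta_*(\alpha)$ forces $\|\nabla\varphi_p\|_q^q/\|\varphi_p\|_q^q\ge\beta_*(\alpha)>\beta$, i.e.\ $G_\beta(\varphi_p)>0$; hence $\varphi_p\in B_{\alpha,\beta}^+$. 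By the $0$-homogeneity of $\J$ and Proposition~\ref{prop:minpoint}, for every $u\in B_{\alpha,\beta}^+$ we have $t(u)u\in\N\cap B_{\alpha,\beta}^+$ (the set $B_{\alpha,\beta}^+$ being a cone) with $\J(t(u)u)=\J(u)$, and since $\J=\E$ on $\N\cap B_{\alpha,\beta}^+$ we obtain
\[
d^+=\inf_{u\in\N\cap B_{\alpha,\beta}^+}\E(u)=\inf_{u\in\N\cap B_{\alpha,\beta}^+}\J(u)=\inf_{u\in B_{\alpha,\beta}^+}\J(u),
\]
and this value lies in $[0,\infty)$ (nonnegativity because $\E=\tfrac{p-q}{pq}G_\beta>0$ on $\N\cap B_{\alpha,\beta}^+$, finiteness because $d^+\le\J(\varphi_p)<\infty$).

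Next I would take a minimizing sequence $\{u_n\}_{n\in\mathbb{N}}\subset B_{\alpha,\beta}^+$ for $\J$ and, using $0$-homogeneity, normalize it by $\|\nabla u_n\|_p=1$. Then $\{u_n\}_{n\in\mathbb{N}}$ is bounded in $\W$, so, passing to a subsequence, $u_n\rightharpoonup u_0$ in $\W$ (hence also weakly in $W_0^{1,q}$, by the continuous embedding $\W\hookrightarrow W_0^{1,q}$) and $u_n\to u_0$ strongly in $L^p(\Omega)$ and in $L^q(\Omega)$. From $H_\alpha(u_n)<0$ we get $\|u_n\|_p^p>1/\alpha$, whence $\|u_0\|_p^p\ge1/\alpha>0$, so $u_0\neq0$. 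Weak lower semicontinuity of $\|\nabla\cdot\|_p$ and $\|\nabla\cdot\|_q$ together with the strong $L^p$- and $L^q$-convergence yield $H_\alpha(u_0)\le\liminf_{n\to\infty}H_\alpha(u_n)\le0$ and $G_\beta(u_0)\le\liminf_{n\to\infty}G_\beta(u_n)$.

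The crucial step is to prove that $u_0\in B_{\alpha,\beta}^+$, that is, to rule out $H_\alpha(u_0)=0$. If $H_\alpha(u_0)=0$, then $\|\nabla u_0\|_p^p=\alpha\|u_0\|_p^p\le\liminf_{n\to\infty}\|\nabla u_n\|_p^p=1$, so $H_\alpha(u_n)=1-\alpha\|u_n\|_p^p\to1-\alpha\|u_0\|_p^p\ge0$; since each $H_\alpha(u_n)<0$, this limit must equal $0$, i.e.\ $-H_\alpha(u_n)\to0^+$. As $\J(u_n)$ converges to the finite value $d^+$, the numerator $G_\beta(u_n)^{p/(p-q)}$ must then also tend to $0$, so $G_\beta(u_0)\le\liminf_{n\to\infty}G_\beta(u_n)=0$; but $H_\alpha(u_0)=0\le0$ and $\beta<\beta_*(\alpha)$ force $G_\beta(u_0)>0$ by the definition of $\beta_*(\alpha)$, a contradiction. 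Hence $H_\alpha(u_0)<0$, and then the same definition of $\beta_*(\alpha)$ gives $G_\beta(u_0)>0$, so $u_0\in B_{\alpha,\beta}^+$. A standard lower-semicontinuity computation (along a further subsequence with $G_\beta(u_n)\to g\ge G_\beta(u_0)>0$ and $-H_\alpha(u_n)\to h$, where $-H_\alpha(u_0)\ge h\ge0$ by weak lower semicontinuity and $h>0$ since $h=0$ would again force $g=0$) gives $\J(u_0)\le\tfrac{p-q}{pq}\,g^{p/(p-q)}h^{-q/(p-q)}=d^+$, and therefore $\J(u_0)=d^+$; in particular $d^+=\J(u_0)>0$ because $u_0\in B_{\alpha,\beta}^+$. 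Replacing $u_0$ by $|u_0|$ (which changes neither $H_\alpha$, nor $G_\beta$, nor $\J$) we may assume $u_0\ge0$.

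Finally, since $u_0$ minimizes the $C^1$ functional $\J$ over the open set $B_{\alpha,\beta}^+$, we have $\J'(u_0)=0$ in $(\W)^*$. Writing $\J=\tfrac{p-q}{pq}\,G_\beta^{p/(p-q)}(-H_\alpha)^{-q/(p-q)}$ on $B_{\alpha,\beta}^+$ and differentiating, this identity is equivalent to $q\,G_\beta(u_0)\,H_\alpha'(u_0)=p\,H_\alpha(u_0)\,G_\beta'(u_0)$, i.e.\ to
\[
G_\beta(u_0)\bigl(-\Delta_p u_0-\alpha|u_0|^{p-2}u_0\bigr)=H_\alpha(u_0)\bigl(-\Delta_q u_0-\beta|u_0|^{q-2}u_0\bigr)
\]
in $(\W)^*$. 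Using $t(u_0)^{p-q}=-G_\beta(u_0)/H_\alpha(u_0)$ from \eqref{tu}, a direct substitution then shows that $v:=t(u_0)u_0\in\N\cap B_{\alpha,\beta}^+$ is a weak solution of \eqref{eq:D} with $\E(v)=\J(v)=\J(u_0)=d^+$. Since $v\ge0$, $v\neq0$, $\E(v)=d^+>0\neq0$ and $H_\alpha(v)G_\beta(v)\neq0$, Remark~\ref{rem:positivity} (regularity up to the boundary and the strong maximum principle) yields $v\in\C$ with $v>0$ in $\Omega$, so $v$ is a positive solution. The main obstacle is the crucial step above, namely controlling the possible loss of compactness on the boundary $\{H_\alpha=0\}$ of the cone $B_{\alpha,\beta}^+$; this is precisely where the strict inequality $\beta<\beta_*(\alpha)$ enters, through the definition of $\beta_*(\alpha)$.
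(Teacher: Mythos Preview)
Your proof is correct and complete, but it follows a genuinely different route from the paper's own argument.

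The paper splits the proof into three cases according to the position of $\beta$ relative to $\lambda_1(q)$: for $\beta<\lambda_1(q)$ and $\beta=\lambda_1(q)$ it simply quotes Theorems~\ref{thm:MP-Nehari} and~\ref{prop:MP-Nehari-resonant}~\ref{prop:MP-Nehari-resonant:2}, and for $\lambda_1(q)<\beta<\beta_*(\alpha)$ it works directly on $\N\cap B_{\alpha,\beta}^+$. There it takes a minimizing sequence $\{u_n\}\subset\N\cap B_{\alpha,\beta}^+$, rules out $\|\nabla u_n\|_p\to\infty$ and $\|\nabla u_n\|_p\to0$ separately (both using $\beta<\beta_*(\alpha)$), and then upgrades weak to \emph{strong} convergence by the standard Nehari trick: if convergence were only weak, projecting the weak limit back onto $\N$ via Proposition~\ref{prop:minpoint} would produce a strictly lower value. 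Strong convergence keeps the limit on $\N\cap B_{\alpha,\beta}^+$, and the fact that a ground state with $H_\alpha\cdot G_\beta\neq0$ is a critical point is then invoked through Remark~\ref{rem:positivity}.

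You instead handle all $\beta<\beta_*(\alpha)$ uniformly and never argue strong convergence. By normalizing a minimizing sequence for the $0$-homogeneous functional $\J$ on the open cone $B_{\alpha,\beta}^+$, you only need weak semicontinuity to show the weak limit $u_0$ stays in $B_{\alpha,\beta}^+$ (the key point $H_\alpha(u_0)\neq0$ again using $\beta<\beta_*(\alpha)$) and satisfies $\J(u_0)\le d^+$. You then obtain criticality by differentiating $\J$ at an interior minimum and rescaling via \eqref{tu}. This is the fibering method in its pure form; it trades the paper's strong-convergence step and appeal to \cite[Lemma~2]{BobkovTanaka2015} for a direct computation of $\J'$. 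Both arguments are standard; yours is slightly more self-contained and avoids the case splitting, while the paper's is more in line with the Nehari-manifold machinery used elsewhere in the article.
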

\begin{proof}
	Let $\lambda_1(p)<\alpha<\alpha_*$. 
	If $\beta < \lambda_1(q)$, then the assertion follows from Theorem \ref{thm:MP-Nehari}. If $\beta = \lambda_1(q)$, then the assertion follows from Theorem \ref{prop:MP-Nehari-resonant} \ref{prop:MP-Nehari-resonant:2}. Assume now that $\lambda_1(q) < \beta < \beta_*(\alpha)$. 
	It is not hard to see that $\varphi_p \in B_{\alpha,\beta}^+$ (since $\beta < \beta_*(\alpha) < \beta_*$ by Proposition \ref{prop:property-curve} \ref{prop:property-curve:6}), and hence $\N \cap B_{\alpha,\beta}^+ \neq \emptyset$, as it follows from Proposition \ref{prop:minpoint}. 
	Let $\{u_n\}_{n \in \mathbb{N}}$ be a minimizing sequence for $\E$ over $\N \cap B_{\alpha,\beta}^+$. 
	Let us show first that $\{u_n\}_{n \in \mathbb{N}}$ is bounded in $\W$. Suppose, by contradiction, that $\|\nabla u_n\|_p \to \infty$ as $n \to \infty$. Then, considering $w_n := u_n/\|\nabla u_n\|_p$ for $n \in \mathbb{N}$, we see that $w_n$ converges to some $w_0$ weakly in $\W$ and strongly in $L^p(\Omega)$ and $L^q(\Omega)$.
	Thus, since $H_\alpha(w_n)<0$, the weak lower semicontinuity implies that $H_\alpha(w_0) \leq 0$. Moreover, $H_\alpha(w_n)<0$ yields $1<\alpha\|w_n\|_p^p$, and hence $w_0 \not\equiv 0$.
	Furthermore, recalling that $\beta < \beta_*(\alpha)$, we conclude that $G_\beta(w_0) > 0$. Therefore,
	$$
	\E(u_n) = \frac{p-q}{pq} \|\nabla u_n\|_p^q\, G_\beta(w_n) \to \infty
	\quad \text{as } n \to \infty,
	$$
	which is impossible, since $\{u_n\}_{n \in \mathbb{N}}$ is a minimizing sequence. Thus, $\{u_n\}_{n \in \mathbb{N}}$ is bounded in $\W$. Suppose now that $\|\nabla u_n \|_p \to 0$ as $n \to \infty$. 
	Considering again $w_n := u_n/\|\nabla u_n\|_p$, we derive as above that $H_\alpha(w_0) \leq 0$ and $w_0 \not\equiv 0$. However, since $u_n \in \N$, we get
	$$
	\|\nabla u_n\|_p^{p-q} H_\alpha(w_n) = - G_\beta(w_n) \to 0 \quad \text{as } n \to \infty,
	$$
	and hence $G_\beta(w_0) \leq 0$, which contradicts the definition of $\beta_*(\alpha)$ since $\beta < \beta_*(\alpha)$. As a result, we derive that $\inf\limits_{n\in\mathbb{N}}\|\nabla u_n\|_p>0$. 

	The boundedness of $\{u_n\}_{n \in \mathbb{N}}$ implies the existence of $u_0$ such that $u_n$ converges to $u_0$ weakly in $\W$ and strongly in $L^p(\Omega)$ and $L^q(\Omega)$, up to a subsequence. Thanks to $\delta:=\inf\limits_{n\in\mathbb{N}}\|\nabla u_n\|_p>0$ and $H_\alpha(u_0) \leq 0$, we see that $\alpha \|u_0\|_p^p \geq \delta$, and hence $u_0\not\equiv 0$. Therefore, the definition of $\beta_*(\alpha)$ ensures that  $G_\beta(u_0) > 0$. 
	
	Now, let us show that $u_n$ converges to $u_0$ strongly in $\W$. If we suppose that $\|\nabla u_0\|_p < \liminf\limits_{n\to\infty}\|\nabla u_n\|_p$, then $H_\alpha(u_0) < 0 < G_\beta(u_0)$, and hence Proposition \ref{prop:minpoint} yields the existence of $t_0 > 0$ such that $t_0u_0 \in \N \cap B_{\alpha,\beta}^+$. This implies the following contradiction:
	\begin{align*}
	\inf_{u \in \N\cap B_{\alpha,\beta}^+}\E(u) \leq \E(t_0u_0) < \liminf_{n\to\infty} \E(t_0u_n) \leq \liminf_{n\to\infty} \E(u_n) = \inf_{u \in \N\cap B_{\alpha,\beta}^+}\E(u),
	\end{align*}
	where the third inequality follows from the fact that $t=1$ is the unique maximum point of $\E(tu_n)$ on $[0,\infty)$ for any $n \in \mathbb{N}$. Consequently, $\|\nabla u_0\|_p=\liminf\limits_{n\to\infty}\|\nabla u_n\|_p$, whence $u_n\to u_0$ strongly in $\W$. Therefore, noting that $u_0\in\N$ and $H_\alpha(u_0) = -G_\beta(u_0) < 0$, we see that $u_0 \in \N \cap B_{\alpha,\beta}^+$ and it is a nonnegative minimizer of $\E$ over $\N \cap B_{\alpha,\beta}^+$ with $\E(u_0) > 0$. Consequently, $u_0$ is a positive solution of \eqref{eq:D}, see Remark \ref{rem:positivity}.
\end{proof}


\section{Proofs for global minimizers}\label{sec:proofs:global_minimizers}

\begin{proof*}{Proposition~\ref{prop:GM1}} 
\ref{prop:GM1:1} Let $\alpha \le \lambda_1(p)$ and $\beta \le \lambda_1(q)$. 
Then $H_\alpha(u)\ge 0$ and $G_\beta(u)\ge 0$ for all $u\in\W$, see Lemma \ref{lem:eigenvalue}. This implies that $\E(u) \geq 0$ for all $u\in\W$. On the other hand, we have $\E(0) = 0$, that is, $0$ is a global minimizer of $\E$. If $u \not\equiv 0$ is such that $\E(u) = 0$, then we get $H_\alpha(u) = 0$ and $G_\beta(u) = 0$. This is possible if and only if $\alpha=\lambda_1(p)$ and $\beta=\lambda_1(q)$. Consequently, $u = t\varphi_p$ and $u=s\varphi_q$ for some $t$, $s\in\mathbb{R} \setminus \{0\}$. However, it contradicts Lemma~\ref{lem:LID}, and hence $0$ is the unique global minimizer of $\E$. 

\ref{prop:GM1:2} Let $\alpha < \lambda_1(p)$ and $\beta > \lambda_1(q)$. The assertion was proved in \cite[Proposition~2]{BobkovTanaka2015}. 

\ref{prop:GM1:3} Let $\alpha>\lambda_1(p)$ and $\beta \in \mathbb{R}$. 
Since $H_\alpha(\varphi_p)<0$ and $p>q$, we have $\E(t\varphi_p) = t^pH_\alpha(\varphi_p)/p + t^qG_\beta(\varphi_p)/q \to -\infty$ as $t\to\infty$, which implies the desired result. 
\end{proof*}

\begin{proof*}{Proposition~\ref{prop:GM2}} 
Let $(\alpha_n,\beta_n) \in \mathbb{R}^2$ be such that $\alpha_n<\lambda_1(p)$ and $\beta_n>\lambda_1(q)$ for all $n \in \mathbb{N}$, and it converges to some $(\alpha,\beta) \in \mathbb{R}^2$ as $n \to \infty$. 
Let $u_n$ be a global minimizer of $\En$ given by Proposition \ref{prop:GM1} \ref{prop:GM1:2}. 
Since $\En(u_n)<0$, we have $G_{\beta_n}(u_n)<0<H_{\alpha_n}(u_n)$. Consequently, each $u_n$ is a solution of $(GEV;\alpha_n, \beta_n)$ (see Remark \ref{rem:positivity}), and we have 
\begin{equation}\label{eq:proof:prop:GM1:1}
\|\nabla u_n\|_p^p+\|\nabla u_n\|_q^q=\alpha_n\|u_n\|_p^p+\beta_n\|u_n\|_q^q.
\end{equation}
This implies that $\|u_n\|_p$ is bounded if and only if $\|\nabla u_n\|_p$ is bounded. 
Finally, since $\En$ is even, we may suppose that $u_n \ge 0$.

\ref{prop:GM2:1} Let $\alpha=\lambda_1(p)$ and $\beta>\beta_*$. 
Since $\alpha_n\to\lambda_1(p)$ and $u_n$ is a global minimizer of $\En$, 
we have 
\begin{equation*}\label{eq:GM2-1}
\limsup_{n\to\infty}\En(u_n)\le \limsup_{n\to\infty}\En(t\varphi_p)
=\frac{t^q}{q}G_\beta(\varphi_p)
\end{equation*}
for any $t>0$. 
Since $G_{\beta}(\varphi_p)<0$ for $\beta > \beta_*$, we get  $\lim\limits_{n\to\infty}\En(u_n)=-\infty$ by tending $t \to \infty$. 
Therefore, we see from \eqref{eq:proof:prop:GM1:1} that $\|u_n\|_p$ has no bounded subsequences, that is, $\lim\limits_{n\to\infty}\|u_n\|_p=\infty$ occurs.  
Finally, recalling that $\En^\prime(u_n)=0$ in $(\W)^*$ and $\alpha_n \to \lambda_1(p)$, 
Lemma \ref{lem:bdd-PS} guarantees that $u_n/\|u_n\|_p$ converges to $\varphi_p/\|\varphi_p\|_p$ 
strongly in $\W$ because any subsequence of $u_n/\|u_n\|_p$ has a strongly convergent subsequence to the same limit function. 

\ref{prop:GM2:2} Let $\alpha=\lambda_1(p)$ and $\lambda_1(q)<\beta<\beta_*$. 
First, we prove that $\{u_n\}_{n \in \mathbb{N}}$ is bounded in $\W$. 
In view of \eqref{eq:proof:prop:GM1:1}, it is sufficient to show the boundedness of $\{u_n\}_{n \in \mathbb{N}}$ in $L^p(\Omega)$.  
Suppose, by contradiction, that $\|u_n\|_p \to \infty$ as $n\to\infty$, up to a subsequence. 
Considering $v_{n} := u_{n}/\|u_{n}\|_p$ for $n \in \mathbb{N}$, Lemma \ref{lem:bdd-PS} implies the existence of a subsequence $\{v_{n_k}\}_{k \in \mathbb{N}}$ which converges strongly in $\W$ to $\varphi_p/\|\varphi_p\|_p$. Thus, since $\beta<\beta_*$, we have 
$\lim\limits_{k\to\infty}G_{\beta_{n_k}}(v_{n_k})=G_\beta(\varphi_p/\|\varphi_p\|_p)>0$. On the other hand, recalling that $u_{n} \in \mathcal{N}_{\alpha_n,\beta_n}$ 
and $m(\alpha_n,\beta_n)<0$ for all $n \in \mathbb{N}$, we get
\begin{align*}
0>\En(u_n) = \frac{p-q}{pq} G_{\beta_n}(u_n)
=\frac{p-q}{pq} G_{\beta}(u_n)
-\frac{p-q}{pq} (\beta_n-\beta)\|u_n\|_q^q. 
\end{align*}
This implies that $(\beta_{n_k}-\beta)\|v_{n_k}\|_q^q > G_{\beta}(v_{n_k})$ for all $k \in \mathbb{N}$.
Hence, letting $k\to\infty$, we obtain a contradiction. 
As a result, $\{\|u_n\|_p\}_{n \in \mathbb{N}}$ is bounded, and we conclude that $\{u_n\}_{n \in \mathbb{N}}$ is bounded in $\W$. 

Now, we prove that $\limsup\limits_{n\to\infty} \En(u_n)<0$. 
Since $u_n$ is a global minimizer of $\En$, we have for any $t>0$ that
$$
\limsup_{n\to\infty} \En(u_n)\le \limsup_{n\to\infty} \En(t\varphi_q)
= \frac{t^p}{p}H_\alpha(\varphi_q) + \frac{t^q}{q} G_\beta(\varphi_q).
$$
Then, recalling that $\beta>\lambda_1(q)$ and $q<p$, we take $t>0$ small enough to get the desired fact.

Finally, according to Lemma \ref{lem:conv-gs}, $\{u_n\}_{n \in \mathbb{N}}$ has a subsequence $\{u_{n_k}\}_{k \in \mathbb{N}}$ which converges strongly in $\W$ to a ground state $u_0$ of $\E$ and $\E(u_0) = d(\alpha,\beta)<0$. 
Moreover, $u_0$ is a global minimizer of $\E$. 
Indeed, taking any $w\in\W$ and passing to the limit in $E_{\alpha_{n_k},\beta_{n_k}}(u_{n_k})\le E_{\alpha_{n_k},\beta_{n_k}}(w)$, we conclude that $\E(u_0)\le \E(w)$, whence $u_0$ is a global minimizer of $\E$ and $m(\alpha,\beta)<0$.

\ref{prop:GM2:3} Let $\beta = \lambda_1(q)$. 
We begin by proving the boundedness of $\{u_n\}_{n \in \mathbb{N}}$ in $\W$. 
In view of \eqref{eq:proof:prop:GM1:1}, we suppose, by contradiction, that $\|u_n\|_p\to \infty$ as $n\to\infty$, up to a subsequence.
Since $u_n \geq 0$, it follows from Lemma~\ref{lem:bdd-PS} that $\{v_n\}_{n \in \mathbb{N}}$, where $v_n := u_n/\|u_n\|_p$ for $n \in \mathbb{N}$, has a subsequence $\{v_{n_k}\}_{k \in \mathbb{N}}$ which converges strongly  in $\W$ to $v_0 = \varphi_p / \|\varphi_p\|_p$, and $\alpha=\lambda_1(p)$. 
On the other hand, recalling that $G_{\beta_{n_k}}(u_{n_k}) < 0$, we get $G_{\beta_{n_k}}(v_{n_k})<0$. Since $\beta_n \to \lambda_1(q)$, we conclude that $G_\beta(v_0) = 0$ and hence  $v_0=\varphi_q/\|\varphi_q\|_p$. However, this contradicts Lemma~\ref{lem:LID}.
Therefore, $\{u_n\}_{n \in \mathbb{N}}$ is bounded in $\W$. 
This ensures that $\lim\limits_{n\to \infty}\En(u_n)=0$, since 
\begin{align*}
0>\En(u_n)&=\E(u_n)+\frac{\alpha-\alpha_n}{p}\|u_n\|_p^p
+\frac{\beta-\beta_n}{q}\|u_n\|_q^q
\\ 
&\ge \frac{\alpha-\alpha_n}{p}\|u_n\|_p^p
+\frac{\beta-\beta_n}{q}\|u_n\|_q^q=o(1),
\end{align*}
where we used the fact that $\E(u_n) \geq m(\alpha, \beta) = 0$, see Proposition~\ref{prop:GM1} \ref{prop:GM1:1}.

Since $\{u_n\}_{n \in \mathbb{N}}$ is bounded in $\W$ and $\En(u_n)<0$, Lemma~\ref{lem:conv-gs} implies that any subsequence of $\{u_n\}_{n \in \mathbb{N}}$ has a subsequence strongly convergent in $\W$ to a solution of \eqref{eq:D}. 
In view of Lemma~\ref{lem:nonempty-Nehari}, \eqref{eq:D} has no \textit{nontrivial} solutions for $\alpha \le \lambda_1(p)$ and $\beta=\lambda_1(q)$, and hence we conclude that $u_n$ converges to $0$ strongly in $\W$. 

Finally, consider $w_n := u_n/\|\nabla u_n\|_q$ for $n \in \mathbb{N}$. By choosing an appropriate subsequence of any subsequence of $\{w_n\}_{n \in \mathbb{N}}$, we may assume that $w_n$ converges to some $w_0$ weakly in $W_0^{1,q}$ and strongly in $L^q(\Omega)$. Since $G_{\beta_{n}}(w_{n})<0$, we get $1 \le \beta\|w_0\|_q^q$, whence $w_0 \not\equiv 0$. 
Moreover, by $\beta=\lambda_1(q)$, it is clear that $0\le G_\beta(w_0)\le \liminf\limits_{n\to\infty} G_{\beta_n}(w_n)\le 0$, that is,  $0 = G_\beta(w_0) = \lim\limits_{n\to\infty}G_{\beta_n}(w_n)$. 
This yields the strong convergence of $\{w_n\}_{n \in \mathbb{N}}$ in $W_0^{1,q}$ to  
$w_0=\varphi_q/\|\nabla \varphi_q\|_q$. 

\ref{prop:GM2:4} Let $\alpha=\lambda_1(p)$, $\beta=\beta_*$ and $p>2q$. 
First, we show that $\limsup\limits_{n\to\infty} \En(u_n)<0$. 
Taking any $v \in\N\cap B^-_{\alpha,\beta}$  (see Proposition~\ref{Iinf} for the existence), we see that 
$$
\En(u_n)=\inf_{u\in\W} \En(u)\le \En(v) = \E(v)+o(1) < 0
$$
for all large $n \in \mathbb{N}$, which implies the desired result. 

Now, let us show the boundedness of $\{u_n\}_{n \in \mathbb{N}}$ in $\W$. 
Suppose, by contradiction, that $\|\nabla u_n\|_p\to \infty$ as $n\to\infty$. 
Setting $w_n:=u_n/\|u_n\|_p$ for $n\in\mathbb{N}$, Lemma \ref{lem:bdd-PS} ensures that $w_n$ converges to $\varphi_p/\|\varphi_p\|_p$ strongly in $\W$, up to a subsequence. 
Therefore, considering the $L^2$-orthogonal decomposition $w_n=\gamma_n \varphi_p+v_n$ as in the proof of Proposition \ref{Iinf}, we see that $\gamma_n\to 1$ and $\|\nabla v_n\|_p\to0$ as $n\to\infty$. 
Recalling that $\alpha_n < \alpha=\lambda_1(p)$, we have $H_{\alpha_n}(w_n)>H_\alpha(w_n)>0$. 
Therefore, since $\{\beta_n\}_{n \in \mathbb{N}}$ is bounded, the same argument as in Proposition~\ref{Iinf} implies that 
$$
\En(u_n)=J_{\alpha_n,\beta_n}(w_n)\to 0 
$$
as $n\to \infty$, which contradicts $\limsup\limits_{n\to\infty} \En(u_n)<0$. 
Thus, $\{u_n\}_{n \in \mathbb{N}}$ is bounded in $\W$. 

Finally, Lemma \ref{lem:conv-gs} implies that $u_n$ converges strongly in $\W$, up to a subsequence, to a global minimizer of $\E$ as $n \to \infty$ (see the end of the proof of \ref{prop:GM2:2}).

\ref{prop:GM2:5} Let $\alpha=\lambda_1(p)$, $\beta=\beta_*$ and $p<2q$. 
Let us show $\lim\limits_{n\to\infty}\En(u_n)=-\infty$. 
Fix any $R>0$. According to Proposition \ref{Iinf}, we can choose $w \in \N \cap B^-_{\alpha,\beta}$ satisfying $\E(w)\le -R$. Then we get 
$$
\En(u_n)=\inf_{u\in\W} \En(u)\le \En(w)=\E(w)+o(1)
\le -R+o(1)
$$
for all large $n \in \mathbb{N}$, and hence $\limsup\limits_{n\to\infty}\En(u_n)\le -R$. 
Since $R>0$ is arbitrary, we get the desired result. 
The remaining claims can be proved as in \ref{prop:GM2:1}.
\end{proof*}

\begin{proof*}{Proposition~\ref{prop:GM3}} 
Let $\alpha=\lambda_1(p)$ and $\beta > \lambda_1(q)$. Then $H_\alpha(\varphi_q) > 0 > G_\beta(\varphi_q)$, and, considering $t\varphi_q$ for $t>0$ small enough, we see that $m(\alpha, \beta) \leq \E(t\varphi_q) < 0$.
	
\ref{prop:GM3:1} Let $\beta>\beta_*$.
Then $H_\alpha(\varphi_p)=0$ and $G_\beta(\varphi_p)<0$, and we get $\E(t\varphi_p) = t^qG_\beta(\varphi_p)/q\to -\infty$ as $t\to\infty$. 

\ref{prop:GM3:2} Let $\lambda_1(q)<\beta<\beta_*$. 
Set $\alpha_n=\alpha-1/n$, $n \in \mathbb{N}$. Since $\alpha_n<\lambda_1(p)$ and $\beta>\lambda_1(q)$, 
we can obtain a minimizer $u_n$ of $E_{\alpha_n,\beta}$ which satisfies $E_{\alpha_n,\beta}(u_n)<0$ for each $n \in \mathbb{N}$, see Proposition \ref{prop:GM1} \ref{prop:GM1:2}. 
According to Proposition \ref{prop:GM2} \ref{prop:GM2:2}, $u_n$ has a strongly convergent subsequence to a global minimizer $u_0$ with $\E(u_0)<0$. 

\ref{prop:GM3:3} Let $\beta=\beta_*$. 
The assertion follows from Proposition \ref{Iinf}. 
\end{proof*}

\begin{proof*}{Proposition~\ref{prop:conti-minimum-value}} 
First we prove that the extended function $m$ defined by \eqref{def:mini} is continuous at every $(\alpha,\beta)\in \mathbb{R}^2 \setminus \{\lambda_1(p)\}\times(-\infty,\beta_*]$. 
Let $\{(\alpha_n,\beta_n)\}_{n \in \mathbb{N}}$ be any sequence convergent to such  $(\alpha,\beta)$. 
We divide arguments for the following cases:

(a) Let $\alpha<\lambda_1(p)$ and $\beta<\lambda_1(q)$.
The assertion follows from Proposition \ref{prop:GM1} \ref{prop:GM1:1}.

(b) Let $\alpha<\lambda_1(p)$ and $\beta=\lambda_1(q)$. 
Then, $m(\alpha,\beta)=0$ holds by Proposition \ref{prop:GM1} \ref{prop:GM1:1}.
If there exists a subsequence of $\{(\alpha_n,\beta_n)\}_{n \in \mathbb{N}}$, denoted for simplicity by the same index $n$, such that $\beta_n>\lambda_1(q)$ for all $n \in \mathbb{N}$, then we can find a global minimizer $u_n$ of $\En$ for all $n \in \mathbb{N}$ large enough, see Proposition \ref{prop:GM1} \ref{prop:GM1:2}.
Namely, $m(\alpha_n,\beta_n)=\En(u_n)$, and Proposition \ref{prop:GM2} \ref{prop:GM2:3} shows that 
$m(\alpha_n,\beta_n) \to 0 = m(\alpha,\beta)$ as $n\to\infty$. 
On the other hand, if $\beta_n\le \lambda_1(q)$, then $m(\alpha_n,\beta_n)=0=m(\alpha,\beta)$ by Proposition \ref{prop:GM1} \ref{prop:GM1:1}, which completes the proof. 

(c) Let $\alpha<\lambda_1(p)$ and $\beta>\lambda_1(q)$. 
We may assume that $\alpha_n<\lambda_1(p)$ and $\beta_n>\lambda_1(q)$ for all sufficiently large $n \in \mathbb{N}$. 
By Proposition \ref{prop:GM1} \ref{prop:GM1:2}, we can choose a global minimizer $u_n$ of $\En$ and $m(\alpha_n,\beta_n)=\En(u_n)<0$. 
Recalling that $\alpha<\lambda_1(p)$, we deduce from Lemma~\ref{lem:bdd-PS} that $\{u_n\}_{n \in \mathbb{N}}$ is bounded in $\W$. Moreover, recalling that $q<p$, we get
\begin{align*}
m(\alpha_n,\beta_n)& \le \En(t\varphi_q) 
=\frac{t^p}{p}H_{\alpha_n}(\varphi_q) + \frac{t^q(\lambda_1(q)-\beta_n)}{q}\|\varphi_q\|_q^q 
\\
&=t^p\frac{\alpha_*-\alpha+o(1)}{p}\|\varphi_q\|_p^p-t^q\frac{\beta-\lambda_1(q)+o(1)}{q}\|\varphi_q\|_q^q <0
\end{align*}
for small $t>0$ and all $n \in \mathbb{N}$ large enough, which implies that $\limsup\limits_{n\to\infty}m(\alpha_n,\beta_n)<0$.
Therefore, Lemma \ref{lem:conv-gs} guarantees that $\{u_n\}_{n \in \mathbb{N}}$ has a subsequence strongly convergent in $\W$ to a global minimizer of $\E$. 
Thus, any subsequence of $\{m(\alpha_n,\beta_n)\}_{n \in \mathbb{N}}$ has a convergent subsequence to the same value $m(\alpha,\beta)$, i.e., $m(\alpha_n,\beta_n)\to m(\alpha,\beta)$ as $n\to \infty$. 

(d) Let $\alpha=\lambda_1(p)$ and $\beta>\beta_*$. 
Then $m(\alpha,\beta) = -\infty$ by Proposition~\ref{prop:GM3} \ref{prop:GM3:1}.
Taking a global minimizer $u_n$ of $\En$ provided $\alpha_n<\lambda_1(p)$ (see Proposition \ref{prop:GM1} \ref{prop:GM1:2}), we see that $m(\alpha_n,\beta_n)=\En(u_n) \to -\infty=m(\alpha,\beta)$ as $n\to\infty$ by Proposition \ref{prop:GM2} \ref{prop:GM2:1}. 
In the case of $\alpha_n\ge \lambda_1(p)$, the assertion obviously follows from Proposition \ref{prop:GM1} \ref{prop:GM1:3} or \ref{prop:GM3} \ref{prop:GM3:1} since $m(\alpha_n,\beta_n)=-\infty=m(\alpha,\beta)$.

(e) Let $\alpha>\lambda_1(p)$ and $\beta \in \mathbb{R}$. The assertion follows from Proposition~\ref{prop:GM1} \ref{prop:GM1:3}.

Let us now prove that $m$ is discontinuous on $(\alpha,\beta) \in  \{\lambda_1(p)\}\times(-\infty,\beta_*)$. On the one hand, $m(\alpha, \beta) = -\infty$ for any  $\alpha > \lambda_1(p)$ and $\beta \in \mathbb{R}$, see Proposition \ref{prop:GM1} \ref{prop:GM1:3}. On the other hand, if $\alpha = \lambda_1(p)$, then $m(\alpha, \beta) = 0$ for $\beta \leq \lambda_1(q)$, see Proposition \ref{prop:GM1} \ref{prop:GM1:1}, and $m(\alpha, \beta) > -\infty$ for $\lambda_1(q) < \beta < \beta_*$, see Proposition \ref{prop:GM3} \ref{prop:GM3:2}. These observations complete the proof.
\end{proof*}


\section{Proofs for ground states}\label{sec:proofs:ground_states}

\begin{proof*}{Proposition~\ref{prop:property-gs}} 
\ref{prop:property-gs:1} 
Let $u$ be a ground state of $\E$ with $\E(u)<0$. 
Suppose, by contradiction, that there exists a sequence $\{u_n\}_{n \in \mathbb{N}} \subset \W$ such that 
\begin{equation}\label{eq:prop:gs:1}
\E(u_n)<\E(u)\quad {\rm for\ all}\ n \in \mathbb{N} \quad {\rm and}\quad 
u_n \to u 
\quad {\rm strongly\ in}\ \W.
\end{equation}
Since $G_\beta(u)<0<H_\alpha(u)$, we may assume that $G_\beta(u_n)<0<H_\alpha(u_n)$ for all sufficiently large $n \in \mathbb{N}$. 
Thus, according to Proposition \ref{prop:minpoint}, there exists $s_n>0$ such that $s_nu_n\in\N$ and $\E(tu_n)$ attains the minimum value at $t=s_n$ on  $[0,\infty)$. 
Therefore, 
$$
\E(u)=\inf_{v\in\N}\E(v) \le \E(s_nu_n)=\min_{t\ge 0}\E(tu_n)\le \E(u_n),
$$
which contradicts \eqref{eq:prop:gs:1}.

\ref{prop:property-gs:2}
Let $u$ be a ground state of $\E$ with $\E(u)>0$. 
Proposition \ref{prop:minpoint} implies that $t=1$ is a unique maximum point of $\E(tu)$ on $[0, \infty)$, and hence $u$ is not a local minimum point of $\E$.
Let us now prove that $u$ is also not a local maximum point. 
Suppose, by contradiction, that there exists $\delta_0>0$ such that 
\begin{equation}\label{eq:unext-2} 
\E(v)\le \E(u) 
\quad {\rm for\ all}\ 
v 
{\rm \ with}\ 
\|\nabla v - \nabla u\|_p < \delta_0. 
\end{equation}
Let us take an arbitrary $\theta\in \W\setminus C^1_0(\overline{\Omega})$. Thus, $\theta\not\in\mathbb{R}u$ 
since $u\in  C^1_0(\overline{\Omega})$ (see Remark~\ref{rem:positivity}). Consider $u_\varepsilon := u + \varepsilon\theta$ for $\varepsilon\in\mathbb{R}$.
Recalling that $u \in \N$ and $G_\beta(u)>0>H_\alpha(u)$, there exists $\varepsilon_0 > 0$ such that 
$G_\beta(u_\varepsilon)>0>H_\alpha(u_\varepsilon)$ for any $\varepsilon\in (-\varepsilon_0,\varepsilon_0)$. 
Hence, in view of Proposition \ref{prop:minpoint}, for each $\varepsilon\in (-\varepsilon_0,\varepsilon_0)$ there exists a unique $t_\varepsilon > 0$ such that $t_\varepsilon u_\varepsilon\in\N$. 
Noting that $t_\varepsilon\to 1$ (see \eqref{tu}) and $u_\varepsilon\to u$ strongly in $\W$ as $\varepsilon\to 0$, we can choose $\varepsilon_1 \in (0, \varepsilon_0)$ such that $\|\nabla (t_\varepsilon u_\varepsilon) - \nabla u\|_p < \delta_0$ for any $\varepsilon\in (-\varepsilon_1,\varepsilon_1)$.
As a result, we deduce from \eqref{eq:unext-2} that
$$
0 < d(\alpha,\beta)\le \E(t_\varepsilon u_\varepsilon)\le \E(u)=d(\alpha,\beta), \quad 
{\rm and\ so}\quad d(\alpha,\beta)=\E(t_\varepsilon u_\varepsilon) 
$$
for all $\varepsilon\in (-\varepsilon_1,\varepsilon_1)$. 
Consequently, $t_\varepsilon u_\varepsilon$ must be a nontrivial solution of \eqref{eq:D} for all $\varepsilon\in (-\varepsilon_1,\varepsilon_1)$, and hence $t_\varepsilon u_\varepsilon \in \C$, see Remark~\ref{rem:positivity}.
Recalling that $u \in \C$, we get $\theta = \frac{1}{\varepsilon}\left(u_\varepsilon - u\right) \in \C$ for $\varepsilon \neq 0$,  which is impossible since $\theta \in \W\setminus\C$ by assumption. 
\end{proof*}

\begin{proof*}{Theorem~\ref{thm:MP-Nehari}} 
Let $\alpha>\lambda_1(p)$ and $\beta<\lambda_1(q)$. 
In \cite[Theorem 2.1]{BobkovTanaka2015}, it was proved that $c^+(\alpha,\beta)>0$ and it is attained by a positive solution $u$ of \eqref{eq:D}. Hence, $u\in \N$ and 
$$
d(\alpha,\beta) \le \E(u) = \E^+(u) = c^+(\alpha,\beta).
$$

On the other hand, $c^+(\alpha,\beta)\le c(\alpha,\beta)$. 
Indeed, fix any $\varepsilon > 0$ and take a path $\gamma_\varepsilon\in\Gamma(\alpha,\beta)$ such that $\max\limits_{s\in[0,1]}\E(\gamma_\varepsilon(s)) \le c(\alpha,\beta)+\varepsilon$. Noting that $\E(\gamma_\varepsilon(\cdot))=\E^+(|\gamma_\varepsilon(\cdot)|)$ and  $|\gamma_\varepsilon|\in\Gamma^+(\alpha,\beta)$, we obtain $$c^+(\alpha,\beta)\le \max\limits_{s\in[0,1]}\E^+(|\gamma_\varepsilon(s)|)= \max\limits_{s\in[0,1]}\E(\gamma_\varepsilon(s))\le c(\alpha,\beta)+\varepsilon. 
$$
Since $\varepsilon>0$ was chosen arbitrarily, we conclude that $c^+(\alpha,\beta)\le c(\alpha,\beta)$.

Finally, we show that $c(\alpha,\beta)\le d(\alpha,\beta)$. 
Fix any $\varepsilon>0$ and choose $w_\varepsilon\in \N$ such that $\E(w_\varepsilon) \le d(\alpha,\beta)+\varepsilon$. Since $\beta<\lambda_1(q)$, we see that $H_\alpha(w_\varepsilon)<0<G_\beta(w_\varepsilon)$. Therefore, $t=1$ is the maximum point of $\E(tw_\varepsilon)$ on $[0,\infty)$. Moreover, recalling that $q<p$, we can find sufficiently large $R>0$ such that $\E(Rw_\varepsilon)<0$. Hence, considering $\gamma(s):=sRw_\varepsilon$, we obtain that $\gamma \in \Gamma(\alpha,\beta)$ and 
$$
c(\alpha,\beta) \le 
\max_{s\in[0,1]} \E(\gamma(s))=\max_{s\ge 0} \E(sw_\varepsilon) 
=\E(w_\varepsilon)\le d(\alpha,\beta)+\varepsilon,
$$
which implies that $c(\alpha,\beta) \le d(\alpha,\beta)$. This leads to the desired conclusion.
\end{proof*}

\begin{proof*}{Theorem~\ref{prop:MP-Nehari-resonant}} 
Let $\alpha \in \mathbb{R}$ and $\beta = \lambda_1(q)$. 
Let $\{u_n\}_{n \in \mathbb{N}} \subset \N$ be a minimizing sequence for $d(\alpha,\beta)$. Since $\E$ is even, we may assume that $u_n\ge 0$. 
Note first that $d(\alpha,\beta) \ge 0$. Indeed, since $\beta=\lambda_1(q)$, we see that $G_\beta(u)\ge 0$ for any $u \in \W$ and hence $\E(u) \ge 0$ for any $u \in \N$ by \eqref{eq:Nehari}. 

\ref{prop:MP-Nehari-resonant:1} Let $\alpha \leq \lambda_1(p)$. The assertion follows from the emptiness of $\N$, see Lemma \ref{lem:nonempty-Nehari}.

\ref{prop:MP-Nehari-resonant:2} Let $\lambda_1(p) < \alpha < \alpha_*$. 
Suppose first that there exists a subsequence $\{u_{n_k}\}_{k \in \mathbb{N}}$ such that $\|\nabla u_{n_k}\|_p \to \infty$ as $k\to\infty$. Then Lemma~\ref{lem:bdd-minimizer-Nehar} yields $\alpha \ge \alpha_*$, a contradiction. Thus, $\{u_n\}_{n \in \mathbb{N}}$ is bounded in $\W$. 

Note now that $H_\alpha(u_n) < 0 < G_\beta(u_n)$ for all $n \in \mathbb{N}$, as it easily follows from Lemma \ref{lem:eigenvalue} \ref{lem:eigenvalue:2}. Setting $v_n:=u_{n}/\|\nabla u_{n}\|_p$, we may assume that, up to a subsequence, $v_n$ converges to some $v_0$ weakly in $\W$ and strongly in $L^p(\Omega)$. 
Moreover, since $H_\alpha(v_n)<0$, we obtain that $1 \le \alpha\|v_0\|_p^p$, and hence $v_0 \not\equiv 0$. Recall that $0 \le G_\beta(v_0)\le \liminf\limits_{n\to\infty}G_\beta(v_n)$ and $H_\alpha(v_0) \le \liminf\limits_{n\to\infty} H_\alpha(v_n) \le 0$. If $G_\beta(v_0)=0$, then $v_0=t \varphi_q$ for some $t>0$ and we get a contradiction to $\alpha < \alpha_*$. Thus, $G_\beta(v_0)>0$. 
This fact implies that $\inf\limits_{n \in \mathbb{N}}\|\nabla u_n\|_p>0$. 
Indeed, suppose that there exists a subsequence $\{u_{n_k}\}_{k \in \mathbb{N}}$ such that $\|\nabla u_{n_k}\|_p\to 0$ as $k\to\infty$. 
Since $G_\beta(v_0)>0$, we obtain the following contradiction:
$$
H_\alpha(v_0)\le \limsup_{k\to\infty}H_\alpha(v_{n_k})
=-\liminf_{k\to\infty}\frac{G_\beta(v_{n_k})}{\|\nabla u_{n_k}\|_p^{p-q}} 
\to -\infty
\quad \text{as } k\to\infty.
$$

Since $\{u_n\}_{n \in \mathbb{N}}$ is bounded in $\W$, we may assume that, up to a subsequence, $u_n$ converges to some $u_0$ weakly in $\W$ and strongly in $L^p(\Omega)$. Moreover, since $H_\alpha(u_n) < 0$ leads to $\alpha\|u_0\|_p\ge \inf\limits_{n \in \mathbb{N}}\|\nabla u_n\|_p>0$, we have $u_0 \not\equiv 0$. 
Let us show now that $u_n$ converges to $u_0$ strongly in $\W$. Suppose, contrary to our claim, that $\|\nabla u_0\|_p < \liminf\limits_{n\to\infty} \|\nabla u_n\|_p$. Then $H_\alpha(u_0) < 0$. Moreover, $G_\beta(u_0) > 0$ 
since otherwise $u_0\in\mathbb{R}\varphi_q\setminus\{0\}$ and so 
we get a contradiction to $\alpha < \alpha_*$. Therefore, Proposition \ref{prop:minpoint} yields the existence of a unique maximum point $t_0 > 0$ of $\E(tu_0)$ on $[0, \infty)$ such that $t_0 u_0 \in \N$, and hence 
\begin{align*}
d(\alpha, \beta) \leq \E(t_0 u_0) < \liminf\limits_{n\to\infty} \E(t_0 u_n) 
\leq \liminf\limits_{n\to\infty} \E(u_n) = d(\alpha, \beta),
\end{align*}
a contradiction. The last inequality was obtained by the fact that a unique maximum point of each $\E(tu_n)$ on $[0, \infty)$ is $t=1$.
Thus, $u_n \to u_0$ strongly in $\W$. This implies that $u_0 \in \N$ and $\E(u_0) = d(\alpha, \beta)$. Moreover, as above, we see that $H_\alpha(u_0) < 0 < G_\beta(u_0)$, which leads to $d(\alpha, \beta) > 0$ and to the fact that $u_0$ is a positive solution of \eqref{eq:D}, see Remark \ref{rem:positivity}. 

\ref{prop:MP-Nehari-resonant:3}
Let $\alpha = \alpha_*$. 
Then it follows from $H_\alpha(\varphi_q)=0=G_\beta(\varphi_q)$ that $t\varphi_q \in \N$ for any $t\neq0$ and $\E(t\varphi_q)=0$ for 
any $t$. Since we already know that $d(\alpha,\beta)\ge 0$, we conclude that $d(\alpha,\beta)=0$ and it is attained by $t\varphi_q$ for any $t \neq 0$. 
(Note that equality $d(\alpha,\beta)=0$ also follows from Lemma \ref{lem:seq-to-zero}.)
On the other hand, we see from \eqref{eq:Nehari} that any ground state $u_0$ of $\E$ must satisfy $G_\beta(u_0) = 0$. Recalling that $\beta = \lambda_1(q)$, we conclude that $u_0 \in \mathbb{R}\varphi_q \setminus \{0\}$.

\ref{prop:MP-Nehari-resonant:4}
Let $\alpha > \alpha_*$. 
We start by proving that $d(\alpha,\beta)=0$. 
Choose any $w_n\in\W \setminus \mathbb{R}\varphi_q$ such that $0<\|\nabla w_n-\nabla \varphi_q\|_p<1/n$ for $n \in \mathbb{N}$. 
Then, for sufficiently large $n \in \mathbb{N}$, we have $H_\alpha(w_n)<0$ because of $H_\alpha(\varphi_q)<0$, and $G_\beta(w_n) > 0$ because of $\beta = \lambda_1(q)$. 
Therefore, Proposition \ref{prop:minpoint} guarantees the existence of a unique maximum point $t_n>0$ of $\E(t w_n)$ on $[0,\infty)$ and $t_nw_n \in \N$. 
Moreover, we obtain (see \eqref{tu})
$$
t_n^{p-q}=-\frac{G_\beta(w_n)}{H_\alpha(w_n)}
=-\frac{G_\beta(\varphi_q)+o(1)}{H_\alpha(\varphi_q)+o(1)}
=-\frac{o(1)}{H_\alpha(\varphi_q)+o(1)}
=o(1)
\quad \text{as } n\to\infty.
$$
Thus, recalling that $w_n$ converges to $\varphi_q$ strongly in $\W$, we get $d(\alpha,\beta)=0$, since
$$
0\le d(\alpha,\beta)\le 
\E(t_nw_n)=\frac{p-q}{pq}G_\beta(t_nw_n)
\to 0
\quad \text{as} \quad 
n\to\infty.
$$

Suppose now that $d(\alpha,\beta)=0$ is attained by some $u_0\in \N$. 
This implies that $G_\beta(u_0)=0=H_\alpha(u_0)$, and hence $u_0 = t\varphi_q$ for some $t \neq 0$. However, this yields $\alpha = \alpha_*$, which is impossible by assumption.
\end{proof*}

\begin{proof*}{Proposition~\ref{prop:behavior-gs}} 
Let $\alpha_n > \lambda_1(p)$ and $\beta_n < \lambda_1(q)$ for all $n \in \mathbb{N}$, or $\lambda_1(p) < \alpha_n < \alpha_*$ and $\beta_n \leq \lambda_1(q)$ for all $n \in \mathbb{N}$, and let $u_n$ be a ground state of $\En$. Since $\En$ is even, we may assume that $u_n \ge 0$ for all $n \in \mathbb{N}$.
Recall that $u_n$ is a positive solution of $(GEV;\alpha_n,\beta_n)$ such that $H_{\alpha_n}(u_n)=-G_{\beta_n}(u_n)<0$, see Theorem \ref{thm:MP-Nehari}, Proposition \ref{prop:MP-Nehari-resonant}, and Remark \ref{rem:positivity}.
Note that $\|u_n\|_p$ is bounded if and only if $\|\nabla u_n\|_p$ is bounded, as it follows from the equality $\|\nabla u_n\|_p^p+\|\nabla u_n\|_q^q=\alpha_n\|u_n\|_p^p+\beta_n\|u_n\|_q^q$. 

\ref{prop:behavior-gs:1} Let $\alpha=\lambda_1(p)$ and $\beta\le \lambda_1(q)$. 
First, we show that $\lim\limits_{n\to\infty}\|u_n\|_p=\infty$. 
Suppose, by contradiction, that $\{u_n\}_{n \in \mathbb{N}}$ is bounded in $\W$, up to a subsequence. 
Then, Lemma~\ref{lem:conv-gs} ensures that $\{u_n\}_{n \in \mathbb{N}}$ has a subsequence $\{u_{n_k}\}_{k \in \mathbb{N}}$ which converges strongly in $\W$ to a solution $u_0$ of \eqref{eq:D}. 
Since \eqref{eq:D} has no nontrivial solutions (cf.\ Lemma \ref{lem:nonempty-Nehari}), 
we have $u_0 \equiv 0$, and hence $\|\nabla u_{n_k}\|_p\to 0$ as $k\to\infty$. 
Consider $\{w_k\}_{k \in \mathbb{N}}$, where $w_k:=u_{n_k}/\|\nabla u_{n_k}\|_q$ for $k \in \mathbb{N}$. Noting that $H_{\alpha_n}(u_n)<0$, we apply Lemma~\ref{lem:bdd-PS-2} to deduce that $\beta=\lambda_1(q)$ and that $\{w_k\}_{k \in \mathbb{N}}$ has a subsequence convergent to $w_0:=\varphi_q/\|\nabla \varphi_q\|_q$ weakly in $\W$ and strongly in $W_0^{1,q}$. 
However, since $\alpha=\lambda_1(p)$ and $H_{\alpha_n}(u_n)<0$, we get $H_\alpha(w_0) = 0$, which contradicts Lemma \ref{lem:LID}.

Now, in order to prove that $\lim\limits_{n\to\infty}\En(u_n)=\infty$, we suppose, by contradiction, that $\limsup\limits_{n\to\infty}\En(u_n)<\infty$. 
Since we already know that $\|u_n\|_p\to\infty$, it follows from Lemma~\ref{lem:bdd-PS} that 
$\{v_n\}_{n \in \mathbb{N}}$, where $v_n:=u_n/\|u_n\|_p$ for $n \in \mathbb{N}$, has a subsequence strongly convergent in $\W$ to $v_0 = \varphi_p/\|\varphi_p\|_p$. However, this yields the following contradiction:
$$
o(1)=\frac{\En(u_n)}{\|u_n\|_p^q}=\frac{p-q}{pq} G_{\beta_n}(v_n) = \frac{p-q}{pq} G_\beta(v_0) + o(1) > 0.
$$

\ref{prop:behavior-gs:2} Let $\lambda_1(p)<\alpha<\alpha_*$ and $\beta=\lambda_1(q)$. 
If $\{u_n\}_{n \in \mathbb{N}}$ has a subsequence which is unbounded in $L^p(\Omega)$, then Lemma~\ref{lem:bdd-PS} implies $\alpha=\lambda_1(p)$ since $u_n \ge 0$ for all $n \in \mathbb{N}$. However, this is a contradiction, and hence $\{u_n\}_{n \in \mathbb{N}}$ is bounded in $\W$. Moreover, since $H_{\alpha_n}(u_n)<0$ for all $n \in \mathbb{N}$, Lemma~\ref{lem:conv-gs} implies the existence of a subsequence $\{u_{n_k}\}_{k \in \mathbb{N}}$ strongly convergent in $\W$ to a solution $u_0$ of \eqref{eq:D}. 

If we suppose that $u_0 \equiv 0$, then Lemma~\ref{lem:bdd-PS-2} guarantees that $\{w_{n_k}\}_{k \in \mathbb{N}}$, where $w_k:=u_{n_k}/\|\nabla u_{n_k}\|_q$ for $k \in \mathbb{N}$, has a subsequence convergent to $w_0=\varphi_q/\|\nabla \varphi_q\|_q$ weakly in $\W$ and strongly in $W^{1,q}_0$. 
Hence $H_\alpha(w_0)>0$ by $\alpha<\alpha_*$, but this contradicts the fact that $H_{\alpha_n}(u_n)<0$ for all $n \in \mathbb{N}$. 
Therefore, $u_0 \not\equiv 0$ and, consequently, $u_0 \in \N$.

Finally, let us show that $u_0$ is a ground state of $\E$, that is, $\E(u_0)=d(\alpha,\beta)$. 
Recall that $d(\alpha,\beta)>0$ by Theorem~\ref{prop:MP-Nehari-resonant} \ref{prop:MP-Nehari-resonant:2}, and hence any $v\in\N$ satisfies $\E(v)>0$ and $G_\beta(v)>0>H_\alpha(v)$. 
Thus, for sufficiently large $n \in \mathbb{N}$, Proposition \ref{prop:minpoint} guarantees the existence of $t_n > 0$ such that $t_nv \in \mathcal{N}_{\alpha_n,\beta_n}$, and hence
\begin{align*} 
\E (v) &=\max_{t\ge 0} \E(tv) \ge \E (t_nv) \\
&=\En(t_n v)+o(1)\ge d(\alpha_n,\beta_n)+o(1)=\En (u_n)+o(1) 
\quad \text{as } n\to\infty.
\end{align*}
Consequently, $\E(v) \ge \E(u_0)$ for any $v\in\N$, which implies that $u_0$ is a ground state of $\E$. 

\ref{prop:behavior-gs:3} Let $\alpha\ge \alpha_*$ and $\beta=\lambda_1(q)$. 
By the same arguments as in case \ref{prop:behavior-gs:2}, we see that $\{u_n\}_{n \in \mathbb{N}}$ is bounded in $\W$ and any of its subsequence has a subsequence strongly convergent in $\W$ to a solution $u_0$ of \eqref{eq:D}. We can assume that $u_0 \ge 0$.

Suppose, by contradiction, that $u_0 \not\equiv 0$. Then we see that $\alpha=\alpha_*$ since it is proved in \cite[Proposition 4 (ii)]{BobkovTanaka2015} that \eqref{eq:D} has no positive solutions provided $\alpha>\alpha_*$ and $\beta = \lambda_1(q)$. 
Furthermore, since $t \varphi_q$ is not a solution of \eqref{eq:D} for $t\neq0$, we have $u_0 \not\in \mathbb{R}\varphi_q$, and hence $\E(u_0)>0=d(\alpha,\beta)$ because 
$d(\alpha,\beta)$ is attained only by $t\varphi_q$, see Theorem \ref{prop:MP-Nehari-resonant} \ref{prop:MP-Nehari-resonant:3}. 
By the same argument as in case \ref{prop:behavior-gs:2}, it can be shown that $\E(u_0)\le \E(v)$ for any $v\in\N\setminus \mathbb{R}\varphi_q$. However, this yields a contradiction since there exists a sequence $\{v_n\}_{n \in \mathbb{N}} \subset \N \setminus \mathbb{R} \varphi_q$ satisfying $\E(v_n) \to 0$ as $n \to\infty$, see Lemma~\ref{lem:seq-to-zero}. 
Consequently, any subsequence of $\{u_n\}_{n \in \mathbb{N}}$ has a subsequence strongly convergent in $\W$ to $0$, which implies that $\{u_n\}_{n \in \mathbb{N}}$ also converges strongly in $\W$ to $0$.

The second claim of the assertion \ref{prop:behavior-gs:3} directly follows from Lemma \ref{lem:bdd-PS-2}. 
\end{proof*}

\begin{proof*}{Proposition~\ref{prop:property-curve}}
\ref{prop:property-curve:1}, \ref{prop:property-curve:2} 
The assertions are obvious. 

\ref{prop:property-curve:3} Noting that the functional $H_\alpha$ in the constraint of $\beta_*(\alpha)$ is weakly lower semicontinuous, we apply the direct method of the calculus of variations to obtain that $\beta_*(\alpha)$ is attained for all $\alpha \geq \lambda_1(p)$.

\ref{prop:property-curve:4} Let $\alpha<\alpha_*$. If $\alpha < \lambda_1(p)$, then $\beta_*(\alpha) = \infty > \lambda_1(q)$. Assume that $\alpha \geq \lambda_1(p)$. Then $\beta_*(\alpha)$ is attained by \ref{prop:property-curve:3}. Therefore, recalling that $\|\nabla u\|_q^q/\|u\|_q^q=\lambda_1(q)$ if and only if $u \in \mathbb{R}\varphi_q$, and $H_\alpha(\varphi_q)>0$, we see that $\beta_*(\alpha)>\lambda_1(q)$. 

\ref{prop:property-curve:6} 
Let $\mathcal{B}(\alpha):=\{u\in\W\setminus\{0\}\,:\,H_\alpha(u)\le 0\}$ denotes the admissible set of $\beta_*(\alpha)$. 
Evidently, $\mathcal{B}(\alpha)$ satisfies $\mathcal{B}(\alpha)\subset \mathcal{B}(\alpha^\prime)$ provided $\alpha\le \alpha^\prime$, which implies that $\beta_*(\alpha)$ is nonincreasing for $\alpha \geq \lambda_1(p)$. 

Let us show that $\beta_*(\cdot)$ decreases on $[\lambda_1(p), \alpha_*]$. 
Suppose, by contradiction, that there exist $\alpha, \alpha^\prime$ such that $\lambda_1(p) \leq \alpha < \alpha^\prime \leq \alpha_*$ and $\beta_*(\alpha) = \beta^*(\alpha^\prime)$. 
By the assertion \ref{prop:property-curve:3}, $\beta_*(\alpha)$ and $\beta_*(\alpha^\prime)$ are attained. Let $u_0 \not\equiv 0$ be a minimizer for $\beta_*(\alpha)$, that is, $H_\alpha(u_0) \leq 0$ and $G_{\beta_*(\alpha)}(u_0) = 0$. 
Since $H_\alpha$ and $G_\beta$ are even, we may assume that $u_0 \geq 0$. 
Then we see that $H_\alpha(u_0) = 0$. 
Indeed, if we suppose that $H_\alpha(u_0) < 0$, then $u_0$ is an interior point of $\mathcal{B}(\alpha)$. Hence, we get $(|\nabla u_0\|_q^q/\|u_0\|_q^q)^\prime = 	0$ in $(\W)^*$, which implies that $G_{\beta_*(\alpha)}^{\prime}(u_0)=0$. 
This means that $u_0 \in ES(q;\beta_*(\alpha)) \setminus \{0\}$. 
Since there exist no constant sign eigenfunctions of $-\Delta_q$ except the first eigenfunctions $\mathbb{R}\varphi_q$, 
we must have $\beta_*(\alpha)=\lambda_1(q)$ and $u_0 \in \mathbb{R}\varphi_q$, which is a contradiction since $\beta_*(\alpha) > \lambda_1(q)$ by the assertion \ref{prop:property-curve:4}. 

As a result, we see that $u_0 \in \mathcal{B}(\alpha^\prime)$ 
with $H_{\alpha^\prime}(u_0) < H_{\alpha}(u_0)=0$ and $G_{\beta_*(\alpha^\prime)}(u_0) = 0$ because we are assuming $\alpha < \alpha^\prime$ and $\beta_*(\alpha) = \beta^*(\alpha^\prime)$. 
Applying the above argument to $\beta^*(\alpha^\prime)$, we again get a contradiction. 
Hence, $\beta_*(\cdot)$ is decreasing on $[\lambda_1(p),\alpha_*]$. 

\ref{prop:property-curve:5} 
Fix any $\alpha \geq \lambda_1(p)$ and take any sequence $\{\alpha_n\}_{n \in \mathbb{N}}$ which converges to $\alpha$. (If $\alpha = \lambda_1(p)$, then we assume that $\alpha_n > \lambda_1(p)$ for all $n \in \mathbb{N}$). 
By the assertion \ref{prop:property-curve:3}, for each $n \in \mathbb{N}$ we can find a minimizer  $u_n\in \mathcal{B}(\alpha_n)$ of $\beta_*(\alpha_n)$. We can assume that $\|u_n\|_p = 1$ and $u_n\ge 0$ for all $n \in \mathbb{N}$. 
Moreover, since $\|\nabla u_n\|_p^p \leq \alpha_n\|u_n\|_p^p=\alpha_n=\alpha+o(1)$ for all $n \in \mathbb{N}$, we may suppose that $u_n$ converges, up to a subsequence, weakly in $\W$ and strongly in $L^p(\Omega)$ to some $\tilde{u} \in \W$
 with $\|\tilde{u}\|_p = 1$. This readily implies that 
\begin{equation*}
\beta_*(\alpha)\le 
\dfrac{\|\nabla \tilde{u}\|_q^q}{\|\tilde{u}\|_q^q} \le 
\liminf_{n\to\infty} \dfrac{\|\nabla u_n\|_q^q}{\|u_n\|_q^q}=
\liminf\limits_{n\to\infty} \beta_*(\alpha_n),
\end{equation*}
that is, $\beta_*(\cdot)$ is lower semicontinuous. 

Let us show now the upper semicontinuity of $\beta_*$. By the assertion \ref{prop:property-curve:3}, we can find $u_0\in\mathcal{B}(\alpha)$ such that $u_0\ge 0$ and $\|\nabla u_0\|_q^q/\|u_0\|_q^q = \beta_*(\alpha)$. 

Assume first that $\alpha > \lambda_1(p)$. Thus, we have $H^\prime_\alpha(u_0) \neq 0$ in $(\W)^*$, and hence we can find $\theta \in C_0^\infty(\Omega)$ such that $\langle H^\prime_\alpha(u_0),\theta\rangle <0$. 
This ensures that $H_\alpha(u_0+t\theta)<0$ for all $t>0$ small enough (cf.\ \eqref{eq:lemma:3.10:1}). Thus, for any $\varepsilon>0$ there exists sufficiently small $t>0$ such that $\|\nabla (u_0+t\theta)\|_q^q/\|u_0+t \theta\|_q^q<\beta_*(\alpha)+\varepsilon$. 
Moreover, for sufficiently large $n \in \mathbb{N}$ it holds $H_{\alpha_n}(u_0+t\theta)<0$, that is, $u_0+t\theta\in\mathcal{B}(\alpha_n)$. Consequently, $\beta_*(\alpha_n)\le \|\nabla (u_0+t\theta)\|_q^q/\|u_0+t\theta\|_q^q <\beta_*(\alpha)+\varepsilon$ for sufficiently large $n \in \mathbb{N}$. 
Since $\varepsilon>0$ was taken arbitrarily, we get $\limsup\limits_{n\to\infty}\beta_*(\alpha_n)\le \beta_*(\alpha)$, and 
hence the upper semicontinuity follows. 

Assume now that $\alpha=\lambda_1(p)$. Since $\beta_*(\alpha)$ is nonincreasing (see \ref{prop:property-curve:6}) and $\alpha_n > \lambda_1(p)$, we have $\limsup\limits_{n\to\infty} \beta_*(\alpha_n)\le \beta_*(\lambda_1(p))$. Thus, $\beta_*(\alpha)$ is right upper semicontinuous at $\lambda_1(p)$. 
\end{proof*}

\begin{proof*}{Theorem~\ref{thm:negative-gs}} 
\ref{thm:negative-gs:1}
Assume first that $\alpha=\lambda_1(p)$ and $\lambda_1(q)<\beta<\beta_*$. Then Proposition~\ref{prop:GM3} \ref{prop:GM3:2} guarantees that $d(\alpha,\beta) <0$ and it is attained by a global minimizer of $\E$. 

Assume now that $\lambda_1(p)<\alpha<\alpha_*$ and $\lambda_1(q)<\beta<\beta_*(\alpha)$. 
First, we show that $d(\alpha,\beta)<0$. Since $G_\beta(\varphi_q)<0<H_\alpha(\varphi_q)$, there exists a unique $t_q>0$ such that $\E(t_q\varphi_q)=\min\limits_{t\ge 0} \E(t\varphi_q)<0$ and $t_q\varphi_q\in\N$, see Proposition \ref{prop:minpoint}.
Hence, $d(\alpha,\beta)\le \E(t_q\varphi_q)<0$. 

Let $\{u_n\}_{n \in \mathbb{N}}$ be a minimizing sequence for $d(\alpha,\beta)$, that is,
$u_n \in \N$ and $\E(u_n)\to d(\alpha,\beta)$ as $n\to\infty$. 
Since $d(\alpha,\beta)<0$, we have $\E(u_n)<0$ and $G_\beta(u_n)<0<H_\alpha(u_n)$ for sufficiently large $n \in \mathbb{N}$. 
We claim that $\{u_n\}_{n \in \mathbb{N}}$ is bounded in $\W$. 
Suppose, by contradiction, that $\|\nabla u_n\|_p\to \infty$, up to a subsequence. Setting $v_n:=u_n/\|\nabla u_n\|_p$ and choosing again an appropriate subsequence, we may suppose that $v_n$ converges to some $v_0$ weakly in $\W$ and strongly in $L^p(\Omega)$ as $n \to \infty$. Then, noting that $G_\beta$ is bounded on bounded sets, we obtain 
$$
H_\alpha(v_0)\le \liminf_{n\to\infty} H_\alpha(v_n)
=-\lim_{n\to\infty}\frac{G_\beta(v_n)}{\|\nabla u_n\|_p^{p-q}}= 0.
$$
Moreover, $H_\alpha(v_n)=1-\alpha\|v_n\|_p^p$ yields $\|v_0\|_p=1/\alpha$, and hence $v_0 \not\equiv 0$. Since $\beta<\beta_*(\alpha)$, we must have $G_\beta(v_0)>0$. 
However, since $G_\beta(v_n)<0$, we get $G_\beta(v_0) \le 0$. This contradiction implies that $\{u_n\}_{n \in \mathbb{N}}$ is bounded in $\W$. 

The boundedness of $\{u_n\}_{n \in \mathbb{N}}$ implies that $u_n$ converges to some $u_0$ weakly in $\W$ and strongly in $L^p(\Omega)$, up to a subsequence. 
It is clear that $u_0 \not\equiv 0$ since $\E(u_0) \le \liminf\limits_{n\to\infty}\E(u_n) = d(\alpha,\beta) < 0$. 
Moreover, $G_\beta(u_0) \le 0$. Since $\beta<\beta_*(\alpha)$, we have $H_\alpha(u_0)>0$. 
Since $H_\alpha(u_n)+G_\beta(u_n)=0$ leads to $H_\alpha(u_0)+G_\beta(u_0)\le 0$, we deduce that $G_\beta(u_0)<0<H_\alpha(u_0)$. 
As a result, there exists a unique $t_0>0$ such that $\E(t_0u_0)=\min\limits_{t\ge 0} \E(tu_0)$ and $t_0u_0\in\N$, see Proposition \ref{prop:minpoint}.
Therefore, we get
\begin{align*} 
d(\alpha,\beta) \le \E(t_0u_0)=\min_{t\ge 0} \E(tu_0) \le \E(u_0)\le \liminf_{n\to\infty}\E(u_n)=d(\alpha,\beta), 
\end{align*}
which implies that $t_0=1$, $u_0\in\N$, and $\E(u_0)=d(\alpha,\beta)$. 
Finally, since $d(\alpha, \beta) < 0$, we conclude that $u_0$ is a positive solution of \eqref{eq:D}, see Remark~\ref{rem:positivity}. 

\ref{thm:negative-gs:2} Let $\alpha \geq \lambda_1(p)$ and $\beta > \beta_*(\alpha)$. 
Let us show that we can find $v_0 \in \W$ such that $H_\alpha(v_0)=0$ and $G_\beta(v_0)<0$. Then Lemma~\ref{lem:seq-to-negative-infty} will imply the desired result. 

If $\alpha = \lambda_1(p)$, then we conclude by choosing $v_0 = \varphi_p$ since $\beta > \beta_*=\beta_*(\alpha)$. 

Assume that $\lambda_1(p) < \alpha < \alpha_*$. By Proposition \ref{prop:property-curve} \ref{prop:property-curve:3}, $\beta_*(\alpha)$ is attained. Let $u_0 \not\equiv 0$ be a corresponding minimizer, that is, $H_\alpha(u_0) \leq 0$ and $\|\nabla u_0\|_q^q/\|u_0\|_q^q = \beta_*(\alpha) < \beta$. The latter inequality yields $G_\beta(u_0) < 0$. 
If $H_\alpha(u_0) = 0$, then we are done. If we suppose that $H_\alpha(u_0) < 0$, then, arguing as in the proof of 
Proposition~\ref{prop:property-curve} \ref{prop:property-curve:6}, we obtain a contradiction.

If $\alpha = \alpha_*$, then we conclude by choosing $v_0 = \varphi_q$. 

Assume finally that $\alpha > \alpha_*$. 
Note that $\lambda_1(q)=\beta_*(\alpha)$, see Proposition \ref{prop:property-curve} \ref{prop:property-curve:2}. 
To prove the claim, we will show the existence of a sequence $\{v_n\}_{n \in \mathbb{N}} \subset \W \setminus \{0\}$ such that
\begin{equation}\label{eq:thm215:1}
H_\alpha(v_n) = 0
\quad 
\text{and}
\quad 
\frac{\|\nabla v_n\|_q^q}{\|v_n\|_q^q} \to \lambda_1(q)=\beta_*(\alpha)
\,(<\beta) 
\quad \text{as } n \to \infty.
\end{equation}
Recalling that $H_\alpha(\varphi_q) < 0$ by $\alpha > \alpha_*$, we see that $v_n$ cannot converge to $\varphi_q$ strongly in $\W$. Therefore, we must find a sequence $\{v_n\}$ which converges to $\varphi_q$ weakly in $\W$ and strongly in $W_0^{1,q}$.
Let us fix any function $\theta \in C_0^\infty(\Omega)$ such that $\|\nabla \theta\|_p = 1$, and consider 
$$
\theta_n(x) = n^{\frac{N}{p}-1}\, \theta(n\,x).
$$
By straightforward calculations, we have 
\begin{align}
\label{eq:thm:2.14:2}
\|\nabla \theta_n\|_p = \|\nabla \theta\|_p = 1&, 
\qquad 
\|\theta_n\|_p = \frac{1}{n} \|\theta\|_p \to 0,\\
\notag
\|\nabla \theta_n\|_q = \frac{1}{n^{\frac{N(p-q)}{pq}}}\|\nabla \theta\|_q \to 0&, 
\qquad 
\|\theta_n\|_q = \frac{1}{n^{1+\frac{N(p-q)}{pq}}}\|\theta\|_q \to 0,
\end{align}
as $n \to \infty$. Therefore, $\theta_n \to 0$ weakly in $\W$ and strongly in $W_0^{1,q}$ and $L^p(\Omega)$ and $L^q(\Omega)$.

Consider now the function $v_n := \varphi_q + \gamma_n \theta_n$ for $n \in \mathbb{N}$, where a positive constant $\gamma_n > 0$ is chosen such that $H_\alpha(v_n) = 0$, or, equivalently, 
\begin{equation}\label{eq:2}
\frac{\|\nabla \varphi_q + \gamma_n \nabla \theta_n\|_p^p}{\|\varphi_q + \gamma_n \theta_n\|_p^p} = \alpha, 
\end{equation}
for all $n \in \mathbb{N}$ large enough.
Note that such $\gamma_n >0$ exists, since
$$
\frac{\|\nabla \varphi_q\|_p^p}{\|\varphi_q\|_p^p} = \alpha_* < \alpha
\quad \text{and} \quad 
\frac{\|\nabla \varphi_q + C \nabla \theta_n\|_p^p}{\|\varphi_q + C\theta_n\|_p^p}=
\frac{\|\frac{1}{C}\nabla \varphi_q + \nabla \theta_n\|_p^p}{\|\frac{1}{C}\varphi_q + \theta_n\|_p^p} 
> \alpha
$$
for all sufficiently large $C>0$ and $n \in \mathbb{N}$, see \eqref{eq:thm:2.14:2}.
Moreover, $\{\gamma_n\}_{n \in \mathbb{N}}$ is bounded. 
Indeed, by the triangle inequality, we have
\begin{align*}
\|\nabla \varphi_q + \gamma_n \nabla \theta_n\|_p 
&\geq 
|\|\nabla \varphi_q\|_p - \gamma_n\|\nabla \theta_n\|_p| 
= 
|\|\nabla \varphi_q\|_p - \gamma_n|,\\
\|\varphi_q + \gamma_n\theta_n\|_p 
&\leq 
\|\varphi_q\|_p + \gamma_n\|\theta_n\|_p.
\end{align*}
Therefore, if we suppose that $\gamma_n \to \infty$ as $n \to \infty$, up to a subsequence, then 
$$
\frac{\|\nabla \varphi_q + \gamma_n \nabla \theta_n\|_p}{\|\varphi_q + \gamma_n \theta_n\|_p}
\geq 
\frac{|\|\nabla \varphi_q\|_p - \gamma_n|}{\|\varphi_q\|_p + \gamma_n\|\theta_n\|_p} = 
\frac{\left|\frac{1}{\gamma_n}\|\nabla \varphi_q\|_p - 1\right|}{\frac{1}{\gamma_n}\|\varphi_q\|_p + \|\theta_n\|_p} 
\to \infty
$$
as $n \to \infty$, which is impossible in view of \eqref{eq:2}.
Consequently, since
\begin{gather*}
|\|\nabla \varphi_q\|_q - \gamma_n \|\nabla \theta_n\|_q| 
\leq 
\|\nabla \varphi_q + \gamma_n \nabla \theta_n\|_q \leq  
\|\nabla \varphi_q\|_q + \gamma_n \|\nabla \theta_n\|_q, \\
|\|\varphi_q\|_q - \gamma_n \|\theta_n\|_q| 
\leq 
\|\varphi_q + \gamma_n \theta_n\|_q \leq 
\|\varphi_q\|_q + \gamma_n  \|\theta_n\|_q,
\end{gather*}
we get
$$
\|\nabla \varphi_q + \gamma_n \nabla \theta_n\|_q \to \|\nabla \varphi_q\|_q
\quad \text{and} \quad 
\|\varphi_q + \gamma_n \theta_n\|_q \to \|\varphi_q\|_q
\quad \text{as }
n \to \infty.
$$
Finally, we conclude that $v_n = \varphi_q + \gamma_n \theta_n$ satisfies \eqref{eq:thm215:1}.
Since $\beta > \beta_*(\alpha)$, we can choose $n \in \mathbb{N}$ large enough to get $H_\alpha(v_n) = 0$ and $G_\beta(v_n) < 0$, and then Lemma \ref{lem:seq-to-negative-infty} gives $d(\alpha, \beta) = -\infty$.
\end{proof*}

\begin{proof*}{Proposition~\ref{prop:behavior-gs-2} \ref{prop:behavior-gs-2:1} $\sim$ \ref{prop:behavior-gs-2:3}} 
Let $\lambda_1(p) < \alpha_n < \alpha_*$ and $\lambda_1(q) < \beta_n < \beta_*(\alpha_n)$ for all $n \in \mathbb{N}$, and let $u_n$ be a ground state of $\En$. Since $\En$ is even, we may assume that $u_n\ge 0$ for all $n \in \mathbb{N}$.
Recall that $\En(u_n) < 0$ and $u_n$ is a positive solution of $(GEV;\alpha_n,\beta_n)$, see Theorem \ref{thm:negative-gs}.

First, we claim that $\{u_n\}_{n \in \mathbb{N}}$ is bounded in $\W$. 
If $\alpha > \lambda_1(p)$, then the result follows from Lemma~\ref{lem:bdd-PS}. 
If $\alpha=\lambda_1(p)$, $\beta < \beta_*=\beta_*(\lambda_1(p))$, and we suppose that $\|u_n\|_p \to \infty$, then Lemma \ref{lem:bdd-PS} ensures that $v_n:=u_n/\|u_n\|_p$ converges strongly in $\W$, up to a subsequence, to $v_0=\varphi_p/\|\varphi_p\|_p$. 
Since $G_\beta(v_0)=\lim\limits_{n\to\infty}G_{\beta_n}(v_n)\le 0$, we get a contradiction 
to the assumption $\beta<\beta_*$. Hence, $\{u_n\}_{n \in \mathbb{N}}$ is bounded in $L^p(\Omega)$. This implies the boundedness of $\{u_n\}_{n \in \mathbb{N}}$ in $\W$ since $u_n \in \mathcal{N}_{\alpha_n,\beta_n}$. 
Therefore, Lemma \ref{lem:conv-gs} guarantees that $u_n$ converges strongly in $\W$, up to a subsequence, to a nonnegative solution $u_0$ of \eqref{eq:D}. 

\ref{prop:behavior-gs-2:1} Let $\beta=\lambda_1(q)$. 
Suppose that $u_0 \not\equiv 0$. Then $u_0 \in \N$ and $d(\alpha,\beta)\le \E(u_0)\le \liminf\limits_{n\to\infty}\En(u_n)\le 0$. If $\alpha=\lambda_1(p)$, then $\N$ is empty (see Lemma \ref{lem:nonempty-Nehari}), which is impossible. 
Thus, $u_0 \equiv 0$. 
If $\lambda_1(p)<\alpha<\alpha_*$, then we again get a contradiction since $d(\alpha,\beta)>0$ by Theorem \ref{prop:MP-Nehari-resonant} \ref{prop:MP-Nehari-resonant:2}. 
Finally, if $\alpha=\alpha_*$, then $d(\alpha, \beta)=0$ and $u_0=t\varphi_q$ for some $t>0$, see Theorem \ref{prop:MP-Nehari-resonant} \ref{prop:MP-Nehari-resonant:3}.
However, this is a contradiction because $t\varphi_q$ is not a solution of \eqref{eq:D}. 
Therefore, we conclude that $u_0 \equiv 0$. 
Applying the above arguments to any subsequence of $\{u_n\}_{n \in \mathbb{N}}$, we deduce that $u_n$ converges to $0$ strongly in $\W$. 

Recalling that $\beta=\lambda_1(q)$ and $G_{\beta_n}(u_n)<0$, the second claim of the assertion \ref{prop:behavior-gs-2:1} follows from the same arguments as in the the proof of Proposition~\ref{prop:GM2} \ref{prop:GM2:3}.

\ref{prop:behavior-gs-2:2} Let $\alpha=\lambda_1(p)$ and $\lambda_1(q)<\beta<\beta_*$. 
Let us show first that $u_0 \not\equiv 0$. 
Take a global minimizer $w_0\in\N$ of $\E$ (which exists by Proposition~\ref{prop:GM3} \ref{prop:GM3:2}). 
Since $\E(w_0)<0$ yields $G_\beta(w_0)<0<H_\alpha(w_0)$, we have $G_{\beta_n}(w_0)<0<H_{\alpha_n}(w_0)$ for all sufficiently large $n \in \mathbb{N}$. 
Therefore, using Proposition \ref{prop:minpoint}, we can find $t_n>0$ such that $t_n w_0 \in \mathcal{N}_{\alpha_n,\beta_n}$. 
Hence, passing to the limit as $n\to\infty$ 
in the following chain of inequalities:
$$
\En(u_n)=d(\alpha_n,\beta_n)\le \En(t_n w_0)
=\min_{t\ge 0} \En(t w_0)\le \En(w_0), 
$$
we get $\E(u_0)\le \E(w_0)<0$. Thus, $u_0 \not\equiv 0$ and $\liminf\limits_{n\to\infty}\E(u_n)<0$. 
As a result, Lemma \ref{lem:conv-gs} guarantees that $u_0$ is a ground state of $\E$, and we conclude that $u_0$ is a global minimizer of $\E$. 

\ref{prop:behavior-gs-2:3} Let $\lambda_1(p)<\alpha<\alpha_*$ and $\beta=\beta_*(\alpha)$. 
Choose any $\lambda_1(q)<\beta^\prime<\beta_*(\alpha)$ and take a ground state $w_0$ of $E_{\alpha,\beta^\prime}$ (the existence is shown by Theorem \ref{thm:negative-gs} \ref{thm:negative-gs:1}). 
Note that $E_{\alpha,\beta^\prime}(w_0) < 0$. 
Then, using the same arguments as in the proof of \ref{prop:behavior-gs-2:2}, we can show that $\liminf\limits_{n\to\infty}\E(u_n)<0$ and $u_0\not\equiv 0$. 
Indeed, by Proposition \ref{prop:minpoint}, there exists $t_n>0$ such that $t_nw_0 \in  \mathcal{N}_{\alpha_n,\beta_n}$ for all sufficiently large $n \in \mathbb{N}$, and 
\begin{align*}
\En(u_n) \le \En(t_n w_0) \le \En(w_0)
=\frac{1}{p}H_{\alpha_n}(w_0)+\frac{1}{q}G_{\beta_n}(w_0) 
\le \frac{1}{p}H_{\alpha_n}(w_0)+\frac{1}{q}G_{\beta^\prime}(w_0) 
\end{align*}
since $\beta^\prime<\beta$ and $\beta_n\to\beta$. 
Letting $n\to\infty$, we get $\E(u_0)\le E_{\alpha,\beta^\prime}(w_0)<0$ and $u_0 \not\equiv 0$.
Finally, Lemma~\ref{lem:conv-gs} ensures that $u_0$ is a ground state of $\E$. 
\end{proof*}

\begin{proof*}{Proposition~\ref{prop:behavior-gs-2} \ref{prop:behavior-gs-2:4}}
We may assume that $u_n\ge 0$ for all $n \in \mathbb{N}$. 
First, we show that $\lim\limits_{n\to\infty}\En(u_n)=-\infty$. Fix any $R>0$. 
Since $d(\alpha,\beta)=-\infty$ by Proposition \ref{Iinf}, we can choose $v\in\N$ such that $\E(v)\le -R$, and so $G_\beta(v)<0<H_\alpha(v)$. 
Then, for sufficiently large $n \in \mathbb{N}$ we have $G_{\beta_n}(v)<0<H_{\alpha_n}(v)$. 
Thus, there exists $t_n>0$ such that $t_n v \in \mathcal{N}_{\alpha_n,\beta_n}$ and $\En(t_n v)=\min\limits_{t \geq 0} \En (tv)$, see Proposition \ref{prop:minpoint}.
As a result, we see that $\En (u_n)\le \En(t_n v)\le \En(v)$, and hence $\limsup\limits_{n\to\infty}\En (u_n)\le \E(v)\le -R$. 
Since $R>0$ is arbitrary, we conclude that $\lim\limits_{n\to\infty}\En(u_n)=-\infty$. 
This implies that $\{u_n\}_{n \in \mathbb{N}}$ does not have subsequences bounded in $\W$. 
Hence, $\|\nabla u_n\|_p\to\infty$ (and also $\|u_n\|_p\to\infty$) as $n \to \infty$. Finally, Lemma \ref{lem:bdd-PS} implies that $u_n/\|u_n\|_p$ converges to $\varphi_p/\|\varphi_p\|_p$ strongly in $\W$. 
\end{proof*} 

\begin{proof*}{Theorem~\ref{thm:negative-gs-2}}
\ref{thm:negative-gs-2:1} Let $\lambda_1(p)<\alpha<\alpha_*$ and $\beta=\beta_*(\alpha)$. 
Choose a sequence $\{(\alpha_n,\beta_n)\}_{n \in \mathbb{N}}$ satisfying $\lambda_1(p)<\alpha_n<\alpha_*$ and $\lambda_1(q)<\beta_n<\beta_*(\alpha_n)$ 
for all $n \in \mathbb{N}$, and $\lim\limits_{n\to\infty}\alpha_n=\alpha$ and $\lim\limits_{n\to\infty}\beta_n=\beta$. Then, according to Theorem \ref{thm:negative-gs} \ref{thm:negative-gs:1}, we can find a ground state $u_n$ of $\En$ such that $\En(u_n)<0$ for each $n \in \mathbb{N}$.
Thanks to Proposition \ref{prop:behavior-gs-2} \ref{prop:behavior-gs-2:3}, $\{u_n\}_{n \in \mathbb{N}}$ has a subsequence strongly convergent in $\W$ to a ground state of $\E$ and $d(\alpha,\beta)<0$. 

\ref{thm:negative-gs-2:2} The assertion is proved in Proposition \ref{Iinf}.
\end{proof*}

In order to prove Theorem~\ref{thm:multi}, we prepare the following result. 

\begin{lemma}\label{lem:minimizer-sol} 
	Assume $\lambda_1(p)<\alpha<\alpha_*$ and $\beta=\beta_*(\alpha)$. 
	Let $u_0$ be a nonnegative minimizer of $\beta_*(\alpha)$, that is, $u_0\not\equiv 0$, $H_\alpha(u_0)\le 0$, and $\|\nabla u_0\|_q^q/\|u_0\|_q^q=\beta_*(\alpha)$. 
	Then there exists $t>0$ such that $tu_0$ is a positive solution of \eqref{eq:D} with $\E(tu_0)=0$. 
\end{lemma}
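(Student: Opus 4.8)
The plan is to show that the given minimizer $u_0$ satisfies the Euler--Lagrange identity of the constrained variational problem defining $\beta_*(\alpha)$, with a \emph{strictly positive} Lagrange multiplier attached to the active constraint $H_\alpha(u)\le 0$, and then to rescale $u_0$ so that this identity becomes exactly the weak formulation of \eqref{eq:D}.

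First I would record two elementary facts. Since $\beta=\beta_*(\alpha)=\|\nabla u_0\|_q^q/\|u_0\|_q^q$, we have $G_\beta(u_0)=0$ directly. Moreover $H_\alpha(u_0)=0$: if instead $H_\alpha(u_0)<0$, then $u_0$ is an interior point of the admissible set $\{u\in\W\setminus\{0\}:H_\alpha(u)\le0\}$, hence a free critical point of $R(u):=\|\nabla u\|_q^q/\|u\|_q^q$, so $G_\beta'(u_0)=0$ and $u_0\in ES(q;\beta_*(\alpha))$; since $u_0\ge0$ this forces $\beta_*(\alpha)=\lambda_1(q)$ (cf.\ \cite{anane1987}), contradicting Proposition~\ref{prop:property-curve}~\ref{prop:property-curve:4} because $\alpha<\alpha_*$. (This is precisely the argument used in the proof of Proposition~\ref{prop:property-curve}~\ref{prop:property-curve:6}.) I also note that $u_0$ is a regular point of $H_\alpha$, i.e.\ $H_\alpha'(u_0)\neq0$ in $(\W)^*$: otherwise $u_0$ would be a nonnegative, nontrivial eigenfunction of $-\Delta_p$ associated to $\alpha$, whence $\alpha=\lambda_1(p)$, contrary to $\alpha>\lambda_1(p)$.

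Next I would invoke a Lagrange multiplier rule. A direct computation, using $\|\nabla u_0\|_q^q=\beta\|u_0\|_q^q$, gives $R'(u_0)=\|u_0\|_q^{-q}\,G_\beta'(u_0)$. Since $u_0$ minimizes $R$ over $\{H_\alpha\le0\}$ and the constraint is active, for every $\varphi\in\W$ with $\langle H_\alpha'(u_0),\varphi\rangle<0$ the path $u_0+s\varphi$ is admissible for all small $s>0$, hence $s\mapsto R(u_0+s\varphi)$ has nonnegative right derivative at $0$, i.e.\ $\langle R'(u_0),\varphi\rangle\ge0$; by continuity the closed half-space $\{\varphi:\langle H_\alpha'(u_0),\varphi\rangle\le0\}$ lies in $\{\varphi:\langle R'(u_0),\varphi\rangle\ge0\}$, and since $H_\alpha'(u_0)\neq0$ this forces $R'(u_0)=-\mu H_\alpha'(u_0)$ for some $\mu\ge0$. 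Equivalently, with $\nu:=\mu\|u_0\|_q^q\ge0$,
\begin{equation*}
G_\beta'(u_0)+\nu H_\alpha'(u_0)=0\qquad\text{in }(\W)^*.
\end{equation*}
Here $\nu>0$, for if $\nu=0$ then $G_\beta'(u_0)=0$ and we reach the same contradiction as in Step~1. Writing this identity out explicitly and dividing by $q$, it reads, with $a:=\nu p/q>0$,
\begin{equation*}
a\intO|\nabla u_0|^{p-2}\nabla u_0\nabla\varphi\,dx+\intO|\nabla u_0|^{q-2}\nabla u_0\nabla\varphi\,dx
=a\alpha\intO|u_0|^{p-2}u_0\varphi\,dx+\beta\intO|u_0|^{q-2}u_0\varphi\,dx
\end{equation*}
for all $\varphi\in\W$. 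Setting $t:=a^{1/(p-q)}>0$ and $v:=tu_0$, the $p$- and $q$-homogeneity of the respective terms turns this into the weak formulation of \eqref{eq:D} for $v$. By homogeneity of $H_\alpha$, $G_\beta$ and Step~1, $\E(v)=\frac{1}{p}t^pH_\alpha(u_0)+\frac{1}{q}t^qG_\beta(u_0)=0$ (indeed $\E(tu_0)=0$ for every $t>0$); and since $v\ge0$, $v\not\equiv0$ and $v$ solves \eqref{eq:D}, the regularity theory and the strong maximum principle (cf.\ Remark~\ref{rem:positivity}) show that $v$ is a positive solution.

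I expect the only genuinely delicate point to be the Lagrange multiplier step: justifying the first-order necessary condition for the one-sided constraint $H_\alpha\le0$, extracting the correct sign of $\mu$ via the half-space inclusion (where $H_\alpha'(u_0)\neq0$ is essential), and ruling out the degenerate case $\nu=0$. Once the multiplier identity $G_\beta'(u_0)+\nu H_\alpha'(u_0)=0$ with $\nu>0$ is established, the rescaling $v=(\nu p/q)^{1/(p-q)}u_0$ and the energy computation are routine bookkeeping with the homogeneity exponents.
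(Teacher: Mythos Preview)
Your proof is correct and follows essentially the same route as the paper: show $H_\alpha(u_0)=0$ first, apply a Lagrange multiplier argument to obtain $G_\beta'(u_0)=-\nu H_\alpha'(u_0)$ with $\nu>0$, and then rescale. The only noteworthy difference is how the sign of the multiplier is obtained: the paper invokes the classical Lagrange rule on the equality constraint $H_\alpha=0$ to get $G_\beta'(u_0)=\lambda H_\alpha'(u_0)$, then rules out $\lambda>0$ by perturbing along a direction $\theta$ with $\langle H_\alpha'(u_0),\theta\rangle<0$ (hence also $\langle G_\beta'(u_0),\theta\rangle<0$) and observing that $u_0+\varepsilon\theta$ would beat $\beta_*(\alpha)$; you instead derive the KKT-type condition directly from the one-sided constraint via the half-space inclusion, which gives $\mu\ge0$ for free. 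Both arguments are valid and equally short; your version is perhaps a touch more self-contained, and as a bonus your bookkeeping of the rescaling factor $t=(\nu p/q)^{1/(p-q)}$ carries the $p/q$ coming from the derivatives explicitly.
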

\begin{proof}
	First, we note that $H_\alpha(u_0)=0$. 
	Indeed, if $H_\alpha(u_0)<0$, then $u_0$ is an interior point of the admissible set of $\beta_*(\alpha)$. Therefore, $(\|\nabla u_0\|_q^q/\|u_0\|_q^q)^\prime = 0$ in $(\W)^*$, which implies that $G_\beta^\prime(u_0)=0$, and hence $u_0$ is a nontrivial and nonnegative eigenfunction of $-\Delta_q$ associated to $\beta$. However, this is a contradiction since $\beta_*(\alpha)>\lambda_1(q)$ for $\alpha < \alpha_*$, see Proposition \ref{prop:property-curve} \ref{prop:property-curve:4}. Thus, $H_\alpha(u_0)=0$. 
	
	According to the Lagrange multipliers rule, there exists $\lambda\in\mathbb{R}$ such that 
	\begin{equation}\label{eq:minimizer-sol:1}
	G_\beta^\prime(u_0)=\lambda H_\alpha^\prime(u_0) \quad{\rm in}\quad (\W)^*. 
	\end{equation}
	Since $u_0$ is a regular point of $G_\beta$, we have $\lambda \not= 0$. 
	In order to get $\lambda<0$, we suppose, by contradiction, that $\lambda > 0$. 
	Since $u_0 \geq 0$ and $\alpha > \lambda_1(p)$, $u_0$ is a regular point of $H_\alpha$, and hence we can find
	$\theta\in\W$ such that $\langle H_\alpha^\prime(u_0),\theta\rangle<0$, and so 
	$\langle G_\beta^\prime(u_0),\theta\rangle<0$ by \eqref{eq:minimizer-sol:1} 
	and our assumption $\lambda>0$. 
	Taking sufficiently small $\varepsilon_0>0$, we have
	$$
	\langle H_\alpha^\prime(u_0+\varepsilon\theta),\theta\rangle<0 
	\quad {\rm and }  \quad 
	\langle G_\beta^\prime(u_0+\varepsilon\theta),\theta\rangle<0
	\quad 
	{\rm for\ all}\ \varepsilon\in[0,\varepsilon_0). 
	$$
	Therefore, according to the mean value theorem, there exist $\varepsilon_1\in (0,\varepsilon)$ and $\varepsilon_2\in(0,\varepsilon)$ such that
	\begin{align*} 
	H_\alpha(u_0+\varepsilon\theta) &= 
	H_\alpha(u_0) + 
	\varepsilon\langle H_\alpha'(u_0+\varepsilon_1\theta), \theta \rangle = \varepsilon\langle H_\alpha'(u_0+\varepsilon_1\theta), \theta \rangle
	< 0, 
	\\
	G_\beta(u_0+\varepsilon\theta) &= 
	G_\beta(u_0) + 
	\varepsilon \langle G_\beta'(u_0+\varepsilon_2\theta), \theta \rangle = \varepsilon \langle G_\beta'(u_0+\varepsilon_2\theta), \theta \rangle
	< 0. 
	\end{align*}
	However, this implies that
	$$
	\frac{\|\nabla (u_0+\varepsilon\theta)\|_q^q}{\|u_0+\varepsilon\theta\|_q^q}
	<\beta=\beta_*(\alpha) 
	\quad {\rm and}\quad H_\alpha(u_0+\varepsilon\theta)<0, 
	$$
	which contradicts the definition of $\beta_*(\alpha)$. Therefore, $\lambda<0$. 
	Finally, taking $t=|\lambda|^\frac{1}{p-q}$, we see from \eqref{eq:minimizer-sol:1} that $tu_0$ is a positive solution of \eqref{eq:D}. 
\end{proof}

\begin{proof*}{Theorem~\ref{thm:multi}} 
The existence of the least energy solution $u_1$ is already shown in Theorem~\ref{thm:negative-gs}. 
In the case $\beta=\beta_*(\alpha)$, Lemma~\ref{lem:minimizer-sol} and Proposition~\ref{prop:property-curve} \ref{prop:property-curve:3} imply the existence of the second solution whose energy is zero.
Finally, if $\beta<\beta_*(\alpha)$, then Proposition~\ref{prop:miniNehari+} implies the desired result. 
\end{proof*}

\subsection{Properties of the least energy}
In this subsection, we prove Proposition~\ref{prop:property-least-energy}. First we prepare two auxiliary facts.

\begin{lemma}\label{lem:usc} 
Let $(\alpha, \beta) \in \mathbb{R}^2$. 
Assume that 
$d(\alpha, \beta)$ is attained and $d(\alpha, \beta) \neq 0$. 
Then $d$ is upper semicontinuous at $(\alpha, \beta)$.
\end{lemma}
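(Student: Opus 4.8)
The plan is to verify the sequential form of upper semicontinuity: given an arbitrary sequence $(\alpha_n,\beta_n)\to(\alpha,\beta)$, I will show $\limsup_{n\to\infty} d(\alpha_n,\beta_n)\le d(\alpha,\beta)$. The strategy is to take a ground state $u$ realizing $d(\alpha,\beta)$ and project it onto each perturbed Nehari manifold $\mathcal{N}_{\alpha_n,\beta_n}$, then use these projections as test functions.

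First I would exploit the hypothesis $d(\alpha,\beta)\neq 0$. Let $u\in\N$ with $\E(u)=d(\alpha,\beta)$. By \eqref{eq:Nehari} we have $\E(u)=-\frac{p-q}{pq}H_\alpha(u)=\frac{p-q}{pq}G_\beta(u)$, so $d(\alpha,\beta)\neq 0$ forces $H_\alpha(u)\neq 0$ and $G_\beta(u)\neq 0$; combined with $H_\alpha(u)+G_\beta(u)=0$ this gives $H_\alpha(u)\cdot G_\beta(u)<0$ (with $H_\alpha(u)>0>G_\beta(u)$ when $d(\alpha,\beta)<0$, and $G_\beta(u)>0>H_\alpha(u)$ when $d(\alpha,\beta)>0$). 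This strict sign condition is exactly the hypothesis of Proposition~\ref{prop:minpoint}, and it is stable under small parameter perturbations: since $H_{\alpha_n}(u)=\|\nabla u\|_p^p-\alpha_n\|u\|_p^p\to H_\alpha(u)$ and $G_{\beta_n}(u)=\|\nabla u\|_q^q-\beta_n\|u\|_q^q\to G_\beta(u)$, for all large $n$ we still have $H_{\alpha_n}(u)\cdot G_{\beta_n}(u)<0$ with the same signs. Hence Proposition~\ref{prop:minpoint} provides a unique $t_n>0$ with $t_n u\in\mathcal{N}_{\alpha_n,\beta_n}$, and in particular $d(\alpha_n,\beta_n)\le\En(t_n u)<\infty$ for all large $n$.

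Next I would show $\En(t_n u)\to d(\alpha,\beta)$. From the explicit fibering formula \eqref{tu}, $t_n=\bigl(-G_{\beta_n}(u)/H_{\alpha_n}(u)\bigr)^{\frac{1}{p-q}}$; letting $n\to\infty$ and using $u\in\N$, i.e.\ $-G_\beta(u)=H_\alpha(u)$, we obtain $t_n\to 1$. Therefore $\En(t_n u)=\frac{t_n^p}{p}H_{\alpha_n}(u)+\frac{t_n^q}{q}G_{\beta_n}(u)\to\frac1p H_\alpha(u)+\frac1q G_\beta(u)=\E(u)=d(\alpha,\beta)$. Combining this with $d(\alpha_n,\beta_n)\le\En(t_n u)$, we conclude $\limsup_{n\to\infty}d(\alpha_n,\beta_n)\le d(\alpha,\beta)$, which is the desired upper semicontinuity at $(\alpha,\beta)$.

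The argument has essentially no serious obstacle; the one point requiring care is the observation that the nonvanishing of $d(\alpha,\beta)$ upgrades the inclusion $u\in\N$ to the strict condition $H_\alpha(u)\cdot G_\beta(u)<0$, which is precisely what makes the fibering projection $t\mapsto t_n u$ available on all nearby Nehari manifolds and forces $t_n\to 1$. (If $d(\alpha,\beta)=0$, the ground state satisfies $H_\alpha(u)=G_\beta(u)=0$ and this projection breaks down, explaining why that case must be excluded.)
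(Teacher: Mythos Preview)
Your proof is correct and follows essentially the same approach as the paper's: take a ground state $u$ at $(\alpha,\beta)$, use $d(\alpha,\beta)\neq 0$ to ensure the strict sign condition $H_\alpha(u)\cdot G_\beta(u)<0$, project $u$ onto $\mathcal{N}_{\alpha_n,\beta_n}$ via Proposition~\ref{prop:minpoint}, observe $t_n\to 1$ from \eqref{tu}, and conclude by passing to the limit. Your write-up is in fact slightly more explicit than the paper's in justifying why $d(\alpha,\beta)\neq 0$ forces the strict inequalities and in verifying $t_n\to 1$.
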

\begin{proof} 
Let $\{\alpha_n\}_{n \in \mathbb{N}}$ and $\{\beta_n\}_{n \in \mathbb{N}}$ be arbitrary sequences satisfying $\alpha_n \to \alpha$ and $\beta_n \to \beta$ as $n\to\infty$. 
Since $d(\alpha, \beta) \neq 0$ and it is attained by some $w\in\N$, we see that for all $n \in \mathbb{N}$ large enough we have either $H_{\alpha_n}(w) < 0<G_{\beta_n}(w)$ or $H_{\alpha_n}(w)>0>G_{\beta_n}(w)$. 
Thus, by Proposition \ref{prop:minpoint}, there exists a unique $t_n > 0$ such that $t_n w \in \mathcal{N}_{\alpha_n, \beta_n}$, and $t_n \to 1$ as $n \to \infty$ (see \eqref{tu}). 
Therefore, we get $d(\alpha_n, \beta_n) \leq \En(t_n w)$ for all sufficiently large $n \in \mathbb{N}$, which finally yields the desired conclusion:
\begin{equation*}\label{E<d}
\limsup_{n \to \infty} d(\alpha_n, \beta_n) \leq \limsup_{n \to \infty} \En(t_n w) = \E(w) = d(\alpha, \beta).
\end{equation*}
\end{proof}

\begin{lemma}\label{lem:donti-d}
Let $U \subset \mathbb{R}^2$ be an open set such that 
$$
\{\lambda_1(p)\} \times 
\big((-\infty,\lambda_1(q)]\cup[\beta_*, \infty)\big) \cap U=\emptyset
\quad \text{and}\quad \mathbb{R} \times \{\lambda_1(q)\} \cap U=\emptyset. 
$$
Let $d(\alpha, \beta)$ be attained for any $(\alpha,\beta) \in U$. Moreover, let one of the following assumptions be satisfied:
\begin{enumerate}[label={\rm(\roman*)}]
	\item\label{lem:donti-d:1} $d(\alpha, \beta) > 0$ for any $(\alpha,\beta) \in U$;
	\item\label{lem:donti-d:2} $d(\alpha, \beta) < 0$ for any $(\alpha,\beta) \in U$.
\end{enumerate}
Then $d(\alpha, \beta)$ is continuous on $U$.
\end{lemma}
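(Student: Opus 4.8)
The plan is to derive continuity by pairing the upper semicontinuity already supplied by Lemma~\ref{lem:usc} with a compactness argument for the reverse inequality. Fix $(\alpha,\beta)\in U$ and an arbitrary sequence $(\alpha_n,\beta_n)\to(\alpha,\beta)$; since $U$ is open we may assume $(\alpha_n,\beta_n)\in U$ for all $n$, so that $d(\alpha_n,\beta_n)$ is attained and, by hypothesis, has the same (nonzero) sign as $d(\alpha,\beta)$. Because $d(\alpha,\beta)$ is attained and $d(\alpha,\beta)\neq 0$, Lemma~\ref{lem:usc} gives $\limsup_{n\to\infty}d(\alpha_n,\beta_n)\le d(\alpha,\beta)$, so it remains to prove $\liminf_{n\to\infty}d(\alpha_n,\beta_n)\ge d(\alpha,\beta)$. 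For each $n$ I would pick a ground state $u_n$ of $\En$, taken nonnegative (replace $u_n$ by $|u_n|$, which is again a ground state), which is a positive solution of $(GEV;\alpha_n,\beta_n)$ by Remark~\ref{rem:positivity} since $\En(u_n)=d(\alpha_n,\beta_n)\neq 0$; moreover $H_{\alpha_n}(u_n)\neq 0$ by \eqref{eq:Nehari}.

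The main step is to show that $\{u_n\}_{n\in\mathbb{N}}$ is bounded in $\W$. Suppose not, so $\|\nabla u_n\|_p\to\infty$ along a subsequence. Using $u_n\in\mathcal{N}_{\alpha_n,\beta_n}$ and the H\"older inequality one gets $\|\nabla u_n\|_p^p\le C(\|u_n\|_p^p+\|u_n\|_p^q)$, hence $\|u_n\|_p\to\infty$, and then Lemma~\ref{lem:bdd-PS} (with $\En'(u_n)=0$) yields $u_n/\|u_n\|_p\to\varphi_p/\|\varphi_p\|_p$ strongly in $\W$ and $\alpha=\lambda_1(p)$. In case~\ref{lem:donti-d:1} this contradicts $\alpha>\lambda_1(p)$, which holds because $d(\alpha,\beta)>0$ and \eqref{eq:Nehari} force $H_\alpha(u)<0$ for any $u$ in the nonempty $\mathcal{N}_{\alpha,\beta}$, whence $\lambda_1(p)<\alpha$ by \eqref{eq:lambdar}. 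In case~\ref{lem:donti-d:2}, if $\alpha\neq\lambda_1(p)$ the same contradiction occurs, while if $\alpha=\lambda_1(p)$ the hypothesis on $U$ forces $\lambda_1(q)<\beta<\beta_*$, so that $G_{\beta_n}(u_n/\|u_n\|_p)\to G_\beta(\varphi_p/\|\varphi_p\|_p)=(\beta_*-\beta)\|\varphi_p\|_q^q/\|\varphi_p\|_p^q>0$, contradicting $G_{\beta_n}(u_n)<0$ (which follows from $\En(u_n)=d(\alpha_n,\beta_n)<0$ and \eqref{eq:Nehari}). Thus $\{u_n\}$ is bounded in $\W$.

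With boundedness established, and since $H_{\alpha_n}(u_n)\neq 0$, Lemma~\ref{lem:conv-gs} provides a subsequence $u_{n_k}\to u_0$ strongly in $\W$, where $u_0\ge 0$ is a solution of $(GEV;\alpha,\beta)$; by the strong convergence, $d(\alpha_{n_k},\beta_{n_k})=E_{\alpha_{n_k},\beta_{n_k}}(u_{n_k})\to\E(u_0)$. I then claim $u_0\not\equiv 0$. In case~\ref{lem:donti-d:2}, $\E(u_0)=\lim_k d(\alpha_{n_k},\beta_{n_k})\le d(\alpha,\beta)<0$, so $u_0\neq 0$. In case~\ref{lem:donti-d:1}, if $u_0\equiv 0$ then $\|\nabla u_{n_k}\|_p\to 0$, and since $H_{\alpha_n}(u_n)<0$ (because $\En(u_n)=d(\alpha_n,\beta_n)>0$), Lemma~\ref{lem:bdd-PS-2} forces $\beta=\lambda_1(q)$, contradicting $\mathbb{R}\times\{\lambda_1(q)\}\cap U=\emptyset$; hence $u_0\neq 0$ here too. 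Consequently $u_0\in\mathcal{N}_{\alpha,\beta}$, so $\E(u_0)\ge d(\alpha,\beta)$, while upper semicontinuity gives $\E(u_0)=\lim_k d(\alpha_{n_k},\beta_{n_k})\le d(\alpha,\beta)$; therefore $\lim_k d(\alpha_{n_k},\beta_{n_k})=d(\alpha,\beta)$. Since the whole argument applies to any subsequence of $\{(\alpha_n,\beta_n)\}$, every subsequence of $\{d(\alpha_n,\beta_n)\}$ has a further subsequence converging to $d(\alpha,\beta)$, whence $d(\alpha_n,\beta_n)\to d(\alpha,\beta)$; as $(\alpha,\beta)\in U$ was arbitrary, $d$ is continuous on $U$.

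I expect the genuine difficulty to be the compactness part — securing both the $\W$-boundedness of $\{u_n\}$ and the nontriviality of the limit $u_0$ — because this is precisely where the two geometric restrictions on $U$ enter: deleting $\{\lambda_1(p)\}\times\big((-\infty,\lambda_1(q)]\cup[\beta_*,\infty)\big)$ tames the resonant blow-up profile $\varphi_p$ via Lemma~\ref{lem:bdd-PS}, while deleting $\mathbb{R}\times\{\lambda_1(q)\}$ excludes the vanishing profile $\varphi_q$ via Lemma~\ref{lem:bdd-PS-2}. Everything else is a routine combination of the preparatory lemmas with the subsequence principle.
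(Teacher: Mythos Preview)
Your proof is correct and follows essentially the same route as the paper's: upper semicontinuity from Lemma~\ref{lem:usc}, then boundedness of ground states via Lemma~\ref{lem:bdd-PS}, strong convergence via Lemma~\ref{lem:conv-gs}, and nontriviality of the limit via Lemma~\ref{lem:bdd-PS-2} (case~\ref{lem:donti-d:1}) or the negative energy level (case~\ref{lem:donti-d:2}). The only minor variation is that in the boundedness step for case~\ref{lem:donti-d:1} you argue directly that $d(\alpha,\beta)>0$ forces $H_\alpha(u)<0$ on $\N$ and hence $\alpha>\lambda_1(p)$, whereas the paper instead cites Propositions~\ref{prop:GM2}~\ref{prop:GM2:2} and~\ref{prop:behavior-gs-2}~\ref{prop:behavior-gs-2:2} to rule out case~\ref{lem:donti-d:1} near $\{\lambda_1(p)\}\times(\lambda_1(q),\beta_*)$; your argument is self-contained and slightly cleaner.
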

\begin{proof}
Take any $(\alpha,\beta) \in U$ and let $\{\alpha_n\}_{n \in \mathbb{N}}$ and $\{\beta_n\}_{n \in \mathbb{N}}$ be arbitrary sequences satisfying $\alpha_n \to \alpha$, $\beta_n \to \beta$ and $(\alpha_n,\beta_n) \in U$ (note that $U$ is open). 
Moreover, let $\{u_n\}_{n \in \mathbb{N}}$ be a sequence of minimizers of $d(\alpha_n, \beta_n)$ (the existence follows from the assumption). Since either case \ref{lem:donti-d:1} or case \ref{lem:donti-d:2} holds for all $n \in \mathbb{N}$ and we can assume that each $u_n\ge 0$, we see that $u_n$ is a positive solution of $(GEV;\alpha_n,\beta_n)$ for all $n \in \mathbb{N}$ (see Remark \ref{rem:positivity}).
Let us prove that $\{u_n\}_{n \in \mathbb{N}}$ has a subsequence strongly convergent in $\W$ to some $u_0$ and $d(\alpha,\beta)$ is attained by $u_0$. This will be the desired continuity of $d$.
	
First we show that $u_n$ is bounded in $\W$. 
Note that the boundedness of $\|\nabla u_n\|_p$ is equivalent to the boundedness of $\|u_n\|_p$ since $u_n \in \mathcal{N}_{\alpha_n,\beta_n}$.
Suppose, by contradiction, that $\|u_n\|_p \to \infty$ as $n \to \infty$. 
Applying Lemma \ref{lem:bdd-PS}, we see that $v_n:=u_n/\|u_n\|_p$ converges, up to a subsequence, to $\varphi_p/\|\varphi_p\|_p$ strongly in $\W$, and $\alpha=\lambda_1(p)$. Because we already know that $d(\alpha,\beta) < 0$ in a neighborhood of $\{\lambda_1(p)\} \times (\lambda_1(q),\beta_*)$ (see Propositions \ref{prop:GM2} \ref{prop:GM2:2} and \ref{prop:behavior-gs-2} \ref{prop:behavior-gs-2:2}) our case \ref{lem:donti-d:1} cannot occur. 
However, case \ref{lem:donti-d:2} implies that $G_{\beta_n}(v_n)<0$ and so $G_\beta(\varphi_p)\le 0$, which contradicts the assumption $\lambda_1(q) < \beta < \beta_*$. 
Consequently, $\{u_n\}_{n \in \mathbb{N}}$ is bounded in $\W$. 
Thus, due to Lemma \ref{lem:conv-gs}, there exists a solution $u_0$ of \eqref{eq:D} such that $u_n \to u_0$ strongly in $W_0^{1,p}$, up to a subsequence, and hence  $\liminf\limits_{n\to\infty}\En(u_n)=\liminf\limits_{n\to\infty}d(\alpha_n,\beta_n)$. 

Let us show that $u_0 \not\equiv 0$ in order to get $u_0\in\N$. 
Suppose, by contradiction, that $\|\nabla u_n\|_p \to 0$ as $n \to \infty$. 
Consider case \ref{lem:donti-d:1}. Since $H_{\alpha_n}(u_n)<0$, Lemma~\ref{lem:bdd-PS-2} implies $\beta=\lambda_1(q)$. 
However, this contradicts the assumption $\mathbb{R}\times\{\lambda_1(q)\}\cap U=\emptyset$. 
Consider case \ref{lem:donti-d:2}. Thanks to Lemma~\ref{lem:usc}, we know that $\limsup\limits_{n\to\infty} d(\alpha_n,\beta_n) \le d(\alpha,\beta) < 0$.
Therefore, Lemma~\ref{lem:conv-gs} ensures that $u_0$ is a ground state of $\E$, whence $u_0 \not\equiv 0$ and so $u_0\in\N$. 

Finally, let us show that $d(\alpha,\beta)$ is attained by $u_0$. 
Since $u_n \to u_0$ strongly in $W_0^{1,p}$, we get
\begin{equation*}\label{d<E}
d(\alpha, \beta) \leq \E(u_0) = \liminf\limits_{n \to \infty} \En(u_n) = \liminf\limits_{n \to \infty} d(\alpha_n, \beta_n).
\end{equation*}
As a result, $d$ is lower semicontinuous at $(\alpha,\beta)$. Combining this fact with Lemma \ref{lem:usc}, we deduce that $d$ is continuous at $(\alpha,\beta)$. 
\end{proof}

Now, we are ready to prove Proposition~\ref{prop:property-least-energy}. 
\begin{proof*}{Proposition~\ref{prop:property-least-energy}}
\ref{prop:property-least-energy:2}
Let $\alpha \le \alpha^\prime$ and $\lambda_1(q) < \beta \le \beta^\prime < \beta_*(\alpha^\prime)$. 
Note that the assumption $\lambda_1(q) < \beta_*(\alpha^\prime)$ implies that $\alpha^\prime < \alpha_*$, see Proposition \ref{prop:property-curve} \ref{prop:property-curve:2}. 
It follows from Proposition \ref{prop:GM1} \ref{prop:GM1:2} and Theorem \ref{thm:negative-gs} \ref{thm:negative-gs:1} that $d(\alpha,\beta)<0$ and it is attained by some $u\in\N$.
Since $\E(u) < 0$, we get $G_{\beta^\prime}(u) \le G_\beta(u) < 0$ and so $\|\nabla u\|_q^q/\|u\|_q^q<\beta\le \beta^\prime < \beta_*(\alpha^\prime)$. 
Hence, by the definition of $\beta_*(\alpha^\prime)$, we see that $0 < H_{\alpha^\prime}(u) \le H_{\alpha}(u)$. 
Therefore, according to Proposition \ref{prop:minpoint}, there exists a unique $t^\prime>0$ such that 
$t^\prime u\in \mathcal{N}_{\alpha^\prime,\beta^\prime}$ and 
$E_{\alpha^\prime,\beta^\prime}(t^\prime u) =\min\limits_{s\ge 0} E_{\alpha^\prime,\beta^\prime}(s u)$. As a result, our assertion follows from the following inequalities:
$$
d(\alpha^\prime,\beta^\prime)\le 
E_{\alpha^\prime,\beta^\prime}(t^\prime u)
=\min_{s\ge 0} E_{\alpha^\prime,\beta^\prime}(s u)
\le E_{\alpha^\prime,\beta^\prime}(u)
<\E(u)=d(\alpha,\beta), 
$$
where the last inequality is strict by $(\alpha,\beta)\neq(\alpha^\prime,\beta^\prime)$. 

Note that the method of the proof carries over to the case where $\lambda_1(p) \le \alpha^\prime < \alpha_*$ and $\beta^\prime = \beta_*(\alpha^\prime)$. Indeed, noting that $\beta_*(\alpha^\prime)$ decreases for $\lambda_1(p) \leq \alpha^\prime \leq \alpha_*$ (see Proposition \ref{prop:property-curve} \ref{prop:property-curve:6}), we see that $d(\alpha,\beta) < 0$ and it is attained.

\ref{prop:property-least-energy:3}
Let $\lambda_1(p)<\alpha\le \alpha^\prime$ and $\beta\le \beta^\prime<\beta_*(\alpha^\prime)$. 
Due to Theorem~\ref{thm:MP-Nehari}, Theorem \ref{prop:MP-Nehari-resonant} \ref{prop:MP-Nehari-resonant:2} and Theorem \ref{thm:negative-gs} \ref{thm:negative-gs:1}, $d(\alpha,\beta)$ is attained. If $\lambda_1(q) < \beta \leq \beta^\prime < \beta_*(\alpha^\prime)$, then our conclusion follows from the assertion \ref{prop:property-least-energy:2} proved above.  If $\beta \le \lambda_1(q) < \beta^\prime < \beta_*(\alpha^\prime)$, then $d(\alpha,\beta)>0$ and $d(\alpha^\prime,\beta^\prime)<0$, which yields the desired monotonicity.
Therefore, it remains to consider two cases: either $\lambda_1(p)<\alpha\le \alpha^\prime < \alpha_*$ and $\beta \le \beta^\prime \le \lambda_1(q)$ or $\alpha_* \leq \alpha \le \alpha^\prime$ and $\beta \le \beta^\prime < \lambda_1(q)$. In both cases, $d(\alpha,\beta)>0$ and $d(\alpha^\prime,\beta^\prime) > 0$. Let $u \in \N$ be a ground state of $\E$. It is easy to see that $u \not\in \mathbb{R}\varphi_q$. This yields
$G_{\beta}(u)\ge G_{\beta^\prime}(u)>0>H_\alpha(u)\ge H_{\alpha^\prime}(u)$.
Hence, Proposition \ref{prop:minpoint} implies the existence of a unique $t^\prime>0$ such that $t^\prime u\in \mathcal{N}_{\alpha^\prime,\beta^\prime}$.
Moreover, noting that $\E(u)=\max\limits_{s\ge 0} \E(s u)$, we obtain 
$$
d(\alpha^\prime,\beta^\prime)\le 
E_{\alpha^\prime,\beta^\prime}(t^\prime u)<
\E(t^\prime u) \le \max_{s\ge 0} \E(s u)=
\E(u) = d(\alpha,\beta), 
$$
where the second inequality is strict by $(\alpha,\beta)\neq(\alpha^\prime,\beta^\prime)$. 

\ref{prop:property-least-energy:1} 
Let us divide the $(\alpha,\beta)$-plane into the following four sets (see Fig.\ \ref{fig2}):
\begin{align*}
&A:=\{(\alpha,\beta)\,:\,\alpha<\alpha_*,\ 
\lambda_1(q)<\beta\le \beta_*(\alpha)\,\}, 
&&D:=\{(\alpha,\beta)\,:\, \lambda_1(p)\le \alpha,\ 
\beta_*(\alpha)<\beta\, 
\}, \\
&B:=\{(\alpha,\beta)\,:\, \alpha\le \lambda_1(p),
\ \beta\le \lambda_1(q)\,\}, 
&&C:=\{(\alpha,\beta)\,:\, 
\lambda_1(p)<\alpha,\ \beta\le \lambda_1(q)\, 
\}, 
\end{align*}
where in the set $A$, we denote $\beta_*(\alpha)=+\infty$ 
for $\alpha < \lambda_1(p)$. 
Recall that 
\begin{itemize}
	\item[1.] $d(\alpha,\beta) < 0$ for $(\alpha,\beta) \in A$, see Propositions \ref{prop:GM1} \ref{prop:GM1:2}, \ref{prop:GM3}, and Theorems \ref{thm:negative-gs} \ref{thm:negative-gs:1}, \ref{thm:negative-gs-2}
	(see also Remark \ref{rem:special} for the case $\alpha=\lambda_1(p)$ and $\beta=\beta_*$). 
	\item[2.] $d(\alpha,\beta) = \infty$ for $(\alpha,\beta) \in B$, see Lemma \ref{lem:nonempty-Nehari}.
	\item[3.] $d(\alpha,\beta) \geq 0$ for $(\alpha,\beta) \in C$, see Theorems \ref{thm:MP-Nehari} and \ref{prop:MP-Nehari-resonant}.
	\item[4.] $d(\alpha,\beta) = -\infty$ for $(\alpha,\beta) \in D$, see Theorem \ref{thm:negative-gs} \ref{thm:negative-gs:2}.
\end{itemize}
Let $(\alpha, \beta), (\alpha^\prime, \beta^\prime) \in \mathbb{R}^2$ be such that $\alpha \leq \alpha^\prime$ and $\beta \leq \beta^\prime$, and $(\alpha, \beta) \neq (\alpha^\prime, \beta^\prime)$.
If $(\alpha, \beta) \in B$ or $(\alpha^\prime,\beta^\prime) \in D$, then the assertion is trivial. 
Moreover, if $(\alpha, \beta) \in C$ and $(\alpha^\prime, \beta^\prime) \in A$, then the assertion is trivial, too. Therefore, there are only two remaining cases:
\begin{enumerate}[label={\rm(\alph*)}]
	\item\label{prop:ple:1} $(\alpha, \beta) \in A$ and $(\alpha^\prime, \beta^\prime) \in A$. This case is covered by the assertion \ref{prop:property-least-energy:2} (see the proof of \ref{prop:property-least-energy:2} for the borderline cases).
	\item\label{prop:ple:2} $(\alpha, \beta) \in C$ and $(\alpha^\prime, \beta^\prime) \in C$. This case follows from the assertion \ref{prop:property-least-energy:3} by noting that $d(\alpha,\beta) = 0$ whenever $\alpha \geq \alpha_*$ and $\beta = \lambda_1(q)$, see 	Theorem \ref{prop:MP-Nehari-resonant} \ref{prop:MP-Nehari-resonant:3}-\ref{prop:MP-Nehari-resonant:4}.
\end{enumerate}

\ref{prop:property-least-energy:4} 
Let $\{(\alpha_n,\beta_n)\}_{n \in \mathbb{N}}$ be any sequence convergent to $(\alpha,\beta)$. 
According to Lemma~\ref{lem:usc}, it is sufficient to handle the following three cases: 
\begin{enumerate}[label={\rm(\alph*)}]
	\item\label{prop:ple2:1} $d(\alpha,\beta)=-\infty$;
	\item\label{prop:ple2:2} $\alpha=\lambda_1(p)$, $\beta= \beta_*$ and $d(\alpha,\beta)>-\infty$;
	\item\label{prop:ple2:3} $\alpha\ge \alpha_*$ and $\beta=\lambda_1(q)$.
\end{enumerate}

Case \ref{prop:ple2:1}:
Fix any $R>0$. Since $d(\alpha,\beta)=-\infty$, there exists $u_R\in\N$ such that $\E(u_R)<-R<0$, and hence $H_\alpha(u_R)>0>G_\beta(u_R)$. 
Thus, we may assume that $H_{\alpha_n}(u_R)>0>G_{\beta_n}(u_R)$ for all sufficiently large $n \in \mathbb{N}$. This leads to 
\begin{align*} 
d(\alpha_n,\beta_n)\le \min_{s\ge 0} \En (s u_R)
\le \En(u_R) =\E(u_R)+o(1)<-R+o(1), 
\end{align*}
which implies that $\limsup\limits_{n\to\infty}d(\alpha_n,\beta_n)\le -R$. 
Since $R>0$ is arbitrary, we conclude that  $\lim\limits_{n\to\infty}d(\alpha_n,\beta_n) = -\infty = d(\alpha,\beta)$. 

Case \ref{prop:ple2:2}:
Recall that $d(\alpha,\beta)<0$. Fix any sufficiently small $\varepsilon>0$. Then, we can find $u\in\N$ such that $\E(u)<d(\alpha,\beta)+\varepsilon<0$. Arguing as in case \ref{prop:ple2:1}, we have 
\begin{align*} 
d(\alpha_n,\beta_n)\le \min_{s\ge 0} \En (s u)
\le \En(u) =\E(u)+o(1)<d(\alpha,\beta)+\varepsilon+o(1)
\end{align*}
for sufficiently large $n \in \mathbb{N}$, which implies that $\limsup\limits_{n\to\infty}d(\alpha_n,\beta_n)\le d(\alpha,\beta)$.

Case \ref{prop:ple2:3}:
If $\{(\alpha_n,\beta_n)\}_{n \in \mathbb{N}}$ has a subsequence contained in 
$(\lambda_1(p),+\infty)\times (-\infty,\lambda_1(q))$ or 
in $(\lambda_1(p),\alpha_*)\times\{\lambda_1(q)\}$, then our assertion follows from Proposition~\ref{prop:behavior-gs} \ref{prop:behavior-gs:3}. Otherwise, the claim is trivial because $d(\alpha_n,\beta_n)\le 0=d(\alpha,\beta)$. 

\ref{prop:property-least-energy:5}
The assertion follows from Lemma~\ref{lem:donti-d}.
\end{proof*}

\par
\bigskip
\noindent
{\bf Acknowledgements.} This work was supported by JSPS KAKENHI Grant Number 15K17577. 
The first author wishes to thank Tokyo University of Science, where this research was initiated, for the invitation and hospitality. The work of the first author was also supported by the project LO1506 of the Czech Ministry of Education, Youth and Sports.

\appendix
\section{Appendix}\label{sec:appendix}

\begin{proposition}\label{prop:LID}
Let $\Omega \subset \mathbb{R}^N$ be a bounded domain, $N \geq 1$, and $1<q<p<\infty$. Let $\varphi$ and $\psi$ be (nontrivial) eigenfunctions of the $p$-Laplacian and $q$-Laplacian in $\Omega$ under zero Dirichlet boundary condition, respectively. 
Then $\varphi$ and $\psi$ are linearly independent, i.e., $\varphi \not\in\mathbb{R}\psi$ 
(equivalently, $\psi \not\in\mathbb{R}\varphi$). 
\end{proposition}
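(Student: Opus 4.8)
The plan is to argue by contradiction. Suppose $\varphi=c\psi$ for some $c\neq0$; dividing by $c$ where convenient we may assume $\varphi=\psi=:u$, so that $u\in W_0^{1,p}(\Omega)\setminus\{0\}$ is simultaneously an eigenfunction of $-\Delta_p$ and of $-\Delta_q$: weakly $-\Delta_p u=\lambda|u|^{p-2}u$ and $-\Delta_q u=\mu|u|^{q-2}u$ for some $\lambda,\mu\in\mathbb{R}$, and testing each equation with $u$ gives $\lambda=\|\nabla u\|_p^p/\|u\|_p^p>0$ and $\mu=\|\nabla u\|_q^q/\|u\|_q^q>0$. I would then reduce to the case of a positive eigenfunction: if $u$ changes sign, restrict to a nodal domain $\omega$ (a connected component of $\{u\neq0\}$), on which, up to replacing $u$ by $-u$, $u$ is a positive solution of both eigenvalue problems (and $u|_\omega\in W_0^{1,p}(\omega)\cap W_0^{1,q}(\omega)$); since a sign-definite eigenfunction of the $r$-Laplacian on a connected open set must be a first eigenfunction, $u|_\omega$ is, up to normalization, the first eigenfunction of both $-\Delta_p$ and $-\Delta_q$ in $\omega$. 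Hence it suffices to derive a contradiction under the assumption that $u=\varphi_p=\varphi_q>0$ in a connected bounded domain with $C^2$ boundary.

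The core of the argument is a computation on $\partial\Omega$. Because $u>0$ is a first eigenfunction, $u\in C^{1,\alpha}(\overline{\Omega})$ (regularity up to the boundary, cf.\ \cite{Lieberman}) and $\partial u/\partial\nu<0$ on $\partial\Omega$ by the Hopf boundary-point lemma; consequently $|\nabla u|$ is bounded away from $0$ in a neighbourhood of $\partial\Omega$, where therefore both equations are uniformly elliptic with $C^{1,\alpha}$ right-hand sides, and Schauder theory for quasilinear equations yields $u\in C^{2,\alpha}$ up to $\partial\Omega$. Fix $x_*\in\partial\Omega$ with outer unit normal $\nu$. Since $u\equiv0$ on $\partial\Omega$, all tangential derivatives of $u$ vanish there, so $\nabla u(x_*)=(\partial_\nu u(x_*))\,\nu$ with $a:=\partial_\nu u(x_*)\neq0$, and a short computation gives $\Delta_r u(x_*)=|a|^{r-2}\bigl(\Delta u(x_*)+(r-2)\,\partial_\nu^2 u(x_*)\bigr)$ for $r\in\{p,q\}$. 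As the right-hand sides $\lambda|u|^{p-2}u$ and $\mu|u|^{q-2}u$ vanish at $x_*$, this forces
\[
\Delta u(x_*)+(p-2)\,\partial_\nu^2 u(x_*)=0=\Delta u(x_*)+(q-2)\,\partial_\nu^2 u(x_*),
\]
and subtracting (here $p\neq q$ is used) yields $\partial_\nu^2 u(x_*)=0$ and then $\Delta u(x_*)=0$. On the other hand, the classical splitting of the Laplacian along a $C^2$ hypersurface, combined with $u|_{\partial\Omega}=0$, reads $\Delta u(x_*)=H(x_*)\,\partial_\nu u(x_*)+\partial_\nu^2 u(x_*)$, where $H(x_*)$ is the mean curvature of $\partial\Omega$ at $x_*$. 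Hence $H(x_*)\,a=0$, and since $a\neq0$ we conclude $H(x_*)=0$.

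Since $x_*\in\partial\Omega$ was arbitrary, this says $\partial\Omega$ is everywhere minimal, which is impossible for a bounded domain: at a point of $\partial\Omega$ of maximal distance from a fixed interior point the domain lies inside a ball tangent there, so all principal curvatures, and hence $H$, are strictly positive. This contradiction would complete the proof.

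I expect the main obstacle to be the passage from the abstract hypothesis to a setting in which the boundary computation is legitimate: securing the $C^{2,\alpha}$-regularity of $u$ up to $\partial\Omega$ (this is exactly where boundedness and smoothness of $\Omega$ and the Hopf lemma enter), and, for a common eigenfunction that changes sign and whose nodal set may reach $\partial\Omega$ (or for a genuinely non-smooth domain), carrying out the argument on a smooth portion of the boundary on which $u$ is single-signed and then still exhibiting a boundary point of positive mean curvature at which $\partial_\nu u\neq0$. An interior variant, analysing $u$ near an interior extremum point, seems less convenient, since the $r$-Laplacian degenerates precisely at the critical points of $u$, whereas near $\partial\Omega$ one has $\nabla u\neq0$ and full ellipticity.
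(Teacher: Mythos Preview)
Your route is genuinely different from the paper's. After the same reduction to a positive function $\phi$ on a nodal domain, the paper performs an \emph{interior} blow-up at a global maximum point (taken at the origin): setting $u_\lambda(x)=(\phi(0)-\phi(\lambda x))/\lambda^{p/(p-1)}$, a subsequence converges in $C^1_{\mathrm{loc}}(\mathbb{R}^N)$ to a nonnegative $\bar u$ with $\bar u(0)=0$ solving $\Delta_p\bar u=\lambda_1(p)\,\phi(0)^{p-1}>0$ in $\mathbb{R}^N$; rescaling the $q$-equation shows that the same $\bar u$ is $q$-harmonic, so the strong maximum principle forces $\bar u\equiv 0$, contradicting the previous equation. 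Your boundary computation is more direct and avoids blow-up: once $u$ is $C^2$ near $\partial\Omega$, the two equations at any boundary point with $\partial_\nu u\neq 0$ yield $\partial_\nu^2 u=0$ and hence $H=0$, which cannot hold on all of $\partial\Omega$ for a bounded domain.

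Two points, however, are not covered by your sketch. First, the mean-curvature argument is empty when $N=1$: the boundary is zero-dimensional and both equations give only $u''=0$ at the endpoints, so no contradiction arises. The paper treats $N=1$ separately by citing an ODE result. Second---and this is the obstacle you yourself flag---after restricting to a nodal domain $\omega$ of a sign-changing eigenfunction you no longer have a $C^2$ boundary, and your argument needs a point $x_*\in\partial\omega$ at which $\partial\omega$ is $C^2$, $\partial_\nu u(x_*)\neq 0$, and $H(x_*)\neq 0$ simultaneously. It is not clear how to secure this: on $\partial\omega\cap\partial\Omega$ smoothness is inherited but that portion need not contain a point of nonzero mean curvature, while on $\partial\omega\cap\Omega$ the nodal set may be singular. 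The paper's interior blow-up sidesteps this entirely, since it uses nothing about $\partial\omega$. For the first eigenfunctions on the original $C^2$ domain $\Omega$ (the case actually needed for Lemma~\ref{lem:LID}) your argument does go through, since Hopf gives $\partial_\nu u\neq 0$ on all of $\partial\Omega$ and a bounded $C^2$ domain always has a boundary point with $H>0$.
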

\begin{proof}
In the case $N=1$, the result follows from \cite[Lemma A.1]{BobkovTanaka2016} or \cite[Lemma 4.3]{KTT}.
Let $N \geq 2$. Suppose, by contradiction, that $\varphi \in\mathbb{R}\psi$. Evidently, we can assume that $\varphi \equiv \psi$.
Since $\varphi\in C^{1,\gamma}(\Omega)$ for some $\gamma \in (0,1)$ (cf.\ \cite{tolksdorf}, where a $L^\infty$-bound can be obtained by the bootstrap arguments, for instance, as in \cite[Lemma 3.2]{drabekkufner}), $\varphi = 0$ on $\partial \Omega$, and $\varphi \not\equiv 0$, there exists at least one point of global extrema of $\varphi$ and, in view of the translation invariance of $p$-Laplacian, we can assume that this point is $0$. Moreover, considering $-\varphi$ instead of $\varphi$, if necessary, we can assume that $\varphi(0) > 0$. 
Let us denote by $\phi$ a restriction of $\varphi$ to a component $\Omega'$ of the set $\{x \in \Omega: \varphi(x) > 0\}$ such that $0 \in \Omega'$. Then $\phi \in W_0^{1,p}(\Omega')$, see \cite[Lemma 5.6]{cuesta}. Finally, for simplicity of notation, let us assume that $\Omega' \equiv \Omega$.
	
The following blow-up arguments are based on the article \cite{garcia}. 
Take any $\lambda > 0$ and consider the function
\begin{equation}\label{ul}
u_\lambda(x) := \frac{\phi(0) - \phi(\lambda x)}{\lambda^\frac{p}{p-1}}, 
\quad x \in \Omega_\lambda,
\end{equation}
where $\Omega_\lambda = \{x \in \mathbb{R}^N:~ 
\lambda x \in \Omega\}$. 
It was proved in \cite[Lemma~5]{garcia} that there exists a sequence $\lambda_n \to 0$ as $n \to \infty$ such that $u_{\lambda_n} \to \bar{u}$ in $C^1_{\text{loc}}(\mathbb{R}^N)$, where $\bar{u}$ is a nonnegative weak solution of
\begin{equation}\label{eq}
\Delta_p u = \lambda_1(p) \phi(0)^{p-1} ~~(=\text{const} > 0)
\quad \text{in }~ \mathbb{R}^N
\end{equation}
such that $\bar{u}(0) = 0$.
Let us show now that $\bar{u}$ is simultaneously a $q$-harmonic function in $\mathbb{R}^N$.
Indeed, noting that 
$$
\Delta_q u_\lambda = -\frac{1}{\lambda^\frac{p(q-1)}{p-1}} 
\Delta_q(\phi(\lambda x)) = 
\frac{\lambda^q}{\lambda^\frac{p(q-1)}{p-1}} \lambda_1(q) \phi(\lambda x)^{q-1} = \lambda^\frac{p-q}{p-1} \lambda_1(q) \phi(\lambda x)^{q-1},
$$
we see that each $u_\lambda$ defined by \eqref{ul} weakly satisfies the equation
$$
\Delta_q u = \lambda^\frac{p-q}{p-1} \lambda_1(q) \phi(\lambda x)^{q-1}
\quad \text{in }~ \Omega_\lambda.
$$
Since $\phi$ is uniformly bounded and $p>q$, we consider the sequence $\{\lambda_n\}_{n \in \mathbb{N}}$ as above and, passing to the limit as $n \to \infty$, deduce that $\bar{u}$ weakly satisfies
$$
\Delta_q \bar{u} = 0 \quad \text{in }~ \mathbb{R}^N,
$$
that is, $\bar{u}$ is a $q$-harmonic function in $\mathbb{R}^N$.
Recalling that $\bar{u}$ is nonnegative and $\bar{u}(0) = 0$, we derive from the strong maximum principle (cf.\ \cite[Theorem 5.3.1]{PS}) that $\bar{u} \equiv 0$ in $\mathbb{R}^N$.
However, it contradicts the equation \eqref{eq}.	
\end{proof}

\addcontentsline{toc}{section}{\refname}
\small

\end{document}